%

\documentclass[aop,MSNbibl,seceqn,dvips]{arximspdf}
\usepackage{mathbh}

%

\doi{10.1214/13-AOP861} 
\volume{42}
\issue{1}
\pubyear{2014}
\firstpage{311}
\lastpage{353}

\makeatletter
\def\leftidx{}

\newcommand{\eqref}[1]{(\ref{#1})}
\newtheorem{theorem}{Theorem}[section]
\newtheorem{proposition}[theorem]{Proposition}
\newtheorem{lemma}[theorem]{Lemma}
\newtheorem{corollary}[theorem]{Corollary}
\newtheorem{fact}[theorem]{Fact}
\newproclaim{definition}[theorem]{Definition}
\newproclaim{remark}[theorem]{Remark}
\newproclaim{question}[theorem]{Question}
\newproclaim{notation}[theorem]{Notation}
\newproclaim{example}[theorem]{Example}

\renewcommand{\phi}{\varphi}

\newcommand{\D}{\mathrm{d}}

\newcommand{\ti}{\tilde}

\renewcommand{\equiv}{\Longleftrightarrow}
\newcommand{\equ}{$ \Longleftrightarrow$}
\newcommand{\imply}{ \Longrightarrow}
\newcommand{\impl}{ \Longrightarrow}
\newcommand{\imp}{$ \Longrightarrow$ }

\newcommand{\Atoms}{\operatorname{Atoms}}
\newcommand{\Down}{\operatorname{Down}}
\newcommand{\Up}{\operatorname{Up}}
\newcommand{\Cl}{\operatorname{Cl}}

\newcommand{\even}{\mathrm{even}}
\newcommand{\odd}{\mathrm{odd}}

\newcommand{\One}{\mathbh{1}}

\newcommand{\eps}{\varepsilon}
\newcommand{\Si}{\Sigma}
\newcommand{\ga}{\gamma}
\newcommand{\Om}{\Omega}
\newcommand{\al}{\alpha}
\newcommand{\be}{\beta}
\newcommand{\M}{\mathcal{ M}}
\newcommand{\F}{\mathcal{F}}
\newcommand{\A}{\mathcal{ A}}
\newcommand{\La}{\Lambda}

\newcommand{\Q}{\mathbf{Q}}

\newcommand{\Ex}{\mathbb{E}}
\newcommand{\R}{\mathbb R}

\makeatother

\begin{document}
\begin{frontmatter}

\title{Noise as a Boolean algebra of $\sigma$-fields}
\runtitle{Noise as a Boolean algebra of $\sigma$-fields}

\begin{aug}
\author{\fnms{Boris} \snm{Tsirelson}\corref{}\ead[label=e1]{tsirel@post.tau.ac.il}}
\runauthor{B. Tsirelson}
\affiliation{Tel Aviv University}
\address{School of Mathematics\\
Tel Aviv University\\
Tel Aviv 69978\\
Israel\\
\printead{e1}}
\end{aug}

\received{\smonth{11} \syear{2011}}
\revised{\smonth{3} \syear{2013}}

%
\begin{abstract}
A noise is a kind of homomorphism from a Boolean algebra of domains to
the lattice of $\sigma$-fields. Leaving aside the homomorphism we
examine its
image, a Boolean algebra of $\sigma$-fields. The largest extension of such
Boolean algebra of $\sigma$-fields, being well-defined always, is a complete
Boolean algebra if and only if the noise is classical, which answers
an old question of J.~Feldman.
\end{abstract}

%
\begin{keyword}[class=AMS]
\kwd[Primary ]{60G99}
\kwd[; secondary ]{60A10}
\kwd{60G20}
\kwd{60G60}
\end{keyword}
\begin{keyword}
\kwd{Black noise}
\end{keyword}

\end{frontmatter}
\section*{Introduction}

The product of two measure spaces, widely known among
mathematicians, leads to the tensor product of the corresponding
Hilbert spaces~$ L_2 $. The less widely known product of an infinite
sequence of probability spaces leads to the so-called infinite tensor
product space. A continuous product of probability spaces, used in the
theory of noises, leads to a continuous tensor product of Hilbert
spaces, used in noncommutative dynamics. Remarkable parallelism and
fruitful interrelations between the two theories of continuous
products, commutative (probability) and noncommutative (operator
algebras) are noted \cite{TV,Ts02,Ts04}.

The classical theory, developed in the 20th century, deals with
independent increments (L\'evy processes) in the commutative case, and
quasi-free representations of canonical commutation relations (Fock
spaces) in the noncommutative case. These classical continuous
products are well understood, except for one condition of
classicality, whose sufficiency was conjectured by H.~Araki and
E.~J. Woods in 1966 (\cite{AW}, page~210), in the noncommutative case (still
open), and by J.~Feldman in 1971 (\cite{Fe}, Problem 1.9), in the
commutative case (now proved).

Araki and Woods note (\cite{AW}, pages~161--162), that lattices of von
Neumann algebras occur in quantum field theory and quantum statistical
mechanics; these algebras correspond to domains in space--time or
space; in most interesting cases they fail to be a Boolean algebra of
type I factors. As a first step toward an understanding of such
structures, Araki and Woods investigate ``factorizations,'' complete
Boolean algebras of type I factors, leaving aside their
relation to the domains in space(--time), and conjecture that all such
factorizations contain sufficiently many factorizable vectors.

Feldman defines ``factored probability spaces'' that are in fact
complete Boolean algebras of sub-$\sigma$-fields (corresponding to Borel
subsets of a parameter space, which does not really matter),
investigates them assuming sufficiently many ``decomposable
processes'' (basically the same as factorizable vectors) and asks
whether this assumption holds always, or not.

In both cases the authors failed to prove that the completeness of the
Boolean algebra implies classicality (via sufficiently many
factorizable vectors).

In both cases the authors did not find any nonclassical
factorizations, and did not formulate an appropriate framework for
these. This challenge in the noncommutative case was met in 1987 by
Powers \cite{Po} (``type III product system''), and in the
commutative case in 1998 by Vershik and myself \cite{TV} (``black
noise''). In both cases the framework was an incomplete Boolean
algebra indexed by one-dimensional intervals and their finite
unions. More interesting nonclassical noises were found soon (see the
survey \cite{Ts04}), but the first highly important example is given
recently by Schramm, Smirnov and Garban \cite{SS}---the
noise of percolation, a conformally invariant black noise over the
plane.

Being indexed by planar domains (whose needed regularity depends on
some properties of the noise), such a noise exceeds the limits of the
existing framework based on one-dimensional intervals. Abandoning the
intervals, it is natural to return to the Boolean algebras, leaving
aside (once again!) their relations to planar (or more general)
domains; this time, however, the Boolean algebra is generally
incomplete.

The present article provides a remake of the theory of noises, treated
here as Boolean algebras of $\sigma$-fields. Completeness of the Boolean
algebra implies classicality, which answers the question of Feldman.

The noncommutative case is still waiting for a similar treatment.

The author thanks the anonymous referee and the associate editor;
several examples and the whole Section~\ref{1f} are added on their
advices.

\section{Main results}
\label{sec1}

\subsection{Definitions}
\label{1a}

Let $ (\Om,\F,P) $ be a probability space; that is, $ \Om$ is a set,
$ \F$ a $\sigma$-field (in other words, $\sigma$-algebra) of its subsets
(throughout, every $\sigma$-field is assumed to contain all null
sets), and $ P
$ a probability measure on $ (\Om,\F) $. We assume that $
L_2(\Om,\F,P) $ is separable. The set $ \La$ of all sub-$\sigma
$-fields of $
\F$ is partially ordered (by inclusion: $ x \le y $ means $ x \subset
y $ for $ x,y \in\La$), and is a lattice:
\[
x \wedge y = x \cap y, \qquad x \vee y = \sigma(x,y)\qquad \mbox{for } x,y \in\La;
\]
here $ \sigma(x,y) $ is the least $\sigma$-field containing both $ x
$ and $ y
$. (See \cite{DP} for basics about lattices and Boolean algebras.)
The greatest element $ 1_\La$ of $ \La$ is $ \F$; the smallest
element $ 0_\La$ is the trivial $\sigma$-field (only null sets and their
complements).

A subset $ B \subset\La$ is called a sublattice if $ x \wedge y, x
\vee y \in B $ for all $ x,y \in B $. The sublattice is called
distributive if $ x \wedge( y \vee z ) = ( x \wedge y ) \vee( x
\wedge z ) $ for all $ x,y,z \in B $.

Let $ B \subset\La$ be a distributive sublattice, $ 0_\La\in B $,
$ 1_\La\in B $. An element $ x $ of $ B $ is called complemented (in
$ B $), if $ x \wedge y = 0_\La$, $ x \vee y = 1_\La$ for some
(necessarily unique) $ y \in B $; in this case one says that $ y $ is
the complement of $ x $, and writes $ y = x' $.

%
\begin{definition}\label{1a1}
A \emph{noise-type Boolean algebra} is a distributive sublattice $ B
\subset\La$ such that $ 0_\La\in B $, $ 1_\La\in B $, all elements
of $ B $ are complemented (in $ B $), and for every $ x \in B $ the
$\sigma$-fields $ x, x' $ are independent [i.e., $ P ( X \cap Y ) = P(X)
P(Y) $ for all $ X \in x $, $ Y \in y $].
\end{definition}

From now on $ B \subset\La$ is a noise-type Boolean algebra.

%
\begin{definition}\label{1a2}
The \emph{first chaos space} $ H^{(1)}(B) $ is a (closed linear)
subspace of the Hilbert space $ H = L_2(\Om,\F,P) $ consisting of all
$ f \in H $ such that
\[
f = \mathbb{E}(f |x) + \mathbb{E}\bigl(f |x'\bigr) \qquad\mbox{for all }
x \in B.
\]
\end{definition}

Here $ \mathbb{E}(\cdot |x)$ is the conditional expectation, that
is, the
orthogonal projection onto the subspace $ H_x $ of all $x$-measurable
elements of $ H $.

%
\begin{definition}\label{1a3}
(a) $ B $ is called \emph{classical} if the first chaos space
generates the whole $\sigma$-field $ \F$.

(b) $ B $ is called \emph{black} if the first chaos space contains
only $ 0 $ (but $ 0_\La\ne1_\La$).
\end{definition}

The lattice $ \La$ is complete; that is, every subset $ X \subset\La
$ has an infimum and a supremum,
\[
\inf X = \bigcap_{x\in X} x, \qquad\sup X = \sigma\biggl(
\bigcup_{x\in X} x \biggr).
\]
A noise-type Boolean algebra $ B $ is called complete if
\[
(\inf X) \in B \quad\mbox{and}\quad (\sup X) \in B\qquad \mbox{for every } X \subset B.
\]

\subsection{The simplest nonclassical example}
\label{1b}

Let $ \Om= \{-1,1\}^\infty$ (all infinite sequences of $ \pm1 $)
with the product measure $ \mu^\infty$ where $
\mu(\{-1\})=\mu(\{1\})=1/2 $. The coordinate projections $ \xi
_n\dvtx\Om
\to\{-1,1\} $, $ \xi_n(s_1,s_2,\ldots) = s_n $, treated as random
variables, are independent random signs. The products $ \xi_1 \xi_2,
\xi_2 \xi_3,\break  \xi_3 \xi_4, \ldots$ are also independent random
signs.\vadjust{\goodbreak}

We introduce $\sigma$-fields
\[
x_n = \sigma(\xi_n,\xi_{n+1},\ldots)\quad \mbox{and}\quad
y_n = \sigma(\xi_n \xi_{n+1})\qquad \mbox{for } n=1,2,
\ldots.
\]
Then
\begin{eqnarray*}
&1_\La= x_1 \ge x_2 \ge\cdots;&
\\
&y_n \le x_n;&
\\
&y_1,\ldots,y_n,x_{n+1} \mbox{ are independent};&
\\
&y_n \vee x_{n+1} = x_n.&
\end{eqnarray*}
The independent $\sigma$-fields $ y_1,\ldots,y_n,x_{n+1} $ are atoms
of a finite
noise-type Boolean algebra $ B_n $ (containing $ 2^{n+1} $ elements),
and $ B_n \subset B_{n+1} $. The union
\[
B = B_1 \cup B_2 \cup\cdots
\]
is an infinite noise-type Boolean algebra. As a Boolean algebra, $B$
is isomorphic to the finite/cofinite Boolean algebra, that is, the
algebra of all finite subsets of $ \{1,2,\ldots\} $ and their
complements; $ x_n \in B $ corresponds to the cofinite set
$\{n,n+1,\ldots\}$, while $ y_n \in B $ corresponds to the
single-element set $\{n\}$. The first chaos space $ H^{(1)}(B) =
H^{(1)}(B_1) \cap H^{(1)}(B_2) \cap\cdots$ consists of linear
combinations
\[
c_1 \xi_1 \xi_2 + c_2
\xi_2 \xi_3 + c_3 \xi_3
\xi_4 + \cdots
\]
for all $ c_1,c_2,\ldots\in\R$ such that $ c_1^2 + c_2^2 + \cdots<
\infty$. It is not $ \{0\} $, which shows that $ B $ is not black. On
the other hand, all elements of $ H^{(1)}(B) $ are invariant under the
measure preserving transformation $ (s_1,s_2,\ldots) \mapsto
(-s_1,-s_2,\ldots) $; therefore $ \sigma(H^{(1)}(B)) $ is not the
whole $
1_\La$, which shows that $ B $ is not classical.

The complement $ x'_n $ of $ x_n $ in $ B $ is $ y_1 \vee\cdots\vee
y_{n-1} = \sigma( \xi_1 \xi_2, \xi_2 \xi_3, \ldots, \xi_{n-1} \xi
_n )
$. Clearly, $ x_n \downarrow0_\La$ (i.e., $ \inf_n x_n = 0_\La
$). Strangely, the relation $ x'_n \uparrow1_\La$ fails; $ x'_n
\uparrow\sup_n y_n = \sigma( \xi_1 \xi_2, \xi_2 \xi_3, \ldots)
\ne
1_\La$. ``The phenomenon \ldots tripped up even Kolmogorov and
Wiener'' \cite{Wi}, Section~4.12.

This example goes back to an unpublished dissertation of Vershik
\cite{Ve73}. According to Emery and Schachermayer (\cite{ES}, page~291), it
is a paradigmatic example,
well known in ergodic theory, independently discovered by several
authors. See also \cite{Wi}, Section~4.12, \cite{Ts04}, Section~1b.

\subsection{On Feldman's question}
\label{1c}

%
\begin{theorem}\label{1c1}
If a noise-type Boolean algebra is complete, then it is classical.
\end{theorem}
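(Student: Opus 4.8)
The plan is to reduce the statement to a structural dichotomy and then to dispose of the hard ``black'' case. I would first record the commuting-square structure that a noise-type Boolean algebra carries, independently of completeness. For $x\in B$ one has $H=H_x\otimes H_{x'}$ (independence together with $x\vee x'=1_\La$), and for $x,y\in B$ I would pass to the finite subalgebra they generate: its atoms $x\wedge y,\ x\wedge y',\ x'\wedge y,\ x'\wedge y'$ lie in $B$, are pairwise disjoint, hence pairwise --- and in fact jointly --- independent, and by distributivity $x=(x\wedge y)\vee(x\wedge y')$, $y=(x\wedge y)\vee(x'\wedge y)$. In the resulting tensor factorization $\cE{\cdot}{x}$ and $\cE{\cdot}{y}$ become projections onto different groups of factors, so they commute and $\cE{\cE{f}{x}}{y}=\cE{f}{x\wedge y}$; in particular $H_x\cap H_y=H_{x\wedge y}$. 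Two consequences I would extract: disjoint elements of $B$ are independent (so separability of $H$ forces $B$ to satisfy the countable chain condition), and, since conditional expectations converge along monotone nets, completeness lets me define for each $f$ the support $s(f)=\inf\{x\in B:\cE{f}{x}=f\}\in B$, with $f\in H_{s(f)}$.

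Next I would isolate the classical part inside $B$. Let $x_{\mathrm{cl}}=\si(H^{(1)}(B))$. Using supports, $x_{\mathrm{cl}}=\sup\{s(f):f\in H^{(1)}(B)\}$, and by completeness this supremum lies in $B$; thus $x_{\mathrm{cl}}\in B$. Being complemented, $x_{\mathrm{cl}}$ splits $B$ as the product $B{\restriction}x_{\mathrm{cl}}\times B{\restriction}x'_{\mathrm{cl}}$ (via $x\mapsto(x\wedge x_{\mathrm{cl}},x\wedge x'_{\mathrm{cl}})$, distributivity giving $x=(x\wedge x_{\mathrm{cl}})\vee(x\wedge x'_{\mathrm{cl}})$), and correspondingly $H=H_{x_{\mathrm{cl}}}\otimes H_{x'_{\mathrm{cl}}}$. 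A short check shows every first-chaos vector of the factor $B{\restriction}x'_{\mathrm{cl}}$ is again a first-chaos vector of $B$: for $x\in B$ the cut $(x,x')$ restricts on the $x'_{\mathrm{cl}}$-factor to a cut of that factor, the $x_{\mathrm{cl}}$-part being independent and averaged away. Hence $H^{(1)}(B{\restriction}x'_{\mathrm{cl}})\subset H^{(1)}(B)\subset H_{x_{\mathrm{cl}}}$, so it lies in $H_{x_{\mathrm{cl}}}\cap H_{x'_{\mathrm{cl}}}=H_{0_\La}=\C$ and, being centred, is $\{0\}$. Therefore, if $B$ is not classical, i.e.\ $x'_{\mathrm{cl}}\ne0_\La$, the factor $B{\restriction}x'_{\mathrm{cl}}$ is a nontrivial, complete, \emph{black} noise-type Boolean algebra, and everything comes down to excluding such an object.

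So the crux is: a complete black noise-type Boolean algebra with $0_\La\ne1_\La$ does not exist. I would first note two easy reductions. Every atom $a$ of $B$ satisfies $H_a\ominus\C\subset H^{(1)}(B)=\{0\}$ (an atom sits entirely inside one side of every cut), so $a=0_\La$; thus $B$ is atomless, and by the previous paragraph it is also ccc. To reach a contradiction I would fix a centred unit vector $f$ (with $s(f)=1_\La$, after restricting to its support) and follow its chaos decomposition $f=\sum_S f_S$ along the net of finite subalgebras, where $S$ ranges over subsets of the atoms of the subalgebra; refining a partition moves mass from a component of size $k$ to sizes $k$ and $k+1$, so the first-chaos energy $\sum_a\|\cE{f}{a}\|^2$ decreases to $\|P^{(1)}f\|^2=0$, where $P^{(1)}$ is the projection onto $H^{(1)}(B)$. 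Completeness supplies, for every increasing or decreasing family of finite subalgebras, a limiting \sif\ \emph{inside} $B$, which is what lets these decompositions be assembled into a genuine spectral measure $\nu_f$ of total mass $1$ living on configurations in the Stone space of $B$. The task is to show that order-continuity forces $\nu_f$ to charge singletons, for a singleton-supported component is exactly a first-chaos vector, contradicting blackness.

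This last step is the main obstacle, and it is the real content of Feldman's question. The difficulty is that all the bookkeeping available along a single filtration is consistent with blackness: along a maximal chain $\cE{f}{\cdot}$ is merely a continuous $L_2$-martingale of total energy $1$, exactly as in a classical second-chaos example, so no one-sided (martingale) argument can separate the two cases. The separation must come from the \emph{two-sided} structure --- the independence of the complementary halves $x$ and $x'$ for \emph{all} $x$ simultaneously --- fed through the order-continuity that completeness provides; equivalently, one shows that a spectral measure assigning singletons zero mass cannot be order-continuous on a complete atomless $B$. I expect this to require a genuinely new continuity/compactness estimate for $\nu_f$ rather than any routine manipulation, and it is here that the bulk of the work lies; the reductions above are comparatively soft.
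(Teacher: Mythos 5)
Your reductions are mostly sound (the commuting tensor-product structure, the countable chain condition, the supports $s(f)\in B$ supplied by completeness, the splitting-off of a black factor), but they are exactly what you call them: soft. The theorem is not proved, because the statement you reduce it to --- \emph{a nontrivial complete black noise-type Boolean algebra does not exist} --- carries essentially the full strength of Feldman's question, and you explicitly stop there, conceding that a ``genuinely new continuity/compactness estimate'' is needed. That estimate is the entire content of the paper's Section~\ref{sec:7}; no argument is offered for the one step on which everything depends, so this is a genuine gap rather than a stylistic difference. There is also a smaller soft spot in the reduction itself: you need $x_{\mathrm{cl}}$ to be an element of $B$ with $H^{(1)}(B)\subset H_{x_{\mathrm{cl}}}$ \emph{and} $x_{\mathrm{cl}}\ne1_\La$ whenever $B$ is not classical. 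Completeness does give $\inf\{x\in B:H^{(1)}(B)\subset H_x\}\in B$, but it is not clear that this infimum differs from $1_\La$ when $\si(H^{(1)}(B))\ne1_\La$: the smallest element of $B$ whose $L_2$-space contains $H^{(1)}(B)$ may be strictly larger than $\si(H^{(1)}(B))$, and your asserted identity $x_{\mathrm{cl}}=\sup\{s(f):f\in H^{(1)}(B)\}=\si(H^{(1)}(B))$ is only proved in one direction.

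For comparison, the paper does not argue by excluding a black summand. It proves Theorem~\ref{1c2}: completeness gives condition (c), the order-continuity $(\sup_n x_n)\vee(\inf_n x'_n)=1_\La$ (via the closure $\Cl(B)$ and the completion $C$ of Sections~\ref{sec:4} and~\ref{sec:6}), and then the hard implication (c)$\Rightarrow$(a) occupies Section~\ref{sec:7}. There the commutative von Neumann algebra generated by the projections $Q_x$, $x\in\Cl(B)$, is represented on a measure class space $(S,\Si,\M)$; a counting function $K=\sup_b K_b$ is defined on $S$; classicality is shown equivalent to $K<\infty$ almost everywhere (Theorem~\ref{7d2}, which identifies $H(E_1)$ with $H^{(1)}$ and propagates $\si(H^{(k)})\subset\si(H^{(1)})$ by induction on $k$); and finally a \emph{random} increasing sequence $Y_n=X_1\vee\dots\vee X_n$, whose atoms are included independently with probabilities $p_n$ satisfying $\sum_n p_n<1$ and $(1-p_n)^{c_n}\to0$, shows that $K=\infty$ on a non-null set would violate condition (c) with positive probability (Propositions~\ref{7a6} and~\ref{7e1}). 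Your ``spectral measure $\nu_f$ on configurations in the Stone space'' gestures toward this picture, but none of it is carried out; the missing step in your outline is precisely that probabilistic spectral argument.
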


%
\begin{theorem}\label{1c2}
The following conditions on a noise-type Boolean algebra $ B $ are
equivalent:
\begin{longlist}[(a)]
\item[(a)] $ B $ is classical;

\item[(b)] there exists a complete noise-type Boolean algebra $ \hat
B $ such
that $ B \subset\hat B $;

\item[(c)] $ ( \sup_n x_n ) \vee( \inf_n x'_n ) = 1_\La$ for all
$ x_n
\in B $ such that $ x_1 \le x_2 \le\cdots$.\vadjust{\goodbreak}
\end{longlist}
\end{theorem}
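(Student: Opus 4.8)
The plan is to prove the cyclic chain of implications (a) $\Rightarrow$ (c) $\Rightarrow$ (b) $\Rightarrow$ (a), using Theorem \ref{1c1} as a black box for the hard direction embedded in (b) $\Rightarrow$ (a).

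\medskip

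First I would establish (b) $\Rightarrow$ (a). Suppose $B \subset \hat B$ with $\hat B$ a complete noise-type Boolean algebra. By Theorem \ref{1c1}, $\hat B$ is classical, so its first chaos space $H^{(1)}(\hat B)$ generates $1_\La = \F$. The key observation is that $B \subset \hat B$ forces $H^{(1)}(\hat B) \subset H^{(1)}(B)$: indeed $H^{(1)}(B)$ is defined by the decomposition condition $f = \cE f x + \cE{f}{x'}$ for all $x \in B$, and since $B$ imposes \emph{fewer} constraints than $\hat B$, the first chaos of the larger algebra sits inside that of the smaller one. Hence $H^{(1)}(B) \supset H^{(1)}(\hat B)$ also generates $\F$, so $B$ is classical. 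I would need to check carefully that the complement $x'$ taken in $B$ agrees with the complement taken in $\hat B$; this holds because complements in a noise-type Boolean algebra are unique, and $x'$ (computed in $B$) still satisfies $x \wedge x' = 0_\La$, $x \vee x' = 1_\La$ inside $\hat B$.

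\medskip

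Next, (a) $\Rightarrow$ (c). Assume $B$ classical and take $x_1 \le x_2 \le \dots$ in $B$. Write $u = \sup_n x_n$ and $v = \inf_n x'_n$; since $x_n' \ge x_{n+1}'$, we have $x_n' \downarrow v$. The goal is $u \vee v = 1_\La$, equivalently that $\si(u,v)$ contains a generating set of $\F$. Since $B$ is classical, it suffices to show every element of the first chaos $H^{(1)}(B)$ is \measurable{\si(u,v)}. For fixed $n$, any $f \in H^{(1)}(B)$ splits as $f = \cE f{x_n} + \cE f{x_n'}$, with the two summands orthogonal and living in $H_{x_n}$, $H_{x_n'}$ respectively. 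Letting $n \to \infty$, martingale convergence gives $\cE f{x_n} \to \cE f u$ and $\cE f{x_n'} \to \cE f v$ (the latter being a reverse martingale along the decreasing sequence $x_n' \downarrow v$). Passing to the limit yields $f = \cE f u + \cE f v$, so $f$ is measurable with respect to $\si(u,v)$. As such $f$ generate $\F$, we conclude $u \vee v = 1_\La$.

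\medskip

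Finally, (c) $\Rightarrow$ (b), which I expect to be the main obstacle and the technical heart of the proof. Here one must actually construct the completion $\hat B$. The natural candidate is to close $B$ under the lattice operations $\sup$ and $\inf$ taken in the ambient complete lattice $\La$, and then verify that the resulting object is again a noise-type Boolean algebra, i.e.\ that it is a distributive sublattice in which every element is complemented and each element is independent of its complement. Condition (c) is precisely what guarantees that the suprema and infima produced by monotone limits do not ``lose mass'': it forces $\sup_n x_n$ and $\inf_n x_n'$ to be genuine complements of each other. The difficulty is that arbitrary (not merely monotone) suprema and infima must be controlled; I would first treat monotone sequences using (c) and the independence/orthogonality structure, then bootstrap to general families via a transfinite or directed-limit argument, checking at each stage that independence of an element and its complement is preserved under the limit — this preservation of independence through infinite lattice operations is where the argument is most delicate, and where condition (c) must be invoked repeatedly to rule out the nonclassical pathology exhibited by the example of Section \ref{1b}.
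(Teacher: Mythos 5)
Your implication (a) $\Rightarrow$ (c) is correct, and in fact more direct than the paper's route: the paper obtains (c) from (b) via Proposition \ref{6b1}, whereas your martingale argument ($Q_{x_n}f \to Q_u f$, $Q_{x'_n}f \to Q_v f$, hence every first-chaos vector is measurable w.r.t.\ $u \vee v$, which therefore equals $1_\La$ by classicality) works immediately. The problems are elsewhere. First, your step (b) $\Rightarrow$ (a) is circular in the context of this paper: Theorem \ref{1c1} is not proved independently --- it is obtained at the very end of Section \ref{sec:7} as an immediate corollary of Theorem \ref{1c2} (take $\hat B = B$ in (b)). Invoking it as a black box therefore assumes exactly the hard direction you are supposed to establish. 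The genuine content of the theorem --- that condition (c) (equivalently, completeness) forces classicality, i.e.\ Feldman's question --- occupies all of Section \ref{sec:7}: one builds the spectral space $(S,\Si,\M)$ and the counting function $K$, proves that classicality amounts to $K<\infty$ almost everywhere (Theorem \ref{7d2}), and then uses a probabilistic random-supremum argument to show that (c) forces $K<\infty$. None of this is present in, or replaceable by, your outline.

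Second, your (c) $\Rightarrow$ (b) is a plan rather than a proof, and the plan as stated runs into the very pathology the paper is organized around. Closing $B$ under monotone limits in $\La$ gives $\Cl(B)$, which in general is not even a lattice (see the Question after Proposition \ref{1d4}) and whose elements need not be complemented; condition (c), via Proposition \ref{6b1}, only supplies complements for increasing limits of elements of $B$ itself, while $\Cl(B)$ is produced by iterating monotone limits, so the bootstrap you allude to is precisely what is missing. Tellingly, the paper never proves (c) $\Rightarrow$ (b) directly: it proves (c) $\Rightarrow$ (a) by the spectral argument and then (a) $\Rightarrow$ (b) in Section \ref{6a}, where classicality is used essentially (the $\Up$/$\Down$ correspondence between projections on the first chaos space and \sif s). So the two steps into which you have split the difficulty are each, on their own, as hard as the theorem, and neither is carried out.
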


See also Theorem \ref{7d2} for another important condition of classicality.

\subsection{On completion}
\label{1d}

Bad news: a noise-type Boolean algebra cannot be extended to a
complete one unless it is classical. (See Theorem \ref{1c2}. True,
every Boolean algebra admits a completion \cite{Ha}, Section~21, but not
within~$\La$.)

Good news: an appropriate notion of completion exists and is described
below (Definition \ref{1d3}).

The lower limit
\[
\liminf_n x_n = \sup_n
\inf_k x_{n+k}
\]
is well defined for arbitrary $ x_1,x_2,\ldots\in\La$. (The upper
limit is defined similarly.)

%
\begin{theorem}\label{1d1}
Let $ B $ be a noise-type Boolean algebra and
\[
\Cl(B) = \Bigl\{ \liminf_n x_n\dvtx x_1,x_2,\ldots\in B \Bigr\}
\]
(the set of lower limits of all sequences of elements of $ B $). Then:
\begin{longlist}[(a)]
\item[(a)] $ (\inf_n x_n) \in\Cl(B) $ whenever $ x_1,x_2,\ldots
\in\Cl(B) $;

\item[(b)] $ (\sup_n x_n) \in\Cl(B) $ whenever $ x_1,x_2,\ldots
\in\Cl(B) $, $
x_1 \le x_2 \le\cdots$.
\end{longlist}
\end{theorem}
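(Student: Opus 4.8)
The plan is to linearize $\La$ through conditional expectations and then recognize $\Cl(B)$ as an honest $\sigma$\nobreakdash-algebra, inside which closure under countable infima and increasing suprema is automatic. Write $P_x=\cE{\cdot}{x}$ for the orthogonal projection onto $H_x$. Two soft facts will be used throughout: infima in $\La$ are intersections, so $H_{\inf_i x^{(i)}}=\bigcap_i H_{x^{(i)}}$ and $P_{\inf_i x^{(i)}}$ is the strong limit of the decreasing products $P_{x^{(1)}}\cdots P_{x^{(k)}}$; and for an increasing sequence $x^{(1)}\le x^{(2)}\le\cdots$ one has $H_{\sup_i x^{(i)}}=\overline{\bigcup_i H_{x^{(i)}}}$, whence $P_{x^{(i)}}\uparrow P_{\sup_i x^{(i)}}$ strongly (martingale convergence in $L_2$). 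In particular, for $x=\liminf_n x_n$ with $x_n\in B$, the projection $P_x$ is a strong limit of products of the $P_{x_n}$.

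The crucial structural input is commutativity: for $x,y\in B$ I claim $P_xP_y=P_{x\wedge y}=P_yP_x$. Using the complement $x'$ and independence (Definition \ref{1a1}), $H=H_x\otimes H_{x'}$; writing $y=(y\wedge x)\vee(y\wedge x')$ with $y\wedge x\le x$ and $y\wedge x'\le x'$ independent, one gets $P_y=P_{y\wedge x}\otimes P_{y\wedge x'}$ while $P_x=\mathrm{id}\otimes P_{0_\La}$, where $P_{0_\La}$ projects onto the constants; both composites collapse to $P_{y\wedge x}\otimes P_{0_\La}=P_{x\wedge y}$. Hence $\M:=\{P_x:x\in B\}''$ is a commutative von Neumann algebra, and by the first paragraph $P_z\in\M$ for every $z\in\Cl(B)$. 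This commutativity is exactly where Definition \ref{1a1} enters, and I expect it to be the main obstacle: for a general distributive sublattice the projections need not commute, the dictionary below breaks, and the statement is in fact false.

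By the spectral theorem $\M\cong L^\infty(\Theta,\nu)$, and every projection of $\M$ is multiplication by an indicator $\One_{S}$ of a measurable $S\subset\Theta$; write $S_x$ for the set attached to $P_x$. The facts above now translate the lattice operations into set operations: a countable infimum maps to $\bigcap_i S_{x^{(i)}}$ (product of commuting projections), and an increasing supremum maps to $\bigcup_i S_{x^{(i)}}$ (strong limit of an increasing family). Let $\mathcal G=\{S_t:t\in B\}$; since $B$ is a Boolean algebra, $\mathcal G$ is a Boolean subalgebra of the measure algebra of $(\Theta,\nu)$, and it generates the $\sigma$\nobreakdash-algebra of that algebra because $\{P_t\}$ generates $\M$.

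The key claim is that $x\mapsto S_x$ carries $\Cl(B)$ bijectively onto $\sigma(\mathcal G)$. The forward inclusion is $S_{\liminf_n t_n}=\liminf_n S_{t_n}=\bigcup_N\bigcap_{n\ge N}S_{t_n}\in\sigma(\mathcal G)$. For surjectivity, given $S\in\sigma(\mathcal G)$ I approximate it in $L^1(\nu)$ by members of the dense Boolean algebra $\mathcal G$, extract a subsequence $S_{t_m}$ with $\One_{S_{t_m}}\to\One_S$ almost everywhere, and note $S=\liminf_m S_{t_m}=S_{\liminf_m t_m}$ with $\liminf_m t_m\in\Cl(B)$. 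Granting this, $\Cl(B)$ is order\nobreakdash-isomorphic to the $\sigma$\nobreakdash-algebra $\sigma(\mathcal G)$: statement (a) is its closure under countable intersections and statement (b) is its closure under increasing countable unions, both trivial, and the dictionary transports them back to $\La$. The routine verifications that remain are the martingale and reverse\nobreakdash-martingale convergence statements, the $L^1$\nobreakdash-density of $\mathcal G$ (here separability of $H$ keeps $(\Theta,\nu)$ standard), and that the extracted $\liminf$ genuinely realizes the intersection, respectively union, produced in (a) and (b).
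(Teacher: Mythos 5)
Your reduction to commutativity and to the spectral picture is sound up to and including the assertion that $P_z\in\M$ for every $z\in\Cl(B)$ and that countable infima and increasing suprema in $\La$ turn into intersections and increasing unions of the sets $S_x$; this part reproduces, in substance, \eqref{4a1}, Lemma \ref{3d1} and the mechanism of Lemma \ref{3b6}. The argument breaks at the ``key claim''. The map $t\mapsto S_t$ is \emph{not} a Boolean homomorphism of $B$ into the measure algebra of $(\Theta,\nu)$: it preserves meets ($S_{t_1}\cap S_{t_2}=S_{t_1\wedge t_2}$) but neither joins nor complements. Indeed $S_{0_\La}$ is the support of the projection onto the constants, a set of strictly positive measure contained in \emph{every} $S_t$; hence $S_t\cap S_{t'}=S_{0_\La}\ne\emptyset$ and $S_t\cup S_{t'}\subsetneq S_{t\vee t'}=\Theta$ in general (the paper records exactly this right after \eqref{7b3}). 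Consequently $\mathcal G=\{S_t:t\in B\}$ is not a Boolean subalgebra of the measure algebra, it is not $L^1$-dense in $\sigma(\mathcal G)$ (no set $A$ disjoint from $S_{0_\La}$ can be approximated, since $\nu(S_t\mathbin{\triangle}A)\ge\nu(S_{0_\La})>0$ for all $t$), and the map $\Cl(B)\to\sigma(\mathcal G)$, $x\mapsto S_x$, while injective, is far from surjective: $\Theta\setminus S_{0_\La}$ lies in $\sigma(\mathcal G)$ but equals no $S_z$ with $z\in\La$. Since your proof of (a) and (b) consists precisely in pulling $\bigcap_n S_{x_n}$, resp.\ $\bigcup_n S_{x_n}$, back through this alleged bijection, this is a genuine gap, not a routine verification.

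What is true --- and is what the paper proves in Lemma \ref{3b6} and Proposition \ref{3b7} --- is that $\{S_z:z\in\Cl(B)\}$ is the closure of $\mathcal G$ in the topology of convergence in measure (equivalently, $\Cl(B)$ is the closure of $B$ in the strong operator topology on $\La$), not the generated $\sigma$\nobreakdash-algebra. With that identification the theorem follows as in Sect.~\ref{4a}: $\Cl(B)$ is closed under finite meets by Proposition \ref{3b8}, a countable infimum is the decreasing limit of the finite meets $x_1\wedge\dots\wedge x_n$, an increasing supremum is the limit of its terms, monotone limits are topological limits by \eqref{3a1}, and a topological closure is topologically closed. If you replace ``$\sigma(\mathcal G)$'' by ``closure of $\mathcal G$ in measure'' and verify closedness of that class under the two monotone operations directly (which is where the subsequence extraction you sketch genuinely belongs), your argument becomes essentially the paper's.
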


Thus, we add to $ B $ limits of all monotone sequences, iterate this
operation until stabilization and get $ \Cl(B) $, call it the closure
of $ B $. (It is not a noise-type Boolean algebra, unless $ B $ is
classical.)

%
\begin{theorem}\label{1d2}
Let $ B $ and $ \Cl(B) $ be as in Theorem \ref{1d1}, and
\[
C = \bigl\{ x \in\Cl(B)\dvtx\exists y \in\Cl(B) \> x \wedge y =
0_\La,
x \vee y = 1_\La\bigr\}
\]
[the set of all complemented elements of $\Cl(B)$]. Then
\begin{longlist}[(a)]
\item[(a)] $ C $ is a noise-type Boolean algebra such that $ B
\subset C
\subset\Cl(B) $;

\item[(b)] $ C $ contains every noise-type Boolean algebra $ C_1 $ satisfying
$ B \subset C_1 \subset\Cl(B) $.
\end{longlist}
\end{theorem}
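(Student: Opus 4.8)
The plan is to prove Theorem \ref{1d2} in two stages, corresponding to its two parts. For part (a), I need to verify that $C$, defined as the set of complemented elements of $\Cl(B)$, is a noise-type Boolean algebra. The key structural fact I would exploit is Theorem \ref{1d1}: $\Cl(B)$ is closed under arbitrary infima and under monotone (increasing) suprema. From these closure properties I can deduce that $\Cl(B)$ is a lattice — it is trivially closed under finite infima, and for finite suprema $x \vee y$ one writes the join as an increasing limit or argues directly, keeping everything inside $\Cl(B)$. The real content of (a) is showing $C$ is a \emph{distributive} sublattice in which every element is complemented, together with the independence requirement from Definition \ref{1a1}.

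Carrying this out, I would first check that $C$ is closed under the lattice operations. If $x_1, x_2 \in C$ with complements $y_1, y_2 \in \Cl(B)$, I expect $x_1 \wedge x_2$ to have complement $y_1 \vee y_2$ and $x_1 \vee x_2$ to have complement $y_1 \wedge y_2$; the infima $y_1 \wedge y_2$ lie in $\Cl(B)$ by Theorem \ref{1d1}(a), and one must verify the join $y_1 \vee y_2 \in \Cl(B)$ and that the complementation identities $x \wedge y = 0_\La$, $x \vee y = 1_\La$ actually hold in $\La$. This is where independence enters crucially: for complemented elements the independence of $x$ and its complement $x'$ lets me control $\si(x,y)$ and verify that the candidate complements are genuine. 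Distributivity of $C$ I would obtain either by inheriting it as a Boolean algebra of complemented elements, or by a direct computation using independence to reduce joins and meets to manageable form. The independence condition itself — that each $x \in C$ is independent of its complement — I would establish by approximating $x = \liminf_n x_n$ and its complement by elements of $B$, where independence is known, and passing to the limit; conditional expectations and the $L_2$ convergence associated with monotone limits of \sif s should make this limiting argument go through.

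For part (b), I would take an arbitrary noise-type Boolean algebra $C_1$ with $B \subset C_1 \subset \Cl(B)$ and show $C_1 \subset C$. Since every element of $C_1$ is complemented \emph{in $C_1$} (by the definition of noise-type Boolean algebra) and $C_1 \subset \Cl(B)$, each $x \in C_1$ has a complement $y \in C_1 \subset \Cl(B)$, so $x$ is complemented in $\Cl(B)$, i.e.\ $x \in C$. This direction is essentially immediate once the definitions are unwound, so the weight of the theorem rests on part (a).

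\textbf{The hard part} will be the independence verification in part (a): showing that an element $x = \liminf_n x_n \in C$ and its $\Cl(B)$-complement are independent \sif s. Independence is a genuinely measure-theoretic (not merely lattice-theoretic) condition, and $\liminf$ is not continuous in any naive sense, so the limiting argument needs care — in particular, a complemented element of $\Cl(B)$ need not be a monotone limit of complemented elements of $B$, and one cannot simply invoke independence termwise. I anticipate the resolution is to use the complementation relation to force enough rigidity: if $x$ and $y$ are complementary in $\Cl(B)$, then $x \vee y = 1_\La$ with $x \wedge y = 0_\La$ should, via the independence already available in $B$ and a second-moment / orthogonality computation on $H = L_2(\Om,\F,P)$, pin down $x$ and $y$ as independent. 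Converting the lattice identities into the factorization $P(X \cap Y) = P(X)P(Y)$ is the step I expect to require the most work.
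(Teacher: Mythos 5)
Part (b) of your proposal is correct and is exactly the paper's argument. Part (a), however, has a genuine gap, and you have also misplaced where the difficulty lies.

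The gap: you assert that $\Cl(B)$ is a lattice --- ``for finite suprema $x \vee y$ one writes the join as an increasing limit or argues directly, keeping everything inside $\Cl(B)$.'' This does not work. Theorem \ref{1d1} gives closure under arbitrary infima and under \emph{monotone} suprema only; a join $y_1 \vee y_2$ of two incomparable elements is not a monotone limit of elements of $\Cl(B)$ in any evident way, and in fact the paper explicitly poses ``is $\Cl(B)$ a lattice?'' as an open question for nonclassical $B$. The paper only proves the weaker statement it actually needs (Lemma \ref{4b4}): $x \vee y \in \Cl(B)$ when $x$ is \emph{complemented}. This is done via a continuity lemma (Lemma \ref{4b3}): for $x \in C$ the map $y \mapsto x \vee y$ is continuous on $\Cl(B)$, which rests on the product decomposition $\La_{x,x'} \cong \La_x \times \La_{x'}$ of Theorem \ref{3d2} and the identity $x \vee y = x \vee (x' \wedge y)$ --- all of which uses the complement $x'$ in an essential way. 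Without some substitute for this step your verification that $y_1 \vee y_2 \in \Cl(B)$, and likewise $x_1 \vee x_2 \in \Cl(B)$, does not go through. A related omission: since $\La$ is not distributive, checking that $(x_1 \vee x_2) \wedge (y_1 \wedge y_2) = 0_\La$ and $(x_1 \vee x_2) \vee (y_1 \wedge y_2) = 1_\La$ is not a formal Boolean computation; the paper does it by ``restricting'' to $x$ and $x'$ via the isomorphism $\La_{x,x'} \to \La_x \times \La_{x'}$ (Remark \ref{3d4}), again relying on $\Cl(B) \subset \La_{x,x'}$.

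Conversely, the step you flag as the hard part --- independence of $x$ and its complement --- is essentially free in the paper's framework and needs no $\liminf$ approximation. Since $\Cl(B)$ is the closure of the commutative set $B$, it is itself commutative (\eqref{4a2}, \eqref{3b4}, \eqref{4a5}), and Proposition \ref{3c1} says that two commuting \sif s with meet $0_\La$ are automatically independent; hence $x \wedge y = 0_\La$ already implies independence for $x,y \in \Cl(B)$ (\eqref{4a8}). Your instinct that a termwise limiting argument would be delicate is right, which is precisely why the paper avoids it. You should also note that well-definedness of the complement operation on $C$ requires uniqueness of complements in $\Cl(B)$ (Corollary \ref{4a12}), which again comes from Proposition \ref{4a11}, i.e.\ from the embedding $\Cl(B) \subset \La_{x,y}$ for complementary pairs.
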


%
\begin{definition}\label{1d3}
The noise-type Boolean algebra $ C $ of Theorem \ref{1d2} is called
the \emph{noise-type completion} of a noise-type Boolean algebra $ B
$.
\end{definition}

%
\begin{example}\label{1d35}
Let $B$, $y_n$ and $\xi_n$ be as in Section~\ref{1b}. Then $ \Cl(B)
\setminus B $ consists of $\sigma$-fields of the form $ \sup_{n\in I}
y_n =
\sigma
( \{ \xi_n \xi_{n+1}\dvtx n \in I \} ) $ where $I$ runs over all
infinite subsets of $ \{ 1,2,\ldots\} $. The noise-type completion of
$B$ is $B$ itself.
\end{example}

If two noise-type Boolean algebras have the same closure, then clearly
they have the same completion.\vadjust{\goodbreak}

%
\begin{proposition}\label{1d4}
If two noise-type Boolean algebras have the same closure, then they
have the same first chaos space.
\end{proposition}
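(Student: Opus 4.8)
The plan is to establish the stronger fact that $H^{(1)}(B)$ depends only on $\Cl(B)$. By Theorem~\ref{1d2} the noise-type completion $C$ is recovered from $\Cl(B)$ alone, being the set of complemented elements of $\Cl(B)$; so if I can show $H^{(1)}(B)=H^{(1)}(C)$, then two noise-type Boolean algebras with a common closure have the same completion and hence the same first chaos space. Since $B\subset C$, the inclusion $H^{(1)}(C)\subset H^{(1)}(B)$ is immediate, and everything reduces to the following claim: for $f\in H^{(1)}(B)$ and any $x\in C$ with complement $x'\in C$, one has $f=\cE{f}{x}+\cE{f}{x'}$.

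To prove this I would pass to the scalar function $g(x)=\|\cE{f}{x}\|^2$, which is defined for every $x\in\La$. Two soft properties of $g$ are needed. It is monotone, and it is continuous along monotone sequences: if $x_n\uparrow x$ or $x_n\downarrow x$ in $\La$ then $\cE{f}{x_n}\to\cE{f}{x}$ in $L_2$ by forward, respectively reverse, martingale convergence, whence $g(x_n)\to g(x)$. Putting $x=1_\La$ in the defining relation shows $\Ex f=0$, so $\cE{f}{x}$ and $\cE{f}{x'}$ are mean-zero; as $x,x'$ are independent (because $C$ is a noise-type Boolean algebra, Theorem~\ref{1d2}(a)) they are moreover orthogonal, so $g(x)+g(x')=\|\cE{f}{x}+\cE{f}{x'}\|^2$. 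Since $\cE{f}{x}+\cE{f}{x'}$ is an orthogonal projection of $f$, this quantity is at most $\|f\|^2=g(1_\La)$, with equality exactly when $f=\cE{f}{x}+\cE{f}{x'}$. Thus it suffices to prove the reverse inequality $g(x)+g(x')\ge g(1_\La)$.

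The key ingredient is that $g$ is submodular on $B$: for $y,z\in B$ one has $g(y)+g(z)\ge g(y\vee z)$. I would deduce this from the basic structural fact that any two (indeed finitely many) elements of a noise-type Boolean algebra generate a finite subalgebra whose atoms $p_1,\dots,p_r$ are independent with $p_1\vee\dots\vee p_r=1_\La$. Since $f\in H^{(1)}(B)$ satisfies the chaos relation for every element of this subalgebra, it lies in its first chaos, i.e. $f=\sum_i\cE{f}{p_i}$ with mutually orthogonal summands; hence $\cE{f}{q}=\sum_{i\in S}\cE{f}{p_i}$ for each join $q=\bigvee_{i\in S}p_i$, and $g(q)=\sum_{i\in S}\|\cE{f}{p_i}\|^2$. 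So $g$ is a finitely additive measure on the atoms, and submodularity follows by inclusion-exclusion, $g(y)+g(z)=g(y\vee z)+g(y\wedge z)\ge g(y\vee z)$.

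Finally I would pass to the limit. Write $x=\liminf_n a_n$ and $x'=\liminf_n b_n$ with $a_n,b_n\in B$, and set $c_m=\inf_{k\ge m}a_k$, $c_m'=\inf_{k\ge m}b_k$, so that $c_m\uparrow x$ and $c_m'\uparrow x'$. For fixed $m$ these are decreasing limits $c_m=\lim_N d_N$, $c_m'=\lim_N e_N$ of elements $d_N=a_m\wedge\dots\wedge a_N$, $e_N=b_m\wedge\dots\wedge b_N$ of $B$, so continuity of $g$, submodularity on $B$, and monotonicity give
\[
g(c_m)+g(c_m')=\lim_N\big(g(d_N)+g(e_N)\big)\ge\lim_N g(d_N\vee e_N)\ge g(c_m\vee c_m') ,
\]
the last step using $d_N\vee e_N\ge c_m\vee c_m'$. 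Since $c_m\vee c_m'\uparrow x\vee x'=1_\La$, letting $m\to\infty$ and invoking continuity of $g$ once more yields $g(x)+g(x')\ge g(1_\La)$, which is the desired inequality. The step I expect to be the real obstacle is the submodularity of $g$ on $B$, that is, correctly extracting the additive, measure-like behaviour of $g$ from the independence of the atoms of finite subalgebras; once that is secured, the remaining work is a careful double monotone passage to the limit (approximating $x,x'$ from below by $c_m,c_m'$, and these in turn from above inside $B$ by $d_N,e_N$), which is routine martingale-convergence bookkeeping.
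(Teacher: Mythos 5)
Your proof is correct, but it reaches the key step by a genuinely different route than the paper. Both arguments make the same initial reduction (it suffices to show $H^{(1)}(B)=H^{(1)}(C)$, since the completion $C$ is determined by $\Cl(B)$, and $H^{(1)}(C)\subset H^{(1)}(B)$ is trivial), and your ``modularity on $B$'' is exactly the paper's Lemma \ref{5b1}(c), obtained there by a two-line projection computation rather than via atoms of finite subalgebras. The divergence is in how the identity is pushed from $B$ to $C$. The paper extends the vector identity $Q_{x\vee y}f+Q_{x\wedge y}f=Q_xf+Q_yf$ to all $x,y\in C$ by separate continuity of the lattice operations on $\Cl(B)$; the delicate ingredient is Lemma \ref{4b3} (continuity of $y\mapsto x\vee y$ on $\Cl(B)$ for complemented $x$), which rests on the product structure $\La_{x,x'}\cong\La_x\times\La_{x'}$ of Theorem \ref{3d2}. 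You instead pass to the scalar valuation $g(\cdot)=\|Q_\cdot f\|^2$ and prove the single inequality $g(x)+g(x')\ge g(1_\La)$ by a squeeze: monotonicity and monotone-sequence continuity of $g$ (i.e.\ \eqref{2b4}), modularity on $B$, and the observation $d_N\vee e_N\ge c_m\vee c_m'$ together with $\sup_m(c_m\vee c_m')=x\vee x'$ let you sidestep the failure of continuity of $\vee$ entirely; the reverse inequality and the upgrade from norm equality to the vector identity $f=Q_xf+Q_{x'}f$ come from the orthogonality of $Q_xf$ and $Q_{x'}f$ (independence of $x,x'$ plus $\Ex f=0$) and the fact that $Q_xf+Q_{x'}f$ is the orthogonal projection of $f$ onto $H_x+H_{x'}$. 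Your version is more elementary in that it needs no continuity of joins and hence no analogue of Lemma \ref{4b3}, at the cost of a more intricate double monotone approximation and of proving only the scalar consequence first; the paper's version yields the full operator identity \eqref{5b4} on $C$ directly, which it reuses later. All the facts you invoke (commutativity and $Q_xQ_y=Q_{x\wedge y}$ on $\Cl(B)$, independence of $x,x'$ for $x\in C$, the representation of elements of $\Cl(B)$ as lower limits) are available in the paper, so there is no gap.
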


Thus if $ \Cl(B_1) = \Cl(B_2) $, then classicality of $ B_1 $ is
equivalent to classicality of~$ B_2 $, and blackness of $ B_1 $ is
equivalent to blackness of $ B_2 $.

%
\begin{question}
It follows from Theorem \ref{1d1} that the following conditions are
equivalent: $ \Cl(B) $ is a lattice; $ \Cl(B) $ is a complete lattice;
$ x \vee y \in\Cl(B) $ for all $ x,y \in\Cl(B) $. These conditions
are satisfied by every classical $ B $. Are they satisfied by some
nonclassical $ B $? By all nonclassical $ B $?
\end{question}

\subsection{On sufficient subalgebras}
\label{1e}

Let $ B, B_0 $ be noise-type Boolean algebras such that $ B_0 \subset
B $. Clearly, $ \Cl(B_0) \subset\Cl(B) $ and $ H^{(1)}(B_0) \supset
H^{(1)}(B) $. We say that:
\begin{itemize}
\item$ B_0 $ is dense in $ B $ if $ \Cl(B_0) = \Cl(B) $;
\item$ B_0 $ is sufficient in $ B $ if $ H^{(1)}(B_0) = H^{(1)}(B)
$.
\end{itemize}

If $ B_0 $ is sufficient in $ B $, then clearly, classicality of $ B_0
$ is equivalent to classicality of $ B $, and blackness of $ B_0 $ is
equivalent to blackness of $ B $.

A dense subalgebra is sufficient by Proposition \ref{1d4}.
Surprisingly, a
nondense subalgebra can be sufficient.

%
\begin{definition}\label{111}
A noise-type Boolean algebra $ B $ is \emph{atomless} if
\[
\inf_{x\in F} x = 0_\La
\]
for every ultrafilter $ F \subset B $.
\end{definition}

Recall that a set $ F \subset B $ is called a filter if for all $ x,y
\in B $
\begin{eqnarray*}
&x \in F,\qquad x \le y \quad\imply\quad y \in F,&
\\
&x,y \in F \quad\imply\quad x \wedge y \in F,&
\\
&0_\La\notin F;&
\end{eqnarray*}
a filter $ F $ is called ultrafilter if it is a maximal filter;
equivalently, if
\[
\forall x \in B\qquad \bigl( x \notin F \impl x' \in F \bigr).
\]

%
\begin{theorem}\label{1e2}
If a noise-type subalgebra is atomless, then it is sufficient.
\end{theorem}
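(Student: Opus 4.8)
The inclusion $ H^{(1)}(B_0) \supset H^{(1)}(B) $ is automatic, so the plan is to prove the reverse. Fix $ f \in H^{(1)}(B_0) $ and an arbitrary $ z \in B $ with complement $ z' $; it suffices to show $ f = \cE{f}{z} + \cE{f}{z'} $. The first step is a commutation observation. For any $ x \in B_0 \subset B $ the four elements $ x\wedge z,\, x\wedge z',\, x'\wedge z,\, x'\wedge z' $ are pairwise disjoint with join $ 1_\La $, hence they are the atoms of a finite noise-type subalgebra of $ B $; the atoms of a finite noise-type Boolean algebra are jointly independent (iterate $ a \perp a' $), so conditioning on $ x $ and conditioning on $ z $ commute. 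Consequently $ \cE{\cdot}{z} $ and $ \cE{\cdot}{z'} $ map $ H^{(1)}(B_0) $ into itself: applying $ \cE{\cdot}{z} $ to $ f = \cE{f}{x} + \cE{f}{x'} $ and commuting gives $ \cE{f}{z} = \cE{\cE{f}{z}}{x} + \cE{\cE{f}{z}}{x'} $ for every $ x \in B_0 $.

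Set $ g = f - \cE{f}{z} - \cE{f}{z'} $. By the previous paragraph $ g \in H^{(1)}(B_0) $, and since $ \cE{\cdot}{z} $ is a projection with $ \cE{\cE{\cdot}{z'}}{z} = \Ex(\cdot) $, one computes $ \cE{g}{z} = \cE{g}{z'} = 0 $. Thus the theorem reduces to the assertion that the only element of $ H^{(1)}(B_0) $ which is a \emph{pure interaction} of $ z $ and $ z' $ (i.e.\ orthogonal to all mean-zero $ z $-measurable and all mean-zero $ z' $-measurable functions) is $ 0 $. In other words, I must show $ g = 0 $.

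Now I would bring in atomlessness. For any finite subalgebra of $ B_0 $ with atoms $ a_1,\dots,a_n $ one has $ g = \sum_j \cE{g}{a_j} $ with orthogonal summands. Because $ \cE{\cE{g}{a_j}}{z} $ is $ (a_j\wedge z) $-measurable and these are orthogonal across $ j $, the relation $ \cE{g}{z}=0 $ forces $ \cE{\cE{g}{a_j}}{z}=0 $ for each $ j $, and likewise for $ z' $; so every increment $ \cE{g}{a_j} $ is itself a pure interaction between $ a_j\wedge z $ and $ a_j\wedge z' $. Using Definition \ref{1.11} (every ultrafilter has trivial infimum, hence no atoms at any scale) together with reverse-martingale convergence, I would choose a refining sequence of finite $ B_0 $-subalgebras along which $ \max_j \| \cE{g}{a_j} \| \to 0 $. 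This exhibits the fixed vector $ g $ as an $ L_2 $-convergent sum of \emph{independent}, mean-zero, within-block interaction increments of vanishing individual size.

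The hard part will be the final step, and I expect it to be the genuine obstacle. The difficulty is that $ \|g\|^2 = \sum_j \|\cE{g}{a_j}\|^2 $ is invariant under refinement, so no amount of $ L_2 $-orthogonality bookkeeping can make this ``interaction energy'' decrease; its vanishing must be extracted from a central-limit phenomenon rather than from Hilbert-space geometry alone. The mechanism I would aim for is that a sum of many tiny independent increments is asymptotically Gaussian, and a jointly Gaussian family splits linearly across the fixed partition $ \{z,z'\} $, leaving no room for a surviving pure interaction; equivalently, atomlessness should force $ \{\cE{\cdot}{x}:x\in B_0\} $ to generate a maximal abelian algebra of projections on $ H^{(1)}(B_0) $, so that the commuting projection $ \cE{\cdot}{z} $ lies in it and $ \cE{\cdot}{z}+\cE{\cdot}{z'}=\mathrm{id} $ there. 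Making this rigorous requires higher-moment (hypercontractive/Lindeberg-type) control of first-chaos elements to justify the passage to the Gaussian limit; this, rather than the algebraic reductions above, is where the real work lies.
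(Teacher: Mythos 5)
Your reductions are sound and track the paper's up to a point: the restriction to killing the ``pure interaction'' between $z$ and $z'$ is the content of the paper's Lemma \ref{5c1}, and the passage from atomlessness to finite partitions $y_1\vee\dots\vee y_n=1_\La$, $y_i\in B_0$, with $\max_i\|Q_{y_i}f\|\le\eps$ is Proposition \ref{5a1}. (Be aware that this latter step is itself not just reverse-martingale convergence: the paper needs Lemma \ref{5a3}, a combinatorial argument which converts the failure of such partitions into an ultrafilter $F\subset B_0$ with $\inf_{x\in F}\|Q_xf\|>0$, contradicting Definition \ref{1.11} via Lemma \ref{5a2}.)

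The genuine gap is your final step, and your diagnosis of it is wrong: no central-limit or hypercontractive input is needed, and the claim that no amount of $L_2$-orthogonality bookkeeping can close the argument is incorrect. The move you miss is that one should not try to make the interaction energy $\|g\|^2$ decrease under refinement; one instead tests the interaction against a \emph{fixed product vector}. Concretely (Proposition \ref{5c3}): for $g\in H_x\ominus H_0$ and $h\in H_{x'}\ominus H_0$ one must show $\Ex(fgh)=0$. Writing $f=\sum_iQ_{y_i}f$ (orthogonal summands, by \ref{5b1}, after refining the $y_i$ to a disjoint partition) and using $Q_{y_i}(g\otimes h)=(Q^{(x)}_{u_i}g)\otimes(Q^{(x')}_{v_i}h)$ with $u_i=y_i\wedge x$, $v_i=y_i\wedge x'$, one gets
\[
|\Ex(fgh)|\le\sum_i\|Q_{y_i}f\|\,\|Q^{(x)}_{u_i}g\|\,\|Q^{(x')}_{v_i}h\|
\le\Bigl(\max_i\|Q_{y_i}f\|\Bigr)\Bigl(\sum_i\|Q^{(x)}_{u_i}g\|^2\Bigr)^{1/2}\Bigl(\sum_i\|Q^{(x')}_{v_i}h\|^2\Bigr)^{1/2}
\le\eps\,\|g\|\,\|h\|,
\]
the two square sums being $\le\|g\|^2$ and $\le\|h\|^2$ by the supermodularity inequality \eqref{5a5}. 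The asymmetry that saves the day is exactly what you overlooked: the $g$- and $h$-factors absorb the entire Cauchy--Schwarz budget over the partition, so the $f$-factor is only ever needed in sup norm, and that sup is made small by atomlessness. Your proposed Gaussian route is not what the paper does, and would face real obstacles of its own, since first-chaos elements of a nonclassical noise need not be close to Gaussian in any useful sense.
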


Some applications of this result are mentioned in the end of
Section~\ref{1f}.

\subsection{On available examples and frameworks}
\label{1f}

Several examples of nonclassical noise-type Boolean algebras are
available in the literature but described in somewhat different
frameworks.\vadjust{\goodbreak}

According to Tsirelson and Vershik (\cite{TV}, Definition~1.2), a \emph
{measure factorization} over
a Boolean algebra $\A$ is a map $ \phi\dvtx \A\to\La$ such that $
\phi(a_1\wedge a_2) = \phi(a_1)\wedge\phi(a_2) $, $ \phi(a_1\vee
a_2) =
\phi(a_1)\vee\phi(a_2) $, $ \phi(0_\A)=0_\La$, $ \phi(1_\A
)=1_\La$,
and two $\sigma$-fields $\phi(a)$, $\phi(a')$ are independent (for
all $
a,a_1,a_2 \in\A$). In this case the image $ B = \phi(\A) \subset
\La
$ evidently is a noise-type Boolean algebra. A measure factorization
over $\A$ may be defined equivalently as a homomorphism $\phi$ from
$\A$ onto some noise-type Boolean algebra. Assuming that $\phi$ is an
isomorphism (which usually holds) we may apply several notions
introduced in \cite{TV} to noise-type Boolean algebras.

In particular, an element of the first chaos space $H^{(1)}(B)$ is the
same as a square integrable real-valued \emph{additive integral}
\cite{TV}, Definition~1.3 and Theorem~1.7. Complex-valued \emph{multiplicative
integrals} are also examined in \cite{TV}, Theorem~1.7; these generate a
$\sigma$-field that contains the $\sigma$-field generated by
$H^{(1)}(B)$. These two
$\sigma$-fields differ in the ``simplest nonclassical example'' of
Section~\ref{1b}. Namely, the latter $\sigma$-field consists of all
measurable
sets invariant under the sign change, while the former $\sigma$-field is the
whole $1_\La$, since the coordinates $ \xi_1,\xi_2,\ldots$ are
multiplicative integrals [indeed, $ \xi_1 = (\xi_1\xi_2) (\xi_2\xi_3)
\cdots(\xi_n\xi_{n+1}) \xi_{n+1} $]. A sufficient condition for
equality of the two $\sigma$-fields, given by \cite{TV}, Theorem~1.7,
is the
\emph{minimal up continuity condition} \cite{TV}, Definition~1.6: $
\sup_{x\in F} x' = 1_\La$ for every ultrafilter $ F \subset B $. This
is stronger than the condition $ \inf_{x\in F} x = 0_\La$ called
\emph{minimal down continuity} in \cite{TV}, Definition~1.6, and just
\emph{atomless} here (Definition \ref{111}). The ``continuous
example'' in
\cite{Ts04}, Section~1b, is atomless but violates the minimal up
continuity condition. The seemingly evident relation $ \sup_{x\in F}
x' = ( \inf_{x\in F} x )' $ may fail (see Section~\ref{1b}), since
$\sup$ and $\inf$ are taken in $\La$ rather than $B$; see also Remark
\ref{4a10}.

A wide class of countable atomless black noise-type Boolean algebras
is obtained in \cite{TV}, Section~4a, via combinatorial models on trees.

According to \cite{Ts04}, Definition~3c1, a \emph{continuous product of
probability spaces} (over $\R$) is a family $(x_{s,t})_{s<t}$ of
$\sigma$-fields $ x_{s,t} \in\La$ given for all $ s,t \in\R$, $ s<t $, such that
$ \sup_{s,t} x_{s,t} = 1_\La$ and
\[
x_{r,s} \otimes x_{s,t} = x_{r,t}\qquad \mbox{whenever }
r<s<t
\]
in the sense that $ x_{r,s} $ and $ x_{s,t} $ are independent and
generate $ x_{r,t} $. This is basically the same as a measure
factorization over the Boolean algebra $\A$ of all finite unions of
intervals $(s,t)$ treated modulo finite sets (see
\cite{Ts04}, Section~11a, for details).

According to Tsirelson \cite{Ts04}, Definition~3d1, a \emph{noise}
(over $\R$) is a
\emph{homogeneous} continuous product of probability spaces;
``homogeneous'' means existence of a measurable action $(T_h)_{h\in\R}
$ of $\R$ on $\Om$ such that
\[
T_h \mbox{ sends } x_{s,t} \mbox{ to } x_{s+h,t+h}\qquad
\mbox{whenever } s<t \mbox{ and } h\in\R
\]
(see \cite{Ts04}, Section~3d for details). It follows from homogeneity
(and separability of~$H$) that \cite{Ts04}, Proposition~3d3 and Corollary~3d5
%
%
%
\begin{equation}
\label{*} \inf_{\eps>0} x_{s-\eps,t+\eps} = x_{s,t} = \sup
_{\eps>0} x_{s+\eps,t-\eps},
\end{equation}
which implies the minimal up continuity condition (since an
ultrafilter must contain all neighborhoods of some point from
$[-\infty,+\infty]$). Thus, additive and multiplicative integrals
generate the same sub-$\sigma$-field, called the \emph{stable $\sigma
$-field} in
\cite{Ts04}, Section~4c, where it is defined in a completely different
but equivalent way. Note also that every noise leads to an
\emph{atomless} noise-type Boolean algebra.

Two examples of a nonclassical, but not black, noise were published in
1999 and 2002 by J.~Warren (see \cite{Ts04}, Sections~2c, 2d).

Existence of a black noise was proved first in 1998 (\cite{TV},
Section~5),
via projective limit; see also \cite{Ts03}, Section~8.2. However, this
was not quite a \emph{construction} of a specific noise; existence of
a subsequence limit was proved, uniqueness was not.

All other black noise examples available for now use random
configurations over $\R^{1+d}$ for some $d\ge1$ (in most cases $d=1$);
the $\sigma$-field $x_{s,t}$ consists of all events ``observable'' within
the domain $ (s,t)\times\R^d \subset\R^{1+d} $.

Examples based on stochastic flows were published in 2001 by
Watanabe and in 2004 by the author Le Jan, O.~Raimond and
S.~Lemaire. In these examples the first coordinate of $ \R^{1+d} $ is
interpreted as time, the other $d$ coordinates as space. Blackness is
deduced from the relation $ \| \mathbb{E}(f |x_{t,t+\eps} ) \|^2 =
o(\eps)
$ as $ \eps\to0+ $ for all $ f \in L_2(\Om,\F,P) $ such that $ \Ex
f =
0 $. For details and references see \cite{Ts04}, Section~7.

The first highly important example is the \emph{black noise of
percolation.} The corresponding random configuration over $ \R^2 $ is
the full scaling limit of critical site percolation on the triangular
lattice. This example was conjectured in 2004 (\cite{Ts03}, Question
8.1 and
Remark 8.2, \cite{Ts04}, Question 11b1). It was rather clear
that the noise of percolation must be black; it was less clear how to
define its probability space and $\sigma$-fields $ x_{s,t} $, and it was
utterly unclear whether $ x_{r,s} $ and $ x_{s,t} $ generate $ x_{r,t}
$, or not. (It is not sufficient to know that $ x_{r,s+\eps} $ and $
x_{s,t} $ generate $ x_{r,t} $.) The affirmative answer was published
in 2011~\cite{SS}.\looseness=-1

In order to say that the noise of percolation is a conformally
invariant black noise over $\R^2$ we must first define a noise over
$\R^2$. Recall that a noise over $\R$ is related to the Boolean
algebra of all finite unions of intervals modulo finite sets. Its
two-dimensional counterpart, according to Schramm and Smirnov \cite
{SS}, Corollary 1.20,
is ``an appropriate algebra of piecewise-smooth planar domains
(e.g., generated by rectangles).'' However, the algebra generated by
rectangles hides the conformal invariance of this noise. The class of
all piecewise-smooth domains is conformally invariant, however, two
$C^k$-smooth curves may have a nondiscrete intersection. Piecewise
analytic boundaries could be appropriate for this noise.\vadjust{\goodbreak}

Stochastic flows on $ \R^{1+d} $, mentioned above, lead to noises over
$\R$, generally not $ \R^{1+d} $ since, being uncorrelated in time,
they may be correlated in space. However, two of them are also
uncorrelated in (one-dimensional) space: Arratia's coalescing flow, or
the Brownian web (see~\cite{Ts04}, Section~7f), and its sticky
counterpart (see~\cite{Ts04}, Section~7j). For such flow it is natural to
conjecture that a $\sigma$-field $ y_{a,b} $ consisting of all events
``observable'' within the domain $ \R\times(a,b) \subset\R^2 $ is
well defined whenever $a<b$, and $ y_{a,b} \otimes y_{b,c} = y_{a,c}
$. Then $(y_{a,b})_{a<b}$ is the second noise (over $\R$) obtained
from this flow. Moreover, the $\sigma$-fields $ x_{s,t} \wedge y_{a,b} $
indexed by rectangles $ (s,t)\times(a,b) $ should form a noise over
$\R^2$. For Arratia's flow this conjecture was proved in 2011
\cite{EF}. It appears that the relation $ y_{a,b} \otimes y_{b,c} =
y_{a,c} $ is harder to prove than the relation $ x_{r,s} \otimes
x_{s,t} = x_{r,t} $. Unlike percolation, Arratia's flow, being
translation-invariant (in time and space), is not
rotation-invariant, and the two noises $(x_{s,t})_{s<t}$,
$(y_{a,b})_{a<b}$ are probably nonisomorphic.

Still, the notion of a noise over $\R^2$ is obscure because of
nonuniqueness of an appropriate Boolean algebra of planar
domains. Surely, a single ``noise of percolation'' is more
satisfactory than ``the noise of percolation on rectangles'' different
from ``the noise of percolation on piecewise analytic domains''
etc. These should be treated as different generators of the same
object. On the level of noise-type Boolean algebras the problem is
solved by the noise-type completion (Section~\ref{1d}). However, it
remains unclear how to relate the $\sigma$-fields belonging to the completion
to something like planar domains.

Any reasonable definition of a noise over $\R^2$ leads to a noise-type
Boolean algebra $B$, two noises $(x_{s,t})_{s<t}$, $(y_{a,b})_{a<b}$
over $\R$, their noise-type Boolean algebras $ B_1 \subset B $, $ B_2
\subset B $, and the corresponding first chaos spaces $ H^{(1)}(B) $,
$ H^{(1)}(B_1) $, $ H^{(1)}(B_2) $. As was noted after \eqref{*}, $
B_1 $ and $ B_2 $ are atomless. By Theorem \ref{1e2} they are
sufficient, that is,
\[
H^{(1)}(B_1) = H^{(1)}(B)= H^{(1)}(B_2)
.
\]
Thus, if one of these three noises (one over $\R^2$ and two over $\R$)
is classical, then the other two are classical; if one is black, then
the other two are black.

For the noise of percolation we know that the noise over $\R^2$ is
black and conclude that the corresponding two (evidently isomorphic)
noises over $\R$ are black.

For the Arratia's flow we know that the first noise over $\R$ is black
and conclude that the second noise over $\R$ is also black.

\section{Preliminaries}
\label{sec2}

This section is a collection of useful facts (mostly folk-lore, I
guess), more general than noise-type Boolean algebras.

Throughout, the probability space $ (\Om,\F,P) $, the complete lattice
$ \La$ of sub-$\sigma$-fields and the separable Hilbert space $ H =
L_2(\Om,\F,P) $ are as in Section~\ref{1a}. Complex numbers are not
used; $ H $ is a Hilbert space over $ \R$. A ``subspace'' of $ H $
always means a closed linear subset. Recall also $ 0_\La, 1_\La,
x\wedge y, x\vee y $ for $ x,y \in\La$, the notion of
independent $\sigma$-fields, operators $ \mathbb{E}(\cdot |x)$ of conditional
expectation, and $ \inf X, \sup X \in\La$ for $ X \subset\La$
(Section~\ref{1a}).

\subsection{Type $ L_2 $ subspaces}
\label{2a}

%
\begin{fact}[(\cite{Si}, \textup{Theorem}~3)]\label{2a1}
The following two conditions on a subspace $ H_1 $ of $ H $ are
equivalent:
\begin{longlist}[(a)]
\item[(a)] there exists a sub-$\sigma$-field $ x \in\La$ such
that $ H_1 =
L_2(x) $, the space of all \mbox{$x$-measurable} functions of $ H $;

\item[(b)] $ H_1 $ is a sublattice of $ H $, containing constants.
That is, $
H_1 $ contains $ f \vee g $ and $ f \wedge g $ for all $ f,g \in H_1
$, where $ (f \vee g)(\omega) = \max(f(\omega),g(\omega)) $ and $
(f \wedge
g)(\omega) = \min(f(\omega),g(\omega)) $, and $ H_1 $ contains the
one-dimensional space of constant functions.
\end{longlist}
\end{fact}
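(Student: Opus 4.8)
The implication (a)$\Rightarrow$(b) is immediate, and I would dispose of it first: if $ H_1 = L_2(x) $, then for $ f,g \in H_1 $ the functions $ f \vee g $ and $ f \wedge g $ are \measurable{x} and dominated by $ |f|+|g| \in L_2 $, hence lie in $ L_2(x)=H_1 $, while the constants are trivially \measurable{x}. So the entire content is the converse (b)$\Rightarrow$(a). The plan is to put $ x = \si\(f : f \in H_1\) $, the least \sif\ making every $ f \in H_1 $ measurable (completed by the null sets, as is our convention). Then $ H_1 \subset L_2(x) $ holds trivially, and the whole problem reduces to the reverse inclusion $ L_2(x) \subset H_1 $.

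The key step I would establish is a \emph{functional calculus}: for every bounded $ f \in H_1 $ and every continuous $ \phi : \R \to \R $ one has $ \phi(f) \in H_1 $. First I would verify this for piecewise-linear $ \psi $ by writing
\[
\psi(t) = a + bt + \sum_i c_i\,\bigl((t-d_i)\vee 0\bigr) ;
\]
here each $ (f-d_i)\vee 0 $ lies in $ H_1 $ by the lattice property together with linearity and the presence of constants, so the finite combination $ \psi(f) $ lies in $ H_1 $. For general continuous $ \phi $ I would approximate it uniformly on the bounded range of $ f $ by such $ \psi_n $, whence $ \psi_n(f) \to \phi(f) $ uniformly, hence in $ H $, and closedness of $ H_1 $ gives $ \phi(f) \in H_1 $. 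Taking $ \phi(t)=t^2 $ yields $ f^2 \in H_1 $ for bounded $ f $, and polarization $ fg = \tfrac14\((f+g)^2-(f-g)^2\) $ then shows that $ \A := H_1 \cap L_\infty $ is closed under multiplication. Thus $ \A $ is an algebra of bounded functions containing the constants; and since the truncations $ (f\wedge n)\vee(-n) $ of any $ f \in H_1 $ belong to $ \A $, one has $ \si(\A)=x $.

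To pass from this algebra structure to the desired inclusion I would invoke the functional monotone class (multiplicative system) theorem. Let $ \mathcal H $ denote the set of bounded functions lying in the closure of $ \A $ in $ H $; since $ H_1 $ is closed, $ \mathcal H \subset H_1 $. Now $ \mathcal H $ is a vector space, contains the constants, contains the multiplicative class $ \A $, and is closed under uniformly bounded increasing pointwise limits: if $ g_n \in \mathcal H $ and $ g_n \uparrow g $ with $ g $ bounded, then $ g_n \to g $ in $ H $ by dominated convergence, so $ g \in \mathcal H $. The functional monotone class theorem then forces $ \mathcal H $ to contain every bounded \measurable{x} function. As these are dense in $ L_2(x) $ and $ H_1 $ is closed, we conclude $ L_2(x) \subset H_1 $, which completes the argument.

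The one genuinely load-bearing step is the functional calculus of the second paragraph, where the lattice hypothesis of (b) enters in an essential way; without it the affine operations alone would merely reproduce the linear structure of $ H_1 $ and could not generate the multiplication that makes $ \A $ an algebra. By contrast, once $ \A $ is known to be a multiplicative algebra generating $ x $, the passage to all of $ L_2(x) $ is the standard monotone-class argument and should present no difficulty beyond routine care with null sets.
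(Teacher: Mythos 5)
Your argument is correct, but it takes a genuinely different route from the one the paper (following \v{S}id\'ak) sketches. The paper's hint manufactures indicator functions directly from the lattice operations: for $ f \in H_1 $ one has $ \One_{\{f>0\}} = \lim_n ( ( 0 \vee nf ) \wedge 1 ) \in H_1 $, hence $ \One_{\{f>c\}} \in H_1 $ for every $ c $; the collection of events whose indicators lie in $ H_1 $ is then a \sif\ (closed under complementation via $ 1 - \One_E $, under intersection via $ \wedge $, and under monotone limits via closedness of $ H_1 $), it contains the generators of $ x $, and simple functions finish the job. You instead upgrade the lattice structure to a multiplicative one: piecewise-linear functional calculus gives $ f^2 \in H_1 $ for bounded $ f $, polarization makes $ H_1 \cap L_\infty $ an algebra, and the functional monotone class theorem then delivers $ L_2(x) \subset H_1 $; in effect you reduce Fact~\ref{2a1} to the situation of Fact~\ref{2a2} (closure in $ H $ of a subalgebra of $ L_\infty $ containing constants). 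Both proofs are complete modulo the same routine care with null sets and with the completion of the generated \sif. The paper's route is shorter and needs no monotone class machinery, since the lattice operations produce the needed indicators directly; your route costs the extra functional-calculus step but isolates the useful intermediate fact that a closed sublattice containing constants is automatically closed under multiplication of its bounded elements.
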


%
\begin{pf*}{Hint to the proof that (b) \imp(a)}
$ \One_{(0,\infty)}(f) = \lim_n ( ( 0 \vee nf ) \wedge1 ) \in H_1 $
for $ f \in H_1 $.
\end{pf*}

Such subspaces $ H_1 $ will be called type $ L_2 $ (sub)spaces. (In
\cite{Si} they are called measurable, which can be confusing.)

Due to linearity of $ H_1 $ the condition $ f \vee g, f \wedge g \in
H_1 $ boils down to $ |f| \in H_1 $ for all $ f \in H_1 $. [Hint: $
f \vee g = f + ( 0 \vee(g-f) ) $ and $ 0 \vee f = 0.5 ( f + |f| )
$.]

%
\begin{fact}\label{2a2}
If $ A \subset L_\infty(\Om,\F,P) $ is a subalgebra containing
constants, then the closure of $ A $ in $ H $ is a type $ L_2 $ space.
\end{fact}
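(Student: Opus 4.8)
The plan is to verify that the closure $H_1=\overline{A}$ of $A$ in $H$ satisfies condition (b) of Fact \ref{2a1}, and then to invoke the implication (b) $\Rightarrow$ (a). Since $A$ contains the constants, so does $H_1$; and by the remark following Fact \ref{2a1}, the lattice conditions $f\vee g,\,f\wedge g\in H_1$ reduce, using linearity, to the single requirement that $|f|\in H_1$ whenever $f\in H_1$. Thus everything boils down to showing that $H_1$ is closed under $f\mapsto|f|$.

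First I would establish this for the generators $f\in A$. Here $f$ is essentially bounded, say $\|f\|_\infty=M<\infty$, so the continuous function $t\mapsto|t|$ can be approximated uniformly on $[-M,M]$ by polynomials $p_k$ (Weierstrass). Because $A$ is an algebra containing constants, each $p_k(f)$ lies in $A$, and $\big\||f|-p_k(f)\big\|_\infty\to0$. On a probability space one has $\|\cdot\|_2\le\|\cdot\|_\infty$, hence $p_k(f)\to|f|$ in $H$ and therefore $|f|\in H_1$.

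Next I would propagate this to an arbitrary $f\in H_1$. Pick $f_n\in A$ with $f_n\to f$ in $H$. Since $t\mapsto|t|$ is $1$-Lipschitz, $\big\||f_n|-|f|\big\|_2\le\|f_n-f\|_2\to0$; each $|f_n|$ belongs to $H_1$ by the previous step, and $H_1$ is closed, so $|f|\in H_1$. This completes the verification of (b), and Fact \ref{2a1} then gives a sub-\sif\ $x$ with $H_1=L_2(x)$.

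The one point that requires care --- and is really the whole content of the argument --- is that the $L_2$-closure of $A$ may contain unbounded functions, to which the Weierstrass approximation cannot be applied directly. The two-stage scheme circumvents this: the absolute value is obtained on the bounded generators by polynomial approximation, and is then carried over to the closure purely by the contractivity ($1$-Lipschitz property) of $t\mapsto|t|$ in the $L_2$ norm.
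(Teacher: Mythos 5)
Your proposal is correct and follows exactly the route indicated by the paper's own hint: polynomial (Weierstrass) approximation of the absolute value on the bounded elements of $A$, followed by extension to the $L_2$-closure via the $1$-Lipschitz property, and then the implication (b) $\Rightarrow$ (a) of Fact \ref{2a1}. You have merely supplied the details that the paper leaves implicit, including the correct observation that the two-stage argument is needed because the closure may contain unbounded functions.
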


(``Subalgebra'' means $ fg \in A $ for all $ f,g \in A $, in addition
to linearity.)

\begin{pf*}{Hint}
Approximating the absolute value by polynomials we get $ |f| \in
H_1 $ (the closure of $ A $) for $ f \in A $, and by continuity, for $
f \in H_1 $.
\end{pf*}

%
\begin{notation}\label{2a3}
We denote the type $ L_2 $ space $ L_2(x) $ corresponding to $
x \in\La$ by $ H_x $, and the orthogonal projection $ \mathbb{E}(\cdot|x)$
by $ Q_x $. In particular, $ H_0 = \{ c \One\dvtx c \in\R\} $ is the
one-dimensional subspace of constant functions on $ \Om$, and $ Q_0 f
= ( \Ex f ) \One= \langle f,\One\rangle\One$. Also, $ H_1 = H $,
and $ Q_1 =
I $ is the identity operator.
\end{notation}

Thus:
%
%
%
\begin{eqnarray}
&H_x \subset H;\qquad Q_x\dvtx H \to H;\qquad Q_x H =
H_x \qquad\mbox{for } x \in\La; &\label{2a4}
\\
&H_x \subset H_y \quad\equiv \quad Q_x \le
Q_y \quad\equiv\quad x \le y;& \label{2a5}
\\
&Q_x Q_y = Q_x = Q_y
Q_x\qquad \mbox{whenever } x \le y;& \label{2a6}
\\
&H_x = H_y \quad\equiv\quad Q_x = Q_y
\quad\equiv\quad x = y;& \label{2a7}
\\
&H_{x\wedge y} = H_x \cap H_y; &\label{2a8}
\end{eqnarray}
\eqref{2a6} and \eqref{2a7} follow from \eqref{2a5}; \eqref{2a8} is a
special case of Fact \ref{2a9}.\vadjust{\goodbreak}

%
\begin{fact}\label{2a9}
$ H_{\inf X} = \bigcap_{x\in X} H_x $ for $ X \subset\La$.
\end{fact}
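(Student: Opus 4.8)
The plan is to establish the two inclusions $H_{\inf X} \subset \bigcap_{x\in X} H_x$ and $\bigcap_{x\in X} H_x \subset H_{\inf X}$ separately.

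The first inclusion is immediate from monotonicity. For every $x\in X$ we have $\inf X \le x$, hence by \eqref{2a5} the inclusion $H_{\inf X} \subset H_x$ holds; intersecting over all $x\in X$ gives $H_{\inf X} \subset \bigcap_{x\in X} H_x$.

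For the reverse inclusion the idea is to characterize $\inf X$ concretely. Recall from Sect.~\ref{1a} that $\inf X = \bigcap_{x\in X} x$, the intersection taken as an intersection of \sif s, which is again a \sif. I would argue as follows. Take $f \in \bigcap_{x\in X} H_x$; then $f$ is \measurable{x} for every $x \in X$, i.e.\ $f$ is measurable with respect to $\bigcap_{x\in X} x = \inf X$. Therefore $f \in L_2(\inf X) = H_{\inf X}$, which is exactly what is needed. The whole point is that measurability of a single function $f$ with respect to the intersection of a family of \sif s is equivalent to measurability with respect to each member of the family: $f$ is $\(\bigcap_{x\in X} x\)$-measurable iff $\{f \le c\} \in \bigcap_{x\in X} x$ for all $c$, iff $\{f \le c\} \in x$ for all $x \in X$ and all $c$, iff $f$ is $x$-measurable for every $x\in X$.

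The only subtlety worth flagging — though it is not a genuine obstacle — is the standing convention that every \sif\ contains all null sets, so that $\inf X = \bigcap_{x\in X} x$ is still a legitimate element of $\La$ and $H_{\inf X}$ is well-defined. Because of this convention the equivalences in the previous paragraph hold on the nose, with no modification of functions on null sets required. Thus the argument is essentially a restatement, at the level of $L_2$ spaces, of the elementary fact that the intersection of a family of \sif s is the \sif\ of sets (equivalently, functions) measurable with respect to every member. I expect no hard step here; the content of \ref{2a9} is that the lattice operation $\inf$ in $\La$ corresponds under $x \mapsto H_x$ to ordinary intersection of subspaces, and this falls out directly from the definitions once the pointwise measurability characterization is invoked.
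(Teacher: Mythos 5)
Your argument is correct and is exactly the paper's intended proof: the paper's hint for \ref{2a9} reads ``measurability w.r.t.\ the intersection of \sif s is equivalent to measurability w.r.t.\ each one of these \sif s,'' which is precisely the observation you spell out (together with the trivial monotonicity inclusion from \eqref{2a5}). Your remark about the null-set convention is a sensible clarification but adds nothing beyond the paper's route.
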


\begin{pf*}{Hint}
Measurability w.r.t. the intersection of $\sigma$-fields is equivalent
to measurability w.r.t. each one of these $\sigma$-fields.
\end{pf*}

However, $ H_{x\vee y} $ is generally much larger than the closure of
$ H_x + H_y $.

%
\begin{fact}[(\cite{Ma}, \textup{Theorem}~3.5.1)]\label{2a10}
$ H_{x\vee y} $ is the subspace spanned by pointwise products $ fg $
for $ f \in H_x \cap L_\infty(\Om,\F,P) $ and $ g \in H_y \cap
L_\infty(\Om,\F,P) $.
\end{fact}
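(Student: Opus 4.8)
The plan is to identify the closed linear span of the products as a type $L_2$ space using Fact \ref{2a2}, and then pin down the associated \sif\ by a two-sided comparison. Let $A$ denote the set of all finite linear combinations $\sum_i f_i g_i$ with $f_i \in H_x \cap L_\infty(\Om,\F,P)$ and $g_i \in H_y \cap L_\infty(\Om,\F,P)$; these products are bounded, hence lie in $H$. First I would check that $A$ is a subalgebra of $L_\infty(\Om,\F,P)$ containing the constants: it is a linear subspace by construction and contains $\One$ (a single constant product); and it is closed under multiplication because $(f_1 g_1)(f_2 g_2) = (f_1 f_2)(g_1 g_2)$, where $f_1 f_2$ is again a bounded \measurable{x} function and $g_1 g_2$ a bounded \measurable{y} function, so each product of generators is itself a generator.

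By Fact \ref{2a2} the closure $\overline{A}$ of $A$ in $H$ is therefore a type $L_2$ space, say $\overline{A} = H_z$ for a (unique) $z \in \La$, and it remains to prove $z = x \vee y$. For $z \le x \vee y$: every generator $fg$ is \measurable{(x\vee y)}, hence lies in the closed subspace $H_{x\vee y}$, so $\overline{A} = H_z \subset H_{x\vee y}$, giving $z \le x \vee y$ by \eqref{2a5}. For the reverse, taking $g = \One$ shows $H_x \cap L_\infty(\Om,\F,P) \subset A$, and since bounded functions are dense in $L_2(x)$ we get $H_x = \overline{H_x \cap L_\infty} \subset H_z$, so $x \le z$; symmetrically (take $f = \One$) $y \le z$. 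As $x \vee y = \si(x,y)$ is the least \sif\ containing both $x$ and $y$, this yields $x \vee y \le z$. Combining the two inequalities gives $z = x \vee y$, whence $H_z = H_{x\vee y}$, which is exactly the claim.

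I do not expect a serious obstacle here: once Fact \ref{2a2} is available the argument is essentially bookkeeping, and the inclusion $\overline{A}\subset H_{x\vee y}$ is the trivial direction, the substance being that the products already generate all of $L_2(\si(x,y))$. The only points requiring a moment's care are the verification that $A$ is multiplicatively closed (the identity $(f_1 g_1)(f_2 g_2) = (f_1 f_2)(g_1 g_2)$ together with the stability of boundedness and \measurable{x}-ness under products) and the density of bounded functions in $L_2(x)$ used to pass from the generators to all of $H_x$. These replace the appeal to \cite[Th.~3.5.1]{Ma} by a self-contained route.
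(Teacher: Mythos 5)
Your proof is correct and follows essentially the same route as the paper's own argument: the paper's hint likewise forms the algebra of linear combinations of the products, invokes Fact \ref{2a2} to identify its closure as $H_z$ for some $z \in \La$, and then pins down $z = x \vee y$ by noting $z \ge x$, $z \ge y$ and $z \le x \vee y$. Your write-up merely fills in the routine details (multiplicative closure of the algebra, density of bounded functions in $L_2(x)$, and the use of \eqref{2a5}) that the paper leaves implicit.
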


\begin{pf*}{Hint}
Linear combinations of these products are an algebra;
by Fact \ref{2a2} its closure is $ H_z $ for some $ z \in\La$; note that
$ z \ge x $, $ z \ge y $, but also $ z \le x \vee y $.
\end{pf*}

%
\begin{fact}\label{2a11}
Let $ x,x_1,x_2,\ldots\in\La$, $ x_1 \le x_2 \le\cdots$ and $ x
= \sup_n x_n $. Then $ H_x $ is the closure of $ H_{x_1} \cup
H_{x_2} \cup\cdots.$
\end{fact}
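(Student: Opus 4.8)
The plan is to realise the closure $V=\overline{H_{x_1}\cup H_{x_2}\cup\cdots}$ as a type $L_2$ space and then pin it down through the order dictionary \eqref{2a5}. The inclusion $V\subseteq H_x$ is immediate: since $x_n\le\sup_m x_m=x$, the equivalence \eqref{2a5} gives $H_{x_n}\subseteq H_x$ for every $n$, and $H_x$ is closed, so the whole closure of the union stays inside $H_x$. All the substance is in the reverse inclusion.

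For that I would first verify that $\bigcup_n H_{x_n}$ is a sublattice of $H$ containing the constants. Each $H_{x_n}$ is a type $L_2$ space, hence by Fact \ref{2a1} a sublattice containing constants; and because $x_1\le x_2\le\cdots$ the subspaces $H_{x_n}$ increase, so any two elements of the union already lie in a common $H_{x_n}$, where their pointwise max and min are computed and stay in that $H_{x_n}$. The one step deserving care is passing to the closure: here I would invoke the $L_2$-continuity of $f\mapsto|f|$ (it is $1$-Lipschitz, since $\bigl||f(\om)|-|g(\om)|\bigr|\le|f(\om)-g(\om)|$ pointwise), which together with the formulas $f\vee g=\tfrac12(f+g+|f-g|)$ and $f\wedge g=\tfrac12(f+g-|f-g|)$ shows that the closure $V$ of a sublattice is again a sublattice; constants survive the closure trivially. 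By Fact \ref{2a1} this forces $V=H_z$ for some $z\in\La$.

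It then remains to identify $z$ with $x$, equivalently to prove $H_x\subseteq V$. Since $H_{x_n}\subseteq V=H_z$ for every $n$, \eqref{2a5} yields $x_n\le z$, whence $x=\sup_n x_n\le z$ and therefore $H_x\subseteq H_z=V$. Combined with the inclusion $V\subseteq H_x$ of the first paragraph this gives $V=H_x$, which is exactly the assertion. The only genuine obstacle I anticipate is the closure step of the second paragraph; everything else is a direct application of Fact \ref{2a1} together with the order correspondence \eqref{2a5}. I note that this route is self-contained within the facts already established and avoids appealing to the $L_2$ martingale convergence theorem, which would otherwise furnish $Q_{x_n}f\to f$ for $f\in H_x$ and conclude the argument just as well.
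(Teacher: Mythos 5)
Your proposal is correct and follows essentially the same route as the paper's own hint: realise the closure of $\bigcup_n H_{x_n}$ as a type $L_2$ space $H_z$ via Fact \ref{2a1} (using that the nested union is a sublattice containing constants), then squeeze $z$ between $\sup_n x_n$ and $x$ using \eqref{2a5}. Your extra care with the closure step (Lipschitz continuity of $f\mapsto|f|$) fills in a detail the paper leaves implicit, but the argument is the same.
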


\begin{pf*}{Hint}
By Fact \ref{2a1}, the closure of $ \bigcup_n H_{x_n}$ is $ H_z $
for some $ z \in\La$; note that $ z \ge x_n $ for all $ n $, but
also $ z \le x $.
\end{pf*}

That is,
%
%
%
\begin{eqnarray}
x_n \uparrow x \quad&\impl&\quad H_{x_n} \uparrow H_x;
\label{2a12}
\\
x_n \downarrow x \quad&\imply&\quad H_{x_n} \downarrow H_x
\label{2a13}
\end{eqnarray}
(the latter holds by Fact \ref{2a9}).

\subsection{Strong operator convergence}
\label{2b}

Let $ H $ be a Hilbert space and $ A, A_1,\break   A_2, \ldots\dvtx H \to H $
operators (linear, bounded). Strong operator convergence of $ A_n $ to
$ A $ is defined by
\[
( A_n \to A ) \quad\equiv\quad\bigl( \forall\psi\in H \> \| A_n
\psi- A \psi\| \mathop{\longrightarrow}^{n\to\infty}0 \bigr).
\]
We write just $ A_n \to A $, since we do not need other types of
convergence for operators.

%
\begin{fact}[(\cite{Pe}, Remark~2.2.11)]\label{2b05}
$ A_n \to A $ if and only if $ \| A_n \psi- A \psi\| \to0 $ for a
dense set of vectors $ \psi$ and $ \sup_n \| A_n \| < \infty$.
\end{fact}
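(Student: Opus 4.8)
The plan is to prove the two implications separately, using the uniform boundedness principle (Banach--Steinhaus) for the forward direction and a standard three-$\eps$ estimate for the converse. Throughout I use that all of $ A, A_1, A_2, \dots $ are bounded, so in particular $ \|A\| < \infty $.

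First I would treat the ``only if'' direction. If $ A_n \to A $ strongly then, by the very definition, $ \|A_n\psi - A\psi\| \to 0 $ for \emph{every} $ \psi \in H $, hence a fortiori for the vectors of any dense set. For the uniform bound, I note that for each fixed $ \psi $ the sequence $ (A_n\psi)_n $ converges and is therefore bounded, so $ \sup_n \|A_n\psi\| < \infty $. Since this holds for all $ \psi \in H $ and $ H $ is complete, the Banach--Steinhaus theorem yields $ \sup_n \|A_n\| < \infty $.

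For the ``if'' direction, let $ D $ be the given dense set on which $ \|A_n\phi - A\phi\| \to 0 $, and put $ M = \sup_n \|A_n\| < \infty $. Fix an arbitrary $ \psi \in H $ and $ \eps > 0 $, and choose $ \phi \in D $ with $ \|\psi - \phi\| < \eps $. Inserting $ \pm A_n\phi $ and $ \pm A\phi $ and applying the triangle inequality, I would estimate
\[
\|A_n\psi - A\psi\| \le \|A_n\|\,\|\psi-\phi\| + \|A_n\phi - A\phi\| + \|A\|\,\|\phi-\psi\| \le (M + \|A\|)\eps + \|A_n\phi - A\phi\| \, .
\]
Letting $ n \to \infty $ annihilates the last term, so $ \limsup_n \|A_n\psi - A\psi\| \le (M+\|A\|)\eps $; since $ \eps $ was arbitrary, $ A_n\psi \to A\psi $, which is the claim.

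The argument is entirely routine; the only nontrivial input is the Banach--Steinhaus theorem in the forward direction, which converts pointwise boundedness of $ (A_n) $ on all of $ H $ into the operator-norm bound $ \sup_n \|A_n\| < \infty $. This is the single place where completeness of $ H $ is used, and it is the step I would flag as the crux. The converse, by contrast, is just the observation that a uniform operator bound lets one transfer convergence from a dense set to the whole space.
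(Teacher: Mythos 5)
Your proof is correct, and since the paper gives no argument of its own for this Fact (it simply cites \cite[2.2.11]{Pe}), there is nothing to diverge from: both directions are handled by the standard means, Banach--Steinhaus for the norm bound in the ``only if'' part and the three-$\eps$ density argument for the ``if'' part. This is essentially the proof in the cited reference, so no further comment is needed.
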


%
\begin{fact}[(\cite{Ha1}, \textup{Problem} 93; \cite{Pe}, Section~4.6.1)]\label{2b1}
If $ A_n \to A $ and $ B_n \to B $, then $ A_n B_n \to A B $.
\end{fact}

%
\begin{fact}\label{2b2}
If $ A_n \to A $, $ B_n \to B $ and $ A_n B_n = B_n A_n $ for all $ n
$, then $ AB = BA $.
\end{fact}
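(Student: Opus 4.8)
The plan is to reduce everything to Fact~\ref{2b1}, which already packages the delicate point that operator multiplication is only \emph{separately} (not jointly) continuous in the strong topology. First I would apply Fact~\ref{2b1} to the convergences $A_n \to A$ and $B_n \to B$ to obtain $A_n B_n \to AB$. Then I would apply the same fact with the roles interchanged, to $B_n \to B$ and $A_n \to A$, obtaining $B_n A_n \to BA$.

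Now the hypothesis $A_n B_n = B_n A_n$ for all $n$ says that $(A_n B_n)_n$ and $(B_n A_n)_n$ are literally one and the same sequence of operators. Hence this single sequence converges strongly both to $AB$ and to $BA$. It then remains only to invoke uniqueness of the strong operator limit: if a sequence $(C_n)$ converges strongly to both $C$ and $D$, then for every $\psi \in H$ we have $\| C\psi - D\psi \| \le \| C_n\psi - C\psi \| + \| C_n\psi - D\psi \| \tendsn 0$, so $C\psi = D\psi$ for all $\psi$, i.e.\ $C = D$. Applying this with $C = AB$ and $D = BA$ yields $AB = BA$.

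There is essentially no obstacle here: all of the analytic content --- in particular the uniform boundedness (via Banach--Steinhaus) needed to pass to the limit inside a product --- is already absorbed into Fact~\ref{2b1}. The one thing to be careful about is \emph{not} to try to estimate $\| AB\psi - BA\psi \|$ from scratch, which would implicitly require joint strong continuity of multiplication, a property that genuinely fails; routing the argument through Fact~\ref{2b1} avoids this trap entirely.
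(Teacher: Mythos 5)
Your argument is correct and is exactly the paper's intended proof: the paper's hint is simply ``use \ref{2b1}'', and your route --- applying \ref{2b1} twice to get $A_nB_n\to AB$ and $B_nA_n\to BA$, then invoking uniqueness of the strong limit of the single sequence $A_nB_n=B_nA_n$ --- is the standard way to carry that out. Nothing to add.
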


\begin{pf*}{Hint}
Use Fact \ref{2b1}.\vadjust{\goodbreak}
\end{pf*}

The following fact allows us to write $ A_n \uparrow A $ (or $
A_n \downarrow A $) unambiguously. We need it only for commuting
orthogonal projections.

%
\begin{fact}[(\cite{Co}, Proposition~43.1)]\label{2b25}
Let $ A, A_1, A_2, \ldots\dvtx H \to H $ be Hermitian operators, $ A_1
\le
A_2 \le\cdots$, then
\[
A = \sup_n A_n \quad\equiv\quad A_n \to A.
\]
\end{fact}

The natural bijective correspondence between subspaces of $ H $ and
orthogonal projections $ H \to H $ is order preserving, therefore
%
%
%
\begin{equation}
\label{2b3}\qquad H_n \downarrow H_\infty\quad\equiv\quad Q_n
\downarrow Q_\infty\quad \mbox{also}\quad H_n \uparrow
H_\infty\quad\equiv\quad Q_n \uparrow Q_\infty
\end{equation}
whenever $ H_1, H_2, \ldots, H_\infty\subset H $ are subspaces and $
Q_1, Q_2, \ldots, Q_\infty\dvtx H \to H $ the corresponding orthogonal
projections.

In combination with \eqref{2a12}, \eqref{2a13} it gives
%
%
%
\begin{equation}
\label{2b4} x_n \downarrow x \mbox{ implies } Q_{x_n}
\downarrow Q_x; \mbox{also, } x_n \uparrow x \mbox{
implies } Q_{x_n} \uparrow Q_x.
\end{equation}

Let $ H_1, H_2 $ be Hilbert spaces, and $ H = H_1 \otimes H_2 $ their
tensor product.

%
\begin{fact}\label{2b5}
Let $ A,A_1,A_2,\ldots\dvtx H_1 \to H_1 $, $ B,B_1,B_2,\ldots\dvtx
H_2 \to H_2
$. If\break  $ A_n \to A $ and $ B_n \to B $, then $ A_n \otimes B_n \to
A \otimes B $.
\end{fact}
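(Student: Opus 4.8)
The plan is to reduce everything to Fact \ref{2b05}. Since $A_n \to A$ and $B_n \to B$ are strong operator limits, the ``only if'' direction of \ref{2b05} supplies uniform bounds $M_A = \sup_n \|A_n\| < \infty$ and $M_B = \sup_n \|B_n\| < \infty$. The operator norm on the tensor product is multiplicative, $\|A_n \otimes B_n\| = \|A_n\| \cdot \|B_n\| \le M_A M_B$, so $\sup_n \|A_n \otimes B_n\| < \infty$. It therefore remains only to verify $\|(A_n \otimes B_n)\psi - (A \otimes B)\psi\| \to 0$ for $\psi$ ranging over a dense set, after which the ``if'' direction of \ref{2b05} completes the proof.

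For the dense set I would take the algebraic linear span of simple tensors $\psi_1 \otimes \psi_2$ with $\psi_1 \in H_1$, $\psi_2 \in H_2$; this span is dense in $H_1 \otimes H_2$ by construction of the tensor product. By linearity of all the operators it suffices to treat a single simple tensor $\psi = \psi_1 \otimes \psi_2$, for which $(A_n \otimes B_n)\psi = (A_n\psi_1) \otimes (B_n\psi_2)$. Here I would use the standard telescoping
\[
(A_n\psi_1)\otimes(B_n\psi_2) - (A\psi_1)\otimes(B\psi_2)
= (A_n\psi_1)\otimes(B_n\psi_2 - B\psi_2) + (A_n\psi_1 - A\psi_1)\otimes(B\psi_2) \, ,
\]
whence, taking norms and using $\|u \otimes v\| = \|u\| \cdot \|v\|$,
\[
\|(A_n \otimes B_n)\psi - (A \otimes B)\psi\|
\le M_A \|\psi_1\| \cdot \|B_n\psi_2 - B\psi_2\|
+ \|A_n\psi_1 - A\psi_1\| \cdot \|B\psi_2\| \, .
\]

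Both terms on the right tend to $0$: in the first, $\|B_n\psi_2 - B\psi_2\| \tendsn 0$ because $B_n \to B$, while the factor $M_A \|\psi_1\|$ is fixed; in the second, $\|A_n\psi_1 - A\psi_1\| \tendsn 0$ because $A_n \to A$, while $\|B\psi_2\|$ is fixed. Thus convergence holds on every simple tensor, hence by linearity on their dense span, which is exactly what \ref{2b05} requires. I do not expect a genuine obstacle here; the only point demanding care is to invoke Fact \ref{2b05} in both directions---first to secure the uniform bound, then to upgrade dense-set convergence to strong convergence---rather than trying to prove convergence directly for every $\psi \in H$, since a general $\psi$ is merely a norm-limit of finite sums of simple tensors, and interchanging that limit with $n \to \infty$ is precisely what the uniform bound licenses.
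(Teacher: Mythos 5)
Your proof is correct and follows exactly the route the paper indicates in its hint: uniform boundedness of $A_n\otimes B_n$ via Fact \ref{2b05}, the telescoping estimate on simple tensors to get convergence on a dense set, and then Fact \ref{2b05} again to conclude. Nothing to add.
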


\begin{pf*}{Hint}
The operators are uniformly bounded, and converge on a dense set;
use Fact \ref{2b05}.
\end{pf*}

\subsection{Independence and tensor products}
\label{2c}

%
\begin{fact}\label{2c1}
If $ x,y \in\La$ are independent, then $ H_{x\vee y} = H_x \otimes
H_y $ up to the natural unitary equivalence:
\[
H_x \otimes H_y \ni f \otimes g \quad\longleftrightarrow \quad fg
\in H_{x\vee y}.
\]
\end{fact}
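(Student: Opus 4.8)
The plan is to realize the correspondence $f\otimes g\mapsto fg$ as a unitary by checking that it is a well-defined isometry on elementary tensors and then identifying its range via Fact~\ref{2a10}. First I would note that for $f\in H_x$ and $g\in H_y$ the product $fg$ already lies in $H$: since $f^2$ is \measurable{x} and $g^2$ is \measurable{y}, independence gives $\Ex(f^2g^2)=(\Ex f^2)(\Ex g^2)<\infty$, so $fg\in L_2$ with $\|fg\|=\|f\|\,\|g\|$, and $fg$ is \measurable{(x\vee y)}, i.e.\ $fg\in H_{x\vee y}$. The map $(f,g)\mapsto fg$ is therefore bilinear into $H_{x\vee y}$, and so it induces a linear map $U_0$ on the algebraic tensor product (the linear span of elementary tensors) with $U_0(f\otimes g)=fg$.

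Next I would check that $U_0$ preserves the inner product; by bilinearity it is enough to do so on elementary tensors. For $f_1,f_2\in H_x$ and $g_1,g_2\in H_y$,
\[
\ip{f_1g_1}{f_2g_2}=\Ex\( (f_1f_2)(g_1g_2) \)=\Ex(f_1f_2)\,\Ex(g_1g_2)=\ip{f_1}{f_2}\,\ip{g_1}{g_2},
\]
the first equality because $f_1g_1,f_2g_2\in L_2$, the second by independence of the \measurable{x} factor $f_1f_2$ and the \measurable{y} factor $g_1g_2$. The last expression is precisely $\ip{f_1\otimes g_1}{f_2\otimes g_2}$ in $H_x\otimes H_y$. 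Hence $U_0$ is isometric on the dense span of elementary tensors and extends by continuity to an isometry $U:H_x\otimes H_y\to H$ with $U(f\otimes g)=fg$.

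Finally I would identify the range of $U$. Being isometric, $U$ has range equal to the closed linear span of all products $fg$, $f\in H_x$, $g\in H_y$. Each such $fg$ lies in $H_{x\vee y}$, so this span is contained in $H_{x\vee y}$; conversely Fact~\ref{2a10} exhibits $H_{x\vee y}$ as the closed span of the bounded products $fg$ with $f\in H_x\cap L_\infty$ and $g\in H_y\cap L_\infty$, a subfamily of ours. The two closed subspaces therefore coincide, so $U$ is a unitary from $H_x\otimes H_y$ onto $H_{x\vee y}$ implementing $f\otimes g\leftrightarrow fg$, as claimed.

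The only point requiring care is the very first one: that $fg$ is a genuine element of $H$ for arbitrary $L_2$ factors, not merely bounded ones, and that multiplication is (isometrically) bilinear. Once independence yields $\|fg\|=\|f\|\,\|g\|$ and the inner-product identity, the rest is the routine extension-by-density of an isometry together with the surjectivity supplied by Fact~\ref{2a10}; no further obstacle arises.
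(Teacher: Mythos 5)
Your proposal is correct and follows essentially the same route as the paper's hint: the inner-product identity $\ip{f_1g_1}{f_2g_2}=\ip{f_1}{f_2}\ip{g_1}{g_2}$ from independence gives an isometric embedding of $H_x\otimes H_y$ into $H_{x\vee y}$, and Fact~\ref{2a10} gives surjectivity. The extra care you take (checking $fg\in L_2$ with $\|fg\|=\|f\|\,\|g\|$ for unbounded $f,g$) is a detail the paper leaves implicit, not a different method.
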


\begin{pf*}{Hint}
By the independence, $ \langle f_1 g_1,f_2 g_2 \rangle= \Ex( f_1 g_1
f_2 g_2 ) = \Ex( f_1 f_2 )\times\break   \Ex( g_1 g_2 ) = \langle f_1,f_2 \rangle
\langle g_1,g_2 \rangle= \langle f_1 \otimes g_1,f_2 \otimes g_2
\rangle$, thus,
$ H_x \otimes H_y $ is isometrically embedded into $ H_{x\vee y} $;
by Fact \ref{2a10} the embedding is ``onto.''
\end{pf*}

It may be puzzling that $ H_x $ is both a subspace of $ H $ and a
tensor factor of $ H $ (which never happens in the general theory of
Hilbert spaces). Here is an explanation. All spaces $ H_x $ contain
the one-dimensional space $ H_0 $ of constant functions (on $ \Om
$). Multiplying an $x$-measurable function $ f \in H_x $ by the
constant function $ g \in H_{x'} $, $ g(\cdot)=1 $, we get the
(puzzling) equality $ f \otimes g = f $.

%
\begin{notation}\label{2c2}
For $ u,x \in\La$ such that $ u \le x $ we denote by $ Q_u^{(x)} $
the restriction of $ Q_u $ to $ H_x $.\vadjust{\goodbreak}
\end{notation}

Thus
\[
Q_u^{(x)}\dvtx H_x \to H_x,\qquad
Q_u^{(x)} H_x = H_u \qquad\mbox{for } u
\le x.
\]

%
\begin{fact}\label{2c3}
If $ x,y \in\La$ are independent, $ u \le x $, $ v \le y $, then
treating $ H_{x\vee y} $ as $ H_x \otimes H_y $, we have
\[
Q_{u\vee v} = Q_u^{(x)} \otimes Q_v^{(y)}
.
\]
\end{fact}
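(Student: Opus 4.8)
The plan is to exhibit both sides as the orthogonal projection of $H_{x\vee y}=H_x\otimes H_y$ onto one and the same subspace, and then appeal to the uniqueness of the orthogonal projection onto a given subspace. (Here $Q_{u\vee v}$ is read as its restriction to $H_{x\vee y}$, which is legitimate because $u\vee v\le x\vee y$ makes $H_{x\vee y}$ invariant under $Q_{u\vee v}$, by \eqref{2a6}.)

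First I would check that $Q_u^{(x)}$ is the orthogonal projection of $H_x$ onto $H_u$. Since $u\le x$ we have $H_u\subset H_x$ by \eqref{2a5}, and $Q_u H_x=H_u$ because $Q_u f=\cE{f}{u}\in H_u$ for every $f\in H_x$; thus $H_x$ is invariant under $Q_u$ and contains its range $H_u$. Restricting the self-adjoint idempotent $Q_u$ to this invariant subspace leaves it self-adjoint and idempotent, so $Q_u^{(x)}$ is indeed the orthogonal projection of $H_x$ onto $H_u$; likewise $Q_v^{(y)}$ is the orthogonal projection of $H_y$ onto $H_v$. By the elementary properties of tensor products of operators ($(P\otimes R)^*=P\otimes R$ and $(P\otimes R)^2=P\otimes R$ when $P,R$ are orthogonal projections, with range $PH_x\otimes RH_y=H_u\otimes H_v$), the operator $Q_u^{(x)}\otimes Q_v^{(y)}$ is the orthogonal projection of $H_x\otimes H_y$ onto $H_u\otimes H_v$.

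It remains to identify the range of the left-hand side with the same subspace. Since $u\le x$, $v\le y$ and $x,y$ are independent, the \sif s $u,v$ are independent as well, so Fact~\ref{2c1} applied to the pair $(u,v)$ gives $H_{u\vee v}=H_u\otimes H_v$ via the multiplication map $f\otimes g\mapsto fg$. Because the range of $Q_{u\vee v}$ is $H_{u\vee v}\subset H_{x\vee y}$, its restriction to $H_{x\vee y}$ is the orthogonal projection of $H_{x\vee y}$ onto $H_{u\vee v}$. The one point to be handled with care is that, under the two identifications $H_{x\vee y}\cong H_x\otimes H_y$ and $H_{u\vee v}\cong H_u\otimes H_v$ furnished by Fact~\ref{2c1}, the inclusion $H_{u\vee v}\subset H_{x\vee y}$ corresponds to the inclusion $H_u\otimes H_v\subset H_x\otimes H_y$; this is immediate, since both identifications are the \emph{same} multiplication map, so a simple tensor $f\otimes g$ with $f\in H_u$, $g\in H_v$ is sent to the product $fg$ in either picture. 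With both sides now displayed as orthogonal projections of $H_x\otimes H_y$ onto the common subspace $H_u\otimes H_v=H_{u\vee v}$, they coincide. I do not expect a genuine obstacle; the only delicate bookkeeping is this compatibility of the two tensor-product identifications, together with the routine check that $Q_u^{(x)}$ is an orthogonal projection rather than merely the restriction of one.
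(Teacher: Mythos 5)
Your proof is correct and follows essentially the same route as the paper's hint: apply Fact~\ref{2c1} to the independent pair $(u,v)$ to get $H_{u\vee v}=H_u\otimes H_v$, observe that this factorization embeds compatibly into $H_{x\vee y}=H_x\otimes H_y$ (both identifications being the multiplication map), and note that the orthogonal projection onto $H_u\otimes H_v\subset H_x\otimes H_y$ factorizes as $Q_u^{(x)}\otimes Q_v^{(y)}$. You merely spell out the details the paper leaves implicit, in particular the check that $Q_u^{(x)}$ is itself an orthogonal projection and the compatibility of the two tensor identifications.
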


\begin{pf*}{Hint}
By Fact \ref{2c1}, $ H_{u\vee v} = H_u \otimes H_v $, and this
factorization may be treated as embedded into the factorization $
H_{x\vee y} = H_x \otimes H_y $; the projection onto $ H_u \otimes
H_v \subset H_x \otimes H_y $ factorizes.
\end{pf*}

In a more probabilistic language,
\[
\mathbb{E}(fg |u\vee v ) = \mathbb{E}(f |u) \mathbb{E}(g |v)\qquad \mbox
{for } f \in
L_2(x), g \in L_2(y).
\]

Here is a very general fact (no $\sigma$-fields, no tensor products, just
Hilbert spaces).

%
\begin{fact}[(\cite{Ha1}, \textup{Problem} 96, \cite{Co}, \textup{Exercise} 45.4)]\label{2c4}
Let $ Q_1, Q_2 $ be orthogonal projections in a Hilbert space $ H
$. Then $ (Q_1 Q_2)^n $ converges strongly (as~$ n \to\infty$) to
the orthogonal projection onto $ (Q_1 H) \cap(Q_2 H) $.
\end{fact}

%
\begin{fact}\label{2c5}
$ (Q_x Q_y)^n \to Q_{x\wedge y} $ strongly (as $ n \to\infty$)
whenever $ x,y \in\La$.
\end{fact}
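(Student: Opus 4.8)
The plan is to read this off directly from Fact \ref{2c4}, with essentially no extra work. The operators $ Q_x $ and $ Q_y $ are orthogonal projections in $ H $ with ranges $ Q_x H = H_x $ and $ Q_y H = H_y $ (see Notation \ref{2a3} and \eqref{2a4}). So I would apply Fact \ref{2c4} with $ Q_1 = Q_x $ and $ Q_2 = Q_y $: the powers $ (Q_x Q_y)^n $ converge strongly, as $ n \to \infty $, to the orthogonal projection onto the intersection of the ranges, namely onto $ (Q_x H) \cap (Q_y H) = H_x \cap H_y $.

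It then remains only to identify this limit with $ Q_{x\wedge y} $. By \eqref{2a8} the intersection $ H_x \cap H_y $ equals $ H_{x\wedge y} $, and the orthogonal projection onto $ H_{x\wedge y} $ is by definition $ Q_{x\wedge y} $ (again Notation \ref{2a3} together with \eqref{2a4}). Combining the two observations gives $ (Q_x Q_y)^n \to Q_{x\wedge y} $ strongly, as claimed.

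I do not expect any genuine obstacle here; the statement is just the abstract Hilbert-space fact \ref{2c4} specialized to the projections attached to sub-\sif s, with the range-intersection rewritten via \eqref{2a8}. The only point that must be checked (and is immediate) is that the two facts are compatible at the level of ranges, i.e.\ that the ``intersection of the ranges'' appearing in Fact \ref{2c4} is exactly the space $ H_{x\wedge y} $ whose projection is $ Q_{x\wedge y} $.
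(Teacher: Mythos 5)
Your proposal is correct and follows exactly the paper's own argument: the paper's hint for Fact \ref{2c5} is precisely to note that $ (Q_x H) \cap (Q_y H) = Q_{x\wedge y} H $ by \eqref{2a8} and then invoke Fact \ref{2c4}. Nothing is missing; the identification of the range intersection with $ H_{x\wedge y} $ is the only point to check, and you check it.
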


\begin{pf*}{Hint}
$ (Q_x H) \cap(Q_y H) = Q_{x\wedge y} H $ by \eqref{2a8};
use Fact \ref{2c4}.
\end{pf*}
%

%
\begin{fact}\label{2c6}
$ (Q_{u_1}^{(x)} Q_{u_2}^{(x)})^n \to Q_{u_1 \wedge u_2}^{(x)} $
strongly (as $ n \to\infty$) whenever\break  \mbox{$ u_1, u_2 \le x $}.
\end{fact}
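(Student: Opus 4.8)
The plan is to apply Fact~\ref{2c4} inside the Hilbert space $H_x$ rather than inside $H$, in exact analogy with the way Fact~\ref{2c5} applies it inside $H$.

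First I would check that $Q_{u_1}^{(x)}$ and $Q_{u_2}^{(x)}$ genuinely are orthogonal projections of the Hilbert space $H_x$. For $u \le x$ we have $H_u \subset H_x$ by \eqref{2a5}, and the inner product on $H_x$ is merely the restriction of that on $H$. Hence for $f \in H_x$ the orthogonal projection of $f$ onto the closed subspace $H_u$ is the same whether it is computed in $H$ or in $H_x$; in other words $Q_u^{(x)}$ is precisely the orthogonal projection of $H_x$ onto $H_u$, with range $Q_u^{(x)} H_x = H_u$.

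Then Fact~\ref{2c4}, applied to the two projections $Q_{u_1}^{(x)}, Q_{u_2}^{(x)}$ in the Hilbert space $H_x$, shows that $(Q_{u_1}^{(x)} Q_{u_2}^{(x)})^n$ converges strongly to the orthogonal projection of $H_x$ onto the intersection $(Q_{u_1}^{(x)} H_x) \cap (Q_{u_2}^{(x)} H_x) = H_{u_1} \cap H_{u_2}$.

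Finally I would identify this limit with $Q_{u_1 \wedge u_2}^{(x)}$. By \eqref{2a8} (the special case of Fact~\ref{2a9}) we have $H_{u_1} \cap H_{u_2} = H_{u_1 \wedge u_2}$, and since $u_1 \wedge u_2 \le x$, the orthogonal projection of $H_x$ onto $H_{u_1 \wedge u_2}$ is exactly $Q_{u_1 \wedge u_2}^{(x)}$ by the first step; this gives the claim. There is no substantial obstacle here: the only point requiring a moment's care is the routine observation that restricting to a subspace which already contains the relevant ranges leaves the orthogonal projections unchanged, so that Fact~\ref{2c4} is legitimately applicable within $H_x$.
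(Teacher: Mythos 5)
Your proof is correct and is exactly the argument the paper intends: the paper's hint for Fact~\ref{2c6} is ``similar to \ref{2c5}'', i.e.\ apply Fact~\ref{2c4} inside the Hilbert space $H_x$ and identify the intersection of the ranges as $H_{u_1\wedge u_2}$ via \eqref{2a8}. Your extra remark that restricting to $H_x$ does not change the orthogonal projections is the right (routine) justification for working inside $H_x$.
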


\begin{pf*}{Hint}
Similar to Fact \ref{2c5}.
\end{pf*}

%
\begin{fact}\label{2c7}
If $ x,y \in\La$ are independent, $ u_1, u_2 \le x $ and $ v_1,
v_2 \le y $, then
\[
( u_1 \vee v_1 ) \wedge( u_2 \vee
v_2 ) = ( u_1 \wedge u_2 ) \vee(
v_1 \wedge v_2 ).
\]
\end{fact}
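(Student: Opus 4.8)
The plan is to convert the asserted lattice identity into an equality of orthogonal projections and to verify that equality inside the tensor factorization $H_{x\vee y}=H_x\otimes H_y$ provided by independence (Fact \ref{2c1}). By Fact \ref{2a7} it suffices to show that the projections onto $H_{(u_1\vee v_1)\wedge(u_2\vee v_2)}$ and onto $H_{(u_1\wedge u_2)\vee(v_1\wedge v_2)}$ coincide. Since $u_i\vee v_i\le x\vee y$ (as $u_i\le x$, $v_i\le y$), both projections $Q_{u_i\vee v_i}$ have range inside $H_{x\vee y}$ and hence leave $H_{x\vee y}$ invariant; I would therefore carry out the entire computation on the subspace $H_{x\vee y}=H_x\otimes H_y$, where Fact \ref{2c3} gives the factorization $Q_{u_i\vee v_i}=Q_{u_i}^{(x)}\otimes Q_{v_i}^{(y)}$.

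First I would read off the left-hand meet. By Fact \ref{2c5}, $(Q_{u_1\vee v_1}Q_{u_2\vee v_2})^n\to Q_{(u_1\vee v_1)\wedge(u_2\vee v_2)}$; restricting to $H_{x\vee y}$ and inserting the factorizations, the product splits as
\[
(Q_{u_1\vee v_1}Q_{u_2\vee v_2})^n=(Q_{u_1}^{(x)}Q_{u_2}^{(x)})^n\otimes(Q_{v_1}^{(y)}Q_{v_2}^{(y)})^n .
\]
Now Fact \ref{2c6} gives $(Q_{u_1}^{(x)}Q_{u_2}^{(x)})^n\to Q_{u_1\wedge u_2}^{(x)}$ and $(Q_{v_1}^{(y)}Q_{v_2}^{(y)})^n\to Q_{v_1\wedge v_2}^{(y)}$, and Fact \ref{2b5} propagates these two convergences through the tensor product, so the right-hand side tends to $Q_{u_1\wedge u_2}^{(x)}\otimes Q_{v_1\wedge v_2}^{(y)}$. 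Applying Fact \ref{2c3} a second time, now in the reverse direction with $u=u_1\wedge u_2\le x$ and $v=v_1\wedge v_2\le y$, identifies this limit as the restriction of $Q_{(u_1\wedge u_2)\vee(v_1\wedge v_2)}$ to $H_{x\vee y}$.

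Uniqueness of strong limits then forces $Q_{(u_1\vee v_1)\wedge(u_2\vee v_2)}$ and $Q_{(u_1\wedge u_2)\vee(v_1\wedge v_2)}$ to agree on $H_{x\vee y}$; since the ranges of both projections already lie in $H_{x\vee y}$, these ranges $H_{(u_1\vee v_1)\wedge(u_2\vee v_2)}$ and $H_{(u_1\wedge u_2)\vee(v_1\wedge v_2)}$ coincide, and Fact \ref{2a7} yields the claimed equality of \sif s. I do not expect a genuine obstacle here, as Facts \ref{2c1}--\ref{2c6} and \ref{2b5} are tailored for exactly this computation; the only point needing care is the bookkeeping that passes from operators on all of $H$ to operators on the factor $H_x\otimes H_y$ — checking that each $Q_{u_i\vee v_i}$ preserves $H_{x\vee y}$ and that its restriction is precisely the operator of Fact \ref{2c3}, so that the limit supplied by Fact \ref{2c5} on $H$ and the tensor computation describe one and the same projection.
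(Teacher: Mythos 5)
Your proof is correct and follows essentially the same route as the paper's own argument: reduce to an identity of projections via Fact \ref{2c7}'s neighbours \ref{2a7}, use Fact \ref{2c5} to realize the left-hand meet as a strong limit of $(Q_{u_1\vee v_1}Q_{u_2\vee v_2})^n$, factorize via Fact \ref{2c3}, converge each tensor factor by Fact \ref{2c6}, and identify the limit. The only difference is that you make explicit two points the paper leaves implicit, namely the use of Fact \ref{2b5} to pass the limits through the tensor product and the bookkeeping between operators on $H$ and on $H_{x\vee y}$; both are handled correctly.
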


\begin{pf*}{Hint}
By Fact \ref{2c5}, $ ( Q_{u_1 \vee v_1} Q_{u_2 \vee v_2} )^n \to Q_{(
u_1 \vee v_1 ) \wedge( u_2 \vee v_2 )} $. By Fact \ref{2c6}, $
(Q_{u_1}^{(x)} Q_{u_2}^{(x)})^n \to Q_{u_1 \wedge u_2}^{(x)} $ and $
(Q_{v_1}^{(y)} Q_{v_2}^{(y)})^n \to Q_{v_1 \wedge v_2}^{(y)}
$. By Fact \ref{2c3}, $ Q_{u_1 \vee v_1}\times  Q_{u_2 \vee v_2} = (
Q_{u_1}^{(x)} \otimes Q_{v_1}^{(y)} )( Q_{u_2}^{(x)} \otimes
Q_{v_2}^{(y)} ) = ( Q_{u_1}^{(x)} Q_{u_2}^{(x)} ) \otimes(
Q_{v_1}^{(y)} Q_{v_2}^{(y)} ) $ and\break  $ Q_{ ( u_1 \wedge u_2 ) \vee(
v_1 \wedge v_2 ) } = Q_{u_1\wedge u_2}^{(x)} \otimes Q_{v_1\wedge
v_2}^{(y)} $; use \eqref{2a7}.
\end{pf*}

\begin{remark*}
In a distributive lattice the equality stated
by Fact \ref{2c7} is easy to check (assuming $ x\wedge y = 0 $ instead of
independence). However, the lattice $ \La$ is not distributive.
\end{remark*}

Useful special cases of Fact \ref{2c7} (assuming that $ x,y $ are
independent, $ u \le x $ and $ v \le y $):
%
%
%
\begin{eqnarray}
&( u \vee v ) \wedge x = u,\qquad ( u \vee v ) \wedge y = v;& \label{2c8}
\\
&( u \vee y ) \wedge( x \vee v ) = u \vee v.& \label{2c9}
\end{eqnarray}

Here is another very general fact (no $\sigma$-fields, no tensor
products, just
random variables).

%
\begin{fact}\label{2c10}
Assume that $ X,X_1,X_2,\ldots$ and $ Y,Y_1,Y_2,\ldots$ are random variables
(on a given probability space), and for every $ n $ the two random
variables $ X_n, Y_n $ are independent; if $ X_n \to X $, $ Y_n \to Y
$ in probability, then $ X,Y $ are independent.
\end{fact}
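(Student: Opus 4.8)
The plan is to recast independence as a factorization identity that survives passage to the limit. Recall that two random variables $ X,Y $ are independent if and only if $ \Ex(f(X)g(Y)) = \Ex(f(X))\,\Ex(g(Y)) $ for all bounded continuous $ f,g:\R\to\R $; indeed, the functionals $ (f,g)\mapsto\Ex(f(X)g(Y)) $ over such $ f,g $ determine the joint law of $ (X,Y) $, and the factorization says precisely that this joint law is the product of the two marginals. So it suffices to establish this identity for the limit variables $ X,Y $, knowing it for each pair $ X_n,Y_n $.

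Fix bounded continuous $ f,g $. First I would use that continuity lets convergence in probability pass through $ f $ and $ g $: since $ X_n\to X $ in probability and $ f $ is continuous, $ f(X_n)\to f(X) $ in probability, and likewise $ g(Y_n)\to g(Y) $ and $ f(X_n)g(Y_n)\to f(X)g(Y) $ in probability. Because $ f,g $ are bounded, all these sequences are uniformly bounded, so convergence in probability upgrades to convergence in $ L_1 $ by bounded convergence. Consequently $ \Ex(f(X_n))\to\Ex(f(X)) $, $ \Ex(g(Y_n))\to\Ex(g(Y)) $, and $ \Ex(f(X_n)g(Y_n))\to\Ex(f(X)g(Y)) $.

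Now I would pass to the limit in the identity $ \Ex(f(X_n)g(Y_n)) = \Ex(f(X_n))\,\Ex(g(Y_n)) $, which holds for every $ n $ by the assumed independence of $ X_n,Y_n $. The left-hand side tends to $ \Ex(f(X)g(Y)) $ and the right-hand side to $ \Ex(f(X))\,\Ex(g(Y)) $, whence $ \Ex(f(X)g(Y)) = \Ex(f(X))\,\Ex(g(Y)) $. As $ f,g $ range over all bounded continuous functions, this yields the independence of $ X $ and $ Y $.

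The only point requiring care is the upgrade from convergence in probability to convergence of expectations, and this is exactly where the \emph{continuity} of $ f,g $ (as opposed to mere measurability) is essential: for a bounded measurable $ f $ the composition $ f(X_n) $ need not converge even in probability, whereas boundedness then supplies the $ L_1 $ convergence via bounded convergence. I do not expect a genuine obstacle here; the substance of the fact is simply that the \emph{exact} factorization holds for each $ n $ while both sides are continuous under the relevant limit.
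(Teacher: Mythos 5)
Your argument is correct and coincides with the paper's own hint: test independence against bounded continuous $f,g$, pass convergence in probability through $f$ and $g$, upgrade to convergence of expectations by boundedness, and take the limit in the factorization identity $\Ex(f(X_n)g(Y_n))=\Ex(f(X_n))\,\Ex(g(Y_n))$. You have simply spelled out the same steps in more detail.
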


\begin{pf*}{Hint}
If $ f,g\dvtx\R\to\R$ are bounded continuous functions, then $
\Ex(f(X_n)) \to\Ex(f(X)) $, $ \Ex(g(Y_n)) \to\Ex(g(Y)) $, $
\Ex(f(X_n)) \Ex(g(Y_n)) = \Ex(f(X_n)g(Y_n)) \to\break  \Ex(f(X)g(Y)) $, thus,
$ \Ex(f(X)g(Y)) = \Ex(f(X)) \Ex(g(Y)) $.
\end{pf*}

The same holds for vector-valued random variables.

\subsection{Measure class spaces and commutative von Neumann algebras}
\label{2d}

See \cite{Di,Ta} or \cite{Co} for basics about von Neumann
algebras; we need only the commutative case.

%
\begin{fact}[(\cite{Di}, \textup{Section~I.7.3}, \cite{Ta}, \textup{Theorem~III.1.22},
\cite{Pe}, \textup{E4.7.2})]\label{2d1}
Every commutative von Neumann algebra $ \A$ of operators on a
separable Hilbert space~$ H $ is isomorphic to the algebra $ L_\infty
(S,\Si,\mu) $ on some measure space $ (S,\Si,\mu) $.
\end{fact}

Here and henceforth all measures are positive, finite and such that
the corresponding $ L_2 $ spaces are separable. The isomorphism $ \al
\dvtx\A\to L_\infty(S,\Si,\mu) $ preserves linear operations,
multiplication and norm. Hermitian operators of $ \A$ correspond to
real-valued functions of $ L_\infty$; we restrict ourselves to these
and observe an order isomorphism,
%
%
%
\begin{eqnarray}
\label{2d11} A& \le& B\quad \equiv\quad\al(A) \le\al(B);
\nonumber
\\[-8pt]
\\[-8pt]
\nonumber
 A &=& \sup_n
A_n \quad\equiv\quad \al(A) = \sup_n \al(A_n).
\end{eqnarray}

%
\begin{fact}[(\cite{Di}, \textup{Section}~I.4.3, \textup{Corollary} 1, \cite{Co}, \textup{Section}~46,
\textup{Proposition}~46.6 \textup{and
Exercise} 1)]\label{2d17}
Every isomorphism of von Neumann algebras preserves the strong
operator convergence (of sequences, not nets).
\end{fact}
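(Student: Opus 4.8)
The plan is to reduce the assertion about the strong topology to one about the weak topology via the $C^*$-identity, and to treat the weak topology through the normality ($\si$-weak continuity) of the isomorphism. Let $\pi:\A\to\mathcal B$ be a $*$-isomorphism of von Neumann algebras, $\A$ acting on $H$ and $\mathcal B$ on some $H'$, and suppose $A_n\to A$ strongly. First I would note that $\sup_n\|A_n\|<\infty$: for each $\psi$ the vectors $A_n\psi$ converge, hence are bounded, so Banach--Steinhaus --- and here the restriction to a \emph{sequence} is essential --- yields a uniform bound $M$. A $*$-isomorphism is isometric, so $\sup_n\|\pi(A_n)\|\le M$ as well. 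Putting $B_n=A_n-A$, it remains to show that $B_n\to0$ strongly forces $\pi(B_n)\to0$ strongly.

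The key step uses that $\pi$ is a $*$-homomorphism: for every $\psi\in H'$,
\[
\|\pi(B_n)\psi\|^2=\ip{\pi(B_n)^*\pi(B_n)\psi}{\psi}=\ip{\pi(B_n^*B_n)\psi}{\psi}\,.
\]
Hence it is enough to prove that the positive operators $\pi(B_n^*B_n)$ tend to $0$ in the sense $\ip{\pi(B_n^*B_n)\psi}{\psi}\to0$, i.e.\ weakly. The originals do so: since $\|B_n\psi\|\to0$ while $\|B_n\|\le2M$, Cauchy--Schwarz gives $|\ip{B_n^*B_n\psi}{\phi}|=|\ip{B_n\psi}{B_n\phi}|\le2M\|\phi\|\,\|B_n\psi\|\to0$, so $B_n^*B_n\to0$ weakly, and this sequence is bounded (by $4M^2$).

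Everything thus reduces to the claim that $\pi$ sends a \emph{bounded} weakly null sequence to a weakly null sequence. For this I would invoke normality. Because $\pi(C^*C)=\pi(C)^*\pi(C)$, both $\pi$ and $\pi^{-1}$ map the positive cone into itself, so $\pi$ is an order isomorphism of the self-adjoint parts; a von Neumann algebra is monotone complete, and an order isomorphism preserves every supremum that exists, whence $\pi$ preserves suprema of bounded increasing nets --- that is, $\pi$ is normal, equivalently $\si$-weakly continuous. On bounded sets the weak operator topology coincides with the $\si$-weak topology; therefore a bounded weakly null sequence is $\si$-weakly null, its image is $\si$-weakly null by continuity of $\pi$, and --- being bounded by isometry --- weakly null. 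Substituting this into the displayed identity gives $\|\pi(B_n)\psi\|^2\to0$, i.e.\ $\pi(A_n)\to\pi(A)$ strongly.

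The hard part, and the whole reason for the words ``of sequences, not nets,'' is precisely the implication weakly null $\Rightarrow$ $\si$-weakly null: it rests on the coincidence of the two topologies on \emph{bounded} sets, and the required boundedness is supplied only by Banach--Steinhaus, a theorem about sequences. For general nets the strong topology genuinely depends on the representation and the statement is false, so no net version is available. Apart from this, the one nonelementary ingredient is the $\si$-weak continuity of $\pi$, which I would either derive from normality as above or simply cite.
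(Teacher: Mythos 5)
Your proof is correct. Note, however, that the paper offers no argument of its own for this Fact: it is stated with bare citations to Dixmier and Conway, so there is nothing to compare line by line. What your write-up does is reconstruct essentially the argument those references contain: uniform boundedness of the sequence via Banach--Steinhaus (the only place where ``sequences, not nets'' enters), reduction of strong convergence to weak convergence of $B_n^*B_n$ through the identity $\|\pi(B_n)\psi\|^2=\ip{\pi(B_n^*B_n)\psi}{\psi}$, and transport of bounded weak convergence through $\pi$ via normality. All the elementary steps check out ($*$-isomorphisms are isometric and are order isomorphisms of the self-adjoint parts; the weak and $\sigma$-weak operator topologies agree on bounded sets; $\sigma$-weak convergence implies weak operator convergence, so the boundedness remark at that point is superfluous). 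The one genuinely nontrivial ingredient is the passage from ``normal'' to ``$\sigma$-weakly continuous,'' which you correctly isolate and offer to cite --- and that implication is precisely the content of the references the paper points to (Dixmier I.4.3), so your proof is an honest expansion of the citation rather than a circular appeal to it. Your closing remark that the statement fails for general nets because the strong topology is representation-dependent is also accurate and explains the parenthetical in the statement.
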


The measure $ \mu$ may be replaced with any equivalent (i.e.,
mutually absolutely continuous) measure $ \mu_1 $. Thus we may turn to
a measure class space (see \cite{Ar}, Section~14.4) $ (S,\Si,\M) $ where
$ \M$ is an equivalence class of\vadjust{\goodbreak} measures, and write $
L_\infty(S,\Si,\M) $; we have an isomorphism
%
%
%
\begin{equation}
\label{2d13} \al\dvtx \A\to L_\infty(S,\Si,\M)
\end{equation}
of von Neumann algebras. (See \cite{Ar}, Section~14.4, for the Hilbert space $
L_2(S,\break \Si,\M) $ on which $ L_\infty(S,\Si,\M) $ acts by
multiplication.)

%
\begin{fact}\label{2d2}
Let $ \A$ and $ \al$ be as in \eqref{2d13}, $ A,A_1,A_2,\ldots\in\A
$, $ \sup_n \|A_n\| < \infty$. Then the following two conditions are
equivalent:
\begin{longlist}[(a)]
\item[(a)] $ A_n \to A $ in the strong operator topology;

\item[(b)] $ \al(A_n) \to\al(A) $ in measure.
\end{longlist}
\end{fact}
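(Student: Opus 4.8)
The plan is to transport everything to the concrete model $L_\infty(S,\Si,\M)$ acting by multiplication on $L_2(S,\Si,\M)$, where the two notions of convergence become transparent. Writing $a_n = \al(A_n)$ and $a = \al(A)$, the isomorphism $\al$ carries $A_n$ to the multiplication operator $M_{a_n}$ and $A$ to $M_a$. Since $\al$ is an isometric isomorphism of von Neumann algebras, Fact \ref{2d17} tells us it preserves strong operator convergence of sequences; hence condition (a) is equivalent to $M_{a_n} \to M_a$ strongly on $L_2(S,\Si,\M)$, and it remains to prove the equivalence for multiplication operators. I fix once and for all a finite representative measure $\mu \in \M$ with $L_2(\mu)$ separable, and record that $M := \sup_n \|a_n\|_\infty = \sup_n \|A_n\| < \infty$, so that $\sup_n \|M_{a_n}\| = M$.

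For (a) $\implies$ (b) I would simply test strong convergence against the constant function $\One$, which lies in $L_2(\mu)$ because $\mu$ is finite. Then $\|M_{a_n}\One - M_a\One\|^2 = \int_S |a_n - a|^2 \, \D\mu \to 0$, i.e. $a_n \to a$ in $L_2(\mu)$; Chebyshev's inequality, $\mu(|a_n - a| > \eps) \le \eps^{-2} \int_S |a_n - a|^2 \, \D\mu$, then yields convergence in measure.

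For (b) $\implies$ (a) the uniform bound is the key. Since $a_n \to a$ in measure with $|a_n| \le M$, a subsequence converges a.e.\ and gives $|a| \le M$, whence $|a_n - a| \le 2M$ throughout. Because $\sup_n \|M_{a_n}\| < \infty$, Fact \ref{2b05} lets me verify strong convergence only on the dense set $L_\infty(\mu) \subset L_2(\mu)$; for $\psi \in L_\infty(\mu)$ the integrand $|a_n - a|^2 |\psi|^2$ is bounded by the constant $4M^2 \|\psi\|_\infty^2$ (integrable over the finite measure) and tends to $0$ in measure, so $\int_S |a_n - a|^2 |\psi|^2 \, \D\mu \to 0$, which is exactly $\|M_{a_n}\psi - M_a\psi\| \to 0$.

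The only genuine point is this last convergence of integrals, where convergence in measure must be upgraded to convergence of the integrals under a uniform (integrable) domination. I would argue by the subsequence principle: every subsequence has a further subsequence along which $a_n \to a$ almost everywhere, and along it the ordinary dominated convergence theorem applies; since the limit of the integrals is the same (namely $0$) for all such subsequences, the full sequence of integrals converges to $0$. This domination step — and the fact that it would fail without the standing hypothesis $\sup_n \|A_n\| < \infty$ — is the heart of the matter; the rest is bookkeeping through the isomorphism $\al$.
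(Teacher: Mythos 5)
Your proposal is correct and follows essentially the same route as the paper's own (hinted) proof: transport the statement through $\al$ via Fact \ref{2d17}, reduce strong convergence to convergence of $\|\al(A_n)f-\al(A)f\|_2$ for bounded $f$ using the uniform bound and Fact \ref{2b05}, and identify this with convergence in measure (Chebyshev with $f=\One$ one way, dominated convergence the other). Your write-up merely makes explicit the details — the choice of a finite representative $\mu\in\M$, the subsequence principle behind the dominated-convergence step — that the paper's chain of equivalences leaves implicit.
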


\begin{pf*}{Hint}
($ A_n \to A $ strongly)\,\equ\,($ \al(A_n) \to\al(A) $
strongly)\,\equ\,($ \| \al(A_n)f - \al(A)f \|_2 \to0 $ for every bounded
$ f $)\,\equ\,($ \al(A_n) \to\al(A) $ in measure).
\end{pf*}

Let $ \Si_1 \subset\Si$ be a sub-$\sigma$-field. Restrictions $ \mu
|_{\Si
_1} $
of measures $ \mu\in\M$ are mutually equivalent; denoting their
equivalence class by $ \M|_{\Si_1} $ we get a measure class space $
(S,\Si_1,\M|_{\Si_1}) $. Clearly, $ L_\infty(S,\Si_1,\M|_{\Si_1})
\subset L_\infty(S,\Si,\M) $ or, in shorter notation, $
L_\infty(\Si_1) \subset L_\infty(\Si) $; this is also a von Neumann
algebra.

%
\begin{fact}\label{2d3}
Every von Neumann subalgebra of $ L_\infty(\Si) $ is $ L_\infty(\Si_1)
$ for some sub-$\sigma$-field $ \Si_1 \subset\Si$.
\end{fact}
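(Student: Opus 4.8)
The plan is to read off the sub-\sif\ from the projections of the given von Neumann subalgebra $\mathcal B\subset L_\infty(\Si)$. A projection in $L_\infty(\Si)$, being a self-adjoint idempotent, is multiplication by an indicator $\One_A$ with $A\in\Si$ uniquely determined up to a null set; so I would put
\[
\Si_1 = \{\, A\in\Si : \One_A\in\mathcal B \,\} \, .
\]
First I check that $\Si_1$ is a \sif. It contains $S$ and $\emptyset$ because $\One_S=I$ and $\One_\emptyset=0$ lie in $\mathcal B$; it is closed under complementation, since $\One_{A^c}=I-\One_A\in\mathcal B$, and under finite intersection, since $\One_{A\cap B}=\One_A\One_B\in\mathcal B$. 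For a countable union I set $B_n=A_1\cup\dots\cup A_n\in\Si_1$ and observe that the projections $\One_{B_n}$ increase to the projection $\One_{\bigcup_n A_n}$; by Fact \ref{2b25} this convergence is strong, and since $\mathcal B$ is closed in the strong operator topology the limit $\One_{\bigcup_n A_n}$ lies in $\mathcal B$, that is, $\bigcup_n A_n\in\Si_1$.

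The inclusion $L_\infty(\Si_1)\subset\mathcal B$ is then easy: every bounded $\Si_1$-measurable function is a uniform limit of $\Si_1$-measurable simple functions, hence of linear combinations of indicators $\One_A$ with $A\in\Si_1$; these lie in $\mathcal B$, and $\mathcal B$ is norm closed.

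The reverse inclusion $\mathcal B\subset L_\infty(\Si_1)$ is the heart of the matter and the step I expect to be the main obstacle. Fix a self-adjoint $A\in\mathcal B$; I claim each of its spectral projections $\One_{\{A\le t\}}$ lies in $\mathcal B$. Indeed, all polynomials in $A$ lie in $\mathcal B$, hence by Stone--Weierstrass and norm closure so does $\phi(A)$ for every continuous $\phi$ on the (compact) spectrum of $A$; choosing continuous $\phi_k\uparrow\One_{(t,\infty)}$ gives $\phi_k(A)\uparrow\One_{\{A>t\}}$, a strong limit by Fact \ref{2b25}, so $\One_{\{A>t\}}\in\mathcal B$ by strong closedness, and taking complements $\One_{\{A\le t\}}\in\mathcal B$. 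Consequently each such spectral projection equals $\One_{A_t}$ with $A_t\in\Si_1$, and the spectral theorem exhibits $A$ as a uniform limit of linear combinations of these projections (Riemann sums of $\int t\,dE_t$); thus $A$ is $\Si_1$-measurable. As every element of $\mathcal B$ is self-adjoint (the underlying Hilbert space being real), this yields $\mathcal B\subset L_\infty(\Si_1)$, and therefore $\mathcal B=L_\infty(\Si_1)$. It is worth noting that strong closedness is used essentially: a merely norm-closed $*$-subalgebra such as $C[0,1]\subset L_\infty[0,1]$ is not of the form $L_\infty(\Si_1)$, so no argument relying on norm closure alone can succeed.
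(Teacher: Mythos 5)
Your proof is correct, but it follows a genuinely different route from the paper's. The paper disposes of this fact with a hint ``similar to \ref{2a2}'': given a von Neumann subalgebra $\mathcal B\subset L_\infty(\Si)$, one approximates $|f|$ by polynomials in $f$ to see that the $L_2$-closure of $\mathcal B$ is a sublattice containing constants, hence by the \v Sid\'ak-type characterization (Fact \ref{2a1}) equals $L_2(\Si_1)$ for some sub-\sif\ $\Si_1$; one then identifies $\mathcal B$ with $L_\infty(\Si_1)$, using strong closedness (via truncation and bounded $L_2$-convergence) for the inclusion $L_\infty(\Si_1)\subset\mathcal B$. You instead take the standard operator-algebraic path: read $\Si_1$ off from the projections of $\mathcal B$, verify it is a \sif\ using monotone strong limits (Fact \ref{2b25}), and recover $\mathcal B\subset L_\infty(\Si_1)$ by continuous functional calculus plus spectral projections. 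Both arguments are sound and both use strong closedness in an essential way, as your closing remark about $C[0,1]\subset L_\infty[0,1]$ correctly emphasizes; the paper's route gets more mileage out of machinery it has already set up for \sif s and type $L_2$ subspaces, while yours is self-contained at the level of spectral theory and makes explicit where each closure property (norm versus strong) is used. One cosmetic point: your $\Si_1$ is really a collection of equivalence classes of sets modulo null sets, but since every \sif\ in this paper is assumed to contain all null sets, this causes no difficulty.
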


\begin{pf*}{Hint}
Similar to Fact \ref{2a2}.
\end{pf*}

We have $ L_\infty(\Si_1) = \al(\A_1) $ where $ \A_1 = \al^{-1} (
L_\infty(\Si_1) ) \subset\A$ is a von Neumann algebra. And
conversely, if $ \A_1 \subset\A$ is a von Neumann algebra, then
$ \al(\A_1) = L_\infty(\Si_1) $ for some sub-$\sigma$-field $ \Si_1
\subset\Si
$.

Given two von Neumann algebras $ \A_1, \A_2 \subset\A$, we denote by
$ \A_1 \vee\A_2 $ the von Neumann algebra generated by $ \A_1, \A_2
$. Similarly, for two $\sigma$-fields $ \Si_1, \Si_2 \subset\Si$
we denote by
$ \Si_1 \vee\Si_2 $ the $\sigma$-field generated by $ \Si_1, \Si
_2 $.

%
\begin{fact}\label{2d4}
$ L_\infty(\Si_1) \vee L_\infty(\Si_2) = L_\infty(\Si_1 \vee\Si
_2) $.
\end{fact}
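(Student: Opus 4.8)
The plan is to prove the two inclusions separately, the easy one directly and the other via Fact~\ref{2d3}, exploiting that the correspondence between sub-\sif s and von Neumann subalgebras of $ L_\infty(\Si) $ is order-preserving in both directions.

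The inclusion $ L_\infty(\Si_1) \vee L_\infty(\Si_2) \subset L_\infty(\Si_1 \vee \Si_2) $ is immediate. Since $ \Si_1, \Si_2 \subset \Si_1 \vee \Si_2 $, both $ L_\infty(\Si_1) $ and $ L_\infty(\Si_2) $ are contained in $ L_\infty(\Si_1 \vee \Si_2) $; as noted before Fact~\ref{2d3}, the latter is itself a von Neumann algebra, hence it contains the von Neumann algebra generated by $ L_\infty(\Si_1) $ and $ L_\infty(\Si_2) $, which is by definition $ L_\infty(\Si_1) \vee L_\infty(\Si_2) $.

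For the reverse inclusion I would apply Fact~\ref{2d3} to the von Neumann algebra $ L_\infty(\Si_1) \vee L_\infty(\Si_2) $, obtaining a sub-\sif\ $ \Si_0 \subset \Si $ with $ L_\infty(\Si_1) \vee L_\infty(\Si_2) = L_\infty(\Si_0) $. The key step is to pass back from algebras to \sif s: from $ L_\infty(\Si_1) \subset L_\infty(\Si_0) $ I would deduce $ \Si_1 \subset \Si_0 $, by noting that every indicator $ \One_A $ with $ A \in \Si_1 $ lies in $ L_\infty(\Si_0) $ and is therefore \measurable{\Si_0}, so that $ A \in \Si_0 $. The same argument gives $ \Si_2 \subset \Si_0 $, whence $ \Si_1 \vee \Si_2 \subset \Si_0 $ and $ L_\infty(\Si_1 \vee \Si_2) \subset L_\infty(\Si_0) = L_\infty(\Si_1) \vee L_\infty(\Si_2) $.

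The only point requiring care is that all \sif s here are understood modulo null sets of the measure class $ \M $; with this convention the assignment $ \Si_1 \mapsto L_\infty(\Si_1) $ is injective and order-preserving in both directions (the indicator argument above recovers $ \Si_1 $ from $ L_\infty(\Si_1) $), so it is an order isomorphism onto the lattice of von Neumann subalgebras of $ L_\infty(\Si) $. Thus no serious obstacle arises: the statement is ultimately the observation that an order isomorphism of lattices carries the join $ \Si_1 \vee \Si_2 $ to the join $ L_\infty(\Si_1) \vee L_\infty(\Si_2) $.
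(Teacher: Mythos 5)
Your proof is correct and follows essentially the same route as the paper's hint: apply Fact~\ref{2d3} to the von Neumann algebra $ L_\infty(\Si_1) \vee L_\infty(\Si_2) $ to write it as $ L_\infty(\Si_0) $, then sandwich $ \Si_0 $ between $ \Si_1 \vee \Si_2 $ from both sides. The only difference is that you spell out the indicator argument showing the correspondence $ \Si_1 \mapsto L_\infty(\Si_1) $ is order-preserving in both directions, which the paper's hint leaves implicit.
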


\begin{pf*}{Hint}
By Fact \ref{2d3}, $ L_\infty(\Si_1) \vee L_\infty(\Si_2) =
L_\infty(\Si_3)
$ for some $ \Si_3 $; note that $ \Si_3 \supset\Si_1 $,
$ \Si_3 \supset\Si_2 $, but also $ \Si_3 \subset\Si_1 \vee\Si_2 $.
\end{pf*}

%
\begin{fact}\label{2d5}
If $ \al(\A_1) = L_\infty(\Si_1) $ and $ \al(\A_2) = L_\infty
(\Si_2), $
then $ \al(\A_1 \vee\A_2) = L_\infty(\Si_1 \vee\Si_2) $.
\end{fact}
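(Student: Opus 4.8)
The plan is to derive the statement from Fact~\ref{2d4} by showing that the isomorphism $ \al $ carries the join of two von Neumann subalgebras to the join of their images, that is $ \al(\A_1 \vee \A_2) = \al(\A_1) \vee \al(\A_2) $. Granting this, Fact~\ref{2d4} finishes the job: $ \al(\A_1) \vee \al(\A_2) = L_\infty(\Si_1) \vee L_\infty(\Si_2) = L_\infty(\Si_1 \vee \Si_2) $.

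First I would note that $ \A_1 \vee \A_2 $ is a von Neumann subalgebra of $ \A $, so by the paragraph following Fact~\ref{2d3} its image is $ \al(\A_1 \vee \A_2) = L_\infty(\Si_3) $ for some sub-\sif\ $ \Si_3 \subset \Si $. It then remains to prove $ \Si_3 = \Si_1 \vee \Si_2 $, which I would establish by two inclusions.

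For $ \Si_1 \vee \Si_2 \subset \Si_3 $: since $ \A_1 \subset \A_1 \vee \A_2 $ and $ \al $ is order preserving, $ L_\infty(\Si_1) = \al(\A_1) \subset \al(\A_1 \vee \A_2) = L_\infty(\Si_3) $, whence $ \Si_1 \subset \Si_3 $; symmetrically $ \Si_2 \subset \Si_3 $, so $ \Si_1 \vee \Si_2 \subset \Si_3 $. For the reverse inclusion I would pull the target back: put $ \mathcal B = \al^{-1}(L_\infty(\Si_1 \vee \Si_2)) $, again a von Neumann subalgebra of $ \A $ by the paragraph following Fact~\ref{2d3}. Because $ \Si_1, \Si_2 \subset \Si_1 \vee \Si_2 $ we have $ \al(\A_1) = L_\infty(\Si_1) \subset L_\infty(\Si_1 \vee \Si_2) $ and likewise for $ \A_2 $, hence $ \A_1, \A_2 \subset \mathcal B $. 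Since $ \A_1 \vee \A_2 $ is by definition the smallest von Neumann subalgebra containing both $ \A_1 $ and $ \A_2 $, it follows that $ \A_1 \vee \A_2 \subset \mathcal B $, and therefore $ L_\infty(\Si_3) = \al(\A_1 \vee \A_2) \subset \al(\mathcal B) = L_\infty(\Si_1 \vee \Si_2) $, giving $ \Si_3 \subset \Si_1 \vee \Si_2 $. The two inclusions yield $ \Si_3 = \Si_1 \vee \Si_2 $.

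The only substantive point, and the step I would watch most carefully, is the repeated use of the fact that $ \al $ and $ \al^{-1} $ send von Neumann subalgebras to von Neumann subalgebras, i.e.\ that the image of a strongly closed unital $ * $-subalgebra is again strongly closed. This is exactly what is asserted in the paragraph following Fact~\ref{2d3}, and it rests on Fact~\ref{2d17}: since $ \al $ preserves norm and strong sequential convergence, a uniformly bounded sequence in $ \al(\A_1) $ converging strongly has its preimage converging strongly inside the strongly closed algebra $ \A_1 $, so the limit lies back in $ \al(\A_1) $; metrizability of the strong topology on the unit ball (separability of $ H $) together with the Kaplansky density theorem then upgrades this sequential closedness to genuine strong closedness. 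Everything else is formal monotonicity of $ \al $ and the minimality defining $ \vee $, so no further analytic difficulty arises.
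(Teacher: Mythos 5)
Your proof is correct and takes essentially the same route as the paper, whose entire hint is ``$ \al(\A_1 \vee \A_2) = \al(\A_1) \vee \al(\A_2) $, since $ \al $ is an isomorphism; use \ref{2d4}.'' You simply expand the phrase ``since $ \al $ is an isomorphism'' into the standard two-inclusion argument (images and preimages of von Neumann subalgebras are von Neumann subalgebras, plus monotonicity and minimality of $ \vee $), which is a correct and reasonable amplification of the paper's one-line proof.
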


\begin{pf*}{Hint}
$ \al(\A_1 \vee\A_2) = \al(\A_1) \vee\al(\A_2) $, since $ \al
$ is
an isomorphism; use Fact~\ref{2d4}.
\end{pf*}

The product $ (S,\Si,\M) = (S_1,\Si_1,\M_1) \times(S_2,\Si_2,\M
_2) $
of two measure class spaces is a measure class space \cite{Ar}, 14.4;
namely, $ (S,\Si) = (S_1,\Si_1) \times(S_2,\Si_2) $, and $ \M$ is\vadjust{\goodbreak}
the equivalence class containing $ \mu_1 \times\mu_2 $ for some
(therefore all) $ \mu_1 \in\M_1 $, $ \mu_2 \in\M_2 $. In this
case $
L_\infty(S,\Si,\M) = L_\infty(S_1,\Si_1,\M_1) \otimes
L_\infty(S_2,\break \Si_2,\M_2) $.

Given two commutative von Neumann algebras $ \A_1 $ on $ H_1 $ and $
\A_2 $ on $ H_2 $, their tensor product $ \A= \A_1 \otimes\A_2 $ is
a von Neumann algebra on $ H = H_1 \otimes H_2 $. Given isomorphisms $
\al_1\dvtx\A_1 \to L_\infty(S_1,\Si_1,\M_1) $ and $ \al_2\dvtx
\A_2
\to
L_\infty(S_2,\Si_2,\M_2) $, we get an isomorphism $ \al= \al_1
\otimes\al_2\dvtx\A\to L_\infty(S,\Si,\M) $, where $ (S,\Si,\M
) =
(S_1,\Si_1,\M_1) \times(S_2,\Si_2,\M_2) $; namely, $ \al( A_1
\otimes A_2 ) = \al_1 (A_1) \otimes\al_2 (A_2) $ for $ A_1 \in\A_1
$, $ A_2 \in\A_2 $. Note that $ \al( \A_1 \otimes I ) = L_\infty
(\ti\Si_1) $ and $ \al( I \otimes\A_2 ) = L_\infty(\ti\Si_2) $,
where $ \ti\Si_1 = \{ A_1 \times S_2\dvtx A_1 \in\Si_1 \} $ and $
\ti\Si_2 = \{ S_1 \times A_2\dvtx A_2 \in\Si_2 \} $ are $\M$-independent
sub-$\sigma$-fields of $ \Si$, and $ \ti\Si_1 \vee\ti\Si_2 = \Si$.

%
\begin{definition}
Let $ (S,\Si,\M) $ be a measure class space. Two sub-$\sigma$-fields
$ \Si_1,
\Si_2 \subset\Si$ are \emph{$\M$-independent}, if they are
$\mu$-independent for some $ \mu\in\M$, that is, $ \mu(X \cap
Y) \mu(S) = \mu(X) \mu(Y) $ for all $ X \in\Si_1 $, $ Y \in\Si_2 $.
\end{definition}

%
\begin{fact}\label{2d7}
If $\sigma$-fields $ \Si_1, \Si_2 \subset\Si$ are independent,
then $
L_\infty(\Si_1 \vee\Si_2) = L_\infty(\Si_1) \otimes L_\infty(\Si
_2) $
up to the natural isomorphism
\[
L_\infty(\Si_1) \otimes L_\infty(\Si_2)
\ni f \otimes g \quad\longleftrightarrow\quad fg \in L_\infty(\Si_1
\vee\Si_2).
\]
\end{fact}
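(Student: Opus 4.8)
The plan is to treat \ref{2d7} as the measure-class, von Neumann algebra counterpart of the Hilbert-space statement \ref{2c1}, and to reduce it to the latter by passing to a concrete witnessing measure. First I would choose $\mu \in \M$ realizing the independence of $\Si_1$ and $\Si_2$, normalized so that $\mu(S)=1$; then $(S,\Si,\mu)$ is a probability space with separable $L_2$, to which the whole apparatus of Sect.~\ref{sec:2} applies after relabeling, and $\Si_1,\Si_2$ are genuinely $\mu$-independent sub-\sif s. Applying \ref{2c1} to this probability space (its proof uses only independence together with the density statement \ref{2a10}, so it holds verbatim) produces a unitary
\[
U : L_2(\Si_1) \otimes L_2(\Si_2) \to L_2(\Si_1 \vee \Si_2) \, ,
\quad f \otimes g \mapsto fg \, ,
\]
where $L_2(\Si_1 \vee \Si_2)$ sits inside $L_2(S,\Si,\mu)$ and $L_2(\Si_i)$ abbreviates $L_2(S,\Si_i,\M|_{\Si_i})$.

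The second step is to transport the multiplication algebras through $U$. Writing $M_h$ for the operator of multiplication by a bounded function $h$, one checks on elementary tensors (hence everywhere by continuity) that for $f \in L_\infty(\Si_1)$, $p \in L_2(\Si_1)$, $q \in L_2(\Si_2)$ one has $M_f U (p \otimes q) = (fp)q = U\bigl( (M_f p) \otimes q \bigr)$, whence $U^* M_f U = M_f \otimes I$; symmetrically $U^* M_g U = I \otimes M_g$ for $g \in L_\infty(\Si_2)$. Therefore $U$ carries the von Neumann algebra generated on $L_2(\Si_1 \vee \Si_2)$ by the multiplication operators coming from $L_\infty(\Si_1)$ and from $L_\infty(\Si_2)$ onto the von Neumann algebra on $L_2(\Si_1)\otimes L_2(\Si_2)$ generated by the operators $M_f \otimes I$ and $I \otimes M_g$; by the very definition of the tensor product of von Neumann algebras, the latter is $L_\infty(\Si_1) \otimes L_\infty(\Si_2)$. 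By \ref{2d4}, applied with the ambient \sif\ taken to be $\Si_1 \vee \Si_2$, the former generated algebra is exactly $L_\infty(\Si_1 \vee \Si_2)$. Hence $U^* L_\infty(\Si_1 \vee \Si_2) U = L_\infty(\Si_1) \otimes L_\infty(\Si_2)$, and on generators this sends $M_{fg} = M_f M_g$ to $(M_f \otimes I)(I \otimes M_g) = M_f \otimes M_g$, i.e.\ $fg \leftrightarrow f \otimes g$ as required.

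The one place calling for care is the independence of the conclusion from the auxiliary choice of $\mu$, together with the identification of the generated algebra with the abstract tensor product. The latter is a definitional matter and poses no real difficulty. For the former, the correspondence $fg \leftrightarrow f \otimes g$ is an algebraic (pointwise-product) statement about the measure class $\M$ and not about the representative $\mu$; replacing $\mu$ by an equivalent measure changes $U$ only by conjugation with multiplication operators built from the relevant Radon--Nikodym densities, which map each multiplication algebra into itself and so leave the displayed identity of von Neumann algebras unchanged. I would therefore expect the only substantive verification to be the surjectivity of $U$ --- that products of bounded $\Si_1$- and $\Si_2$-measurable functions are dense in $L_2(\Si_1 \vee \Si_2)$ --- which is already contained in \ref{2c1} (via \ref{2a10}).
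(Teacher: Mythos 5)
Your proof is correct and follows exactly the route the paper intends: the paper's entire proof of \ref{2d7} is the hint ``recall \ref{2c1}'', i.e.\ pick a witnessing measure $\mu\in\M$, invoke the $L_2$ factorization, and transport the multiplication algebras through the resulting unitary (with \ref{2d4} identifying the generated algebra). You have merely written out these routine details, including the harmless point about independence of the choice of $\mu$, so there is nothing to add or correct.
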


\begin{pf*}{Hint}
Recall Fact \ref{2c1}.
\end{pf*}

%
\begin{fact}\label{2d8}
For every isomorphism $ \al\dvtx \A_1 \otimes\A_2 \to L_2(S,\Si
,\M), $
there exist $\M$-independent $ \Si_1,\Si_2 \subset\Si$ such that $
\al(\A_1 \otimes I) = L_\infty(\Si_1) $, $ \al(I \otimes\A_2)
= L_\infty(\Si_2) $, and $ \Si_1 \vee\Si_2 = \Si$.
\end{fact}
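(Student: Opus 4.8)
The plan is to read off $\Si_1,\Si_2$ from the two coordinate subalgebras and then to check the asserted properties, the only nontrivial one being $\M$-independence; throughout I read the target of $\al$ as the von Neumann algebra $L_\infty(S,\Si,\M)$, as in Fact~\ref{2d1}. First I would apply Fact~\ref{2d3} to the von Neumann subalgebras $\al(\A_1\otimes I)$ and $\al(I\otimes\A_2)$ of $L_\infty(\Si)$, obtaining sub-\sif s $\Si_1,\Si_2\subset\Si$ with $\al(\A_1\otimes I)=L_\infty(\Si_1)$ and $\al(I\otimes\A_2)=L_\infty(\Si_2)$; the first two required equalities then hold by construction. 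For the generation $\Si_1\vee\Si_2=\Si$ I would use that $\A_1\otimes I$ and $I\otimes\A_2$ generate the tensor product $\A_1\otimes\A_2=\A$ as a von Neumann algebra, apply the isomorphism $\al$, and invoke Fact~\ref{2d5} to get $L_\infty(\Si_1\vee\Si_2)=\al\((\A_1\otimes I)\vee(I\otimes\A_2)\)=\al(\A)=L_\infty(\Si)$, whence $\Si_1\vee\Si_2=\Si$.

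For $\M$-independence the plan is to transport a factorizing state from a standard product realization. By Fact~\ref{2d1} I would choose isomorphisms $\al_i:\A_i\to L_\infty(S_i,\Si^{(i)},\M_i)$ and form, as in the construction preceding Fact~\ref{2d7}, the isomorphism $\be=\al_1\otimes\al_2:\A\to L_\infty(\ti S,\ti\Si,\ti\M)$ onto the product measure class space, under which the two factors correspond to the cylinder \sif s $\ti\Si_1,\ti\Si_2$; these are \independent{\ti\M} and satisfy $\ti\Si_1\vee\ti\Si_2=\ti\Si$. Let $\ti\mu\in\ti\M$ be a product probability measure witnessing this independence and $\tau'(\cdot)=\int\cdot\,\D\ti\mu$ the associated faithful normal state on $L_\infty(\ti\Si)$; independence says precisely that $\tau'(fg)=\tau'(f)\tau'(g)$ for $f\in L_\infty(\ti\Si_1)$, $g\in L_\infty(\ti\Si_2)$. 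Setting $\ga=\al\circ\be^{-1}$, an isomorphism of von Neumann algebras, I note $\ga(L_\infty(\ti\Si_i))=\al(\be^{-1}(L_\infty(\ti\Si_i)))=L_\infty(\Si_i)$.

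Next I would pull the state back, $\tau=\tau'\circ\ga^{-1}$, and argue that $\tau$ is again a faithful normal state on $L_\infty(\Si)$, hence of the form $\tau(\cdot)=\int\cdot\,\D\mu$ for some probability measure $\mu\in\M$. Faithfulness is clear since $\ga^{-1}$ is an isomorphism and $\tau'$ is faithful. Normality I would get from Facts~\ref{2d17} and~\ref{2b25}: if $\One_{A_n}\uparrow\One_A$ in $L_\infty(\Si)$ then $\ga^{-1}(\One_{A_n})\uparrow\ga^{-1}(\One_A)$ (strong convergence of sequences is preserved by isomorphisms, and for Hermitian operators it coincides with the supremum), so $\tau(\One_{A_n})=\tau'(\ga^{-1}\One_{A_n})\to\tau(\One_A)$; thus $A\mapsto\tau(\One_A)$ is countably additive, and faithfulness gives that $\tau(\One_A)=0$ iff $A$ is $\M$-null, i.e.\ $\mu\in\M$. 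Finally, since $\ga$ is an algebra homomorphism, for $X\in\Si_1$, $Y\in\Si_2$ I would set $f'=\ga^{-1}(\One_X)\in L_\infty(\ti\Si_1)$, $g'=\ga^{-1}(\One_Y)\in L_\infty(\ti\Si_2)$ and compute $\mu(X\cap Y)=\tau(\One_X\One_Y)=\tau'(f'g')=\tau'(f')\tau'(g')=\tau(\One_X)\tau(\One_Y)=\mu(X)\mu(Y)$; as $\mu(S)=1$, this is exactly $\M$-independence.

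The step I expect to be the crux is this transport of the factorizing state: one must verify that independence is faithfully encoded by a faithful normal state, and that both faithfulness and (sequential) normality survive $\ga$, so that the transported functional is genuinely integration against a measure in the class $\M$ rather than a mere bounded linear functional. Once that is secured, the choice of $\Si_1,\Si_2$ and the generation $\Si_1\vee\Si_2=\Si$ are routine consequences of Facts~\ref{2d3} and~\ref{2d5}.
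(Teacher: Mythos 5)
Your proposal is correct and follows essentially the same route as the paper's: $\Si_1,\Si_2$ from Fact~\ref{2d3}, generation from Fact~\ref{2d4}/\ref{2d5}, and independence by transporting the product measure $\mu_1\times\mu_2$ through the isomorphism $(\al_1\otimes\al_2)\al^{-1}$ (your $\ga^{-1}$) to define $\mu\in\M$. You merely make explicit the normality/faithfulness check that the paper's hint leaves implicit in the phrase ``defining $\mu\in\M$''.
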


\begin{pf*}{Hint}
We get $ \Si_1, \Si_2 $ from Fact \ref{2d3}; $ \Si_1 \vee\Si_2 =
\Si$
by Fact \ref{2d4}; for proving independence we choose $ \mu_1 \in\M
_1 $,
$ \mu_2 \in\M_2 $, take isomorphisms $ \al_1\dvtx\A_1 \to
L_\infty(S_1,\Si'_1,\M_1) $, $ \al_2\dvtx\A_2 \to
L_\infty(S_2,\Si'_2,\M_2) $ and use the isomorphism $ \be=
(\al_1 \otimes\al_2) \al^{-1}\dvtx L_\infty(S,\Si,\M) \to
L_\infty(
(S_1,\Si'_1,\M_1) \times(S_2,\Si'_2,\M_2) ) $ for defining
$ \mu\in\M$ by $ \int f \,\D\mu= \int(\be
f) \,\D(\mu_1 \times\mu_2) $; then $ \Si_1,\Si_2 $
are $\mu$-independent.
\end{pf*}

Given an isomorphism $ \al\dvtx \A\to L_\infty(S,\Si,\M) $ of von
Neumann algebras, we have subspaces $ H(E) $, for $ E \in\Si$, of
the space $ H $ on which acts $ \A$:
%
%
%
\begin{eqnarray}
\label{2d9} H (E) &=& \al^{-1}(\One_E) H \subset H;
\nonumber\\
H ( E_1 \cap E_2 ) &=& H(E_1) \cap
H(E_2);
\nonumber
\\[-8pt]
\\[-8pt]
\nonumber
H ( E_1 \uplus E_2 ) &= &H(E_1) \oplus
H(E_2);
\\
H ( E_1 \cup E_2 )& =& H(E_1) +
H(E_2)\nonumber
\end{eqnarray}
[the third line differs from the fourth line by assuming that $ E_1,
E_2 $ are disjoint and concluding that $ H(E_1), H(E_2) $ are
orthogonal]. By \eqref{2d11}, \eqref{2b3}
%
%
%
\begin{eqnarray}
\label{2d10}& E_n \uparrow E \mbox{ implies } H(E_n)
\uparrow H(E),&
\nonumber
\\[-8pt]
\\[-8pt]
\nonumber
&E_n \downarrow E \mbox{ implies } H(E_n) \downarrow H(E)
.&
\end{eqnarray}

\subsection{Boolean algebras}
\label{2e}

Every finite Boolean algebra $ b $ has $ 2^n $ elements, where~$ n $
is the number of the atoms $ a_1,\ldots,a_n $ of $ b $; these atoms
satisfy $ a_k \wedge a_l = 0_b $ for $ k \ne l $, and $
a_1 \vee\cdots\vee a_n = 1_b $. All elements of $ b $ are of the
form
%
%
%
\begin{equation}
\label{2e1} a_{i_1} \vee\cdots\vee a_{i_k},\qquad 1 \le
i_1 < \cdots< i_k \le n.
\end{equation}
We denote by $ \Atoms(b) $ the set of all atoms of $ b $ and rewrite
\eqref{2e1} as
%
%
%
\begin{equation}
\label{2e2} \forall x \in b\qquad  x = \bigvee_{a\in\Atoms(b), a \le x} a.
\end{equation}

%
\begin{fact}\label{2e3}
Let $ B $ be a Boolean algebra, $ b_1,b_2 \subset B $ two finite
Boolean subalgebras and $ b \subset B $ the Boolean subalgebra
generated by $ b_1 $, $ b_2 $. Then $ b $ is finite. If $ a_1 \in
\Atoms(b_1) $, $ a_2 \in\Atoms(b_2) $ and $ a_1 \wedge a_2 \ne0_B $,
then $ a_1 \wedge a_2 \in\Atoms(b) $, and all atoms of $ b $ are of
this form.
\end{fact}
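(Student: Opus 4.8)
The plan is to produce an explicit finite Boolean subalgebra whose atoms are the nonzero meets, and then identify it with $ b $. Write the atoms of $ b_1 $ as $ a_1,\dots,a_m $ and those of $ b_2 $ as $ c_1,\dots,c_n $, and let $ P $ be the collection of all nonzero meets $ a_i \wedge c_j $. First I would check that $ P $ partitions $ 1_B $ into pairwise disjoint nonzero pieces: distributivity in $ B $ together with $ \bigvee_i a_i = 1_B = \bigvee_j c_j $ gives $ \bigvee_{i,j}(a_i \wedge c_j) = (\bigvee_i a_i) \wedge (\bigvee_j c_j) = 1_B $, while for $ (i,j) \ne (i',j') $ at least one of $ a_i \wedge a_{i'} = 0_B $ or $ c_j \wedge c_{j'} = 0_B $ holds, forcing $ (a_i \wedge c_j) \wedge (a_{i'} \wedge c_{j'}) = 0_B $.

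Second, I would set $ b' $ to be the set of all finite joins of members of $ P $ (with the empty join equal to $ 0_B $) and verify that $ b' $ is a finite Boolean subalgebra of $ B $ whose atoms are exactly the members of $ P $. Closure under $ \vee $ is immediate; closure under $ \wedge $ holds because the meet of two joins over index sets $ S, T \subset P $ collapses, by disjointness of distinct members of $ P $, to the join over $ S \cap T $; and closure under complementation holds because $ P $ partitions $ 1_B $, so the complement of $ \bigvee_{p\in S} p $ is $ \bigvee_{p \in P \setminus S} p $. That the members of $ P $ are precisely the atoms of $ b' $ is then clear.

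Finally, I would prove $ b = b' $ by double inclusion. Each atom $ a_i $ satisfies $ a_i = a_i \wedge 1_B = \bigvee_j (a_i \wedge c_j) \in b' $, so by \eqref{2e2} and closure of $ b' $ under $ \vee $ we get $ b_1 \subset b' $, and symmetrically $ b_2 \subset b' $; since $ b' $ is a subalgebra containing both $ b_1 $ and $ b_2 $, and $ b $ is the smallest such, we obtain $ b \subset b' $. Conversely each $ a_i \wedge c_j $ lies in $ b $, being a meet of an element of $ b_1 $ and one of $ b_2 $, hence every join of members of $ P $ lies in $ b $, giving $ b' \subset b $. Therefore $ b = b' $ is finite with atoms exactly the nonzero meets $ a_1 \wedge a_2 $, as claimed.

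I expect the only mildly delicate point to be the verification that $ b' $ is closed under meet and complementation, which rests entirely on $ P $ being a genuine partition of $ 1_B $ into pairwise disjoint pieces; everything else is a direct consequence of distributivity in $ B $ and the atomic decomposition \eqref{2e2}. It is worth noting that distributivity of $ B $ is used without reservation here, which is legitimate precisely because $ B $ is a Boolean algebra --- in contrast to the ambient lattice $ \La $, which need not be distributive.
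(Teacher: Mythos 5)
Your proposal is correct and follows essentially the same route as the paper's hint: build the finite Boolean subalgebra whose atoms are the nonzero meets $a_1\wedge a_2$, observe that it contains both $b_1$ and $b_2$, and that it is contained in $b$, forcing equality. Your write-up merely fills in the verifications (partition property, closure under $\wedge$, $\vee$ and complement) that the paper leaves implicit, and all of them are sound.
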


\begin{pf*}{Hint}
These $ a_1 \wedge a_2 $ are the atoms of some finite Boolean
subalgebra $ b_3 $; note that $ b_1 \subset b_3 $ and $ b_2 \subset
b_3 $, but also $ b_3 \subset b $.
\end{pf*}

%
\begin{fact}\label{2e4}
The following four conditions on a Boolean algebra $ B $ are
equivalent:
\begin{eqnarray*}
&\displaystyle\sup_n x_n \mbox{ exists for all }
x_1,x_2,\ldots\in B;&
\\
&\displaystyle\inf_n x_n \mbox{ exists for all }
x_1,x_2,\ldots\in B;&
\\
&\displaystyle\sup_n x_n \mbox{ exists for all }
x_1,x_2,\ldots\in B \mbox{ satisfying } x_1
\le x_2 \le\cdots;&
\\
&\displaystyle\inf_n x_n \mbox{ exists for all }
x_1,x_2,\ldots\in B \mbox{ satisfying } x_1
\ge x_2 \ge\cdots.&
\end{eqnarray*}
\end{fact}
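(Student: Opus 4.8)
The plan is to reduce everything to two elementary facts about a Boolean algebra $B$: complementation $x\mapsto x'$ is an order-reversing involution, and finite joins and meets always exist. Write \textup{(1)--(4)} for the four conditions in the order stated.

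First I would record the De Morgan duality: since $x\le y$ iff $y'\le x'$, for any sequence $x_1,x_2,\dots\in B$ the supremum $\sup_n x_n$ exists if and only if $\inf_n x_n'$ exists, and then $(\sup_n x_n)'=\inf_n x_n'$. Applying this to arbitrary sequences gives \textup{(1)} $\Leftrightarrow$ \textup{(2)}; applying it to monotone sequences (an increasing sequence has a decreasing sequence of complements, and conversely) gives \textup{(3)} $\Leftrightarrow$ \textup{(4)}. Thus it remains only to prove \textup{(1)} $\Leftrightarrow$ \textup{(3)}.

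The implication \textup{(1)} $\Rightarrow$ \textup{(3)} is immediate, an increasing sequence being a special case. For \textup{(3)} $\Rightarrow$ \textup{(1)}, given an arbitrary $x_1,x_2,\dots\in B$ I would pass to the partial joins $y_n=x_1\vee\dots\vee x_n$, which exist as finite joins and satisfy $y_1\le y_2\le\dots$; by \textup{(3)} the supremum $y=\sup_n y_n$ exists. I then claim $y=\sup_n x_n$: since $x_n\le y_n\le y$, $y$ is an upper bound of $\{x_n\}$, and any upper bound $z$ of $\{x_n\}$ satisfies $z\ge x_1\vee\dots\vee x_n=y_n$ for every $n$, hence $z\ge y$; so $y$ is the least upper bound. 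This closes the chain $\textup{(2)}\Leftrightarrow\textup{(1)}\Leftrightarrow\textup{(3)}\Leftrightarrow\textup{(4)}$, establishing the equivalence of all four.

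The argument is entirely routine and presents no genuine obstacle; the one point to state with care is that the passage from an arbitrary sequence to a monotone one must go through finite joins (or, dually, finite meets) rather than directly, since it is precisely the monotone-supremum hypothesis \textup{(3)} that one is entitled to invoke.
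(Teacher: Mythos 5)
Your proof is correct and follows exactly the route indicated by the paper's hint: De Morgan duality $\inf_n x_n=(\sup_n x_n')'$ to pair off the conditions, and the reduction of an arbitrary supremum to that of the increasing sequence of partial joins $x_1\vee\dots\vee x_n$. Nothing to add.
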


\begin{pf*}{Hint}
First, $ \inf_n x_n = ( \sup_n x'_n )' $; second, $ \sup_n x_n
= \sup_n (x_1\vee\cdots\vee x_n) $.
\end{pf*}

A Boolean algebra $ B $ satisfying these equivalent conditions is
called \emph{$\sigma$-complete} (in other words, a Boolean \mbox{$\sigma$-algebra}).

%
\begin{fact}[(\cite{Ha}, \textup{Section~14, Lemma 1})]\label{2e5}
The following two conditions on a Boolean algebra $ B $ are
equivalent:
\begin{longlist}[(a)]
\item[(a)] no uncountable subset $ X \subset B $ satisfies $ x
\wedge y = 0_B
$ for all $ x,y \in B $ (``the~\emph{countable chain condition}'');

\item[(b)] every subset $ X $ of $ B $ has a countable subset $ Y $
such that
$ X $ and $ Y $ have the same set of upper bounds.
\end{longlist}
\end{fact}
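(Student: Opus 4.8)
The plan is to prove the two implications separately; I expect (b) $\Rightarrow$ (a) to be short and (a) $\Rightarrow$ (b) to carry the real content.

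For (b) $\Rightarrow$ (a) I would argue by contraposition. Assume some uncountable $ X \subset B $ consists of pairwise disjoint elements; discarding $ 0_B $ if it occurs, I may take every element of $ X $ to be nonzero. Applying (b) gives a countable $ Y \subset X $ with the same upper bounds as $ X $; since $ Y $ is countable I can pick $ z \in X \setminus Y $. For each $ y \in Y $ we have $ y \wedge z = 0_B $, hence $ y \le z' $, so $ z' $ is an upper bound of $ Y $ and therefore of $ X $; in particular $ z \le z' $, forcing $ z = z \wedge z' = 0_B $, a contradiction. Thus (a) holds.

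For (a) $\Rightarrow$ (b) I would fix $ X \subset B $ and pass to a completion $ \overline B $ of $ B $ (which exists by \cite[Sect.~21]{Ha} and in which $ B $ is dense, i.e.\ every nonzero element of $ \overline B $ dominates some nonzero element of $ B $); in $ \overline B $ all suprema exist. I would then build a transfinite sequence $ (x_\alpha) $ in $ X $ greedily: having chosen $ \{ x_\beta : \beta < \alpha \} $, put $ u_\alpha = \sup_{\beta<\alpha} x_\beta \in \overline B $, and if some $ x \in X $ satisfies $ x \not\le u_\alpha $ take such an $ x $ as $ x_\alpha $, otherwise stop. Whenever $ x_\alpha $ is defined the increment $ d_\alpha = x_\alpha \wedge u_\alpha' $ is nonzero in $ \overline B $, so density yields a nonzero $ b_\alpha \in B $ with $ b_\alpha \le d_\alpha $. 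For $ \beta < \alpha $ one has $ d_\beta \le x_\beta \le u_\alpha $ and $ d_\alpha \le u_\alpha' $, so $ b_\alpha \wedge b_\beta = 0_B $; thus $ \{ b_\alpha \} $ is an antichain of nonzero elements of $ B $. By (a) it is countable, so the recursion halts at a countable ordinal $ \alpha_0 $, and $ Y = \{ x_\beta : \beta < \alpha_0 \} $ is a countable subset of $ X $ with $ \sup_{\overline B} Y = u_{\alpha_0} = \sup_{\overline B} X $. Finally $ u \in B $ bounds $ X $ iff $ u \ge \sup_{\overline B} X = \sup_{\overline B} Y $ iff $ u $ bounds $ Y $, giving (b).

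The main obstacle is controlling the length of this recursion, which is precisely the point at which the countable chain condition is invoked. Making it rigorous calls for care on three linked points: forming the partial suprema $ u_\alpha $ forces the passage to the completion $ \overline B $; applying (a), a statement about $ B $, requires descending from the increments $ d_\alpha \in \overline B $ to genuine nonzero members $ b_\alpha \in B $ by density; and the concluding step must convert the equality $ \sup X = \sup Y $ computed in $ \overline B $ into equality of the sets of upper bounds taken in $ B $.
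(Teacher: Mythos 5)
The paper does not prove this Fact at all; it is quoted verbatim from Halmos (Sect.~14, Lemma~1) and used as a black box, so there is no in-paper argument to compare against. Your proof is correct. The (b) $\Rightarrow$ (a) direction is the standard one. For (a) $\Rightarrow$ (b), the textbook route (Halmos's own) stays entirely inside $B$: take, by Zorn's lemma, a maximal pairwise-disjoint family $A$ of nonzero elements each dominated by some member of $X$; by the countable chain condition $A$ is countable; for each $a \in A$ choose $x_a \in X$ with $a \le x_a$ and set $Y = \{ x_a : a \in A \}$. If $u$ bounds $Y$ but some $x \in X$ has $x \not\le u$, then $x \wedge u' \ne 0_B$ and by maximality $a \wedge x \wedge u' \ne 0_B$ for some $a \in A$, contradicting $a \le x_a \le u$. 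Your version instead passes to the completion $\overline B$ and runs a greedy transfinite exhaustion of $X$, using density of $B$ in $\overline B$ to convert the increments into a disjoint family of nonzero elements of $B$; all the steps check out (in particular the disjointness $b_\alpha \wedge b_\beta \le u_\alpha \wedge u_\alpha' = 0_B$ and the final reduction of upper-bound sets to the single supremum $u_{\alpha_0}$ computed in $\overline B$). What the maximal-antichain proof buys is economy — no completion, no transfinite recursion, just one application of Zorn; what yours buys is a rather transparent picture of $Y$ as a countable exhaustion of $X$ up to supremum. Either is acceptable; if you want to match the machinery the paper already imports (it cites Halmos Sect.~21 for completions only as an aside), the antichain argument is the lighter choice.
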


%
\begin{fact}[(\cite{Ha}, \textup{Section~14, Corollary})]\label{2e6}
If a $\sigma$-complete Boolean algebra satisfies the countable chain
condition,
then it is complete.
\end{fact}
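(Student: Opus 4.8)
The plan is to reduce completeness to the mere existence of all suprema, and to obtain each supremum by trading a general subset for a countable one on which $\sigma$-completeness is available.

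So fix an arbitrary subset $ X \subset B $; the goal is to produce $ \sup X $. By the countable chain condition in the form \ref{2e5}(b), there is a \emph{countable} subset $ Y \subset X $ having exactly the same upper bounds as $ X $. Since $ B $ is \sic\ and $ Y $ is countable, the supremum $ s = \sup Y $ exists by \ref{2e4}.

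It remains to identify $ s $ with $ \sup X $. First, $ s $ is an upper bound of $ Y $; as $ X $ and $ Y $ have the same upper bounds, $ s $ is then an upper bound of $ X $ as well. Second, if $ z $ is any upper bound of $ X $, then $ z $ is an upper bound of $ Y $ (because $ Y \subset X $), whence $ z \ge \sup Y = s $. Thus $ s $ is the least upper bound of $ X $, i.e.\ $ \sup X $ exists and equals $ \sup Y $.

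Consequently every subset of $ B $ admits a supremum, and the matching infima follow by complementation, $ \inf X = \( \sup_{x\in X} x' \)' $ (the identity already used in the hint to \ref{2e4}); so every subset admits an infimum too, and $ B $ is complete. I expect no genuine obstacle here: the argument is immediate once \ref{2e4} and \ref{2e5} are in hand. The only point needing care is the precise use of \ref{2e5}(b), namely that the set $ Y $ it produces is a \emph{subset} of $ X $ (so that every upper bound of $ X $ is automatically an upper bound of $ Y $) and is \emph{countable} (so that the \sic\ hypothesis yields $ \sup Y $).
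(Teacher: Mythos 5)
Your argument is correct and is precisely the route the paper intends: its proof is just the hint ``use \ref{2e5}(b)'', and you have filled in exactly that step --- replace $X$ by a countable $Y\subset X$ with the same upper bounds, take $\sup Y$ by $\sigma$-completeness via \ref{2e4}, identify it with $\sup X$, and get infima by complementation. Nothing to add.
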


\begin{pf*}{Hint}
Use Fact \ref{2e5}(b).
\end{pf*}

\subsection{Measurable functions and equivalence classes}
\label{2f}

Let $ (S,\Si,\mu) $ be a measure space, $ \mu(S) < \infty$. As usual,
we often treat equivalence classes of measurable functions on $ S $ as
just measurable functions, which is harmless as long as only countably
many equivalence classes are considered simultaneously. Otherwise,
dealing with uncountable sets of equivalence classes, we must be
cautious.

All equivalence classes of measurable functions $ S \to[0,1] $ are a
complete lattice. Let $ Z $ be some set of such classes. If $ Z $ is
countable, then its supremum, $ \sup Z $, may be treated naively (as
the pointwise supremum of functions). For an uncountable $ Z $ we have
$ \sup Z = \sup Z_0 $ for some countable $ Z_0 \subset Z $. In
particular, the equality holds whenever $ Z_0 $ is dense in $ Z $
according to the $ L_1 $ metric.

The same holds for functions $ S \to\{0,1\} $ or, equivalently,
measurable sets. Functions $ S \to[0,\infty] $ are also a complete
lattice, since $ [0,\infty] $ can be transformed into $ [0,1] $ by an
increasing bijection.

In the context of \eqref{2d9}, \eqref{2d10} we have
%
%
%
\begin{equation}
\label{2f05} H \Bigl( \inf_{i\in I} E_i \Bigr) =
\bigcap_{i\in I} H(E_i)
\end{equation}
for an arbitrary (not just countable) family of equivalence classes $
E_i $ of measurable sets. Similarly,
%
%
\begin{equation}
\label{2f2} H \Bigl( \sup_{i\in I} E_i \Bigr) = \sup
_{i\in I} H(E_i),
\end{equation}
the closure of the sum of all $ H(E_i) $.

%
\begin{fact}\label{2f3}
For every increasing sequence of measurable functions $ f_n\dvtx\break  S \to
[0,\infty) $ there exist $ n_1 < n_2 < \cdots$ such that almost every
$ s \in S $ satisfies one of two incompatible conditions:
\[
\mbox{either } \lim_n f_n(s) < \infty\mbox{ or } f_{n_k}(s) \ge k
\mbox{ for all $ k $ large enough}
\]
[here ``$ k $ large enough'' means $ k \ge k_0(s) $].
\end{fact}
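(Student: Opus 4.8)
The plan is to separate the points where the sequence converges to a finite limit from those where it diverges, handle the convergent set trivially, and then on the divergence set force the linear growth along a suitably sparse subsequence by a Borel--Cantelli argument. Since $ f_1 \le f_2 \le \cdots $, the pointwise limit $ f(s) = \lim_n f_n(s) = \sup_n f_n(s) $ exists in $ [0,\infty] $ for every $ s $; set $ A = \{ s : f(s) < \infty \} $ and $ B = S \setminus A = \{ s : f(s) = \infty \} $, both measurable. On $ A $ the first alternative holds for every choice of subsequence, so it remains only to arrange the second alternative for almost every $ s \in B $.

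First I would observe that for each fixed $ k $ the sets $ \{ s \in B : f_n(s) < k \} $ decrease as $ n $ grows (because $ f_n $ increases), and their intersection over all $ n $ is empty, since on $ B $ we have $ \sup_n f_n = \infty > k $, so that $ \bigcap_n \{ f_n < k \} \subset \{ f \le k \} \subset A $. By continuity from above for the finite measure $ \mu $, it follows that $ \mu(\{ s \in B : f_n(s) < k \}) \tendsn 0 $. Hence I can choose $ n_1 < n_2 < \cdots $ recursively so that $ \mu(\{ s \in B : f_{n_k}(s) < k \}) < 2^{-k} $ for every $ k $ (at each stage the measure is eventually small, so the constraint $ n_k > n_{k-1} $ costs nothing).

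With $ E_k = \{ s \in B : f_{n_k}(s) < k \} $ we then have $ \sum_k \mu(E_k) < \infty $, so the Borel--Cantelli lemma gives $ \mu(\limsup_k E_k) = 0 $: for almost every $ s \in B $ the point $ s $ lies in only finitely many $ E_k $, that is, $ f_{n_k}(s) \ge k $ for all $ k $ large enough. Combined with the remark about $ A $, almost every $ s $ satisfies one of the two conditions, and the two are plainly incompatible, since the second forces $ f_{n_k}(s) \to \infty $ while the first bounds the whole subsequence.

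The only point requiring care — and the reason the naive observation ``$ f_n \to \infty $ pointwise on $ B $'' does not by itself suffice — is the passage from pointwise divergence to a \emph{single} subsequence that works simultaneously for almost every $ s $. This uniformity is exactly what is supplied by controlling $ \mu(E_k) $ to be summable and then invoking Borel--Cantelli; no Egorov-type argument is needed, only continuity of the finite measure along decreasing sets.
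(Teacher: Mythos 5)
Your proof is correct and follows essentially the same route as the paper: the paper's hint is precisely to choose $n_k$ so that $\sum_k \mu\bigl(\{s: f_{n_k}(s)<k\}\cap\{s:\lim_n f_n(s)=\infty\}\bigr)<\infty$ and (implicitly) apply Borel--Cantelli. You have merely filled in the details the hint leaves out, namely that continuity from above of the finite measure makes such a summable choice possible.
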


\begin{pf*}{Hint}
Take $ n_k $ such that
\[
\sum_k \mu\Bigl( \{ s\dvtx f_{n_k} < k \}
\cap\Bigl\{ s\dvtx\lim_n f_n(s) = \infty\Bigr\}
\Bigr) < \infty.
\]
\upqed\end{pf*}

All said above holds also for a measure class space $ (S,\Si,\M) $
(see Section~\ref{2d}) in place of the measure space $ (S,\Si,\mu)
$.

\section{\texorpdfstring{Convergence of $\sigma$-fields and independence}
{Convergence of sigma-fields and independence}}
\label{sec3}

Throughout this section $ (\Om,\F,P), \La, H $ and $ Q_x $ are as in
Section~\ref{sec2}.

\subsection{Definition of the convergence}
\label{3a}

The strong operator topology on the projection operators $ Q_x $
induces a topology on $ \La$;\vadjust{\goodbreak} we call it the strong operator topology
on $ \La$. It is metrizable (since the strong operator topology is
metrizable on operators of norm $ \le1 $; see \cite{Co}, Section~8, Exercise
1). Thus, for $ x, x_1, x_2, \ldots\in\La$,
\[
x_n \to x \mbox{ means } \forall f \in H \| Q_{x_n} f -
Q_x f \| \mathop{\longrightarrow}^{n\to\infty}0.
\]

On the other hand we have the monotone convergence derived from the
partial order on $ \La$,
\begin{eqnarray*}
&\displaystyle x_n \downarrow x \mbox{ means } x_1 \ge x_2
\ge\cdots\mbox{ and } \inf_n x_n = x,&
\\
&\displaystyle x_n \uparrow x \mbox{ means } x_1 \le x_2 \le
\cdots\mbox{ and } \sup_n x_n = x.&
\end{eqnarray*}

By Fact \ref{2b25},
%
%
%
\begin{equation}
\label{3a1} x_n \downarrow x \mbox{ implies } x_n \to x;
\mbox{also, } x_n \uparrow x \mbox{ implies } x_n \to x.
\end{equation}

\subsection{\texorpdfstring{Commuting $\sigma$-fields}
{Commuting sigma-fields}}
\label{3b}

%
\begin{definition}\label{3b1}
Elements $ x,y \in\La$ are \emph{commuting},
if $ Q_x Q_y = Q_y Q_x $. A~subset of $ \La$ is \emph{commutative},
if its elements are pairwise commuting.
\end{definition}

By \eqref{2a6},
%
%
%
\begin{equation}
\label{3b2} \mbox{every linearly ordered subset of $ \La$ is commutative.}
\end{equation}
By Fact \ref{2b2},
%
%
%
\begin{eqnarray}
\label{3b3} &&\mbox{if } x_n \to x, y_n \to y,
\nonumber\\
&&\qquad\mbox{and for every $n$ the two elements } x_n, y_n
\mbox{ are commuting},
\\
&&\qquad\mbox{then } x,y \mbox{ are commuting.}\nonumber
\end{eqnarray}
In particular,
%
%
%
\begin{equation}
\label{3b4} \mbox{the closure of a commutative set is commutative.}
\end{equation}
It follows from Fact \ref{2c5}, or just \eqref{2a8}, that
%
%
%
\begin{equation}
\label{3b5} \mbox{if } x,y \in\La\mbox{ are commuting then } Q_x
Q_y = Q_{x\wedge y}.
\end{equation}

Recall $ \liminf_n x_n $ for $ x_n \in\La$ defined in Section~\ref{1c}.

%
\begin{lemma}\label{3b6}
If $ x_n \in\La$ are pairwise commuting and $ x_n \to x $, then\break
$ \liminf_k x_{n_k} = x $ for some $ n_1 < n_2 < \cdots$.
\end{lemma}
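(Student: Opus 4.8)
The plan is to reduce everything to a countably generated commutative von Neumann algebra and work there with honest measurable sets. First, since the $x_n$ pairwise commute and $x_n\to x$, applying \eqref{3b3} with the constant sequence $y_n=x_m$ shows that $x$ commutes with every $x_m$; hence $Q_x,Q_{x_1},Q_{x_2},\dots$ form a commuting family of projections, and I let $\A$ be the (commutative) von Neumann algebra they generate, so that $Q_x,Q_{x_n}\in\A$. By \ref{2d1} and \eqref{2d13} I fix an isomorphism $\al:\A\to L_\infty(S,\Si,\M)$. Each $Q_{x_n}$ is a projection, so $\al(Q_{x_n})=\One_{E_n}$ for some $E_n\in\Si$, and likewise $\al(Q_x)=\One_E$.

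Next, $Q_{x_n}\to Q_x$ strongly, so by \ref{2d2} we have $\One_{E_n}\to\One_E$ in measure, i.e. $\mu(E_n\triangle E)\to0$ for a representative $\mu\in\M$. I then extract $n_1<n_2<\cdots$ with $\sum_k\mu(E_{n_k}\triangle E)<\infty$; a Borel--Cantelli argument gives $\One_{E_{n_k}}\to\One_E$ almost everywhere, whence the set-theoretic lower limit $\liminf_k E_{n_k}=\bigcup_m\bigcap_{k\ge m}E_{n_k}$ coincides with $E$ up to a null set.

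The heart of the argument is to match $\liminf_k x_{n_k}=\sup_m\inf_{k\ge m}x_{n_k}$ with this set-theoretic lower limit. Writing $y_m=\inf_{k\ge m}x_{n_k}$, Fact \ref{2a9} gives $H_{y_m}=\bigcap_{k\ge m}H_{x_{n_k}}$; using commutativity, $Q_{y_m}$ is the decreasing strong limit (as $M\to\infty$) of the products $Q_{x_{n_m}}\cdots Q_{x_{n_M}}=Q_{x_{n_m}\wedge\cdots\wedge x_{n_M}}$ (by repeated use of \eqref{3b5} and \ref{2b25}). Since $\al$ preserves products, order and sequential strong limits (\ref{2d17}, \eqref{2d11}), this yields $\al(Q_{y_m})=\One_{\bigcap_{k\ge m}E_{n_k}}$. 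As the sequence $y_m$ increases, \eqref{2b4} gives $Q_{y_m}\uparrow Q_{\liminf_k x_{n_k}}$, and applying $\al$ once more (\eqref{2d11}) gives $\al\big(Q_{\liminf_k x_{n_k}}\big)=\sup_m\One_{\bigcap_{k\ge m}E_{n_k}}=\One_{\liminf_k E_{n_k}}=\One_E=\al(Q_x)$. Hence $Q_{\liminf_k x_{n_k}}=Q_x$, and \eqref{2a7} concludes $\liminf_k x_{n_k}=x$.

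The main obstacle is precisely this middle step: the infima and suprema defining $\liminf$ are taken in $\La$, where (as warned after Fact \ref{2a9}) the operation $\vee$ need not agree with the closure of the sum of ranges, so a priori these lattice operations have nothing to do with set operations on the $E_n$. Commutativity is exactly what rescues the identification — it forces the finite meets to be products of projections, which stay inside $\A$, so that $\al$ transports countable meets to genuine intersections; without it the equality $\al(Q_{y_m})=\One_{\bigcap_{k\ge m}E_{n_k}}$ would fail. The only additional care needed is to keep all the families countable, so that the measure-theoretic suprema and infima behave naively (Sect.~\ref{2f}).
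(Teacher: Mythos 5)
Your proposal is correct and follows essentially the same route as the paper's proof: pass to the commutative von Neumann algebra generated by the commuting projections, use the isomorphism with $L_\infty$ to turn strong convergence into convergence in measure, extract an a.e.\ convergent subsequence, and identify the lattice $\liminf$ with the set-theoretic one via \eqref{3b5} and preservation of monotone limits under $\al$. Your version merely spells out more explicitly the steps the paper compresses (adding $Q_x$ to the generators via \eqref{3b3}, the Borel--Cantelli extraction, and the two monotone passages to the limit).
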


\begin{pf}
The commuting projection operators $ Q_{x_n} $ generate a commutative
von Neumann algebra; by Fact \ref{2d1} this algebra is isomorphic to the
algebra $ L_\infty$ on some measure space (of finite
measure). Denoting the isomorphism by\vadjust{\goodbreak} $ \al$ we have $ \al(Q_{x_n}) =
\One_{E_n} $, $ \al(Q_x) = \One_E $ (indicators of some measurable
sets $ E_n, E $). Using \eqref{3b5} we get
\[
\al( Q_{x_m \wedge x_n} ) = \One_{E_m \cap E_n}
\]
for all $ m,n $; the same holds for more than two indices.

The strong convergence of operators $ Q_{x_n} \to Q_x $ implies by
Fact \ref{2d2} convergence in measure of indicators, $ \One_{E_n} \to
\One_E $. We choose a subsequence convergent almost everywhere, $
\One_{E_{n_k}} \to\One_E $, then $ \liminf_k \One_{E_{n_k}} = \One_E
$, that is,
\[
\sup_k \inf_i \One_{E_{n_{k+i}}} =
\One_E.
\]
We have $ \al( Q_{x_{n_k} \wedge x_{n_{k+1}} \wedge\cdots\wedge
x_{n_{k+i}}} ) = \One_{E_{n_k} \cap E_{n_{k+1}} \cap\cdots\cap
E_{n_{k+i}}} $, therefore (for $ i \to\infty$), $
\al( Q_{\inf_i x_{n_{k+i}}} ) = \inf_i \One_{E_{n_{k+i}}} $, and
further (for $ k \to\infty$), $ \al( Q_{ \sup_k \inf_i x_{n_{k+i}}}
) =  \sup_k \inf_i \One_{E_{n_{k+i}}} $. We get $ \al( Q_{\liminf_k
x_{n_k}} ) = \liminf_k \One_{E_{n_k}} = \One_E = \al(Q_x) $, therefore
$ \liminf_k x_{n_k} = x $.
\end{pf}

%
\begin{proposition}\label{3b7}
Assume that a set $ B \subset\La$ is commutative, and $ x \wedge y
\in B $ for all $ x,y \in B $. Then the set
\[
\Cl(B) = \Bigl\{ \liminf_n x_n\dvtx x_1,x_2,\ldots\in B \Bigr\}
\]
(lower limits of all sequences of elements of $ B $) is equal to the
topological closure of $ B $.
\end{proposition}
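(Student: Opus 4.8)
The statement to prove is Proposition 3b7: if $B \subset \La$ is commutative and closed under $\wedge$, then $\Cl(B) = \{\liminf_n x_n : x_n \in B\}$ equals the topological closure of $B$ in the strong operator topology.

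I would prove the two inclusions separately. For the inclusion $\Cl(B) \subset \overline{B}$ (topological closure): given $x_1, x_2, \dots \in B$, set $y_k = \inf_i x_{k+i} = \inf_{n \ge k} x_n$ and $x = \liminf_n x_n = \sup_k y_k$. Since $B$ is closed under $\wedge$, each finite infimum $x_k \wedge \dots \wedge x_{k+i}$ lies in $B$; these decrease to $y_k$, so by \eqref{3a1} they converge to $y_k$, placing $y_k \in \overline{B}$. Then $y_1 \le y_2 \le \dots$ with $\sup_k y_k = x$, so again by \eqref{3a1} we get $y_k \to x$, hence $x \in \overline{B}$ as well (the closure is closed). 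The commutativity is what guarantees we stay inside a regime where these monotone limits behave well, but for this direction only \eqref{3a1} and closure-under-$\wedge$ are truly needed.

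For the reverse inclusion $\overline{B} \subset \Cl(B)$: let $x \in \overline{B}$, so there is a sequence $x_n \in B$ with $x_n \to x$ in the strong operator topology. The elements of $B$ are pairwise commuting by hypothesis, and $x$, being a limit of commuting elements, commutes with all of them by \eqref{3b3}; thus $\{x, x_1, x_2, \dots\}$ is commutative. Now Lemma \ref{3b6} applies directly: since the $x_n$ are pairwise commuting and $x_n \to x$, there is a subsequence with $\liminf_k x_{n_k} = x$. This exhibits $x$ as a $\liminf$ of a sequence from $B$, so $x \in \Cl(B)$.

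The main obstacle, and the place where the real content sits, is the reverse inclusion, which rests entirely on Lemma \ref{3b6}; that lemma in turn is where commutativity is essential, since it lets the $Q_{x_n}$ generate a commutative von Neumann algebra isomorphic to an $L_\infty$, transporting strong convergence to convergence in measure and allowing an almost-everywhere convergent subsequence whose pointwise $\liminf$ of indicators matches $\One_E$. The forward inclusion is comparatively routine monotone-limit bookkeeping. I would also verify the harmless detail that $\Cl(B)$ is stable under the $\liminf$ construction only insofar as needed here; but the statement itself only asserts set equality with $\overline{B}$, so once both inclusions are in hand the proof is complete.
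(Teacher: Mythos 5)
Your proof is correct and follows essentially the same route as the paper: the inclusion $\overline{B}\subset\Cl(B)$ via Lemma \ref{3b6} applied to a convergent sequence of pairwise commuting elements, and the inclusion $\Cl(B)\subset\overline{B}$ by writing $\liminf_n x_n=\sup_n\inf_k x_{n+k}$ and using \eqref{3a1} (your version merely spells out, via closure under $\wedge$, why each $\inf_k x_{n+k}$ already lies in the topological closure, a step the paper leaves implicit).
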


\begin{pf}
On one hand, if $ x_n \to x $, then $ x \in\Cl(B) $ by Lemma \ref
{3b6}. On
the other hand, $ \liminf x_n = \sup_n \inf_k x_{n+k} $ belongs to the
topological closure by~\eqref{3a1}.
\end{pf}

%
\begin{proposition}\label{3b8}
Let $ x_n, y_n, x, y \in\La$, $ x_n \to x $, $ y_n \to y $, and for
each $ n $ (separately), $ x_n, y_n $ commute. Then $ x_n \wedge y_n
\to x \wedge y $.
\end{proposition}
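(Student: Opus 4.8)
The plan is to reduce the meet to a product of projections and then pass to the limit, exploiting the good behaviour of products under strong operator convergence. The starting observation is that for each fixed $n$ the elements $x_n, y_n$ commute, so by \eqref{3b5} the projection onto $H_{x_n \wedge y_n}$ factorizes as a genuine product of the two commuting projections,
\[
Q_{x_n \wedge y_n} = Q_{x_n} Q_{y_n} \, .
\]
This is the crucial reduction: it trades the meet $x_n \wedge y_n$ (an operation that is generally discontinuous in the strong operator topology) for an operator product, for which we have a continuity statement at our disposal.

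Next I would invoke \ref{2b1}. Since $x_n \to x$ and $y_n \to y$ mean precisely $Q_{x_n} \to Q_x$ and $Q_{y_n} \to Q_y$ strongly, \ref{2b1} yields $Q_{x_n} Q_{y_n} \to Q_x Q_y$ strongly. Combined with the display above, this already gives $Q_{x_n \wedge y_n} \to Q_x Q_y$, so the whole proposition comes down to identifying the limiting operator $Q_x Q_y$ as $Q_{x \wedge y}$.

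For that identification I first need to know that the limits $x, y$ themselves commute. This is supplied by \eqref{3b3} (an instance of \ref{2b2}): each pair $x_n, y_n$ commutes and $x_n \to x$, $y_n \to y$, whence $x, y$ commute. Once commutativity of the limit pair is in hand, \eqref{3b5} applies to $x, y$ and gives $Q_x Q_y = Q_{x \wedge y}$. Putting the three steps together, $Q_{x_n \wedge y_n} \to Q_x Q_y = Q_{x \wedge y}$, which by the definition of the strong operator topology on $\La$ is exactly the assertion $x_n \wedge y_n \to x \wedge y$.

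The argument carries essentially no computational content; the one place that genuinely requires care — and the conceptual heart of the matter — is the commutativity of the limit pair. Without it, $Q_x Q_y$ need not even be a projection, so the identification $Q_x Q_y = Q_{x \wedge y}$ would be meaningless, and indeed the conclusion of the proposition can fail for non-commuting families. It is therefore \eqref{3b3} that does the real work, transporting the per-$n$ commutation hypothesis to the limit and thereby licensing the final application of \eqref{3b5}.
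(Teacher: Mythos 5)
Your proof is correct and uses exactly the same ingredients as the paper's: \eqref{3b5} to write each meet as a product of commuting projections, \ref{2b1} for strong convergence of products, and \eqref{3b3} to transfer commutativity to the limit pair so that \eqref{3b5} identifies $Q_xQ_y$ as $Q_{x\wedge y}$. The only difference is the order in which the steps are presented.
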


\begin{pf}
By \eqref{3b3}, $ Q_x Q_y = Q_y Q_x $. By \eqref{3b5}, $ Q_{x\wedge
y} = Q_x Q_y $. Similarly, $ Q_{x_n \wedge y_n} = Q_{x_n} Q_{y_n}
$. Using Fact \ref{2b1} we get $ Q_{x_n \wedge y_n} \to Q_{x\wedge y} $,
that is, $ x_n \wedge y_n \to x \wedge y $.
\end{pf}

\subsection{\texorpdfstring{Independent $\sigma$-fields}
{Independent sigma-fields}}
\label{3c}

%
\begin{proposition}\label{3c1}
The following two conditions on $ x,y \in\La$ are equivalent:
\begin{longlist}[(a)]
\item[(a)] $ x,y $ are independent;

\item[(b)] $ x,y $ are commuting, and $ x \wedge y = 0_\La$.
\end{longlist}
\end{proposition}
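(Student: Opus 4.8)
The plan is to reduce the whole equivalence to a single operator identity, namely $Q_x Q_y = Q_0$, where $Q_0$ is the orthogonal projection onto the constants (Notation \ref{2a3}). The bridge between this identity and the lattice operation $x\wedge y$ is \eqref{3b5}: once $Q_x$ and $Q_y$ commute, $Q_x Q_y = Q_{x\wedge y}$. So the condition ``$x\wedge y=0_\La$ together with commutativity'' is exactly ``$Q_x Q_y = Q_{0_\La} = Q_0$'', while independence will be shown to be the same operator identity read through indicator functions. Since $Q_0$ is self-adjoint, $Q_x Q_y = Q_0$ forces $Q_y Q_x = (Q_x Q_y)^{*} = Q_0$ as well, so the identity automatically delivers commutativity; this is what makes the reduction efficient.

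For (b)$\Rightarrow$(a) I would argue as follows. Commutativity gives, via \eqref{3b5}, $Q_x Q_y = Q_{x\wedge y}$, and since $x\wedge y = 0_\La$ this equals $Q_0$. Now fix $X\in x$ and $Y\in y$; then $\One_X\in H_x$ and $\One_Y\in H_y$, so $Q_x\One_X=\One_X$ and $Q_y\One_Y=\One_Y$. Using that $Q_x$ is an orthogonal (hence self-adjoint) projection, $P(X\cap Y)=\ip{\One_X}{\One_Y}=\ip{Q_x\One_X}{Q_y\One_Y}=\ip{\One_X}{Q_xQ_y\One_Y}=\ip{\One_X}{Q_0\One_Y}$, and $Q_0\One_Y=(\Ex\One_Y)\One=P(Y)\One$, whence $P(X\cap Y)=P(Y)\ip{\One_X}{\One}=P(X)P(Y)$. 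As $X,Y$ were arbitrary, $x$ and $y$ are independent.

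For (a)$\Rightarrow$(b) the key step is to establish $Q_xQ_y=Q_0$ from independence. I would show $Q_x g=Q_0 g$ for every $g\in H_y$: such a $g$ is a $\sigma$-field-$y$-measurable function, hence independent of $x$, so $\cE g x=\Ex g=Q_0 g$ directly from the defining property of conditional expectation and independence. Since $Q_y f\in H_y$ for all $f$ and $Q_0 Q_y=Q_0$ (as $0_\La\le y$, by \eqref{2a6}), this gives $Q_xQ_y f=Q_x(Q_y f)=Q_0(Q_y f)=Q_0 f$, i.e.\ $Q_xQ_y=Q_0$. Taking adjoints yields $Q_yQ_x=Q_0$ too, so $x,y$ commute; then \eqref{3b5} gives $Q_{x\wedge y}=Q_xQ_y=Q_0=Q_{0_\La}$, and \eqref{2a7} forces $x\wedge y=0_\La$.

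The only genuine content, and the place to be careful, is the translation between the measure-theoretic definition of independence and the operator identity $Q_xQ_y=Q_0$. The computation in (b)$\Rightarrow$(a) works one pair $(X,Y)$ at a time and needs nothing beyond self-adjointness and $\ip{\One_X}{\One_Y}=P(X\cap Y)$; conversely, in (a)$\Rightarrow$(b) the essential input is the classical fact that conditioning a $y$-measurable function on the independent $\sigma$-field $x$ returns its mean. I do not expect a serious obstacle: once $Q_xQ_y=Q_0$ is recognized as the common hub, both directions are short, with \eqref{3b5}, \eqref{2a6}, \eqref{2a7} and Notation \ref{2a3} supplying all the bookkeeping.
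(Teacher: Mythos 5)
Your proof is correct and follows essentially the same route as the paper: both reduce the equivalence to the operator identity $Q_xQ_y=Q_0=Q_yQ_x$, obtained from independence via the fact that conditioning a measurable function of one $\sigma$-field on an independent one returns its mean, and then translated back via \eqref{3b5} and the inner-product computation with indicators. The only cosmetic difference is that you derive $Q_yQ_x=Q_0$ by taking adjoints, whereas the paper invokes the symmetry of independence directly; both are fine.
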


\begin{pf}
(a) \imp(b): independence of $ x,y $ implies $ \mathbb{E}(f |y ) =
\Ex f $ for all $ f \in L_2(x) $, that is, $ Q_y f = \langle f,\One
\rangle\One$ for $ f \in H_x $, and therefore $ Q_y Q_x = Q_0 = Q_x
Q_y $; use \eqref{3b5}.\vadjust{\goodbreak}

(b) \imp(a): by \eqref{3b5}, $ Q_y Q_x = Q_0 = Q_x Q_y $; thus $ Q_y
f = \langle f,\One\rangle\One$ for $ f \in H_x $, and therefore $
P ( A
\cap B ) = \langle\One_A,\One_B \rangle= \langle\One_A,Q_y
\One_B \rangle
= \langle Q_y \One_A,\One_B \rangle= \langle\One_A,\One
\rangle\times \langle\One,\One_B \rangle
= P(A) P(B) $ for all $ A \in x $, $ B \in y $.
\end{pf}

It may happen that $ x \wedge y = 0 $ but $ x,y $ are not
commuting. (In particular, it may happen that $ x,y $ are independent
w.r.t. some measure equivalent to $ P $, but not w.r.t. $ P $.)

%
\begin{corollary}
If $ x_n \to x $, $ y_n \to y $, and $ x_n,y_n $ are independent for
each~$ n $ (separately), then $ x,y $ are independent.
\end{corollary}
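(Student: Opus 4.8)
The plan is to reduce the claim entirely to the characterization of independence just established in Proposition~\ref{3c1}: two elements of $\La$ are independent precisely when they commute and their meet is $0_\La$. Accordingly, I would aim to verify these two properties for the limits $x,y$, deriving each from the corresponding property of the approximating pairs $x_n,y_n$, and then read off independence of $x,y$ from Proposition~\ref{3c1} in the reverse direction.

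The first step is to unpack what independence of $x_n,y_n$ gives at each finite stage. Applying Proposition~\ref{3c1} to the pair $(x_n,y_n)$, independence yields \emph{simultaneously} that $x_n,y_n$ commute and that $x_n\wedge y_n=0_\La$. These are exactly the two ingredients that the convergence results of Section~\ref{3b} are built to transport to the limit, so the rest of the argument is a matter of invoking them.

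For commutation of the limits I would invoke \eqref{3b3}: since $x_n\to x$, $y_n\to y$, and each pair $x_n,y_n$ commutes, the limits $x,y$ commute. For the meet I would apply Proposition~\ref{3b8}, whose hypotheses—the same two convergences together with commutation of each pair $x_n,y_n$—are now in force; it gives $x_n\wedge y_n\to x\wedge y$. But the sequence $x_n\wedge y_n$ is identically $0_\La$, whose strong limit is $0_\La$, so by uniqueness of the limit (and \eqref{2a7}) we conclude $x\wedge y=0_\La$. Having both commutation and trivial meet, Proposition~\ref{3c1} in the direction (b)~\imp~(a) delivers that $x,y$ are independent.

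There is no genuinely hard step left: the substance has already been absorbed into Propositions~\ref{3b8} and~\ref{3c1}. If anything merits emphasis, it is the one observation that stagewise independence is exactly what authorizes the use of Proposition~\ref{3b8} in the first place—that proposition demands commutation of each pair $x_n,y_n$, and it is the independence hypothesis, routed through Proposition~\ref{3c1}, that supplies this commutation in addition to forcing $x_n\wedge y_n=0_\La$. One should only take minimal care that the meet $x\wedge y$ is computed via the convergence $x_n\wedge y_n\to x\wedge y$ rather than by any naive lattice manipulation, since $\La$ is not distributive.
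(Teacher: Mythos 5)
Your proposal is correct and follows exactly the paper's own argument: Proposition \ref{3c1} applied at each stage yields commutation and $x_n\wedge y_n=0_\La$, \eqref{3b3} and Proposition \ref{3b8} transport these to the limit, and Proposition \ref{3c1} in the reverse direction concludes. Nothing is missing.
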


\begin{pf}
By Proposition \ref{3c1}, $ x_n,y_n $ are commuting, and $ x_n \wedge
y_n = 0_\La$.
By~\eqref{3b3}, $ x,y $ are commuting. By Proposition \ref{3b8}, $ x
\wedge y =
0_\La$. By Proposition~\ref{3c1} (again), $ x,y $ are independent.
\end{pf}

\subsection{\texorpdfstring{Product $\sigma$-fields}
{Product sigma-fields}}
\label{3d}

For every $ x \in\La$ the triple $ (\Om,x,P|_x) $ is also a
probability space, and it may be used similarly to $ (\Om,\F,P) $,
giving the complete lattice $ \La(\Om,x,P|_x) $, endowed with the
topology, etc. This lattice is naturally embedded into $ \La$,
\[
\La(\Om,x,P|_x) = \{ y \in\La\dvtx y \le x \}.
\]
The lattice operations ($ \wedge$, $ \vee$), defined on $
\La(\Om,x,P|_x) $, do not differ from these induced from $ \La$
(which is evident); also the topology, defined on $ \La(\Om,x,P|_x) $,
does not differ from the topology induced from $ \La$ (which follows
easily from the equality $ Q_y = Q_y^{(x)} Q_x $ for $ y \le x $; see
Notation \ref{2c2} for~$ Q_y^{(x)} $). Thus it is correct to define $
\La_x $,
as a lattice and topological space,\footnote{%
Not ``topological lattice'' since the lattice operations are
generally not continuous.}
by
\[
\La(\Om,x,P|_x) = \La_x = \{ y \in\La\dvtx y \le x \}
\subset\La.
\]

Given $ x,y \in\La$, the product set $ \La_x \times\La_y $
carries the product topology and the product partial order, and is
again a lattice (see \cite{DP}, Section~2.15, for the product of two
lattices), moreover, a complete lattice (see \cite{DP}, Exercise
2.26(ii)).

On the other hand, for independent $ x,y \in\La$ we introduce
\[
\La_{x,y} = \{ u \vee v\dvtx u \le x, v \le y \} \subset
\La_{x\vee y}.
\]
Generally, $ \La_{x,y} $ is only a small part of $ \La_{x\vee y} $;
indeed, a sub-$\sigma$-field on the product of two probability spaces is
generally not a product of two sub-$\sigma$-fields. This fact is a
manifestation of nondistributivity of the lattice $ \La$; the
equality
\[
( x \wedge z ) \vee( y \wedge z ) = ( x \vee y ) \wedge z
\]
fails whenever $ z \in\La_{x\vee y} \setminus\La_{x,y} $.

%
\begin{lemma}\label{3d1}
Every element of $ \La_{x,y} $ is commuting with $ x $ (and $ y $).
\end{lemma}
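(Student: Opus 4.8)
The plan is to reduce the asserted commutation $Q_x Q_{u\vee v} = Q_{u\vee v} Q_x$ (for $u \le x$, $v \le y$) to a commutation of \emph{restricted} operators on the subspace $H_{x\vee y}$, where the independence of $x$ and $y$ supplies a tensor factorization.

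First I would isolate the two projections from the rest of $H$. Since $u \vee v \le x\vee y$ and $x \le x\vee y$, formula \eqref{2a5} gives $H_x, H_{u\vee v} \subseteq H_{x\vee y}$, and dually $H_{x\vee y}^\perp \subseteq H_x^\perp \cap H_{u\vee v}^\perp$. Hence both $Q_x$ and $Q_{u\vee v}$ map $H_{x\vee y}$ into itself and annihilate $H_{x\vee y}^\perp$; they are block operators for the orthogonal decomposition $H = H_{x\vee y}\oplus H_{x\vee y}^\perp$, trivial on the second block. Consequently $Q_x Q_{u\vee v}$ and $Q_{u\vee v}Q_x$ both vanish on $H_{x\vee y}^\perp$ and, on $H_{x\vee y}$, coincide with the products of the restricted operators. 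Thus it suffices to prove that the restrictions of $Q_x$ and $Q_{u\vee v}$ to $H_{x\vee y}$ commute.

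Next I would invoke independence. By \ref{2c1} identify $H_{x\vee y}$ with $H_x\otimes H_y$. Then \ref{2c3} applied to the pair $(u,v)$ expresses the restriction of $Q_{u\vee v}$ as $Q_u^{(x)}\otimes Q_v^{(y)}$, while \ref{2c3} applied to the pair $(x,0_\La)$ (where $x\vee 0_\La = x$ and $Q_x^{(x)}=I$) expresses the restriction of $Q_x$ as $I\otimes Q_0^{(y)}$. Since $0_\La \le v \le y$, formula \eqref{2a6} yields $Q_0^{(y)}Q_v^{(y)} = Q_0^{(y)} = Q_v^{(y)}Q_0^{(y)}$, so both $(Q_u^{(x)}\otimes Q_v^{(y)})(I\otimes Q_0^{(y)})$ and $(I\otimes Q_0^{(y)})(Q_u^{(x)}\otimes Q_v^{(y)})$ equal $Q_u^{(x)}\otimes Q_0^{(y)}$. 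Hence the restrictions commute, and by the previous step $u\vee v$ commutes with $x$. Interchanging the roles of $x$ and $y$ (using the pair $(0_\La,y)$ to write the restriction of $Q_y$ as $Q_0^{(x)}\otimes I$) gives commutation with $y$ as well.

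The main obstacle will be the passage from $H_{x\vee y}$ to all of $H$: the factorizations of \ref{2c1} and \ref{2c3} are available only on $H_{x\vee y}$, so one cannot argue purely inside the tensor product. The block-diagonal observation of the first step is precisely what bridges this gap, since both projections kill $H_{x\vee y}^\perp$; everything else is a routine manipulation of tensor products of restricted projections.
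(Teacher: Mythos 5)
Your proof is correct and follows essentially the same route as the paper: both factorize $Q_{u\vee v}$ as $Q_u^{(x)}\otimes Q_v^{(y)}$ and $Q_x$ as $Q_x^{(x)}\otimes Q_0^{(y)}$ via \ref{2c3} and observe that the factors commute. Your preliminary block-decomposition of $H=H_{x\vee y}\oplus H_{x\vee y}^{\perp}$ merely makes explicit a reduction the paper leaves implicit, and is a welcome clarification rather than a different argument.
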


\begin{pf}
By Fact \ref{2c3}, treating $ H_{x\vee y} $ as $ H_x \otimes H_y $ we have
$ Q_{u\vee v} = Q_u^{(x)} \otimes Q_v^{(y)} $ whenever $ u \le x $, $
v \le y $. Also, $ Q_x = Q_x^{(x)} \otimes Q_0^{(y)} $. By
\eqref{3b2}, $ Q_u^{(x)} $ and $ Q_x^{(x)} $ are commuting; the same
holds for $ Q_v^{(y)} $ and $ Q_0^{(y)} $. Therefore $ Q_{u\vee v} $
and $ Q_x $ are commuting.
\end{pf}

%
\begin{theorem}\label{3d2}
If $ x,y \in\La$ are independent, then $ \La_{x,y} $ is a closed
subset of~$ \La$, the maps
\begin{eqnarray*}
&\La_x \times\La_y \ni(u,v) \mapsto u \vee v \in
\La_{x,y},&
\\
&\La_{x,y} \ni z \mapsto( x \wedge z, y \wedge z ) \in
\La_x \times\La_y&
\end{eqnarray*}
are mutually inverse bijections, and each of them is both an
isomorphism of lattices and a homeomorphism of topological spaces.
\end{theorem}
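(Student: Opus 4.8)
The plan is to separate the order-theoretic content from the topological content, using independence only through the tensor factorization $H_{x\vee y}=H_x\otimes H_y$ of Fact~\ref{2c1} and the commutation Lemma~\ref{3d1}. Write $\phi(u,v)=u\vee v$ and $\psi(z)=(x\wedge z,\,y\wedge z)$. The special case \eqref{2c8} of Fact~\ref{2c7} gives $(u\vee v)\wedge x=u$ and $(u\vee v)\wedge y=v$, whence $\psi\circ\phi=\mathrm{id}$ on $\La_x\times\La_y$; and for $z=u\vee v\in\La_{x,y}$ one gets $\phi(\psi(z))=((u\vee v)\wedge x)\vee((u\vee v)\wedge y)=u\vee v=z$, so $\phi\circ\psi=\mathrm{id}$. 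Thus $\phi$ and $\psi$ are mutually inverse bijections. Both are evidently monotone, hence order isomorphisms. To see that $\La_{x,y}$ is a sublattice of $\La$ and that these maps respect $\wedge,\vee$, I would record the two identities $(u_1\vee v_1)\vee(u_2\vee v_2)=(u_1\vee u_2)\vee(v_1\vee v_2)$ and, by Fact~\ref{2c7}, $(u_1\vee v_1)\wedge(u_2\vee v_2)=(u_1\wedge u_2)\vee(v_1\wedge v_2)$: they show simultaneously that $\La_{x,y}$ is closed under the join and meet of $\La$ and that $\phi$ carries the coordinatewise operations of $\La_x\times\La_y$ to them, i.e. $\phi$ (and so $\psi$) is a lattice isomorphism.

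For continuity of $\phi$ I would work in the tensor picture. By Fact~\ref{2c3}, $Q_{u\vee v}=Q_u^{(x)}\otimes Q_v^{(y)}$ on $H_{x\vee y}=H_x\otimes H_y$. If $u_n\to u$ in $\La_x$ and $v_n\to v$ in $\La_y$, then $Q_{u_n}^{(x)}\to Q_u^{(x)}$ on $H_x$ and $Q_{v_n}^{(y)}\to Q_v^{(y)}$ on $H_y$ (the topology of $\La_x$ being the one induced from $\La$, via $Q_y=Q_y^{(x)}Q_x$), so Fact~\ref{2b5} gives $Q_{u_n}^{(x)}\otimes Q_{v_n}^{(y)}\to Q_u^{(x)}\otimes Q_v^{(y)}$ strongly on $H_{x\vee y}$. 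Since $u_n\vee v_n$ and $u\vee v$ all lie below $x\vee y$, their projections factor through $Q_{x\vee y}$, and this strong convergence on the factor $H_{x\vee y}$ propagates to all of $H$; hence $u_n\vee v_n\to u\vee v$ and $\phi$ is continuous.

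Continuity of $\psi$ and closedness both rest on Lemma~\ref{3d1}. For continuity: if $z_n\to z$ in $\La_{x,y}$, then $z_n$ commutes with the constant sequence $x$ for every $n$, so Proposition~\ref{3b8} yields $x\wedge z_n\to x\wedge z$, and likewise $y\wedge z_n\to y\wedge z$; thus $\psi$ is continuous, and together with the previous paragraph both maps are homeomorphisms. For closedness, take $z_n\in\La_{x,y}$ with $z_n\to z$ in $\La$ and set $u_n=x\wedge z_n$, $v_n=y\wedge z_n$, so $z_n=u_n\vee v_n$ by \eqref{2c8}. Each $z_n$ commutes with $x$ and $y$ by Lemma~\ref{3d1}, so Proposition~\ref{3b8} gives $u_n\to u:=x\wedge z\le x$ and $v_n\to v:=y\wedge z\le y$; continuity of $\phi$ then gives $u_n\vee v_n\to u\vee v$, i.e. $z_n\to u\vee v$, and uniqueness of limits in the metrizable topology forces $z=u\vee v\in\La_{x,y}$.

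The main obstacle is the continuity of $\phi$. One might hope the whole statement is routine lattice bookkeeping, but $\La$ is not distributive, so independence must be used honestly: the argument is forced through the factorization $H_{x\vee y}=H_x\otimes H_y$, and the one genuinely delicate point is that strong operator convergence, once obtained on the factor $H_{x\vee y}$, carries over to the entire space $H$. By comparison, the continuity of $\psi$ and the closedness are soft, since Lemma~\ref{3d1} reduces them directly to Proposition~\ref{3b8}.
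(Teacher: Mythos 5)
Your proposal is correct and follows essentially the same route as the paper: mutually inverse bijections via \eqref{2c8}, lattice isomorphism via \ref{2c7}, continuity of $(u,v)\mapsto u\vee v$ via \ref{2c3} and \ref{2b5} in the tensor picture, and continuity of $z\mapsto(x\wedge z,y\wedge z)$ together with closedness via Lemma~\ref{3d1} and Proposition~\ref{3b8}. The only addition is your explicit remark that strong convergence on $H_{x\vee y}$ propagates to $H$, which the paper handles once and for all in Sect.~\ref{3d} by noting that the topology on $\La_{x\vee y}$ induced from $\La$ agrees with the intrinsic one.
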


\begin{pf}
The composition map $ \La_x \times\La_y \to\La_{x,y} \to\La_x
\times\La_y $ is the identity by \eqref{2c8}. Taking into account
that the map $ \La_x \times\La_y \to\La_{x,y} $ is surjective we get
mutually inverse bijections.

The map $ \La_x \times\La_y \to\La_{x,y} $ preserves lattice
operations: ``$\wedge$'' by Fact \ref{2c7}, and ``$\vee$''
trivially. It is
a bijective homomorphism, therefore, isomorphism of lattices.

Let $ u,u_1,u_2,\ldots\in\La_x $, $ u_n \to u $, and $
v,v_1,v_2,\ldots\in\La_y $, $ v_n \to v $. Then $ Q_{u_n}^{(x)} \to
Q_u^{(x)} $ and $ Q_{v_n}^{(y)} \to Q_v^{(y)} $. By Fact \ref{2b5}, $
Q_{u_n}^{(x)} \otimes Q_{v_n}^{(y)} \to Q_u^{(x)} \otimes Q_v^{(y)}
$. By Fact \ref{2c3}, $ Q_{u_n\vee v_n} \to Q_{u\vee v} $, that is, $ u_n
\vee v_n \to u \vee v $. The map $ \La_x \times\La_y \to\La_{x,y} $
is thus continuous.

Let $ z_1,z_2,\ldots\in\La_{x,y} $, $ z_n \to z \in\La$. By
Lemma \ref{3d1} and Proposition \ref{3b8}, $ x \wedge z_n \to x
\wedge z $. Similarly, $
y \wedge z_n \to y \wedge z $. In particular, taking $ z \in\La_{x,y}
$ we see that the map $ \La_{x,y} \to\La_x \times\La_y $ is
continuous. In general (for $ z \in\La$) we get $ z_n = ( x \wedge
z_n ) \vee( y \wedge z_n ) \to( x \wedge z ) \vee( y \wedge z ) $,
therefore $ z = ( x \wedge z ) \vee( y \wedge z ) \in\La_{x,y} $; we
see that $ \La_{x,y} $ is closed.
\end{pf}

It follows that
%
%
%
\begin{equation}
\label{3d3} \La_{x,y} = \bigl\{ z \in\La\dvtx z = ( x \wedge z )
\vee( y
\wedge z ) \bigr\}.
\end{equation}

%
\begin{remark}\label{3d4}
By Theorem \ref{3d2}, any relation between elements of $ \La_{x,y} $ expressed
in terms of lattice operations (and limits) is equivalent to the
conjunction of two similar relations ``restricted'' to $ x $ and $ y
$. For example, the relation
\[
( z_1 \vee z_2 ) \wedge z_3 =
z_4 \vee z_5
\]
between $ z_1, z_2, z_3, z_4, z_5 \in\La_{x,y} $ splits in two;
first,
\[
\bigl( ( x \wedge z_1 ) \vee( x \wedge z_2 ) \bigr)
\wedge( x \wedge z_3 ) = ( x \wedge z_4 ) \vee( x \wedge
z_5 ),
\]
and second, a similar relation with $ y $ in place of $ x $.
\end{remark}

\section{Noise-type completion}
\label{sec4}

Throughout Sections~\ref{sec4}--\ref{sec7}, $ B \subset\La$ is a
noise-type Boolean algebra (as defined by Definition \ref{1a1}); $ \La
$, $ H $
and $ Q_x $ are as in Section~\ref{sec2}.

\subsection{\texorpdfstring{The closure; proving Theorem \protect\ref{1d1}}
{The closure; proving Theorem 1.6}}
\label{4a}

By separability of $ H $,
%
%
%
\begin{equation}
\label{4a03} B \mbox{ satisfies the countable chain condition},
\end{equation}
since otherwise there exists an uncountable set of pairwise orthogonal
nontrivial subspaces of $H$. By Fact \ref{2e6},
%
%
%
\begin{equation}
\label{4a05} B \mbox{ is complete if and only if it is $\sigma$-complete.}
\end{equation}

Recall that every $ x \in B $ has its complement $ x' \in B $,
\[
x \wedge x' = 0_\La,\qquad x \vee x' =
1_\La; \qquad x,x' \mbox{ are independent.}
\]
(The complement in $ B $ is unique, however, many other independent
complements may exist in $ \La$.)

By distributivity of $ B $, $ y = ( x \wedge y ) \vee( x' \wedge y )
$ for all $ x,y \in B $; by \eqref{3d3},
%
%
%
\begin{equation}
\label{4a1} B \subset\La_{x,x'} \qquad\mbox{for every } x \in B.
\end{equation}
By Lemma \ref{3d1},
%
%
%
\begin{equation}
\label{4a2} B \mbox{ is a commutative subset of } \La.
\end{equation}
Recall $ \Cl(B) $ introduced in Theorem \ref{1d1}; by Proposition
\ref{3b7},
%
%
%
\begin{equation}
\label{4a3} \mbox{the topological closure of $B$ is } \Cl(B) = \Bigl
\{ \liminf
_n x_n\dvtx x_1,x_2,\ldots
\in B \Bigr\}.
\end{equation}
Taking into account that $ \La_{x,x'} $ is closed by Theorem \ref{3d2},
we get from \eqref{4a1}
%
%
%
\begin{equation}
\label{4a4} \Cl(B) \subset\La_{x,x'}\qquad \mbox{for every } x \in B.
\end{equation}
By \eqref{4a2} and \eqref{3b4},
%
%
%
\begin{equation}
\label{4a5} \Cl(B) \mbox{ is a commutative subset of } \La.
\end{equation}
By Proposition \ref{3b8},
%
%
%
\begin{equation}
\label{4a6} x \wedge y \in\Cl(B) \qquad\mbox{for all } x,y \in\Cl(B).
\end{equation}
By \eqref{3b5},
%
%
%
\begin{equation}
\label{4a7} Q_x Q_y = Q_{x\wedge y} \qquad\mbox{for all }
x,y \in\Cl(B).
\end{equation}

\begin{pf*}{Proof of Theorem \ref{1d1}}
If $ x_n \in\Cl(B) $ and $ x_n \uparrow x $, then $ x_n \to x $ by
\eqref{3a1}, therefore $ x \in\Cl(B) $, which proves item (b) of the
theorem.

If $ x_n \in\Cl(B) $ and $ x = \inf_n x_n $, then $ x_1 \wedge\cdots
\wedge x_n = y_n \in\Cl(B) $ by \eqref{4a6} and $ y_n \downarrow x $,
thus $ y_n \to x $ by \eqref{3a1} (again) and $ x \in\Cl(B) $, which
proves item (a) of the theorem.
\end{pf*}

By Proposition \ref{3c1} and \eqref{4a5}, for $ x,y \in\Cl(B) $,
%
%
%
\begin{equation}
\label{4a8} x \wedge y = 0_\La\mbox{ if and only if } x,y \mbox{ are
independent.}\vadjust{\goodbreak}
\end{equation}

By Proposition \ref{3b8} and \eqref{4a5}, for $ x,x_n,y,y_n \in\Cl
(B) $,
%
%
%
\begin{equation}
\label{4a9} \mbox{if } x_n \to x, y_n \to y \mbox{ then }
x_n \wedge y_n \to x \wedge y.
\end{equation}

%
\begin{remark}\label{4a10}
In contrast, $ x_n \vee y_n $ need not converge to $ x \vee y $, even
if $ x_n \in B $, $ x_n \downarrow0_\La$, $ y_n = x'_n $; it may
happen that $ y_n \uparrow y $, $ y \ne1_\La$. This situation
appears already in the (simplest nonclassical) example given in
Section~\ref{1b}.

On the other hand, if $ x_n \in B $, $ x_n \to1_\La$, then
necessarily $ x'_n \to0_\La$ (but we do not need this fact).
\end{remark}

By Theorem \ref{3d2}, for every $ z \in B $ the map $ x \mapsto x
\wedge z $ is a lattice homomorphism $ \La_{z,z'} \to\La_z $, thus, $
( x \vee y ) \wedge z = ( x \wedge z ) \vee( y \wedge z ) $ for all $
x,y \in\La_{z,z'} $; in particular, it holds for all $ x,y \in\Cl(B)
$ by \eqref{4a4}. If $ x \vee y = 1_\La$, then $ z = ( x \wedge z )
\vee( y \wedge z ) $. If in addition $ x \wedge y = 0_\La$, then $
x,y $ are independent by \eqref{4a8}, and $ z \in\La_{x,y} $ by
\eqref{3d3}. Thus $ B \subset\La_{x,y} $. By Theorem \ref{3d2} $
\La_{x,y} $ is closed, and we conclude.

%
\begin{proposition}\label{4a11}
If $ x,y \in\Cl(B) $, $ x \wedge y = 0_\La$, $ x \vee y = 1_\La$,
then\break  \mbox{$ \Cl(B) \subset\La_{x,y} $}.
\end{proposition}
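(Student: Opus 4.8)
The plan is to first establish the inclusion $B \subset \La_{x,y}$ and then promote it to $\Cl(B)$ by a closedness argument. The crucial point is that although $\La$ itself is not distributive, distributivity does hold once one meetand is allowed to range over $\La_{z,z'}$. Indeed, for each fixed $z \in B$ the elements $z$ and $z'$ are independent (Def.~\ref{1a1}), so by Theorem \ref{3d2} the map $w \mapsto w \wedge z$ is a lattice homomorphism $\La_{z,z'} \to \La_z$. In particular it preserves joins, giving
\[
( x \vee y ) \wedge z = ( x \wedge z ) \vee ( y \wedge z )
\]
for all $ x,y \in \La_{z,z'} $. By \eqref{4a4} we have $ \Cl(B) \subset \La_{z,z'} $, so this identity is available for our given $ x,y \in \Cl(B) $.

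Next I would feed in the two hypotheses on $x,y$. Since $ x \vee y = 1_\La $, the left-hand side of the displayed identity is just $ z $, so $ z = ( x \wedge z ) \vee ( y \wedge z ) $ for every $ z \in B $. Since $ x \wedge y = 0_\La $ and $ x,y \in \Cl(B) $, relation \eqref{4a8} shows that $ x $ and $ y $ are independent, so $ \La_{x,y} $ is defined and the characterization \eqref{3d3} applies: the identity $ z = ( x \wedge z ) \vee ( y \wedge z ) $ is exactly the membership condition for $ \La_{x,y} $. Hence every $ z \in B $ lies in $ \La_{x,y} $, that is, $ B \subset \La_{x,y} $.

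Finally I would pass to the closure. By Theorem \ref{3d2} the set $ \La_{x,y} $ is closed in the strong operator topology on $\La$, and by \eqref{4a3} the set $ \Cl(B) $ is precisely the topological closure of $ B $. Therefore $ \Cl(B) \subset \La_{x,y} $, which is the claim.

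The only genuine obstacle is the first step, namely securing the distributive identity $ ( x \vee y ) \wedge z = ( x \wedge z ) \vee ( y \wedge z ) $ for elements of $ \Cl(B) $, since naive distributivity fails in $\La$. This is resolved entirely by Theorem \ref{3d2}, which localizes the product structure to the independent pair $ (z,z') $ and turns $ w \mapsto w \wedge z $ into an honest lattice homomorphism on $ \La_{z,z'} \supset \Cl(B) $. Once this is in hand, the remaining steps are immediate invocations of \eqref{4a8}, \eqref{3d3}, and the closedness of $ \La_{x,y} $.
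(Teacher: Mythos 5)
Your proposal is correct and follows essentially the same route as the paper: the paper likewise obtains $(x\vee y)\wedge z=(x\wedge z)\vee(y\wedge z)$ for $x,y\in\Cl(B)\subset\La_{z,z'}$ from the lattice homomorphism of Theorem \ref{3d2} together with \eqref{4a4}, then uses $x\vee y=1_\La$, \eqref{4a8} and \eqref{3d3} to place every $z\in B$ in $\La_{x,y}$, and finishes by the closedness of $\La_{x,y}$. No gaps.
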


%
\begin{corollary}\label{4a12}
For every $ x \in\Cl(B) $ there exists at most one $ y \in\Cl(B) $
such that $ x \wedge y = 0_\La$ and $ x \vee y = 1_\La$.
\end{corollary}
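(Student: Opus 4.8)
The plan is to derive this immediately from Proposition \ref{4a11}. Suppose $ x \in \Cl(B) $ admits two elements $ y_1, y_2 \in \Cl(B) $, each satisfying $ x \wedge y_i = 0_\La $ and $ x \vee y_i = 1_\La $; the goal is to show $ y_1 = y_2 $.

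First I would apply Proposition \ref{4a11} to the pair $ (x, y_1) $, which satisfies the required hypotheses, obtaining $ \Cl(B) \subset \La_{x, y_1} $. In particular $ y_2 \in \La_{x, y_1} $. Since $ x \wedge y_1 = 0_\La $ and both lie in $ \Cl(B) $, the two elements $ x, y_1 $ are independent by \eqref{4a8}, so the characterization \eqref{3d3} of $ \La_{x,y_1} $ applies and gives
\[
y_2 = ( x \wedge y_2 ) \vee ( y_1 \wedge y_2 ) \, .
\]
Using $ x \wedge y_2 = 0_\La $, the right-hand side collapses to $ y_1 \wedge y_2 $, so $ y_2 = y_1 \wedge y_2 $, that is, $ y_2 \le y_1 $.

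By symmetry, applying Proposition \ref{4a11} to the pair $ (x, y_2) $ instead and repeating the same argument with the roles of $ y_1 $ and $ y_2 $ interchanged, I obtain $ y_1 \le y_2 $. Combining the two inequalities yields $ y_1 = y_2 $, which is exactly the asserted uniqueness.

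There is no genuine obstacle here; the corollary is a formal consequence of Proposition \ref{4a11} together with the decomposition \eqref{3d3}. The one point worth emphasizing is that both candidate complements are required to lie in $ \Cl(B) $ (rather than merely in $ \La $), since this is what makes Proposition \ref{4a11}, the independence criterion \eqref{4a8}, and the product-decomposition \eqref{3d3} all available; uniqueness of the complement genuinely fails in the ambient lattice $ \La $, as the independent complements are generally far from unique there.
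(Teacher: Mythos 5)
Your proof is correct and follows essentially the same route as the paper: apply Proposition \ref{4a11} to $(x,y_1)$ to place $y_2$ in $\La_{x,y_1}$, use the decomposition \eqref{3d3} together with $x\wedge y_2=0_\La$ to get $y_2=y_1\wedge y_2$, and conclude by symmetry. Your closing remark about why both complements must lie in $\Cl(B)$ is a sensible observation, but the core argument is identical to the paper's.
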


\begin{pf}
Assume that $ y_1,y_2 \in\Cl(B) $, $ x \wedge y_k = 0_\La$ and $ x
\vee y_k = 1_\La$ for $ k=1,2 $. By Proposition \ref{4a11}, $ y_2 \in
\La_{x,y_1}
$, that is, $ y_2 = ( x \wedge y_2 ) \vee( y_1 \wedge y_2 ) = y_1
\wedge y_2 $. Similarly, $ y_1 = y_2 \wedge y_1 $.
\end{pf}

\subsection{\texorpdfstring{The completion; proving Theorem \protect\ref{1d2}}
{The completion; proving Theorem 1.7}}
\label{4b}

Let $ B $ and $ \Cl(B) $ be as in Section~\ref{4a}, and
\[
C = \bigl\{ x \in\Cl(B)\dvtx\exists y \in\Cl(B) \> x \wedge y =
0_\La,
x \vee y = 1_\La\bigr\}
\]
as in Theorem \ref{1d2}; clearly,
%
%
%
\begin{equation}
\label{4b1} B \subset C \subset\Cl(B).
\end{equation}
Taking Corollary \ref{4a12} into account, we extend the complement
operation, $
x \mapsto x' $, from $ B $ to $ C $:
\begin{eqnarray*}
&x' \in C \mbox{ for } x \in C; \qquad\bigl(x'
\bigr)' = x;&
\\
&x \wedge x' = 0_\La; \qquad x \vee x' =
1_\La.&
\end{eqnarray*}
By \eqref{4a8}, $ x, x' $ are independent; and by Proposition \ref{4a11},
%
%
%
\begin{equation}
\label{4b2} \forall x \in C\qquad  \Cl(B) \subset\La_{x,x'}.
\end{equation}

%
\begin{lemma}\label{4b3}
For every $ x \in C $ the map
\[
\Cl(B) \ni y \mapsto x \vee y \in\La
\]
is continuous.\vadjust{\goodbreak}
\end{lemma}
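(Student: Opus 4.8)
The plan is to reduce the join with $x$ to a coordinatewise operation via the product structure of Theorem~\ref{3d2}. Fix $x \in C$ and let $x'$ be its complement; then $x, x'$ are independent and, crucially, $\Cl(B) \subset \La_{x,x'}$ by \eqref{4b2}. Hence every $y \in \Cl(B)$ admits the decomposition $y = (x \wedge y) \vee (x' \wedge y)$ (this is exactly membership in $\La_{x,x'}$, see \eqref{3d3}), with $x \wedge y \le x$ and $x' \wedge y \le x'$. Since $x \wedge y \le x$ I get
\[
x \vee y = x \vee (x \wedge y) \vee (x' \wedge y) = x \vee (x' \wedge y) \, ,
\]
so $y \mapsto x \vee y$ factors through the single component $x' \wedge y \in \La_{x'}$.

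Because the topology on $\La$ is metrizable (Sect.~\ref{3a}), it suffices to verify sequential continuity, so I would take $y_n \to y$ in $\Cl(B)$ and show $x \vee y_n \to x \vee y$. First, \eqref{4a9} applied to the constant sequence $x' \to x'$ and to $y_n \to y$ (using $x' \in C \subset \Cl(B)$) gives $x' \wedge y_n \to x' \wedge y$. Consequently the pairs $(x, x' \wedge y_n)$ converge to $(x, x' \wedge y)$ in $\La_x \times \La_{x'}$, the first coordinate being constant. Applying the join map $\La_x \times \La_{x'} \ni (u,v) \mapsto u \vee v$, which is continuous by Theorem~\ref{3d2}, yields
\[
x \vee y_n = x \vee (x' \wedge y_n) \to x \vee (x' \wedge y) = x \vee y \, ,
\]
which is the required continuity.

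The conceptual content is that, although $y \mapsto x \vee y$ is \emph{not} continuous on $\La$ in general (see Remark~\ref{4a10}), it becomes continuous on the slice $\La_{x,x'}$ in which $x$ is itself one of the two independent factors: there it is represented in coordinates simply by $(u,v) \mapsto (x,v)$. The only input that makes this reduction available is the hypothesis $x \in C$, which guarantees that the complement $x'$ exists and hence that $\Cl(B) \subset \La_{x,x'}$ via \eqref{4b2}. I therefore expect no genuine obstacle here; the substantive work has already been carried by Theorem~\ref{3d2} together with \eqref{4b2}, and the remaining step is just the bookkeeping of the coordinate reduction.
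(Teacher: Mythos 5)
Your proof is correct and follows essentially the same route as the paper: reduce to $x\vee y = x\vee(x'\wedge y)$ via $\Cl(B)\subset\La_{x,x'}$, use \eqref{4a9} for $x'\wedge y_n\to x'\wedge y$, and then the continuity of the join map from Theorem~\ref{3d2}. The only cosmetic difference is that you verify the identity $x\vee y = x\vee(x'\wedge y)$ by direct absorption of $x\wedge y\le x$, whereas the paper computes it through the coordinate isomorphism; both are fine.
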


\begin{pf}
Let $ y_n,y \in\Cl(B) $, $ y_n \to y $; we have to prove that $ x
\vee y_n \to x \vee y $. By \eqref{4a9}, $ x' \wedge y_n \to x'
\wedge y $. Applying Theorem \ref{3d2} to $ ( x, x' \wedge y_n ) \in
\La_x \times\La_{x'} $ we get $ x \vee( x' \wedge y_n ) \to x \vee(
x' \wedge y ) $. It remains to prove that $ x \vee( x' \wedge y_n ) =
x \vee y_n $ and $ x \vee( x' \wedge y ) = x \vee y $. We prove the
latter; the former is similar. Note that $ y \in\Cl(B) \subset
\La_{x,x'} $ by \eqref{4b2}. The lattice isomorphism $ \La_{x,x'}
\to
\La_x \times\La_{x'} $ of Theorem~\ref{3d2} maps $ x $ into $ (x,0) $
and $ y $ into $ ( x \wedge y, x' \wedge y ) $; therefore it maps $ x
\vee y $ into $ ( x \vee( x \wedge y ), 0 \vee( x' \wedge y ) )
= (x, x' \wedge y) $, which implies $ x \vee( x' \wedge y ) = x \vee
y $.
\end{pf}

%
\begin{lemma}\label{4b4}
\[
\forall x \in C\ \forall y \in\Cl(B)\qquad x \vee y \in\Cl(B).
\]
\end{lemma}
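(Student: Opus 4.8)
The plan is to obtain $x \vee y$ as a limit of elements already known to lie in $\Cl(B)$, exploiting the continuity supplied by Lemma~\ref{4b3}. The obstacle to a one-line argument is that $\vee$ is not jointly continuous on $\Cl(B)$: as Remark~\ref{4a10} warns, $x_n \vee y_n$ may fail to converge to $x \vee y$ even for tame sequences. So I would never approximate both arguments simultaneously; instead I would vary one argument at a time, each time keeping the frozen argument inside $C$, so that Lemma~\ref{4b3} applies.

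First I would reduce to the case $y \in B$. Given an arbitrary $y \in \Cl(B)$, choose $y_n \in B$ with $y_n \to y$; this is possible because $\Cl(B)$ is the topological closure of $B$ by \eqref{4a3} and the topology on $\La$ is metrizable. Assuming the case $y \in B$ is already settled, each $x \vee y_n$ lies in $\Cl(B)$. Since $x \in C$, Lemma~\ref{4b3} tells us that $z \mapsto x \vee z$ is continuous on $\Cl(B)$, hence $x \vee y_n \to x \vee y$; as $\Cl(B)$ is closed (being a topological closure), $x \vee y \in \Cl(B)$.

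It remains to treat $y \in B$. The key observation is that $B \subset C$, so $y$ is itself a complemented element of $\Cl(B)$, i.e.\ $y \in C$. Now pick $x_n \in B$ with $x_n \to x$, again available since $x \in C \subset \Cl(B)$. Applying Lemma~\ref{4b3} this time with the frozen element $y \in C$, the map $z \mapsto y \vee z$ is continuous on $\Cl(B)$, so $x_n \vee y \to x \vee y$. But $x_n, y \in B$ and $B$ is a lattice, whence $x_n \vee y \in B \subset \Cl(B)$; closedness of $\Cl(B)$ then gives $x \vee y \in \Cl(B)$, completing the base case and the proof.

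The heart of the matter, and the step I expect to carry all the weight, is Lemma~\ref{4b3}: it is precisely the continuity of the \emph{partial} join $z \mapsto x \vee z$ for a complemented $x$ that lets me push the join through limits. The rest is a bookkeeping device ensuring that whichever argument I freeze is always complemented (which is automatic for members of $B$ and is exactly the hypothesis $x \in C$), so that joins with it remain controllable despite the general failure of joint continuity.
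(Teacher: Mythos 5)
Your proof is correct and follows essentially the same route as the paper: both reduce to $y \in B$ via the continuity of $z \mapsto x\vee z$ from Lemma~\ref{4b3}, and both then handle $y\in B$ by noting $y\in B\subset C$ and applying Lemma~\ref{4b3} with $y$ frozen, so that the join with $y$ maps $B$ into $B$ and hence (by continuity) $\Cl(B)$ into $\Cl(B)$. Your version merely spells out the closure argument with an explicit sequence $x_n\to x$; the content is identical.
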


\begin{pf}
By Lemma \ref{4b3} it is sufficient to consider $ y \in B $. Applying
Lemma \ref{4b3} (again) to $ y \in B \subset C $ we see that the map $
\Cl(B) \ni z \mapsto y \vee z \in\La$ is continuous. This map sends
$ B $ into $ B $, and therefore it sends $ x \in C \subset\Cl(B) $
into $
\Cl(B) $.
\end{pf}

%
\begin{lemma}\label{4b5}
For all $ x,y \in C $,
\[
x \vee y \in C \quad\mbox{and}\quad ( x \vee y )' = x' \wedge
y'.
\]
\end{lemma}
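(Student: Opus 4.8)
The plan is to exhibit $x'\wedge y'$ as the (necessarily unique, by Corollary \ref{4a12}) complement of $x\vee y$ inside $\Cl(B)$. First I would record the two membership facts: $x\vee y\in\Cl(B)$ holds by Lemma \ref{4b4} (applied with $x\in C$ and $y\in\Cl(B)$), and $x'\wedge y'\in\Cl(B)$ holds by \eqref{4a6} since $x',y'\in C\subset\Cl(B)$. It then remains to verify the two defining relations $(x\vee y)\wedge(x'\wedge y')=0_\La$ and $(x\vee y)\vee(x'\wedge y')=1_\La$; once these are in place the definition of $C$ gives $x\vee y\in C$, and Corollary \ref{4a12} forces $(x\vee y)'=x'\wedge y'$.

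The engine for both relations is the lattice homomorphism $u\mapsto u\wedge x'$ on $\La_{x,x'}$ furnished by Theorem \ref{3d2}: since $x\in C$ the pair $x,x'$ is independent and $\Cl(B)\subset\La_{x,x'}$ by \eqref{4b2}, so $x,y,y'$ all lie in $\La_{x,x'}$ and the homomorphism yields $(a\vee b)\wedge x'=(a\wedge x')\vee(b\wedge x')$ for $a,b\in\Cl(B)$. Applying this to $a=x,\ b=y$ gives $(x\vee y)\wedge x'=(x\wedge x')\vee(y\wedge x')=x'\wedge y$, whence, using associativity of $\wedge$ together with $y\wedge y'=0_\La$, I obtain $(x\vee y)\wedge(x'\wedge y')=\bigl((x\vee y)\wedge x'\bigr)\wedge y'=(x'\wedge y)\wedge y'=x'\wedge(y\wedge y')=0_\La$. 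Applying the same homomorphism to $a=y,\ b=y'$ and using $y\vee y'=1_\La$ gives $x'=(x'\wedge y)\vee(x'\wedge y')$. Since $x'\wedge y\le y\le x\vee y\vee(x'\wedge y')$ and $x'\wedge y'\le x\vee y\vee(x'\wedge y')$, this yields $x'\le(x\vee y)\vee(x'\wedge y')$; as also $x\le(x\vee y)\vee(x'\wedge y')$, it follows that $1_\La=x\vee x'\le(x\vee y)\vee(x'\wedge y')$, so the join equals $1_\La$.

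The only genuine obstacle is the non-distributivity of $\La$: none of these De~Morgan-type manipulations is legitimate in $\La$ at large. The whole point is to confine every join and meet of $x,y,x',y'$ to the sublattice $\La_{x,x'}$, where Theorem \ref{3d2} identifies $\La_{x,x'}$ with the product $\La_x\times\La_{x'}$ and thereby supplies exactly the one distributive law $(a\vee b)\wedge x'=(a\wedge x')\vee(b\wedge x')$ that the argument consumes; associativity of $\wedge$ and of $\vee$, valid in any lattice, then does the rest. The subtle point I would check explicitly is that joins computed in $\La_{x,x'}$ coincide with joins computed in $\La$, so that the homomorphism applies to elements of $\Cl(B)$ regarded inside $\La$ — this is immediate from the decomposition $z=(x\wedge z)\vee(x'\wedge z)$ valid for $z\in\La_{x,x'}$ by \eqref{3d3}.
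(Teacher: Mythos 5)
Your proposal is correct and follows essentially the same route as the paper: membership of $x\vee y$ and $x'\wedge y'$ in $\Cl(B)$ via Lemma \ref{4b4} and \eqref{4a6}, verification of the two complement relations by exploiting the distributivity available inside $\La_{x,x'}$ from Theorem \ref{3d2} together with $\Cl(B)\subset\La_{x,x'}$, and the conclusion via the definition of $C$ and uniqueness of complements (Corollary \ref{4a12}). The only difference is organizational: the paper checks each identity by splitting into both coordinates $x$ and $x'$ (Remark \ref{3d4}), whereas you use only the single projection $z\mapsto z\wedge x'$ plus associativity and monotonicity of the join; both verifications are valid, and your explicit remark that joins in $\La_{x,x'}$ agree with joins in $\La$ is a point the paper leaves implicit.
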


\begin{pf}
By Lemma \ref{4b4}, $ x \vee y \in\Cl(B) $. By \eqref{4a6}, $ x'
\wedge y'
\in\Cl(B) $. We have to prove that $ ( x \vee y ) \wedge( x' \wedge
y' ) = 0_\La$ and $ ( x \vee y ) \vee( x' \wedge y' ) = 1_\La$. We
do it using Remark \ref{3d4}.

First, $ x,y,x',y' \in C \subset\Cl(B) \subset\La_{x,x'} $.

Second, we consider $ z = ( x \vee y ) \wedge( x' \wedge y' ) $ and
``restrict'' it first to $ x $: $ x \wedge z = ( x \vee( x \wedge y )
) \wedge( 0_\La\wedge( x \wedge y' ) ) = 0_\La$, and second, to $
x' $: $ x' \wedge z = ( 0_\La\vee( x' \wedge y ) ) \wedge x' \wedge
( x' \wedge y' ) \le y \wedge y'= 0_\La$. We get $ z = 0_\La$, that
is, $ ( x \vee y ) \wedge( x' \wedge y' ) = 0_\La$.

Third, we consider $ z = ( x \vee y ) \vee( x' \wedge y' ) $ and get
$ x \wedge z = x \vee( x \wedge y ) \vee( x \wedge x' \wedge y' ) =
x $ and $ x' \wedge z = ( x' \wedge x ) \vee( x' \wedge y ) \vee( x'
\wedge x' \wedge y' ) = ( x' \wedge y ) \vee( x' \wedge y' ) = x'
\wedge( y \vee y' ) = x' $. Therefore $ z = x \vee x' = 1_\La$, that
is, $ ( x \vee y ) \vee( x' \wedge y' ) = 1_\La$.
\end{pf}

In addition, $ x \wedge y = (x' \vee y')' \in C $ for all $ x,y \in C
$; thus $ C $ is a sublattice of $ \La$. The lattice $ C $ is
distributive, that is, $ x \wedge( y \vee z ) = ( x \wedge y ) \vee(
x \wedge z ) $ for all $ x,y,z \in C $, since $ C \subset\La_{x,x'} $
by \eqref{4b1}, \eqref{4b2}, and the map $ \La_{x,x'} \ni y \mapsto x
\wedge y \in\La_x $ is a lattice homomorphism by Theorem
\ref{3d2}. Also, $ 0_\La\in C $, $ 1_\La\in C $, and each $ x \in C
$ has a complement $ x' $ in $ C $. By \eqref{4b1} and \eqref{4a8}, $
x,x' $ are independent for every $ x \in C $. Thus $ C $ is a
noise-type Boolean algebra satisfying \eqref{4b1}, which proves item
(a) of Theorem \ref{1d2}.

If $ C_1 $ is also a noise-type Boolean algebra satisfying $ B \subset
C_1 \subset\Cl(B) $, then every element of $ C_1 $ belongs to $ C $,
since its complement in $ C_1 $ is also its complement in $ \Cl(B)
$. Thus $ C_1 \subset C $, which proves item (b) of Theorem~\ref{1d2}.

%
\begin{corollary}\label{4b6}
The following two conditions on a noise-type Boolean algebra $ B $ are
equivalent:
\begin{longlist}[(a)]
\item[(a)] $ C = \Cl(B) $ (where $ C $ is the completion of $ B $);

\item[(b)] there exists a complete noise-type Boolean algebra $ \hat
B $ such
that $ B \subset\hat B $.
\end{longlist}
\end{corollary}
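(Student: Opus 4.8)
The plan is to reduce both implications to a single intrinsic biconditional, \emph{the completion $C$ is complete if and only if $C = \Cl(B)$}, proved without reference to any extension. Granting this, (a)~$\Rightarrow$~(b) is immediate: condition (a) is literally $C = \Cl(B)$, so $C$ is complete and $\hat B = C$ witnesses (b). For (b)~$\Rightarrow$~(a) it then suffices to deduce, from the mere existence of a complete $\hat B \supset B$, that $C$ itself is complete, and to read off $C = \Cl(B)$ from the biconditional.

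I would prove the biconditional as follows. Recall that $C$ is a noise-type Boolean algebra with $B \subset C \subset \Cl(B)$ (Theorem \ref{1d2}); it is commutative and closed under $\wedge$, and it satisfies the countable chain condition, since $C \subset \La$ and separability of $H$ forbids uncountably many pairwise independent nontrivial \sif s, exactly as in \eqref{4a03} (disjoint elements of $\Cl(B)$ being independent by \eqref{4a8}). For the forward direction, if $C = \Cl(B)$ then the supremum of any increasing sequence in $C = \Cl(B)$ stays in $\Cl(B) = C$ by Theorem \ref{1d1}(b); as this $\La$-supremum is then also the supremum inside $C$, the algebra $C$ is \sic\ by Fact \ref{2e4}, hence complete by Fact \ref{2e6}. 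For the reverse direction, if $C$ is complete then each $\liminf$ of a sequence in $C$ is an iterated $\La$-infimum and $\La$-supremum of subsets of $C$, all landing back in $C$; by Proposition \ref{3b7} this means $C$ equals its own topological closure, so $\Cl(B) = \overline{B} \subset \overline{C} = C$ by \eqref{4a3}, forcing $C = \Cl(B)$.

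It remains, for (b)~$\Rightarrow$~(a), to show that a complete $\hat B \supset B$ makes $C$ complete. First I would record that $\Cl(B) \subset \hat B$: any $\liminf_n x_n$ with $x_n \in B \subset \hat B$ is assembled from $\La$-infima and $\La$-suprema of subsets of $\hat B$, each of which lies in $\hat B$ by completeness (and there these $\La$-operations are the operations of $\hat B$); hence $C \subset \Cl(B) \subset \hat B$. Then I would take an increasing sequence $y_n \in C$ and set $s = \sup_n y_n$, so that $s \in \Cl(B)$ by Theorem \ref{1d1}(b) and $s \in \hat B$. Letting $y_n'$ denote the complements, which lie in $C \subset \Cl(B)$ and are simultaneously the complements of $y_n$ in $\hat B$, de~Morgan in the complete Boolean algebra $\hat B$ identifies the complement of $s$ with $\inf_n y_n'$; but this $\La$-infimum belongs to $\Cl(B)$ by Theorem \ref{1d1}(a). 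Thus $s$ is complemented inside $\Cl(B)$, i.e.\ $s \in C$, so $C$ is \sic\ by Fact \ref{2e4} and complete by Fact \ref{2e6}; the biconditional then yields $C = \Cl(B)$.

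The step I expect to be the main obstacle is locating the complement of the increasing supremum $s$ in the last argument. Since $\Cl(B)$ is generally not closed under binary $\vee$ (Remark \ref{4a10}), one cannot form this complement inside $\Cl(B)$; the device is to borrow it from the ambient complete algebra $\hat B$ by de~Morgan and then observe, via the closure of $\Cl(B)$ under \emph{arbitrary countable infima} in Theorem \ref{1d1}(a), that it has in fact returned to $\Cl(B)$. Making this rigorous needs the two ancillary facts that complements in $C$ are absolute, hence also complements in $\hat B$, and that $\hat B$-infima coincide with $\La$-infima on subsets of $\hat B$; both are routine once $\Cl(B) \subset \hat B$ is established.
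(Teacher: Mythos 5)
Your proof is correct and rests on the same mechanism as the paper's: for (a)$\Rightarrow$(b), topological closedness of $\Cl(B)$ plus the countable chain condition; for (b)$\Rightarrow$(a), borrowing the complement from $\hat B$ by de~Morgan and observing via Theorem \ref{1d1} that it lands back in $\Cl(B)$. The only difference is that the paper complements an arbitrary $x=\liminf_n x_n\in\Cl(B)$ directly (its $\hat B$-complement is $\limsup_n x'_n$, which lies in $\Cl(B)$), thereby getting $\Cl(B)\subset C$ in one step and skipping your intermediate biconditional ``$C$ is complete iff $C=\Cl(B)$'' and the appeal to Proposition \ref{3b7}.
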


\begin{pf}
(a) \imp(b): the noise-type Boolean algebra $ C = \Cl(B) $ is closed;
by~\eqref{3a1} it is $\sigma$-complete (recall Section~\ref{2e});
by \eqref{4a05}
it is complete.

(b) \imp(a): Given $ x \in\Cl(B) $, we take $ x_n \in B $ such that $
x = \liminf_n x_n $ [recall~\eqref{4a3}]; $ x \in\hat B $. The
complement $ x' $ of $ x $ in $ \hat B $ belongs to $ \Cl(B) $, since
$ ( \liminf_n x_n )' = \limsup_n x'_n $ in $ \hat B $. Thus, $ x $ is
complemented in $ \Cl(B) $, that is, $ x \in C $.
\end{pf}

\section{Classicality and blackness}
\label{sec5}

\subsection{Atomless algebras}
\label{5a}

Recall Section~\ref{1e}.

%
\begin{proposition}\label{5a1}
If $ B $ is atomless, then for every $ f \in H $ satisfying $ Q_0 f = 0
$ and $ \eps> 0 $ there exist $ n $ and $ x_1,\ldots,x_n \in B $ such
that
\[
x_1 \vee\cdots\vee x_n = 1_\La\quad\mbox{and}\quad \|
Q_{x_1} f \| \le\eps, \ldots, \| Q_{x_n} f \| \le\eps.
\]
\end{proposition}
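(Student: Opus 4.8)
The plan is to prove a local statement at each ultrafilter and then globalize it by compactness of the Stone space of $B$. First recall that $Q_0 f=(\Ex f)\One$ (Notation~\ref{2a3}), so the hypothesis $Q_0 f=0$ means exactly $\Ex f=0$; and $x\le y$ implies $Q_x\le Q_y$ by \eqref{2a5}, whence $\|Q_x f\|\le\|Q_y f\|$. Thus $x\mapsto\|Q_x f\|$ is monotone, and along any downward directed subset of $B$ it decreases. It therefore suffices to find, inside each ultrafilter, a single element on which it is $\le\eps$.

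The key lemma I would establish is: for every ultrafilter $F\subset B$ there is $x_F\in F$ with $\|Q_{x_F}f\|\le\eps$. Fix $F$. By atomlessness (Def.~\ref{1.11}) we have $\inf_{x\in F}x=0_\La$, and $F$ is downward directed since it is closed under $\wedge$; by \ref{2a9} this gives $\bigcap_{x\in F}H_x=H_0$, equivalently $\inf_{x\in F}Q_x=Q_0$. The crux is to \emph{replace the directed family by a decreasing sequence}: I claim there exist $x_1\ge x_2\ge\cdots$ in $F$ with $\inf_n x_n=0_\La$. Granting this, $x_n\downarrow 0_\La$ yields $Q_{x_n}\to Q_0$ by \eqref{3a1}, hence $\|Q_{x_n}f\|\to\|Q_0 f\|=0$, and any large $n$ provides $x_F=x_n\in F$.

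For the sequence extraction I would pass to the commutative von Neumann algebra $\A$ generated by $\{Q_x:x\in F\}$ (they commute by \eqref{4a2}), represented by \ref{2d1} as $L_\infty(S,\Si,\mu)$ with $\mu$ finite via an isomorphism $\al$, so that $\al(Q_x)=\One_{E_x}$ and, by \eqref{3b5}, $E_{x\wedge y}=E_x\cap E_y$ mod null (hence the $E_x$ are downward directed). I choose $x_n\in F$, decreasing after replacing $x_n$ by $x_1\wedge\cdots\wedge x_n$, with $\mu(E_{x_n})\to\inf_{x\in F}\mu(E_x)$. A finite-measure squeeze (for $x\in F$, $\mu(E_x\cap E_{x_n})=\mu(E_{x\wedge x_n})$ lies between $\inf_{y\in F}\mu(E_y)$ and $\mu(E_{x_n})$, both tending to the same limit) then shows $\bigcap_n E_{x_n}\subset E_x$ mod null for every $x\in F$. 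Applying $\al^{-1}$ this reads $Q_{\inf_n x_n}\le Q_x$ for all $x\in F$ (here \eqref{2b4} gives $Q_{\inf_n x_n}=\inf_n Q_{x_n}$ and \ref{2d2} relates the two convergences), so $Q_{\inf_n x_n}\le\inf_{x\in F}Q_x=Q_0$, while $Q_{\inf_n x_n}\ge Q_0$ trivially; thus $\inf_n x_n=0_\La$ by \eqref{2a7}. This extraction is the main obstacle: naively minimizing $\|Q_x f\|$ over $F$ fails, because the meet of a minimizing sequence need not be $0_\La$; one must realize the \emph{lattice} infimum by a sequence, and that is precisely where separability (through \ref{2d1}) is used.

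Finally I would globalize. On the set $\Om_B$ of ultrafilters of $B$ put the Stone topology, with basic clopen sets $\hat x=\{F:x\in F\}$ for $x\in B$; $\Om_B$ is compact. The sets $\hat{x}_F$ form an open cover, since $F\in\hat{x}_F$, so finitely many cover it: there are $x_1,\dots,x_n\in B$ with $\|Q_{x_i}f\|\le\eps$ such that every ultrafilter contains some $x_i$. In a Boolean algebra this is equivalent to $x_1\vee\cdots\vee x_n=1_\La$ (the top of $B$), for otherwise $x_1'\wedge\cdots\wedge x_n'\ne 0_\La$ would lie in some ultrafilter containing no $x_i$. Since the join in $B$ coincides with the join in $\La$, this gives $x_1\vee\cdots\vee x_n=1_\La$ together with $\|Q_{x_i}f\|\le\eps$, as required.
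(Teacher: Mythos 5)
Your proof is correct, but it reaches the conclusion by a genuinely different route from the paper's. The paper splits the work into Lemma \ref{5a2} (on any filter $F$ with $\inf_{x\in F}x=0_\La$ one has $\inf_{x\in F}\|Q_xf\|=0$) and the combinatorial Lemma \ref{5a3}, whose hard direction, for a supermodular $m$ with $m(0_\La)=0$, \emph{constructs} an ultrafilter witnessing the failure of finite $\eps$-covers; supermodularity of $m(x)=\|Q_xf\|^2$ is supplied by \eqref{5a5}. You replace that entire lemma by compactness of the Stone space: once every ultrafilter contains some $x_F$ with $\|Q_{x_F}f\|\le\eps$, a finite subcover of $\{\hat x_F\}$ yields elements joining to $1_\La$ (in $B$, hence in $\La$). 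This is shorter and needs no supermodularity at all for this proposition (though \eqref{5a5} is still used later, in \ref{5c3}); what the paper's route buys is a reusable two-way equivalence and an ultrafilter built by hand rather than extracted via the filter-extension principle implicit in Stone compactness. Your local step also differs: the paper's \ref{5a2} takes a decreasing sequence minimizing $\|Q_{x_n}f\|$ itself and concludes from $Q_xf\in\bigcap_{y\in F}H_y=H_0$, never needing the meet of that sequence to be $0_\La$ --- so your parenthetical claim that naively minimizing $\|Q_xf\|$ over $F$ ``fails'' overstates the obstacle. Your alternative, extracting a decreasing sequence in $F$ with lattice infimum $0_\La$ by the finite-measure squeeze in the $L_\infty$ picture, is nonetheless correct (the passage from $\bigcap_nE_{x_n}\subset E_x$ to $Q_{\inf_nx_n}\le Q_x$ and then to $Q_{\inf_nx_n}\le Q_0$ via Fact \ref{2a9} is sound) and gives a slightly stronger, $f$-independent intermediate statement. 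Both arguments work; yours is the more standard topological packaging of the same ingredients.
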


The proof is given after three lemmas.

%
\begin{lemma}\label{5a2}
Let $ F \subset B $ be a filter such that $ \inf_{x\in F} x = 0_\La
$. Then\break  $ \inf_{x\in F} \| Q_x f \| = 0 $ for all $ f \in H $
satisfying $ Q_0 f = 0 $.
\end{lemma}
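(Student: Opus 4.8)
The plan is to reduce the possibly uncountable filter $F$ to a single decreasing sequence and then invoke monotone strong convergence of the conditional expectations. First I would record that $F$ is directed downward: if $x,y\in F$ then $x\wedge y\in F$ by the filter axioms, and $x\wedge y\le x,y$. Hence the projections $\{Q_x:x\in F\}$ form a commuting (by \eqref{4a2}) decreasing directed family, and $Q_{x\wedge y}=Q_xQ_y$ for $x,y\in F$ by \eqref{4a7}.

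The key step is a countable reduction. The operators $\{Q_x:x\in F\}$ generate a commutative von Neumann algebra $\A$, which by \ref{2d1} is isomorphic, via some $\al$, to $L_\infty(S,\Si,\mu)$ on a finite measure space; write $\al(Q_x)=\One_{E_x}$. Since $Q_{x\wedge y}=Q_xQ_y$, the sets satisfy $E_{x\wedge y}=E_x\cap E_y$, so $\{E_x\}$ is directed downward. The hypothesis $\inf_{x\in F}x=0_\La$ gives, by \ref{2a9}, that $\bigcap_{x\in F}H_x=H_0$, so the infimum of the projections equals $Q_0$; this infimum lies in $\A$ (the projection lattice of a von Neumann algebra is complete), and translating through $\al$ and \eqref{2d11} yields $\inf_{x\in F}\One_{E_x}=\al(Q_0)$. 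Here I would use the countable chain condition \eqref{4a03} together with the remarks of Sect.~\ref{2f}: a supremum (hence, via complements, an infimum) of an arbitrary family of measurable sets is already attained on a countable subfamily. So there is a countable $F_0=\{x_1,x_2,\dots\}\subset F$ with $\inf_{x\in F_0}\One_{E_x}=\al(Q_0)$, equivalently (by \eqref{2d11}, \ref{2a9} and \eqref{2a7}) $\inf_{x\in F_0}x=0_\La$.

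Finally I would pass to the decreasing sequence. Put $y_n=x_1\wedge\dots\wedge x_n$; then $y_n\in F$ since $F$ is a filter, $y_1\ge y_2\ge\dots$, and $\inf_n y_n=\inf_{x\in F_0}x=0_\La$, so $y_n\downarrow 0_\La$. By \eqref{3a1}, $Q_{y_n}\to Q_0$ strongly, whence $\|Q_{y_n}f-Q_0 f\|\to 0$; as $Q_0 f=0$ this gives $\|Q_{y_n}f\|\to 0$. Since each $y_n\in F$, we conclude $\inf_{x\in F}\|Q_x f\|\le\inf_n\|Q_{y_n}f\|=0$, as required. The one genuinely nontrivial point is the countable reduction of the middle paragraph: for an uncountable filter there is a priori no reason a sequence drawn from it should have infimum $0_\La$ merely because the whole filter does, and it is exactly separability, in the form of \eqref{4a03} and Sect.~\ref{2f}, that forces this.
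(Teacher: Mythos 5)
Your proof is correct, but it takes a genuinely different route from the paper's. You first replace the (possibly uncountable) filter $F$ by a countable subfamily $F_0$ with $\inf F_0=0_\La$, and then conclude by monotone convergence $Q_{y_n}\to Q_0$ along the decreasing sequence $y_n=x_1\wedge\dots\wedge x_n\in F$. The paper avoids any countable reduction of the infimum of \sif s: it sets $c=\inf_{x\in F}\|Q_xf\|$, takes a decreasing sequence $x_n\in F$ minimizing the \emph{norm} (not the \sif s), puts $x=\inf_n x_n$ (which need not be $0_\La$), and observes that $\|Q_yQ_xf\|\ge c=\|Q_xf\|$ forces $Q_xf\in H_y$ for every $y\in F$, whence $Q_xf\in\bigcap_{y\in F}H_y=H_0$ by \ref{2a9}, contradicting $c>0$. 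Two remarks on your version. First, the von Neumann algebra detour is heavier than needed: the countable reduction already follows from separability of $H$ applied to the subspaces $H_x$ --- the intersection of an arbitrary family of closed subspaces of a separable Hilbert space equals the intersection of a countable subfamily (pass to orthogonal complements and closed spans) --- after which \ref{2a9} and \eqref{2a7} give $\inf F_0=0_\La$ directly. Second, your appeal to \eqref{2d11} (and implicitly \ref{2d17}) for the \emph{uncountable} family is slightly loose, since those facts are stated for sequences; the clean order is to first invoke Sect.~\ref{2f} to find a countable $F_0$ (which you may close under finite meets) with $\inf_{x\in F_0}\One_{E_x}=\inf_{x\in F}\One_{E_x}$, and only then translate back through $\al$, where everything is sequential. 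With that reordering your argument is complete. What your approach buys is a direct, non-contradiction proof; what the paper's buys is independence from the measure-class machinery of Sect.~\ref{2d} and from separability at this particular step.
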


\begin{pf}
Given such $ f $, we denote $ c = \inf_{x\in F} \| Q_x f \| $, assume
that $ c>0 $ and seek a contradiction.

We choose $ x_n \in F $ such that $ x_1 \ge x_2 \ge\cdots$ and $ \|
Q_{x_n} f \| \downarrow c $. Necessarily, $ x_n \downarrow x $ for
some $ x \in\La$; by \eqref{2b4}, $ Q_{x_n} \to Q_x $, thus $ \| Q_x
f \| = c $.

For arbitrary $ y \in F $ we have $ \| Q_y Q_{x_n} f \| \ge c $ [since
$ Q_y Q_{x_n} = Q_{y\wedge x_n} $ by \eqref{4a7}, and $ y \wedge x_n
\in F $], therefore $ \| Q_y Q_x f \| \ge c = \| Q_x f \| $, which
implies\break  $ Q_y Q_x f = Q_x f $, that is, $ Q_x f \in H_y $ for all $ y
\in F $. By Fact \ref{2a9}, $ \bigcap_{y\in F} H_y = H_0 $. We get $
Q_x f
\in H_0 $, $ Q_0 Q_x f = 0 $ and $ \| Q_x f \| \ne0 $; a
contradiction.
\end{pf}

%
\begin{lemma}\label{5a3}
Let a function $ m\dvtx B \to[0,\infty) $ satisfy $ m ( x \vee y ) +
m (
x \wedge y ) \ge m(x) + m(y) $ for all $ x,y \in B $, and $ m(0_\La) =
0 $. Then the following two conditions on $ m $ are equivalent:
\begin{longlist}[(a)]
\item[(a)] for every $ \eps> 0 $ there exist $ n $ and $ x_1,\ldots
,x_n \in B
$ such that $ x_1 \vee\cdots\vee x_n = 1_\La$ and $ m(x_1) \le\eps,
\ldots, m(x_n) \le\eps$;

\item[(b)] $ \inf_{x\in F} m(x) = 0 $ for every ultrafilter $ F
\subset B $.
\end{longlist}
\end{lemma}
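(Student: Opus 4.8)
The plan is to prove the two implications separately, relying on two standard Boolean-algebra facts: an ultrafilter is \emph{prime} (so $x\vee y\in F$ forces $x\in F$ or $y\in F$, as one checks from the definition in Sect.~\ref{1e} using that $x'\wedge y'=(x\vee y)'$), and any family of elements of $B$ having the finite meet property is contained in some ultrafilter (the ultrafilter lemma). Note that all the joins and complements occurring in the statement are finite, hence computed inside the Boolean algebra $B$, so De Morgan's laws apply freely.

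First I would treat the easy direction (a)$\impl$(b). Fix an ultrafilter $F\subset B$ and $\eps>0$, and apply (a) to obtain $x_1,\dots,x_n\in B$ with $x_1\vee\dots\vee x_n=1_\La$ and $m(x_k)\le\eps$ for every $k$. Since $1_\La\in F$ and $F$ is prime, some $x_k$ lies in $F$, whence $\inf_{x\in F}m(x)\le m(x_k)\le\eps$. As $\eps>0$ is arbitrary, $\inf_{x\in F}m(x)=0$, which is exactly (b).

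The substantive direction is (b)$\impl$(a), which I would prove by contraposition. Suppose (a) fails; then there is $\eps_0>0$ such that no finite join of elements of $S=\{x\in B:m(x)\le\eps_0\}$ equals $1_\La$. By De Morgan this says precisely that the complemented family $\{x':x\in S\}$ has the finite meet property, namely $x_1'\wedge\dots\wedge x_n'=(x_1\vee\dots\vee x_n)'\ne 0_\La$ whenever $x_1,\dots,x_n\in S$; so this family is contained in some ultrafilter $F$. I then claim that $m(x)>\eps_0$ for every $x\in F$: indeed, if some $x\in F$ had $m(x)\le\eps_0$, then $x\in S$, hence $x'\in F$, hence $0_\La=x\wedge x'\in F$, contradicting that $F$ is a proper filter. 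Therefore $\inf_{x\in F}m(x)\ge\eps_0>0$, so (b) fails, and the contrapositive is complete.

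The only real obstacle is spotting the correct filter to build in the hard direction --- the one generated by the complements of the ``small'' elements --- and recognising that the negation of (a) is exactly the finite meet property needed to extend this family to an ultrafilter; the extension step itself is then just the ultrafilter lemma. I would also remark that the submodularity hypothesis $m(x\vee y)+m(x\wedge y)\ge m(x)+m(y)$ appears to play no role in this particular equivalence (it is presumably needed only where the lemma is subsequently applied, e.g.\ in Proposition~\ref{5a1}): the argument above uses only $m\ge0$ and the Boolean structure of $B$.
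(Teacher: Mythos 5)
Your proof is correct, and the substantive direction is handled by a genuinely different route than the paper's. The implication (a)$\Rightarrow$(b) is the same one-line argument (an ultrafilter containing $1_\La = x_1\vee\dots\vee x_n$ must contain some $x_k$). For (b)$\Rightarrow$(a) the paper also argues by contraposition, but instead of invoking the ultrafilter lemma it builds the witnessing ultrafilter by hand: it introduces $\ga(x)=\inf\{\max_k m(x_k) : x_1\vee\dots\vee x_n=x\}$, notes that $c=\ga(1_\La)>0$ when (a) fails, extracts a decreasing sequence $(x_n)_n$ with $\ga(x_n)=c$ and $m(x_n)\downarrow c$, and takes $F=\{y\in B : m(x_n\wedge y)\downarrow c\}$; the hypothesis $m(x\vee y)+m(x\wedge y)\ge m(x)+m(y)$ is used precisely to show that this $F$ is closed under meets, and the minimality built into $\ga$ to show that $F$ is maximal. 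Your argument --- complement the $\eps_0$-small elements, recognize that the negation of (a) is exactly the finite meet property of that family, and extend it to an ultrafilter --- is shorter, and your closing observation is right: the equivalence holds for any $m\ge0$ with $m(0_\La)=0$, so the supermodularity hypothesis (and with it the use of \eqref{5a5}) is in fact not needed to derive Proposition \ref{5a1}, only for Proposition \ref{5c3}. What you pay is an appeal to the ultrafilter lemma (Zorn), whereas the paper's construction requires only a countable sequence of choices and moreover exhibits the exact threshold $c$ obstructing (a), with $\inf_{y\in F}m(y)\ge c$ for the constructed $F$; neither feature is needed for the applications in this paper, so your proof is a legitimate simplification.
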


\begin{pf}
(a) \imp(b): the ultrafilter must contain at least one $ x_k $, thus
$ \inf_{x\in F} m(x) \le\eps$ for every $ \eps$.\vadjust{\goodbreak}

(b) \imp(a): we assume that (a) is violated and prove that (b) is
violated.

Note that $ m ( x \vee y ) \ge m(x) + m(y) \ge m(x) $ whenever $ x
\wedge y = 0_\La$, and therefore $ m(x) \ge m(y) $ whenever $ x \ge y
$.

We define $ \ga\dvtx B \to[0,\infty) $ by
\[
\ga(x) = \inf_{x_1\vee\cdots\vee x_n=x} \max\bigl( m(x_1),
\ldots,m(x_n) \bigr),
\]
the infimum being taken over all $ n $ and all $ x_1,\ldots,x_n \in B $
such that $ x_1 \vee\cdots\vee x_n = x $. We denote $ c = \ga(1_\La)
$ and note that $ c>0 $ [since (a) is violated]. Clearly, $ \ga(x) \le
m(x) $, and $ \ga(x\vee y) = \max( \ga(x), \ga(y) ) $ for all $ x,y
\in B $.

\begin{claim*} For every $ x \in B $ and $ \eps> 0 $ there exists $ y
\in B $ such that $ y \le x $ and $ \ga(x) = \ga(y) \le m(y) \le
\ga(x)+\eps$.
\end{claim*}

\begin{pf} Take $ x_1,\ldots,x_n $ such that $ x_1
\vee\cdots\vee x_n = x $ and $ \max_k m(x_k) \le\ga(x) + \eps$;
note that $ \ga(x) = \max_k \ga(x_k)$, choose $ k $ such that $ \ga(x)
= \ga(x_k) $, and then $ y = x_k $ fits.
\end{pf}

Iterating the transition from $ x $ to $ y $ we construct $ x_0, x_1,
x_2, \ldots\in B $ such that $ 1_\La= x_0 \ge x_1 \ge x_2 \ge\cdots
$, $ \ga(x_n) = c $ for all $ n $, and $ m(x_n) \downarrow c $ as $ n
\to\infty$.

We introduce
\[
F = \Bigl\{ y \in B\dvtx\lim_n m(x_n\wedge y) \ge
c \Bigr\} = \bigl\{ y \in B\dvtx m(x_n\wedge y) \downarrow c \bigr\}
\]
and note that $ \inf_{y\in F} m(y) \ge c > 0 $ [just because $ m(y)
\ge m(x_n\wedge y) $]. It is sufficient to prove that $ F $ is an
ultrafilter.

If $ y \in F $ and $ y \le z $, then $ z \in F $ [just because $
m(x_n\wedge y) \le m(x_n\wedge z) $].

If $ y,z \in F $, then $ m(x_n) \ge m ( (x_n\wedge y) \vee(x_n\wedge
z) ) \ge m(x_n\wedge y) + m(x_n\wedge z) - m ( (x_n\wedge y) \wedge
(x_n\wedge z) ) $, therefore $ \lim_n m(x_n\wedge y\wedge z) \ge
\lim_n m(x_n\wedge y) + \lim_n m(x_n\wedge z) - \lim_n m(x_n) = c $,
thus $ y \wedge z \in F $. We conclude that $ F $ is a filter.

For arbitrary $ y \in B $ we have $ c = \ga(x_n) \le\max(
m(x_n\wedge y), m(x_n\wedge y') ) $ for all $ n $; thus $ c \le
\lim_n \max( m(x_n\wedge y), m(x_n\wedge y') ) = \max( \lim_n
m(x_n\wedge y), \lim_n m(x_n\wedge y') ) $, which shows that $ y
\notin F \impl y' \in F $. We conclude that $ F $ is an ultrafilter,
which completes the proof.
\end{pf}

%
\begin{lemma}
$ Q_x + Q_y \le Q_{x\vee y} + Q_{x\wedge y} $ for all $ x,y \in B $.
\end{lemma}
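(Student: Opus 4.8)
The plan is to use the commutativity of the projections attached to elements of $ B $ and reduce the claim to the elementary fact that, for two commuting orthogonal projections $ P_1, P_2 $, the operator $ P_1 + P_2 - P_1 P_2 $ is itself the orthogonal projection onto the closure of $ P_1 H + P_2 H $.

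First I would invoke \eqref{4a2}: since $ B $ is a commutative subset of $ \La $, the projections $ Q_x, Q_y $ commute for $ x,y \in B $. Set $ R = Q_x + Q_y - Q_x Q_y $. Because $ Q_x Q_y = Q_y Q_x $ is Hermitian, $ R $ is Hermitian, and a direct computation using $ Q_x^2 = Q_x $, $ Q_y^2 = Q_y $ and $ Q_x Q_y = Q_y Q_x $ gives $ R^2 = R $; thus $ R $ is an orthogonal projection. One checks $ R Q_x = Q_x $ and $ R Q_y = Q_y $, so $ R $ fixes $ H_x $ and $ H_y $, while $ R H \subseteq H_x + H_y $; hence $ R $ projects onto the closure of $ H_x + H_y $.

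Next I would rewrite $ Q_x Q_y = Q_{x\wedge y} $, which holds for commuting elements of $ \La $ by \eqref{3b5} (equivalently \eqref{4a7}), so that $ R = Q_x + Q_y - Q_{x\wedge y} $. Finally, since $ x \le x\vee y $ and $ y \le x\vee y $, \eqref{2a5} gives $ H_x \subseteq H_{x\vee y} $ and $ H_y \subseteq H_{x\vee y} $, whence the closure of $ H_x + H_y $ is contained in $ H_{x\vee y} $. For orthogonal projections, inclusion of ranges is exactly the operator order, so $ R \le Q_{x\vee y} $, that is, $ Q_x + Q_y - Q_{x\wedge y} \le Q_{x\vee y} $, which is the desired inequality after moving $ Q_{x\wedge y} $ to the right-hand side.

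There is no serious obstacle here; the only point requiring care is that the identity $ P_1 + P_2 - P_1 P_2 = P_1 \vee P_2 $ (and even the self-adjointness of $ R $) genuinely depends on $ P_1 P_2 = P_2 P_1 $, and likewise $ Q_x Q_y = Q_{x\wedge y} $ uses commutativity --- both of which are supplied by the noise-type structure of $ B $ through \eqref{4a2}.
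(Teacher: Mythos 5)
Your proof is correct and follows essentially the same route as the paper: both use the commutativity of $Q_x,Q_y$ from \eqref{4a2} to write $Q_x+Q_y$ as the sum of the projection onto (the closure of) $H_x+H_y$ and the projection $Q_xQ_y=Q_{x\wedge y}$, and then bound the former by $Q_{x\vee y}$ via range inclusion. Your version merely spells out explicitly the verification that $Q_x+Q_y-Q_xQ_y$ is the projection onto $\overline{H_x+H_y}$, which the paper takes as known.
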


\begin{pf}
By \eqref{4a2}, $ Q_x $ and $ Q_y $ are commuting projections, which
implies $ Q_x + Q_y = Q_x \vee Q_y + Q_x \wedge Q_y $, where
$ Q_x \vee Q_y $ and $ Q_x \wedge Q_y $ are projections onto $ Q_x H +
Q_y H $ and $ Q_x H \cap Q_y H $, respectively. Using \eqref{4a7}, $
Q_x \wedge Q_y = Q_x Q_y = Q_{x\wedge y} $. It remains to note that $
Q_x \vee Q_y \le Q_{x\vee y} $ just because $ Q_x \le Q_{x\vee y} $
and $ Q_y \le Q_{x\vee y} $.
\end{pf}

Taking into account that $ \| Q_x \psi\|^2 = \langle Q_x \psi,\psi
\rangle
$ we get
%
%
%
\begin{equation}
\label{5a5} \| Q_x f \|^2 + \| Q_y f
\|^2 \le\| Q_{x\vee y} f \|^2 + \| Q_{x\wedge y} f
\|^2
\end{equation}
for all $ x,y \in B $ and $ f \in H $. Thus, the function $ m\dvtx x
\mapsto\| Q_x f \|^2 $ satisfies the condition $ m ( x \vee y ) + m (
x \wedge y ) \ge m(x) + m(y) $ of Lemma~\ref{5a3}; the other
condition, $
m(0_\La) = 0 $, is also satisfied if $ Q_0 f = 0 $.

\begin{pf*}{Proof of Proposition \ref{5a1}}
Let $ f \in H $, $ Q_0 f = 0 $. By Lemma~\ref{5a2}, $ \inf_{x\in F}
\| Q_x f
\| = 0 $ for every ultrafilter $ F \subset B $. It remains to apply
Lemma \ref{5a3} to $ m\dvtx x \mapsto\| Q_x f \|^2 $.
\end{pf*}

\subsection{\texorpdfstring{The first chaos; proving Proposition \protect\ref{1d4}}
{The first chaos; proving Proposition 1.10}}
\label{5b}

Let $ C $ be the completion of $ B $; see Definition \ref{1d3}. Recall the
first chaos space $ H^{(1)}(B) \subset H $ (Definition~\ref{1a2}).

%
\begin{lemma}\label{5b1}
The following three conditions on $ f \in H $ are equivalent:
\begin{longlist}[(a)]
\item[(a)] $ f \in H^{(1)}(B) $, that is, $ f = Q_x f + Q_{x'} f
$ for all $ x \in B $;

\item[(b)] $ Q_{x\vee y} f = Q_x f + Q_y f $ for all $ x,y \in B $
satisfying $ x \wedge y = 0_\La$;

\item[(c)] $ Q_{x\vee y} f + Q_{x\wedge y} f = Q_x f + Q_y f $
for all $ x,y \in B $, and $ Q_0 f = 0 $.
\end{longlist}
\end{lemma}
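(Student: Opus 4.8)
The plan is to prove the three conditions equivalent cyclically, as (a)$\imply$(b)$\imply$(c)$\imply$(a). Two structural facts drive everything: the projections $Q_x$, $x\in B$, pairwise commute (by \eqref{4a2}, extended to $\Cl(B)$ by \eqref{4a5}) and multiply according to $Q_uQ_v=Q_{u\wedge v}$ (by \eqref{4a7}). It is crucial to keep in mind that (b) and (c) are statements about the \emph{single} vector $f$, not operator identities: in general $Q_{x\vee y}$ is nowhere near $Q_x+Q_y-Q_{x\wedge y}$, since $H_{x\vee y}$ is vastly larger than $H_x+H_y$. The mechanism throughout is to \emph{apply} a projection $Q_{x\vee y}$ to an additive decomposition of $f$ and let the multiplicativity $Q_uQ_v=Q_{u\wedge v}$ collapse the result.

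For (a)$\imply$(b), take $x,y\in B$ with $x\wedge y=0_\La$. In the Boolean algebra $B$ this forces $y\le x'$, and distributivity gives $(x\vee y)\wedge x'=(x\wedge x')\vee(y\wedge x')=y$. Now apply $Q_{x\vee y}$ to the identity $f=Q_xf+Q_{x'}f$ of (a): by \eqref{2a6} (since $x\le x\vee y$) we get $Q_{x\vee y}Q_xf=Q_xf$, and by \eqref{4a7} we get $Q_{x\vee y}Q_{x'}f=Q_{(x\vee y)\wedge x'}f=Q_yf$. Hence $Q_{x\vee y}f=Q_xf+Q_yf$.

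For (b)$\imply$(c), I would first obtain $Q_0f=0$ from (b) with $x=y=0_\La$. For the additive relation, split $x$ and $y$ into the three pairwise disjoint pieces $a=x\wedge y'$, $b=x\wedge y$, $c=x'\wedge y$; distributivity of $B$ gives $x=a\vee b$, $y=b\vee c$, $x\vee y=a\vee b\vee c$ and $x\wedge y=b$, with $a\wedge b=a\wedge c=b\wedge c=0_\La$. Applying (b) to the disjoint pairs $(a,b)$, $(b,c)$ and $(a\vee b,\,c)$ yields $Q_xf=Q_af+Q_bf$, $Q_yf=Q_bf+Q_cf$ and $Q_{x\vee y}f=Q_af+Q_bf+Q_cf$; since $Q_{x\wedge y}f=Q_bf$, both sides of (c) equal $Q_af+2Q_bf+Q_cf$.

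For (c)$\imply$(a), set $y=x'$: then $x\vee y=1_\La$ and $x\wedge y=0_\La$, so (c) reads $f+Q_0f=Q_xf+Q_{x'}f$, and $Q_0f=0$ delivers (a). The one step demanding genuine care is (a)$\imply$(b): the temptation is to compute $Q_{x\vee y}$ as an operator, which fails; instead one must exploit that $f$ is additive and verify the lattice identity $(x\vee y)\wedge x'=y$, which rests on the distributivity of $B$ (recall that the ambient $\La$ is \emph{not} distributive). Everything else is routine Boolean bookkeeping.
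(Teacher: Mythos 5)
Your proof is correct and takes essentially the same route as the paper: the one nontrivial step, (a)$\Rightarrow$(b), is carried out identically (apply $Q_{x\vee y}$ to $f=Q_xf+Q_{x'}f$ and use $Q_uQ_v=Q_{u\wedge v}$ plus Boolean distributivity to get $(x\vee y)\wedge x'=y$), while your three-piece decomposition for (b)$\Rightarrow$(c) is just a cosmetic reorganization of the paper's two applications of (b) (to $x,\,x'\wedge y$ and to $x\wedge y,\,x'\wedge y$) followed by elimination of $Q_{x'\wedge y}f$. Your direct closure (c)$\Rightarrow$(a) via $y=x'$ matches the paper's remark that (c)$\Rightarrow$(b)$\Rightarrow$(a) is immediate.
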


\begin{pf}
Condition (a) for $ x=0_\La$ gives $ f = Q_0 f + f $, that
is, $ Q_0 f = 0 $. Condition (b) for $ x=y=0_\La$ gives $ Q_0 f
= Q_0 f + Q_0 f $, that is, $ Q_0 f = 0 $ (again). Condition
(c) requires $ Q_0 f = 0 $ explicitly. Thus, we restrict ourselves
to $ f $ satisfying $ Q_0 f = 0 $.

Clearly, (c) \imp(b) \imp(a); we'll prove that (a) \imp(b) \imp
(c). Recall \eqref{4a7}: $ Q_x Q_y = Q_{x\wedge y} $.

(a) \imp(b): If $ x \wedge y = 0_\La$, then $ Q_{x\vee y} f =
Q_{x\vee
y} ( Q_x f + Q_{x'} f ) = Q_{x\vee y} Q_x f + Q_{x\vee y}
Q_{x'} f = Q_{(x\vee y)\wedge x} f + Q_{(x\vee y)\wedge x'} f
= Q_x f + Q_y f $.

(b) \imp(c): we apply (b) twice; first, to $ x $ and $ x' \wedge y $,
getting $ Q_{x\vee y} f = Q_x f + Q_{x'\wedge y} f $, and
second, to $ x \wedge y $ and $ x' \wedge y $, getting $ Q_y f =
Q_{x\wedge y} f + Q_{x'\wedge y} f $. It remains to eliminate $
Q_{x'\wedge y} f $.
\end{pf}

\begin{pf*}{Proof of Proposition \ref{1d4}}
It is sufficient to prove that $ H^{(1)}(B) = H^{(1)}(C) $.
The inclusion $ H^{(1)}(B) \supset H^{(1)}(C) $ follows readily from
the inclusion $ B \subset C $. We have to prove that $ H^{(1)}(B)
\subset H^{(1)}(C) $. Let $ f \in H^{(1)}(B) $. By Lemma~\ref{5b1}, $ Q_0
f = 0 $ and $ Q_{x\vee y} f + Q_{x\wedge y} f = Q_x f +
Q_y f $ for all $ x,y \in B $; it is sufficient to extend this
equality to all $ x,y \in C $. We do it in two steps: first, we extend
it to $ x \in B $, $ y \in C $ by separate continuity in $ y $ for
fixed $ x $; and second, we extend it to $ x,y \in C $ by separate
continuity in $ x $ for fixed $ y $. The separate continuity of $ x
\vee y $ is ensured by Lemma~\ref{4b3}. Continuity of $ x \wedge y $ is
ensured by \eqref{4a9}.
\end{pf*}

From now on we often abbreviate $ H^{(1)}(B) $ to $ H^{(1)} $.

\begin{claim*}
The space $ H^{(1)} $ is invariant under projections $ Q_x $ for $ x
\in B $ and moreover, for $ x \in\Cl(B) $.
\end{claim*}
\begin{pf} For $ f \in H^{(1)}
$, $ x \in\Cl(B) $ and $ g = Q_x f $ we have, using \eqref{4a5}, $
Q_y g + Q_{y'} g = ( Q_y + Q_{y'} ) Q_x f = Q_x ( Q_y + Q_{y'} ) f =
Q_x f = g $ for all $ y \in B $, which means $ g \in H^{(1)} $.
\end{pf}

We denote the restriction of $ Q_x
$ to $ H^{(1)} $ by $ Q_x^{(1)} $; using \eqref{4a7} and Lemma \ref
{5b1} we
have for all $ x,y \in B $,
%
%
%
\begin{eqnarray}
\qquad &Q_x^{(1)}\dvtx H^{(1)} \to H^{(1)};\qquad
Q_x^{(1)} f = Q_x f;\qquad Q_0^{(1)}
= 0, \qquad Q_1^{(1)} = I; &\label{5b2}
\\
&Q_{x\wedge y}^{(1)} = Q_x^{(1)}
Q_y^{(1)};&\label{5b3}
\\
&Q_{x\vee y}^{(1)} + Q_{x\wedge y}^{(1)} =
Q_x^{(1)} + Q_y^{(1)};&\label{5b4}
\\
&Q_{x\vee y}^{(1)} = Q_x^{(1)} +
Q_y^{(1)}\qquad \mbox{whenever } x \wedge y = 0_\La;&
\label{5b5}
\\
&Q_x^{(1)} + Q_{x'}^{(1)} = I;&
\label{5b6}
\end{eqnarray}
here $ I $ is the identity operator on $ H^{(1)} $.

\subsection{\texorpdfstring{Sufficient subalgebras; proving Theorem \protect\ref{1e2}}
{Sufficient subalgebras; proving Theorem 1.13}}
\label{5c}

%
\begin{lemma}\label{5c1}
The following two conditions on $ x \in B $ and $ f \in H $ are
equivalent:
\begin{longlist}[(a)]
\item[(a)] $ f = Q_x f + Q_{x'} f $;

\item[(b)] $ \Ex f = 0 $, and $ \Ex( f g h ) = 0 $ for all $ g
\in H_x $, $ h \in H_{x'} $ satisfying $ \Ex g = 0 $,
$ \Ex h = 0 $.
\end{longlist}
\end{lemma}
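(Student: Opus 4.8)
The plan is to prove the two directions (a) $\Leftrightarrow$ (b) by relating the projection identity to the vanishing of certain expectations, exploiting the independence of $x$ and $x'$ through the tensor factorization $H_{x\vee y}=H_x\otimes H_{x'}$ of Fact~\ref{2c1}. Throughout I would use that under this factorization, with $H_x$ and $H_{x'}$ both containing the constants $H_0$, the projections satisfy $Q_x = I\otimes Q_0^{(x')}$ and $Q_{x'}=Q_0^{(x)}\otimes I$ on $H_{x\vee x'}$, by Fact~\ref{2c3}. The key structural fact is the orthogonal decomposition of the whole space (or of $H_{x\vee x'}$) into the four pieces obtained by splitting each tensor factor into its constant part $H_0$ and its mean-zero orthocomplement.

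First I would set up this decomposition. Write $H_x = H_0\oplus H_x^\circ$ and $H_{x'}=H_0\oplus H_{x'}^\circ$, where $H_x^\circ=\{g\in H_x:\Ex g=0\}$ and similarly for $H_{x'}^\circ$. Then $H_{x\vee x'}=H_x\otimes H_{x'}$ splits as the orthogonal direct sum of $H_0\otimes H_0$ (constants), $H_x^\circ\otimes H_0$ (which equals $H_x^\circ$, the mean-zero $x$-measurable functions), $H_0\otimes H_{x'}^\circ$ (the mean-zero $x'$-measurable functions), and $H_x^\circ\otimes H_{x'}^\circ$. The identity in (a), namely $f=Q_xf+Q_{x'}f$, says exactly that $f$ has no component in the last ``mixed'' summand $H_x^\circ\otimes H_{x'}^\circ$ and no component along the constants: indeed $Q_xf+Q_{x'}f$ recovers the constant piece twice, so after accounting for $Q_0f$ one checks (a) is equivalent to $f\in H_x^\circ\oplus H_{x'}^\circ$ together with $f\perp H_0$, i.e.\ $f$ lies in the mean-zero first-plus-second chaos with no mixed term. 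I should verify carefully that $f$ need not a priori lie in $H_{x\vee x'}$; but since $Q_xf,Q_{x'}f\in H_{x\vee x'}$, condition (a) forces $f\in H_{x\vee x'}$ automatically, so no generality is lost.

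Next I would translate the mixed-term condition into the expectation statement (b). The products $gh$ with $g\in H_x^\circ$, $h\in H_{x'}^\circ$ (and $g,h$ bounded, then by density general) correspond under the unitary $g\otimes h\leftrightarrow gh$ precisely to a total set in $H_x^\circ\otimes H_{x'}^\circ$, using independence via Fact~\ref{2c1} to see that such products span this mixed summand. Therefore $f$ has zero mixed component if and only if $\ip{f}{gh}=\Ex(fgh)=0$ for all such $g,h$; combined with $\Ex f=\ip{f}{\One}=0$ (the vanishing of the constant component, which is $Q_0f=0$), this is exactly condition (b). The boundedness hypotheses $g\in H_x$, $h\in H_{x'}$ in (b) are handled by noting bounded mean-zero functions are dense in $H_x^\circ$ and $H_{x'}^\circ$, so testing against them detects the mixed component just as well.

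The main obstacle I expect is the bookkeeping around the constant subspace: because $H_0\subset H_x$ and $H_0\subset H_{x'}$ simultaneously, the naive count $Q_xf+Q_{x'}f$ double-counts the constant part, and one must disentangle $Q_0f$ from the genuine first-chaos components to see that (a) really forbids only the mixed term. The cleanest way around this is to reduce at the outset to $Q_0f=0$ (each of (a), (b) forces this, as in the proof of Lemma~\ref{5b1}) and then work purely with the three nonconstant summands, where $Q_x$ projects onto $H_x^\circ$ and $Q_{x'}$ onto $H_{x'}^\circ$; the identity $f=Q_xf+Q_{x'}f$ then literally says the $H_x^\circ\otimes H_{x'}^\circ$ component is zero, and the equivalence with (b) follows from the spanning property of the products $gh$.
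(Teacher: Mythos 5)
Your proof is correct and follows essentially the same route as the paper: the tensor factorization $H=H_x\otimes H_{x'}$ of Fact~\ref{2c1}, the four-fold orthogonal decomposition obtained by splitting each factor as $H_0\oplus(H_\cdot\ominus H_0)$, the observation that $f=Q_xf+Q_{x'}f$ kills exactly the constant and mixed components, and the identification $\ip{f}{gh}=\Ex(fgh)$ with products $gh$ spanning the mixed summand. (The worry about whether $f\in H_{x\vee x'}$ is vacuous since $x\vee x'=1_\La$ gives $H_{x\vee x'}=H$, but this does not affect the argument.)
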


\begin{pf}
Treating $ H $ as $ H_x \otimes H_{x'} $ according to Fact \ref{2c1} we
have
\begin{eqnarray*}
H &=& \bigl( (H_x \ominus H_0) \oplus H_0
\bigr) \otimes\bigl( (H_{x'} \ominus H_0) \oplus
H_0 \bigr)
\\
&=& (H_x \ominus H_0) \otimes(H_{x'} \ominus
H_0) \oplus(H_x \ominus H_0) \otimes
H_0 \oplus H_0 \otimes(H_{x'} \ominus
H_0) \\
&&{}\oplus H_0 \otimes H_0;
\end{eqnarray*}
here $ H_x \ominus H_0 $ is the orthogonal
complement of $ H_0 $ in $ H_x $ (it consists of all zero-mean
functions of $ H_x $). In this notation $ Q_x + Q_{x'} $ becomes
\begin{eqnarray*}
&&I \otimes Q_0^{(x')} + Q_0^{(x)}
\otimes I\\
&&\qquad=\bigl( \bigl(I-Q_0^{(x)}\bigr) +
Q_0^{(x)} \bigr) \otimes Q_0^{(x')} +
Q_0^{(x)} \otimes\bigl( \bigl(I-Q_0^{(x')}
\bigr) + Q_0^{(x')} \bigr)
\\
&&\qquad= \bigl(I-Q_0^{(x)}\bigr) \otimes Q_0^{(x')}
+ Q_0^{(x)} \otimes\bigl(I-Q_0^{(x')}
\bigr) + 2 Q_0^{(x)} \otimes Q_0^{(x')}
,
\end{eqnarray*}
the projection onto $ (H_x \ominus H_0) \otimes H_0
\oplus H_0 \otimes(H_{x'} \ominus H_0) $ plus twice the projection
onto $ H_0 \otimes H_0 $ ($=H_0$). Thus, the equality $ f =
(Q_x + Q_{x'}) f $ [item (a)] becomes $ f \in(H_x \ominus H_0)
\otimes H_0 \oplus H_0 \otimes(H_{x'} \ominus H_0) $, or
equivalently, orthogonality of $ f $ to $ H_0 $ and $ (H_x
\ominus H_0) \otimes(H_{x'} \ominus H_0) $, which is item (b).
\end{pf}

%
\begin{remark}\label{5c2}
The proof given above shows also that
\[
\{ f \in H\dvtx f = Q_x f + Q_{x'} f \} = ( H_x
\ominus H_0 ) \oplus( H_{x'} \ominus H_0 )
\]
for all $ x \in B $.
\end{remark}

Let $ B_0 \subset B $ be a noise-type subalgebra, and $ f \in
H^{(1)}(B_0) $. We say that $ f $ is $B_0$-atomless, if for every $
\eps> 0 $ there exist $ n $ and $ x_1,\ldots,x_n \in B_0 $ such that
$ x_1 \vee\cdots\vee x_n = 1_\La$ and $ \| Q_{x_1} f \| \le\eps,
\ldots, \| Q_{x_n} f \| \le\eps$.

%
\begin{proposition}\label{5c3}
If $ f \in H^{(1)}(B_0) $ is $B_0$-atomless, then $ f \in H^{(1)}(B)
$.
\end{proposition}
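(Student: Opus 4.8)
The plan is to fix an element $z\in B$ with its $B$-complement $z'$ (independent of $z$, with $z\vee z'=1_\La$) and prove the single identity $f=Q_zf+Q_{z'}f$; since $z$ is arbitrary, this is exactly $f\in H^{(1)}(B)$. By Lemma \ref{5c1} (applied with $z$ in the role of $x$) this amounts to checking $\Ex f=0$ — which holds since $Q_0f=0$ for $f\in H^{(1)}(B_0)$ — together with the bilinear vanishing $\Ex(fgh)=0$ for all mean-zero $g\in H_z$, $h\in H_{z'}$. Equivalently, writing $H=H_z\otimes H_{z'}$ via Fact \ref{2c1}, the component of $f$ in $(H_z\ominus H_0)\otimes(H_{z'}\ominus H_0)$ must vanish; as the products $g\otimes h$ with bounded mean-zero $g,h$ are total in that subspace, it suffices to prove $\ip f{gh}=0$ for bounded mean-zero $g\in H_z$, $h\in H_{z'}$.

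First I would fix such $g,h$ and an $\eps>0$, and use the hypothesis that $f$ is \atomless{B_0}: there are $x_1,\dots,x_n\in B_0$ with $x_1\vee\dots\vee x_n=1_\La$ and $\|Q_{x_k}f\|\le\eps$. Disjointifying inside $B_0$ (replacing $x_k$ by $w_k=x_k\wedge x_1'\wedge\dots\wedge x_{k-1}'$) yields pairwise disjoint $w_1,\dots,w_m\in B_0$ with $\bigvee_j w_j=1_\La$ and still $\|Q_{w_j}f\|\le\eps$ (since $w_j\le x_j$). Because $f\in H^{(1)}(B_0)$, additivity over disjoint elements (Lemma \ref{5b1} applied to $B_0$) gives $f=\sum_j Q_{w_j}f$. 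Now I localize each term: since $w_j\in B\subset\La_{z,z'}$ by \eqref{4a1}, we have $w_j=(z\wedge w_j)\vee(z'\wedge w_j)$, so Fact \ref{2c3} gives $\Ex(gh\mid w_j)=Q_{z\wedge w_j}g\cdot Q_{z'\wedge w_j}h$; as $Q_{w_j}f$ is \measurable{w_j}, the tower property then yields
\[
\ip f{gh}=\sum_j\ip{Q_{w_j}f}{\,Q_{z\wedge w_j}g\cdot Q_{z'\wedge w_j}h\,} \, .
\]

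The key estimate is a three-factor Cauchy--Schwarz in which the \emph{small} factor is attached to $f$. Using independence of $z\wedge w_j\le z$ and $z'\wedge w_j\le z'$ (so the two conditional expectations multiply in $L_2$-norm), each summand is bounded by $\|Q_{w_j}f\|\,\|Q_{z\wedge w_j}g\|\,\|Q_{z'\wedge w_j}h\|\le\eps\,\|Q_{z\wedge w_j}g\|\,\|Q_{z'\wedge w_j}h\|$; and since the $w_j$ (hence the $z\wedge w_j$ and the $z'\wedge w_j$) are pairwise disjoint, the mean-zero vectors $Q_{z\wedge w_j}g$ are mutually orthogonal, whence $\sum_j\|Q_{z\wedge w_j}g\|^2\le\|g\|^2$ and likewise $\sum_j\|Q_{z'\wedge w_j}h\|^2\le\|h\|^2$. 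Cauchy--Schwarz over $j$ then gives $|\ip f{gh}|\le\eps\|g\|\,\|h\|$, and letting $\eps\to0$ forces $\ip f{gh}=0$.

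I expect the main obstacle to be conceptual rather than computational: the tempting approach — decompose $f-Q_zf-Q_{z'}f=\sum_j\gamma_j$ into the cross-terms $\gamma_j$ of $Q_{w_j}f$ across the split $(z\wedge w_j,\,z'\wedge w_j)$ and make them disappear by refining the partition — is doomed, because that decomposition is orthogonal and scale-invariant, so $\sum_j\|\gamma_j\|^2$ stays equal to $\|f-Q_zf-Q_{z'}f\|^2$ under every refinement and mere smallness of the pieces never helps. The real point is that the atomlessness must be spent on $f$ (not on $g$ or $h$, which matters because the hypothesis only gives coverings fine for $f$) and inserted as the small factor of a \emph{global} Cauchy--Schwarz, the two remaining factors being automatically $\ell^2$-summable with sums $\le\|g\|^2,\|h\|^2$ by disjointness of the $w_j$. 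A secondary point to verify carefully is the conditional-independence identity $\Ex(gh\mid w_j)=Q_{z\wedge w_j}g\cdot Q_{z'\wedge w_j}h$, which rests on $w_j\in\La_{z,z'}$ (that is, on \eqref{4a1}) together with Fact \ref{2c3}.
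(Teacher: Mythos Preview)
Your proof is correct and follows essentially the same route as the paper: reduce via Lemma~\ref{5c1} to showing $\Ex(fgh)=0$ for mean-zero $g\in H_z$, $h\in H_{z'}$, disjointify the atomless cover inside $B_0$, split $f=\sum_j Q_{w_j}f$, use $Q_{w_j}(gh)=(Q_{z\wedge w_j}g)(Q_{z'\wedge w_j}h)$ via Fact~\ref{2c3}, and finish with the three-factor Cauchy--Schwarz where the $\eps$ lands on $\|Q_{w_j}f\|$ while the $\ell^2$-sums over $j$ of $\|Q_{z\wedge w_j}g\|^2$ and $\|Q_{z'\wedge w_j}h\|^2$ are controlled by $\|g\|^2$, $\|h\|^2$. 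The only cosmetic differences are that the paper writes the localization step directly in tensor-product language and cites \eqref{5a5} for the $\ell^2$-bound, whereas you invoke $w_j\in\La_{z,z'}$ via \eqref{4a1} and argue orthogonality of the $Q_{z\wedge w_j}g$ from $Q_{u_1}Q_{u_2}=Q_0$ for disjoint $u_1,u_2$ together with $\Ex g=0$; both are equivalent.
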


\begin{pf}
Given $ x \in B $, we have to prove that $ f = Q_x f + Q_{x'} f $. Let
$ g \in H_x \ominus H_0 $, $ h \in H_{x'} \ominus H_0 $; by Lemma
\ref{5c1} it is sufficient to prove that \mbox{$ \Ex( f g h ) = 0 $}.

Given $ \eps> 0 $, we take $ y_1,\ldots,y_n $ in $ B_0 $ such that $
y_1 \vee\cdots\vee y_n = 1_\La$, $ \| Q_{y_i} f \| \le\eps$ for
all $ i $, and in addition, $ y_i \wedge y_j = 0_\La$ whenever $ i
\ne j $. We have $ f = \sum_i Q_{y_i} f $ by Lemma \ref{5b1}, thus, $
\Ex( f g h ) = \sum_i \Ex( (Q_{y_i} f) g h ) $. Further, $ \Ex(
(Q_{y_i} f) g h ) = \langle Q_{y_i} f,g \otimes h \rangle= \langle
Q_{y_i} f,Q_{y_i} (g \otimes h) \rangle= \langle Q_{y_i} f
,(Q^{(x)}_{u_i} \otimes Q^{(x')}_{v_i}) (g \otimes h) \rangle=
\langle Q_{y_i} f,\break (Q^{(x)}_{u_i} g) \otimes(Q^{(x')}_{v_i} h)
\rangle$, where $ u_i = y_i \wedge x $ and $ v_i
= y_i \wedge x' $; it follows that\break  $ | \Ex( f g h ) | \le\sum_i \|
Q_{y_i} f \| \cdot\| Q^{(x)}_{u_i} g \| \cdot\| Q^{(x')}_{v_i} h \|
$. By \eqref{5a5}, $ \sum_i \| Q^{(x)}_{u_i} g \|^2 \le\| g \|^2 $
and $ \sum_i \| Q^{(x')}_{v_i} h \|^2 \le\| h \|^2 $. We get $ | \Ex
( f g h ) | \le( \max_i \| Q_{y_i} f \| ) ( \sum_i \|
Q^{(x)}_{u_i} g \| \cdot\break \| Q^{(x')}_{v_i}
h \| ) \le\eps\|g\| \|h\| $ for all $ \eps$.
\end{pf}

\begin{pf*}{Proof of Theorem \ref{1e2}}
Given an atomless noise-type subalgebra $ B_0 \subset B $, we have to
prove that $ H^{(1)}(B_0) \subset H^{(1)}(B) $. Applying Proposition
\ref{5a1} to $ B_0 $ we see that every $ f \in H^{(1)}(B_0) $ is
$B_0$-atomless. By Proposition \ref{5c3}, $ f \in H^{(1)}(B) $.
\end{pf*}

\section{\texorpdfstring{The easy part of Theorem \protect\ref{1c2}}
{The easy part of Theorem 1.5}}
\label{sec6}

\subsection{From $\mathrm{(a)}$ to $\mathrm{(b)}$}
\label{6a}

In this subsection we assume that $ B $ is a \emph{classical}
noise-type Boolean algebra and prove that its completion, $ C $, is
equal to its closure, $ \Cl(B) $; in combination with Corollary \ref
{4b6} it
gives the implication (a) \imp(b) of Theorem \ref{1c2}.

The first chaos space $ H^{(1)} $ is invariant under $ Q_x $ for $ x
\in B $ and moreover, for $ x \in\Cl(B) $, as noted in
Section~\ref{5b}. We denote by $ \Down(x) $, for $ x \in\Cl(B) $, the
restriction of $ Q_x $ to $ H^{(1)} $ (treated as an operator $
H^{(1)} \to H^{(1)} $),\vadjust{\goodbreak} recall Section~\ref{5b} and note that
%
%
%
\begin{eqnarray}
&\Down(x)\dvtx H^{(1)} \to H^{(1)}, \qquad\Down(x) f = Q_x f
\qquad\mbox{for } x \in\Cl(B);& \label{6a1}
\\
&\Down(x) = Q_x^{(1)} \qquad\mbox{for } x \in B;& \label{6a2}
\\
&\Down(x) + \Down\bigl(x'\bigr) = I\qquad \mbox{for } x \in B;&
\label{6a3}
\\
&x \le y \mbox{ implies } \Down(x) \le\Down(y) \qquad\mbox{for } x,y \in
\Cl(B).&
\label{6a4}
\end{eqnarray}

We denote by $ \Q$ the closure of $ \{ \Down(x)\dvtx x \in B \} $ in the
strong operator topology; $ \Q$ is a closed set of commuting
projections on $ H^{(1)} $; we have $ \Down(x) \in\Q$ for $ x \in B
$, and by continuity for $ x \in\Cl(B) $ as well.

Note that $ q \in\Q$ implies $ I-q \in\Q$ [since $ \Down(x_n)
\to q $ implies $ \Down(x'_n) \to I-q $ by \eqref{6a3}].

For $ q \in\Q$ we define $ \Up(q) = \sigma(qH^{(1)}) \in\La$ (the
$\sigma$-field generated by $ q f $ for all $ f \in H^{(1)} $) and
note that
%
%
%
\begin{eqnarray}
&q_1 \le q_2 \qquad\mbox{implies } \Up(q_1) \le
\Up(q_2); &\label{6a5}
\\
&\Up(q) \vee\Up(I-q) = 1_\La\qquad\mbox{for } q \in\Q;&\label{6a6}
\end{eqnarray}
in general, $ \Up(q) \vee\Up(I-q) = \sigma(H^{(1)}) $, since $
qH^{(1)} + (I-q)H^{(1)} = H^{(1)} $; and the equality $
\sigma(H^{(1)}) = 1_\La$ is the classicality (Definition \ref{1a3}).

%
\begin{lemma}\label{6a7}
$ \Up(q) $ and $ \Up(I-q) $ are independent (for each $ q \in\Q$).
\end{lemma}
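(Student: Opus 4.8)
The plan is to reduce to the dense family $ q = \Down(x) $, $ x \in B $, where everything is explicit, and then pass to a general $ q \in \Q $ by approximation, using Fact \ref{2c10} to preserve independence in the limit. For the base case, let $ x \in B $ and $ q = \Down(x) $; then $ I-q = \Down(x') $ by \eqref{6a3}. Since $ \Down(x) f = Q_x f \in H_x = L_2(x) $ for every $ f \in H^{(1)} $, each generator of $ \Up(\Down(x)) = \si(\Down(x) H^{(1)}) $ is \measurable{x}, so $ \Up(\Down(x)) \le x $; likewise $ \Up(\Down(x')) \le x' $. Because $ x, x' $ are independent (as complements in the noise-type Boolean algebra $ B $), the sub-\sif s $ \Up(\Down(x)) $ and $ \Up(\Down(x')) $ are independent, which proves the claim for such $ q $.

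For a general $ q \in \Q $ I would pick $ x_n \in B $ with $ \Down(x_n) \to q $ in the strong operator topology; then $ \Down(x_n') \to I-q $ by \eqref{6a3}. To prove independence of $ \Up(q) = \si(q H^{(1)}) $ and $ \Up(I-q) = \si((I-q) H^{(1)}) $ it is enough, by the standard $ \pi $-system reduction (the cylinder events built from finitely many generators form a generating $ \pi $-system, and independence of generating $ \pi $-systems gives independence of the generated \sif s), to show that for all finite families $ f_1,\dots,f_k $ and $ g_1,\dots,g_m $ in $ H^{(1)} $ the random vectors $ (qf_1,\dots,qf_k) $ and $ ((I-q)g_1,\dots,(I-q)g_m) $ are independent. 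For each $ n $ the vector $ (Q_{x_n}f_1,\dots,Q_{x_n}f_k) $ is \measurable{x_n} while $ (Q_{x_n'}g_1,\dots,Q_{x_n'}g_m) $ is \measurable{x_n'}; since $ x_n $ and $ x_n' $ are independent in $ B $, these two vectors are independent for every $ n $. Strong convergence gives $ Q_{x_n}f_i \to qf_i $ and $ Q_{x_n'}g_j \to (I-q)g_j $ in $ H $, hence in probability, so the vector-valued form of Fact \ref{2c10} delivers independence of the limiting vectors, completing the argument.

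The routine steps are the $ \pi $-system reduction and the elementary identity $ \Down(x) f = Q_x f \in L_2(x) $. The one place needing care is the passage to the limit: everything rests on Fact \ref{2c10} upgrading the per-$ n $ independence to independence of the limits, which is legitimate precisely because each approximant $ Q_{x_n}f $ is genuinely \measurable{x_n} with $ x_n, x_n' $ independent in $ B $. Commutativity of $ \Q $ and the closure property $ I-q \in \Q $ are already in hand, so I expect no obstacle beyond this bookkeeping.
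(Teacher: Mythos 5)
Your proof is correct and follows essentially the same route as the paper: approximate $q$ by $\Down(x_n)$ with $x_n\in B$, reduce independence of the generated \sif s to independence of finite random vectors $(qf_1,\dots,qf_k)$ and $((I-q)g_1,\dots,(I-q)g_l)$, and pass to the limit via Fact \ref{2c10} using the per-$n$ independence of $x_n$ and $x_n'$. Your separate ``base case'' for $q=\Down(x)$ is harmless but redundant, since the general argument already covers it.
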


\begin{pf}
We take $ x_n \in B $ such that $ \Down(x_n) \to q $, then $
\Down(x'_n) \to I-q $. We have to prove that $ \sigma(qH^{(1)}) $ and $
\sigma((I-q)H^{(1)}) $ are independent, that is, two random vectors $ (
q f_1,\ldots,q f_k ) $ and $ ( (I-q)g_1,\ldots,(I-q)g_l ) $
are independent for all $ k,l $ and all $
f_1,\ldots,f_k,g_1,\ldots,g_l \in H^{(1)} $. It follows by
Fact \ref{2c10} from the similar claim for $ \Down(x_n) $ in place of
$ q
$.
\end{pf}

%
\begin{lemma}\label{6a8}
$ \Up(\Down(x)) = x $ for every $ x \in B $.
\end{lemma}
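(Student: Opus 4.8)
The inclusion $ \Up(\Down(x)) \le x $ is immediate and requires no hypothesis: for every $ f \in H^{(1)} $ the vector $ \Down(x)f = Q_x f $ lies in $ H_x $, hence is \measurable{x}, so the \sif\ $ \si(Q_x H^{(1)}) = \Up(\Down(x)) $ that these vectors generate is contained in $ x $. The whole content of the lemma is the reverse inclusion $ x \le \Up(\Down(x)) $, and this is where classicality of $ B $ must enter.

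Write $ z = \Up(\Down(x)) $, so that $ z \le x $ by the previous paragraph. The plan is to bound $ \si(H^{(1)}) $ from above by $ z \vee x' $ and then invoke classicality. Indeed, since $ x \in B $, every $ f \in H^{(1)} $ satisfies $ f = Q_x f + Q_{x'} f $ (Def.~\ref{1a2}); here $ Q_x f $ is \measurable{z} by the very definition of $ z = \si(Q_x H^{(1)}) $, while $ Q_{x'}f $ is \measurable{x'}. Hence $ f \in H_{z\vee x'} $ for all $ f \in H^{(1)} $, so $ \si(H^{(1)}) \le z \vee x' $. Classicality (Def.~\ref{1a3}) asserts $ \si(H^{(1)}) = 1_\La $, whence $ z \vee x' = 1_\La $.

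It then remains to deduce $ z = x $ from the two facts $ z \le x $ and $ z \vee x' = 1_\La $. The tempting one-line computation $ z = z \vee 0_\La = (z \wedge x) \vee (x' \wedge x) = (z \vee x') \wedge x = 1_\La \wedge x = x $ uses distributivity of the meet over the join, which fails in $ \La $ in general; overcoming this is the only real obstacle. It is handled by working inside $ \La_{x,x'} $: since $ x,x' $ are independent (Def.~\ref{1a1}) and $ z \le x $ puts $ z \in \La_x \subset \La_{x,x'} $ (and trivially $ x' \in \La_{x,x'} $), Theorem~\ref{3d2} guarantees that the map $ w \mapsto w \wedge x $ is a genuine lattice homomorphism $ \La_{x,x'} \to \La_x $. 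Applying it to the identity $ z \vee x' = 1_\La $ legitimizes exactly the forbidden distributive step, yielding $ z = (z \wedge x) \vee (x' \wedge x) = (z \vee x') \wedge x = x $, which completes the proof.
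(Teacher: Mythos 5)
Your proof is correct and follows essentially the same route as the paper's: both establish $ \Up(\Down(x)) \le x $ from \measurable{x}ity of $ Q_x f $, extract a join identity equal to $ 1_\La $ from classicality (the paper via $ \Up(q) \vee \Up(I-q) = 1_\La $ together with $ \Up(I-q) \le x' $, you via $ z \vee x' = 1_\La $ read off directly from $ f = Q_x f + Q_{x'} f $), and then recover $ x $ by meeting with $ x $ inside the product lattice $ \La_{x,x'} $, which is exactly \eqref{2c8}. The only cosmetic difference is that you pair $ z $ with $ x' $ itself rather than with $ \Up(I-\Down(x)) $.
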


\begin{pf}
Denote $ q = \Down(x) $, then $ \Down(x') = I-q $ by \eqref{6a3}. We
have $ \Up(q) \le x $ (since $ qf = Q_x f $ is $x$-measurable for $ f
\in H^{(1)} $); similarly, $ \Up(I-q) \le x' $. By \eqref{6a6} and
\eqref{2c8}, $ \Up(q) = ( \Up(q) \vee\Up(I-q) ) \wedge x = x $.
\end{pf}

%
\begin{lemma}\label{6a9}
If $ q,q_1,q_2,\ldots\in\Q$ satisfy $ q_n \uparrow q $, then $
\Up(q_n) \uparrow\Up(q) $.
\end{lemma}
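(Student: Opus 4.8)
The plan is to establish the relation $\Up(q_n)\uparrow\Up(q)$ by proving its two halves separately. Writing $w:=\sup_n\Up(q_n)$, I would first check that the increasing sequence $\Up(q_n)$ has supremum $w\le\Up(q)$, and then prove the reverse inequality $\Up(q)\le w$; the two together give $w=\Up(q)$, which is exactly what $\Up(q_n)\uparrow\Up(q)$ asserts.

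The first half is immediate from monotonicity. Since $q_n\uparrow q$ entails $q_1\le q_2\le\dots$ and $q_n\le q$ for every $n$, property \eqref{6a5} yields both $\Up(q_1)\le\Up(q_2)\le\dots$ and $\Up(q_n)\le\Up(q)$. Hence $w=\sup_n\Up(q_n)$ exists in the complete lattice $\La$ and satisfies $w\le\Up(q)$, so only the reverse inequality remains.

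For that inequality I would pass from order convergence to operator convergence: by \ref{2b25}, $q_n\uparrow q$ means precisely $q_n\to q$ in the strong operator topology on $H^{(1)}$, so $q_nf\to qf$ in $H$ for each $f\in H^{(1)}$. Now $q_nf\in q_nH^{(1)}$, so $q_nf$ is \measurable{\Up(q_n)} by the defining property $\Up(q_n)=\si(q_nH^{(1)})$; since $\Up(q_n)\le w$, this places $q_nf$ in the type $L_2$ space $H_w$. As $H_w$ is closed, the $H$-limit $qf=\lim_n q_nf$ also lies in $H_w$, that is, $qf$ is \measurable{w}. Because this holds for every $f\in H^{(1)}$, the \sif\ $\Up(q)=\si(qH^{(1)})$ is contained in $w$, i.e.\ $\Up(q)\le w$. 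Combined with $w\le\Up(q)$ this forces $w=\Up(q)$.

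I do not expect a genuine obstacle here: the only delicate point is deducing that the limit $qf$ is \measurable{w} from the $L_2$-convergence $q_nf\to qf$, and this is handled cleanly by the closedness of the type $L_2$ space $H_w$. (One could instead phrase the last step via \eqref{2a12}, identifying $H_w$ with the closure of $\bigcup_n H_{\Up(q_n)}$ and noting that each $q_nf$ belongs to this union, but appealing directly to the closedness of $H_w$ is shorter. Note also that classicality of $B$, although a blanket assumption of this subsection, is not actually used in the argument.)
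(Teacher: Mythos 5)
Your proof is correct and is essentially an expanded version of the paper's own one-line argument, which simply notes that $ q_n H^{(1)} \uparrow qH^{(1)} $ implies $ \si(q_n H^{(1)}) \uparrow \si(qH^{(1)}) $; your use of the closedness of the type $L_2$ space $H_w$ is exactly the content hidden in that implication.
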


\begin{pf}
$ q_n H^{(1)} \uparrow qH^{(1)} $ implies $ \sigma(q_n H^{(1)})
\uparrow
\sigma(qH^{(1)}) $.
\end{pf}

%
\begin{lemma}\label{6a10}
If $ q,q_1,q_2,\ldots\in\Q$ satisfy $ q_n \downarrow q $, then $
\Up(q_n) \downarrow\Up(q) $.
\end{lemma}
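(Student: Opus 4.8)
The plan is to deduce the downward statement from its upward counterpart (Lemma~\ref{6a9}) together with the independence between $\Up(q)$ and $\Up(I-q)$. Write $y=\inf_n\Up(q_n)$ and $z=\Up(I-q)$. Since $q_n\downarrow q$ forces $q_n\ge q$, monotonicity \eqref{6a5} shows that the $\sigma$-fields $\Up(q_n)$ decrease and that $y\ge\Up(q)$; hence $\Up(q_n)\downarrow y$ and it only remains to prove $y=\Up(q)$. Because $\Q$ is stable under $r\mapsto I-r$ and $q_n\downarrow q$ gives $I-q_n\uparrow I-q$, Lemma~\ref{6a9} applies to the increasing sequence $I-q_n$ and yields $\Up(I-q_n)\uparrow z$.

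The first and crucial step is to establish $y\wedge z=0_\La$. For each fixed $n$, the $\sigma$-fields $\Up(q_n)$ and $\Up(I-q_n)$ are independent by Lemma~\ref{6a7}, so by Proposition~\ref{3c1} they commute and meet in $0_\La$; thus $Q_{\Up(q_n)}Q_{\Up(I-q_n)}=Q_{0_\La}$ by \eqref{3b5}. Now let $n\to\infty$: since $\Up(q_n)\downarrow y$ and $\Up(I-q_n)\uparrow z$, \eqref{2b4} gives $Q_{\Up(q_n)}\to Q_y$ and $Q_{\Up(I-q_n)}\to Q_z$ strongly, so Fact~\ref{2b1} yields $Q_yQ_z=Q_{0_\La}$. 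Taking adjoints (projections are self-adjoint) gives $Q_zQ_y=Q_{0_\La}$ as well, so $y$ and $z$ commute; then \eqref{3b5} gives $Q_{y\wedge z}=Q_yQ_z=Q_{0_\La}$, and \eqref{2a7} forces $y\wedge z=0_\La$.

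The second step is short. Commutativity of $y,z$ together with $y\wedge z=0_\La$ makes $y$ and $z$ independent, by Proposition~\ref{3c1}. I then invoke the first identity in \eqref{2c8}, taking $y,z$ in the roles of the two independent $\sigma$-fields and taking $\Up(q)$ (which lies below $y$) and $z$ itself as the two chosen sub-$\sigma$-fields: it gives $(\Up(q)\vee z)\wedge y=\Up(q)$. But $\Up(q)\vee z=\Up(q)\vee\Up(I-q)=1_\La$ by \eqref{6a6}---this is precisely where classicality enters---so the left-hand side equals $1_\La\wedge y=y$, and therefore $y=\Up(q)$, i.e. $\Up(q_n)\downarrow\Up(q)$.

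I expect the genuine obstacle to be the meet computation $y\wedge z=0_\La$, because the tempting identity $\bigl(\inf_n\Up(q_n)\bigr)\wedge\bigl(\sup_n\Up(I-q_n)\bigr)=\sup_n\bigl(\Up(q_n)\wedge\Up(I-q_n)\bigr)$ is simply false in the nondistributive lattice $\La$; one cannot distribute the meet across the two monotone limits. The way around this is to transport the whole relation to the commuting projections, where the product rule \ref{2b1} and the monotone strong convergence \eqref{2b4} are available, and then to read $y\wedge z$ back off from $Q_yQ_z$ using commutativity---with self-adjointness of projections delivering the commutativity of the two limit operators essentially for free.
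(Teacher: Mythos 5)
Your proof is correct and follows essentially the same route as the paper's: establish that the limit $\sigma$-field $\inf_n\Up(q_n)$ is independent of $\Up(I-q)$ (the paper gets this via Lemma \ref{6a7}, the monotone limit $\Up(I-q_n)\uparrow\Up(I-q)$ from Lemma \ref{6a9}, and the corollary of Proposition \ref{3c1}, while you unpack the same passage to the limit at the level of the projections $Q_yQ_z$), and then conclude with \eqref{6a6} and \eqref{2c8} exactly as the paper does. Your closing remark correctly identifies both the genuine obstacle (one cannot distribute the meet over the monotone limits in the nondistributive lattice $\La$) and the point where classicality enters (namely \eqref{6a6}).
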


\begin{pf}
We have $ \Up(q_n) \downarrow x $ for some $ x \in\La$, $ x \ge
\Up(q) $. By Lemma \ref{6a7}, $ \Up(q_n) $ and $ \Up(I-q_n) $ are
independent;
thus, $ x $ and $ \Up(I-q_n) $ are independent for all $ n $. By
Lemma \ref{6a9}, $ \Up(I-q_n) \uparrow\Up(I-q) $. Therefore $ x $
and $
\Up(I-q) $ are independent. By \eqref{6a6} and \eqref{2c8}, $ \Up
(q) =
( \Up(q) \vee\Up(I-q) ) \wedge x = x $.
\end{pf}

Now we prove that $ C = \Cl(B) $. By \eqref{4a3}, every $ x \in\Cl(B)
$ is of the form
\[
x = \liminf_n x_n = \sup_n
\inf_k x_{n+k}
\]
for some $ x_n \in B $. It follows that $ \Down(x) = \liminf_n
\Down(x_n) $; by Lem\-mas~\ref{6a9}, \ref{6a10}, $ \Up(\Down(x)) =
\liminf_n
\Up(\Down(x_n)) $; using Lemma \ref{6a8} we get $ \Up(\Down(x)) =
\liminf_n
x_n = x $.

On the other hand, $ I - \Down(x) = \limsup_n (I-\Down(x_n)) =\break
\limsup_n  \Down(x'_n) $ by \eqref{6a3}, thus the element $ y =
\Up(I-\Down(x)) $ satisfies (by Lemmas~\ref{6a9}, \ref{6a10} and
\ref{6a8}
again) $ y = \limsup_n \Up(\Down(x'_n)) = \limsup_n x'_n \in\Cl(B)
$.

By Lemma \ref{6a7}, $ x $ and $ y $ are independent. By \eqref{6a6},
$ x
\vee y = 1_\La$. Therefore, $ y $ is the complement of $ x $ in $ \Cl(B)
$, and we conclude that $ x \in C $. Thus, $ C = \Cl(B) $.

\subsection{From $\mathrm{(b)}$ to $\mathrm{(c)}$}
\label{6b}

As before, $ C $ stands for the completion of $ B $. Let $ x \in
\Cl(B) $ be such that $ x_n \uparrow x $ for some $ x_n \in B $.

%
\begin{proposition}\label{6b1}
The following five conditions on $ x $ are equivalent:
\begin{longlist}[(a)]
\item[(a)] $ x \in C $;

\item[(b)] $ x \vee\lim_n x'_n = 1_\La$ for some $ x_n \in B $
satisfying $
x_n \uparrow x $;

\item[(c)] $ x \vee\lim_n x'_n = 1_\La$ for all $ x_n \in B $
satisfying $
x_n \uparrow x $;

\item[(d)] $ \lim_m \lim_n ( x_m \vee x'_n ) = 1_\La$ for some $
x_n \in B $
satisfying $ x_n \uparrow x $;

\item[(e)] $ \lim_m \lim_n ( x_m \vee x'_n ) = 1_\La$ for all $
x_n \in B $
satisfying $ x_n \uparrow x $.
\end{longlist}
\end{proposition}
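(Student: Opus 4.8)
The plan is to reduce all four of (b)--(e) to the single equation $x \vee x^* = 1_\La$, where $x^* = \inf_n x'_n$ for the chosen sequence, and then to recognize this equation as the statement that $x^*$ is the complement of $x$ in $\Cl(B)$, i.e.\ that $x \in C$.

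First I would fix a sequence $x_n \uparrow x$ in $B$ and set $x^* = \inf_n x'_n$. Since $x_1 \le x_2 \le \dots$ we have $x'_1 \ge x'_2 \ge \dots$, so $x'_n \downarrow x^*$ and hence $x'_n \to x^*$ by \eqref{3a1}; in particular $x^* = \liminf_n x'_n \in \Cl(B)$ by \eqref{4a3}. This identifies $\lim_n x'_n$ in (b), (c) as $x^*$. To match (d), (e) I would compute the double limit. For fixed $m$ the sequence $x_m \vee x'_n$ decreases in $n$, so its limit is its infimum; since $x_m \in B \subset C$, Lemma \ref{4b3} gives continuity of $y \mapsto x_m \vee y$ on $\Cl(B)$, whence $\lim_n (x_m \vee x'_n) = x_m \vee x^*$. (This is exactly the step where one must not interchange $\vee$ and $\inf$ naively, cf.\ Remark \ref{4a10}.) Then $x_m \vee x^*$ increases in $m$ with supremum $(\sup_m x_m) \vee x^* = x \vee x^*$, so by \eqref{3a1} $\lim_m \lim_n (x_m \vee x'_n) = x \vee x^*$. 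Thus (b)$\Leftrightarrow$(d) and (c)$\Leftrightarrow$(e), both reducing to $x \vee x^* = 1_\La$.

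The key observation is that $x \wedge x^* = 0_\La$ holds automatically, so that the content of (b)--(e) is entirely in the join. Indeed $x^* \le x'_m$ while $x_m \le x_m$, and $x_m, x'_m$ are independent, so $x^*$ and $x_m$ are independent for every $m$; letting $x_m \to x$, the Corollary in Sect.~\ref{3c} shows $x^*$ and $x$ are independent, and then $x \wedge x^* = 0_\La$ by \eqref{4a8}. Consequently condition (b) says precisely that $x^* \in \Cl(B)$ satisfies $x \wedge x^* = 0_\La$ and $x \vee x^* = 1_\La$, i.e.\ that $x^*$ is a complement of $x$ in $\Cl(B)$; this is exactly membership $x \in C$, giving (b)$\Rightarrow$(a).

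Finally, for (a)$\Rightarrow$(c) I would use that $C$ is a Boolean algebra (Theorem \ref{1d2}(a)), in which complementation reverses order. If $x \in C$ with complement $x' \in C \subset \Cl(B)$, then $x_n \le x$ forces $x' \le x'_n$ for every $n$ (the complement of $x_n$ in $C$ being its complement $x'_n$ in $B$), hence $x' \le \inf_n x'_n = x^*$ and $1_\La = x \vee x' \le x \vee x^*$; as the sequence was arbitrary, (c) holds. Since (c)$\Rightarrow$(b) is immediate (a sequence $x_n \uparrow x$ exists by hypothesis), the cycle (a)$\Rightarrow$(c)$\Rightarrow$(b)$\Rightarrow$(a) closes, and together with (b)$\Leftrightarrow$(d), (c)$\Leftrightarrow$(e) all five conditions are equivalent. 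I expect the double-limit computation to be the main obstacle: the failure of $\vee$ to commute with $\inf$ in $\La$ is the very phenomenon behind nonclassicality, so it is essential that continuity of $x_m \vee (\cdot)$ is available precisely because $x_m \in C$, via Lemma \ref{4b3}.
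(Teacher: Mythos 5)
Your proof is correct and follows essentially the same route as the paper's: reduce (b)--(e) to the single equation $x \vee \inf_n x'_n = 1_\La$, note that $x \wedge \inf_n x'_n = 0_\La$ holds automatically (the paper's Lemma \ref{6b2}, which it gets from \eqref{4a9} rather than from independence, a negligible difference), identify (b) with complementedness of $x$ in $\Cl(B)$, and close the cycle with (a)$\Rightarrow$(c) by order reversal of complementation in $C$. The only variation is in the double-limit computation (the paper's Lemma \ref{6b3}): you use Lemma \ref{4b3} for the inner limit and the lattice identity $\sup_m \( x_m \vee \inf_n x'_n \) = x \vee \inf_n x'_n$ together with \eqref{3a1} for the outer one, whereas the paper applies Theorem \ref{3d2} twice; since Lemma \ref{4b3} is itself derived from Theorem \ref{3d2}, this is the same argument in slightly different packaging.
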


%
\begin{lemma}\label{6b2}
$ (\sup_n x_n) \wedge(\inf_n x'_n) = 0_\La$ for every increasing
sequence $ (x_n)_n $ of elements of $ B $.
\end{lemma}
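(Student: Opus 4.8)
The plan is to reduce the lattice identity to a statement about projection operators. Write $x=\sup_n x_n$ and $y=\inf_n x'_n$. Since $x_1\le x_2\le\dots$ forces $x'_1\ge x'_2\ge\dots$, both monotone limits exist, and $x,y\in\Cl(B)$ by Theorem \ref{1d1}; moreover $x\wedge y\in\Cl(B)$ by \eqref{4a6}. By \eqref{4a7} we have $Q_{x\wedge y}=Q_xQ_y$, so it suffices to prove $Q_xQ_y=Q_0$: then $Q_{x\wedge y}=Q_0=Q_{0_\La}$ yields $x\wedge y=0_\La$ by \eqref{2a7}.

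The key combinatorial observation is that the products $Q_{x_m}Q_{x'_n}$ already vanish on the ``triangle'' $m\le n$. Indeed, for $m\le n$ we have $x_m\le x_n$, hence $x_m\wedge x'_n\le x_n\wedge x'_n$; and $x_n\wedge x'_n=0_\La$ because $x_n,x'_n$ are independent, by Proposition \ref{3c1}. Thus $x_m\wedge x'_n=0_\La$, and by \eqref{4a7} again, $Q_{x_m}Q_{x'_n}=Q_{x_m\wedge x'_n}=Q_0$ for all $m\le n$.

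Now I pass to the limit in two stages, exploiting the triangle to decouple the two monotone directions. Fixing $m$ and letting $n\to\infty$, the convergence $Q_{x'_n}\to Q_y$ (from $x'_n\downarrow y$ via \eqref{2b4}) together with left multiplication by the fixed operator $Q_{x_m}$ gives $Q_{x_m}Q_{x'_n}\to Q_{x_m}Q_y$; since every term with $n\ge m$ equals $Q_0$, we obtain $Q_{x_m}Q_y=Q_0$ for every $m$. Then letting $m\to\infty$, the convergence $Q_{x_m}\to Q_x$ (from $x_m\uparrow x$) together with right multiplication by the fixed operator $Q_y$ gives $Q_{x_m}Q_y\to Q_xQ_y$; as each term is $Q_0$, we conclude $Q_xQ_y=Q_0$, which finishes the proof.

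I expect the triangle observation $x_m\wedge x'_n=0_\La$ for $m\le n$ to carry the real weight, as it is exactly what separates the increasing limit in $m$ from the decreasing limit in $n$. A naive attempt using only $Q_{x_n}Q_{x'_n}=Q_0$ and sending $n\to\infty$ in both factors simultaneously is doomed, precisely because $x'_n$ need not increase to $1_\La$ --- the ``strange'' phenomenon of Sect.~\ref{1b}. Decoupling the two limits via the triangle is what legitimizes treating the monotone convergence in each variable separately; the limit manipulations are then routine, since strong operator convergence is preserved under multiplication by a fixed bounded operator on either side.
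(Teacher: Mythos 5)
Your proof is correct and follows essentially the same route as the paper: kill $x_m\wedge(\inf_n x'_n)$ for each fixed $m$ using the complement relation $x_m\wedge x'_m=0_\La$, then pass to the limit in $m$ via continuity of the meet, which (as in the paper's appeal to \eqref{4a9}) comes down to the strong-operator identities $Q_{x\wedge y}=Q_xQ_y$ and convergence of products. The only difference is cosmetic: the paper skips your inner $n$-limit by observing directly that $\inf_n x'_n\le x'_m$, so $x_m\wedge(\inf_n x'_n)\le x_m\wedge x'_m=0_\La$ at once.
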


\begin{pf}
Note that $ x_m \wedge(\inf_n x'_n) \le x_m \wedge x'_m = 0_\La$, and
use \eqref{4a9}.
\end{pf}

\begin{pf*}{Proof of Proposition \ref{6b1}}
(c) \imp(b): trivial.

(b) \imp(a): by Lemma \ref{6b2}, $ x \wedge\lim_n x'_n = 0_\La$,
thus, $
x $ has the complement $ \lim_n x'_n $ and therefore belongs to $ C $.

(a) \imp(c): if $ x_n \uparrow x $, then (taking complements in the
Boolean algebra~$ C $) $ x'_n \ge x' $, therefore $ \lim_n x'_n \ge
x' $ and $ x \vee\lim_n x'_n \ge x \vee x' = 1_\La$.

We see that (a)\,\equ\,(b)\,\equ\,(c); Lemma \ref{6b3} below gives (b)\,\equ
(d) and (c)\,\equ\,(e).
\end{pf*}

%
\begin{lemma}\label{6b3}
For every increasing sequence $ (x_n)_n $ of elements of $ B $,
\[
\Bigl( \lim_n x_n \Bigr) \vee\Bigl( \lim
_n x'_n \Bigr) = \lim
_m \lim_n \bigl( x_m \vee
x'_n \bigr).
\]
\end{lemma}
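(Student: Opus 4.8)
The plan is to evaluate the iterated limit on the right-hand side from the inside out, collapsing it first to a single join and then to a monotone supremum. Write $x = \lim_n x_n = \sup_n x_n$ and $y = \lim_n x'_n = \inf_n x'_n$; these monotone limits exist in $\La$ because $x_n$ increases and, $B$ being a Boolean algebra, $x'_n$ correspondingly decreases, and by \eqref{3a1} they are also limits in the strong operator topology. In particular $x'_n \to y$, and since $\Cl(B)$ is the topological closure of $B$ by \eqref{4a3}, we have $y \in \Cl(B)$. The target identity then reads $x \vee y = \lim_m \lim_n (x_m \vee x'_n)$.

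First I would dispose of the inner limit. Fix $m$. Since $x_m \in B \subset C$ by \eqref{4b1}, Lemma \ref{4b3} guarantees that the map $\Cl(B) \ni z \mapsto x_m \vee z$ is continuous. Applying it to the convergent sequence $x'_n \to y$, all terms of which lie in $B \subset \Cl(B)$, yields $x_m \vee x'_n \to x_m \vee y$, that is, $\lim_n (x_m \vee x'_n) = x_m \vee y$. This is the one genuinely delicate step, and I expect it to be the main obstacle in spirit if not in length: the join is generally \emph{discontinuous} on $\La$ (this is exactly the pathology highlighted in Remark \ref{4a10}), so one cannot naively pass a limit through $\vee$. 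Lemma \ref{4b3} rescues continuity precisely because the fixed factor $x_m$ belongs to the completion $C$, which is why membership $x_m \in C$ must be checked explicitly.

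It then remains to take the outer limit of $x_m \vee y$ as $m \to \infty$, and this is routine lattice theory. Because $x_m \uparrow x$ and join is monotone, the sequence $x_m \vee y$ increases, so its limit is its supremum; the general complete-lattice identity $\sup_m (x_m \vee y) = (\sup_m x_m) \vee y$ (valid since $a \vee y = \sup\{a,y\}$ and suprema associate) gives $\sup_m (x_m \vee y) = x \vee y$, and by \eqref{3a1} this monotone limit is also the topological one. Combining the two steps, $\lim_m \lim_n (x_m \vee x'_n) = \lim_m (x_m \vee y) = x \vee y = (\lim_n x_n) \vee (\lim_n x'_n)$, as required. The whole weight of the argument rests on the continuity furnished by Lemma \ref{4b3}; everything else is bookkeeping with monotone limits.
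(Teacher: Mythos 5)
Your proof is correct. For the inner limit you and the paper do essentially the same thing in different packaging: the paper applies Theorem \ref{3d2} directly to the pairs $(x_m,x'_n)\in\La_{x_m}\times\La_{x'_m}$ (using $x'_n\le x'_m$ for $n\ge m$) to get $x_m\vee x'_n\to x_m\vee y$, while you invoke Lemma \ref{4b3}, whose proof is built on exactly that application of Theorem \ref{3d2}; both routes correctly identify this as the delicate step, since $\vee$ is not jointly continuous. Where you genuinely diverge is the outer limit. The paper first proves $(\lim_n x_n)\wedge(\lim_n x'_n)=0_\La$ via \eqref{4a9}, upgrades this to independence via \eqref{4a8}, and then applies Theorem \ref{3d2} a second time to $(x_m,y)\in\La_x\times\La_y$ to obtain $x_m\vee y\to x\vee y$. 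You instead note that $x_m\vee y$ is increasing with lattice supremum $(\sup_m x_m)\vee y=x\vee y$ (a valid identity in any complete lattice, hence in $\La$) and that monotone convergence implies topological convergence by \eqref{3a1}. Your outer step is more elementary and entirely avoids the independence argument; the only thing the paper's version buys in exchange is the explicit intermediate facts $y\wedge z=0_\La$ and independence of the two limits, which it happens to record separately anyway (Lemma \ref{6b2}). Both arguments are sound.
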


\begin{pf}
Denote for convenience $ y = \lim_n x_n $ and $ z = \lim_n x'_n $. We
have $ x'_n \le x'_m $ for $ n \ge m $. Applying Theorem \ref{3d2} to
the pairs $ (x_m,x'_n) \in\La_{x_m} \times\La_{x'_m} $ for a fixed $
m $ and all $ n \ge m $ we get $ x_m \vee x'_n \to x_m \vee z $ as $ n
\to\infty$. Further, $ x_m \wedge z \le x_m \wedge x'_m = 0 $ for
all $ m $; by \eqref{4a9}, $ y \wedge z = 0 $, and by \eqref{4a8}, $ y
$ and $ z $ are independent. Applying Theorem \ref{3d2} (again) to $
(x_m,z) \in\La_y \times\La_z $ we get $ x_m \vee z \to y \vee z $ as
$ m \to\infty$. Finally, $ \lim_m \lim_n ( x_m \vee x'_n ) = \lim_m
( x_m \vee z ) = y \vee z = ( \lim_n x_n ) \vee( \lim_n x'_n ) $.
\end{pf}

By Corollary \ref{4b6}, condition (b) of Theorem \ref{1c2} is
equivalent to $ C
= \Cl(B) $. If it is satisfied, then Proposition \ref{6b1} gives $ (
\sup_n x_n )
\vee( \inf_n x'_n ) = 1_\La$ for all $ x_n \in B $ such that $ x_1
\le x_2 \le\cdots$, which is condition (c) of Theorem \ref{1c2}.

\section{\texorpdfstring{The difficult part of Theorem \protect\ref{1c2}}
{The difficult part of Theorem 1.5}}
\label{sec7}

The proof of the implication (c) \imp(a) of Theorem \ref{1c2}, given
in this section, is a remake of \cite{Ts03}, Sections~6c/6.3. In both
cases spectrum is crucial. The one-dimensional framework used in
\cite{Ts03} leads to ``spectral sets''---random compact subsets of
the parameter space $ \R$. The Boolean framework used here, being
free of any parameter space, leads to a more abstract ``spectral
space''; see Section~\ref{7b}. The number of points in a spectral set,
used in~\cite{Ts03}, becomes here a special function (denoted by $ K $
in Section~\ref{7d}) on the spectral space.

\subsection{A random supremum}
\label{7a}

By Proposition \ref{6b1}, condition (c) of Theorem \ref{1c2} may be
reformulated
as follows:
%
%
%
\begin{equation}
\label{7a1} \sup_n x_n \in C\qquad \mbox{for all }
x_n \in B \mbox{ such that } x_1 \le x_2 \le
\cdots
\end{equation}
or equivalently,
%
%
%
\begin{equation}
\label{7a2} \lim_m \lim_n \bigl(
x_1 \vee\cdots\vee x_m \vee( x_1 \vee\cdots
\vee x_n )' \bigr) = 1\qquad \mbox{for all } x_n
\in B.
\end{equation}

In order to effectively use this condition we choose a sequence $
(x_n)_n $, $ x_n \in B $, whose supremum is unlikely to belong to $ C
$. Ultimately it will be proved that $ \sup_n x_n \in C $ only if $ B
$ is classical.

However, we do not construct $ (x_n)_n $ explicitly. Instead we use
probabilistic method: construct a random sequence that has the needed
property with a nonzero probability.

Our noise-type Boolean algebra $ B $ consists of sub-$\sigma$-fields
on a
probability space $ (\Om,\F,P) $. However, randomness of $ x_n $ does
not mean that $ x_n $ is a function on $ \Om$. Another probability
space, unrelated to $ (\Om,\F,P) $, is involved. It may be thought of
as the space of sequences $ (x_n)_n $ endowed with a probability
measure described below.

A measure on a Boolean algebra $ b $ is defined as a countably
additive function $ b \to[0,\infty) $ (\cite{Ha}, Section~15). However,
the distribution of a random element of $ b $ (assuming that $ b $ is
finite) is rather a probability measure $ \nu$ on the set of all
elements of $ b $, that is, a countably additive function $ \nu\dvtx 2^b
\to
[0,\infty) $, $ \nu(b) = 1 $. It boils down to a function $ b \to
[0,\infty) $, $ x \mapsto\nu(\{x\}) $, such that $ \sum_{x\in
b} \nu(\{x\}) = 1 $.

Given a finite Boolean algebra $ b $ and a number $ p \in(0,1) $, we
introduce a probability measure $ \nu_{b,p} $ on the set of elements
of $ b $ by
%
%
%
\begin{equation}
\label{7a3}\quad  \nu_{b,p} \bigl( \{ a_{i_1} \vee\cdots\vee
a_{i_k} \} \bigr) = p^k (1-p)^{n-k}\qquad \mbox{for } 1
\le i_1 < \cdots< i_k \le n
\end{equation}
[using the notation of \eqref{2e1}]. That is, each atom is included
with probability $ p $, independently of others.

Given finite Boolean subalgebras $ b_1 \subset b_2 \subset\cdots
\subset B $ and numbers $ p_1,p_2,\ldots\in(0,1) $, we consider
probability measures $ \nu_n = \nu_{b_n,p_n} $ and their product, the
probability measure $ \nu= \nu_1 \times\nu_2 \times\cdots$ on the
set $ b_1 \times b_2 \times\cdots$ of sequences $ (x_n)_n $, $ x_n
\in b_n $. We note that $ \sup_n x_n \in\Cl(B) $ for all such
sequences and ask, whether or not
%
%
%
\begin{equation}
\label{7a4} \sup_n x_n \in C \qquad\mbox{for $
\nu$-almost all sequences } (x_n)_n,
\end{equation}
or equivalently,
%
%
%
\begin{eqnarray}
\label{7a5}&& \lim_m \lim_n \bigl(
x_1 \vee\cdots\vee x_m \vee( x_1 \vee\cdots
\vee x_n )' \bigr) = 1_\La
\nonumber
\\[-8pt]
\\[-8pt]
\eqntext{\mbox{for $\nu$-almost all sequences } (x_n)_n.}
\end{eqnarray}

%
\begin{proposition}\label{7a6}
If \eqref{7a5} holds for all such $ b_1,b_2,\ldots$ and $
p_1,p_2,\ldots, $ then~$ B $ is classical.
\end{proposition}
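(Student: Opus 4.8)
The plan is to prove the contrapositive: assuming $B$ is \emph{not} classical, I would exhibit finite subalgebras $b_1\subset b_2\subset\cdots\subset B$ and numbers $p_1,p_2,\dots\in(0,1)$ for which \eqref{7a5} fails on a set of positive $\nu$-measure. First I would rewrite the target. For a fixed sequence $(x_n)$ put $s_m=x_1\vee\dots\vee x_m$, $s=\sup_m s_m$ and $s^*=\inf_m s_m'$. Since $s_m\wedge s^*\le s_m\wedge s_m'=0_\La$ for every $m$, \eqref{4a9} gives $s\wedge s^*=0_\La$, so $s,s^*$ are independent by \eqref{4a8}. Applying Theorem \ref{3d2} within $\La_{s_m}\times\La_{s_m'}$ (with $s_n'\downarrow s^*$ for $n\ge m$) and then within $\La_s\times\La_{s^*}$ (with $s_m\uparrow s$) identifies the iterated limit in \eqref{7a5} with the single element $s\vee s^*$. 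Thus \eqref{7a5} fails for $(x_n)$ exactly when some nonzero $f\in H$ is orthogonal to $H_{s\vee s^*}=H_s\otimes H_{s^*}$.

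Next I would build a spectral decomposition adapted to $B$. For a finite subalgebra $b$ with atoms $a_1,\dots,a_k$, independence and Fact \ref{2c1} give $H=H_{a_1}\otimes\dots\otimes H_{a_k}$, and splitting each factor as $H_{a_i}=H_0\oplus(H_{a_i}\ominus H_0)$ yields a chaos decomposition $H=\bigoplus_{T\subseteq\Atoms(b)}H_T$; for $f\in H$ this defines a discrete spectral measure $\mu_f^b=\sum_T\|Q_Tf\|^2\,\delta_T$ on subsets of $\Atoms(b)$. Refining $b$ splits atoms (Fact \ref{2e3}) and the $\mu_f^b$ are consistent under these refinements, so passing to the projective limit over all finite $b\subset B$ (heeding the cautions of Sect.~\ref{2f}) I obtain a spectral space carrying a measure $\mu_f$ and a counting function $K$, the number of atoms in a configuration, possibly $+\infty$. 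By construction $H^{(1)}$ corresponds to $\{K=1\}$; and by Facts \ref{2a2} and \ref{2a10} the closed span of products of bounded first-chaos elements is exactly the $\{K<\infty\}$-part, i.e.\ $H_{\si(H^{(1)})}$ is the closed span of all finite configurations. Consequently a nonzero $f\perp H_{\si(H^{(1)})}$ --- which exists \emph{precisely} because $B$ is nonclassical --- has $\mu_f$ concentrated on $\{K=\infty\}$.

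The heart of the argument links the random sup to the spectrum. The inclusion rule \eqref{7a3} colours each atom independently into the $s$-part or the $s^*$-part, which in spectral terms thins a configuration into its two monochromatic pieces. Using the descriptions $H_s=\overline{\bigcup_m H_{s_m}}$ (Fact \ref{2a11}) and $H_{s^*}=\bigcap_m H_{s_m'}$ (Fact \ref{2a9}), I would prove that a chaos component $Q_Tf$ survives in $H_s\otimes H_{s^*}$ if and only if the colouring separates the configuration $T$ at some \emph{finite} level $m$ (so that $s_m,s_m'$ already isolate its $s$- and $s^*$-points). For a configuration with $K=\infty$, Bernoulli colouring with $p\in(0,1)$ splits it into two infinite parts and re-splits at finer levels with probability one, so such a configuration is never finitely separated. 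Since $\mu_f$ charges only $\{K=\infty\}$, integrating the surviving mass gives $\mathbb{E}_\nu\|Q_{s\vee s^*}f\|^2<\|f\|^2$, whence $s\vee s^*\ne1_\La$ with positive $\nu$-probability for any $b_n$ exhausting $B$ and any $p_n\in(0,1)$; by the first paragraph this means \eqref{7a5} fails, completing the contrapositive.

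The main obstacle is exactly this last linkage. Making the projective-limit spectral space and its counting function $K$ rigorous is technical but routine; the genuine difficulty is the ``separation at a finite level'' criterion and its almost-sure failure on infinite configurations under \eqref{7a3}. The delicate point is that $s\vee s^*$ is a limit taken in $\La$, where (as already in Sect.~\ref{1b}) $\inf_m s_m'$ may be strictly smaller than the naive complement of $\sup_m s_m$; quantifying how a Bernoulli colouring repeatedly severs an infinite configuration, so that its chaos component is irretrievably lost from $H_s\otimes H_{s^*}$, is where the real work lies and is presumably what the apparatus of Sect.~\ref{7b}--\ref{7d} is designed to carry out.
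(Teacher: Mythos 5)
Your overall architecture is the paper's: a spectral space for the commutative algebra generated by the $Q_x$, a counting function $K$ with classicality equivalent to $K<\infty$ a.e.\ (this is Theorem \ref{7d2}, whose proof via Propositions \ref{7d3} and \ref{7d4} is itself nontrivial and cannot be dispatched by citing \ref{2a2} and \ref{2a10}), and a probabilistic argument showing that the Bernoulli colouring fails to ``finitely separate'' configurations with $K=\infty$. But the step you yourself flag as ``where the real work lies'' is not merely unproven --- the form in which you assert it is false. You claim that for an infinite configuration the colouring re-splits at finer levels with probability one, hence $s\vee s^*\ne 1_\La$ with positive $\nu$-probability \emph{for any} exhausting $b_n$ and \emph{any} $p_n\in(0,1)$. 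Take the example of Sect.~\ref{1b} with $b_n$ generated by $y_1,\dots,y_n$ (atoms $y_1,\dots,y_n,x_{n+1}$) and $p_n=1/2$. Almost surely some $X_k$ contains the tail atom $x_{k+1}$; from then on $Y_m\ge x_{k+1}$, the remaining freedom concerns only the finitely many $y_j$ with $j\le k$, so $Y_n$ stabilizes at an element of $B$ and \eqref{7a5} holds $\nu$-almost surely --- even though $B$ is nonclassical and $\{K=\infty\}$ has positive measure. A symmetric failure occurs when the $p_n$ are too small relative to the growth of $K_{b_n}$: by Borel--Cantelli the colouring eventually adds nothing, $Y_n$ again stabilizes in $B$, and separation holds trivially.

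What the paper actually proves is weaker and requires a calibrated choice: $\sum_n p_n<1$, so that (Lemma \ref{7e6}) the probability that $Y_m$ ever swallows every ``infinite branch'' of the configuration is bounded by $\sum p_n<1$; together with a subsequence of the $b_n$ (extracted via \ref{2f3} and a diagonal argument) along which $\leftidx{_x}K_{b_n}(s)\ge c_n$ with $(1-p_n)^{c_n}\to0$, so that (Lemmas \ref{7e7}, \ref{7e8}) on the complementary event of probability $\ge 1-\sum p_n$ the separation keeps failing almost surely. The conclusion is that the separation event has probability $<1$, not $0$, and only for these specially chosen $b_n,p_n$ --- which is exactly what Proposition \ref{7a6} needs, since its hypothesis quantifies over all choices. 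Your sketch cannot be repaired by ``routine'' technique without introducing this balancing of $\sum p_n<1$ against the growth rates $c_n$; as written, the heart of the proof is missing and the claim substituted for it is contradicted by the paper's own simplest nonclassical example.
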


In order to prove the implication (c) \imp(a) of Theorem \ref{1c2} it
is sufficient to prove Proposition \ref{7a6}. To this end we need
spectral theory.

\subsection{Spectrum as a measure class factorization}
\label{7b}

The projections $ Q_x $ for $ x \in\Cl(B) $ commute by \eqref{4a5},
and generate a commutative von Neumann algebra $ \A$. Section~\ref{2d}
gives us a measure class space $ (S,\Si,\M) $ and an isomorphism
%
%
%
\begin{equation}
\label{7b1} \al\dvtx \A\to L_\infty(S,\Si,\M).
\end{equation}
We call $ (S,\Si,\M) $ (endowed with $ \al$) the \emph{spectral
space} of $ B $. Projections $ Q_x $ turn into indicators
%
%
%
\begin{equation}
\label{7b2} \al(Q_x) = \One_{S_x}, \qquad S_x \in
\Si\qquad \mbox{ for } x \in\Cl(B)
\end{equation}
(of course, $ S_x $ is an equivalence class rather than a set);
\eqref{4a7} gives
%
%
%
\begin{equation}
\label{7b3} S_x \cap S_y = S_{x \wedge y} \qquad\mbox{for
} x,y \in\Cl(B).
\end{equation}
(In contrast, the evident inclusion $ S_x \cup S_y \subset S_{x\vee y}
$ is generally strict.)

\begin{claim*}
%
%
%
\begin{equation}
\label{7b35} x_n \downarrow x \mbox{ implies } S_{x_n}
\downarrow S_x;\qquad \mbox{ also } x_n \uparrow x \mbox{
implies } S_{x_n} \uparrow S_x;
\end{equation}
here $ x,x_1,x_2,\ldots\in\Cl(B) $.
\end{claim*}

\begin{pf} let $ x_n
\uparrow x $, then $ Q_{x_n} \uparrow Q_x $, thus $ \al(Q_{x_n})
\uparrow\al(Q_x) $ by \eqref{2d11}, which means $ S_{x_n}
\uparrow S_x $; the case $ x_n \downarrow x $ is similar.
\end{pf}

The subspaces $ H_x = Q_x H \subset H $ for $ x \in\Cl(B) $ are a
special case of the subspaces $ H(E) = \al^{-1}(\One_E) H \subset H $
for $ E \in\Si$ [recall \eqref{2d9}]; by \eqref{7b2},
%
%
%
\begin{equation}
\label{7b4} H (S_x) = H_x\qquad \mbox{for } x \in\Cl(B).
\end{equation}

Every subset of $ B $ leads to a subalgebra of $ \A$. In particular,
for every $ x \in B $ we introduce the von Neumann algebra
%
%
%
\begin{eqnarray}
\label{7b5} \A_x \subset\A
\nonumber
\\[-8pt]
\\[-8pt]
\eqntext{\mbox{generated by } \{ Q_y\dvtx y \in B,
x \vee y = 1_\La\} = \{ Q_{u\vee x'}\dvtx u \in B, u \le x\}}
\end{eqnarray}
and the $\sigma$-field $ \Si_x \subset\Si$ such that
%
%
%
\begin{equation}
\label{7b6} \al(\A_x) = L_\infty(\Si_x)\qquad
\mbox{for } x \in B
\end{equation}
(see Fact \ref{2d3}). Note that
%
%
%
\begin{equation}
\label{7b7} x \le y\qquad \mbox{implies } \A_x \subset\A_y
\mbox{ and } \Si_x \subset\Si_y\qquad \mbox{for } x,y \in B.
\end{equation}

Recall Notation \ref{2c2}: $ Q_u^{(x)}\dvtx H_x \to H_x $ for $ u \le x
$, and
Fact \ref{2c3}: given independent $ x,y $, treating $ H_{x\vee y} $ as $
H_x \otimes H_y $ we have $ Q_{u\vee v} = Q_u^{(x)} \otimes Q_v^{(y)}
$ for all $ u \le x $, $ v \le y $. Introducing von Neumann algebras $
\A^{(x)} $ of operators on $ H_x $,
%
%
%
\begin{equation}
\label{7b75} \A^{(x)} \mbox{ generated by } \bigl\{ Q_u^{(x)}
\dvtx u \in B, u \le x \bigr\},
\end{equation}
we get
%
%
%
\begin{equation}
\A^{(x\vee y)} = \A^{(x)} \otimes\A^{(y)}\qquad
\mbox{whenever } x \wedge y = 0, x,y \in B.
\end{equation}
In the case $ y = x' $, treating $ H $ as $ H_x \otimes H_{x'} $ we
have
%
%
%
\begin{equation}
\label{7b9} \A= \A^{(x)} \otimes\A^{(x')}\quad \mbox{and}\quad
\A_x = \A^{(x)} \otimes I\qquad \mbox{for } x \in B
\end{equation}
(for the latter, fix $ v = x' $),---a natural isomorphism between $
\A_x $ and $ \A^{(x)} $. Thus, $ \al(\A^{(x)} \otimes I) =
L_\infty(\Si_x) $, $ \al(I \otimes\A^{(x')}) = L_\infty(\Si
_{x'}) $
and $ \al(\A^{(x)} \otimes\A^{(x')}) = L_\infty(\Si) $. By
Fact \ref{2d8}, for all $ x \in B $,
%
%
%
\begin{eqnarray}
&\Si_x \mbox{ and } \Si_{x'} \mbox{ are $\M$-independent},&
\\
&\Si_x \vee\Si_{x'} = \Si.&
\end{eqnarray}

(Thus, the spectral space is a measure class (or ``type'')
factorization as defined in \cite{TV}, Section~1c and discussed
in \cite{Ar}, Section~14.4, \cite{Ts04}, Section~10.)

%
\begin{remark}\label{7b10}
The closure of $ B $ determines uniquely the algebra $ \A$ and
therefore also the spectral space.
\end{remark}

%
\begin{example}
Let a noise-type Boolean algebra $B$ be finite, with $n$ atoms. Then
$\A$ is of dimension $2^n$; $(S,\Si,\M)$ is the discrete space with
$2^n$ points. Up to isomorphism we may treat both $B$ and $S$ as
consisting of all subsets of $\Atoms(B)$, and then $S_x$ consists of all
subsets of $x$.
\end{example}

%
\begin{example}
Let $B$ and $y_n$ be as in Section~\ref{1b} and Example \ref{1d35}. The
sign change transformation $ \Om\to\Om$ decomposes the Hilbert
space: $ H = H_{\even} \oplus H_{\odd} $. Introducing $ y = \sup_n
y_n \in\Cl(B) \setminus B $ we have $ H_y = H_{\even} $; the projection
$ Q_y $ onto $ H_{\even} $ corresponds to the indicator of $ S_y $. Up
to isomorphism we may treat $ S_y $ as consisting of all finite
subsets of $ \{1,2,\ldots\} $, and $ S \setminus S_y $ as consisting of
their complements, the cofinite subsets of $ \{1,2,\ldots\} $. Both $B$
and $S$ become the same countable set, and $S_x$ consists of all
finite/cofinite subsets of $x$ (i.e., finite subsets of a finite
$x$, but finite/cofinite subsets of a cofinite $x$). See
also \cite{Ts04}, Section~9a (for $m=2$).
\end{example}

%
\begin{example}
Let $B$ correspond to a noise over $\R$ (see Section~\ref{1f}), and
assume that the noise is classical, which is equivalent to
classicality of $B$ (as defined by Definition \ref{1a3}); it is also
equivalent to existence of L\'evy processes whose increments generate
the noise. Assume that the noise is not trivial, that is, $
1_B \ne0_B $. Then $B$ as a Boolean algebra is isomorphic to the
Boolean algebra of all finite unions of intervals (on $\R$) modulo
finite sets. Up to isomorphism we may treat $ (S,\Si,\M) $ as the
space of all finite subsets of $\R$; a measure $\mu$ on $S$ belongs to
$\M$ if and only if $\mu$ is equivalent (i.e., mutually absolutely continuous)
to the (symmetrized) $n$-dimensional Lebesgue measure on the subset $
S_n \subset S $ of all $n$-point sets, for every $ n = 0,1,2,\ldots$;
for $n=0$ it means an atom: $ \mu(\{\varnothing\}) > 0 $. As before, $
S_x $ consists of all $ s \in S $ such that $ s \subset x $; but now
$S$ and $B$ are quite different collections of sets. See
also \cite{Ts04}, Example 9b9.

In contrast, for a black noise the elements of $S$ may be thought of
as some perfect compact subsets of $\R$ (including the empty set), of
Lebesgue measure zero. And if a noise is neither classical nor black,
then all finite sets belong to $S$, but also some infinite compact
sets of Lebesgue measure zero belong to $S$. These may be countable or
not, depending on the noise. See also \cite{Ts04}, Sections~9b, 9c.
\end{example}

\subsection{\texorpdfstring{Restriction to a sub-$\sigma$-field}
{Restriction to a sub-sigma-field}}
\label{7c}

As was noted in Section~\ref{3d}, for an arbitrary $ x \in\La$ the
triple $ (\Om,x,P|_x) $ is also a probability space, and its lattice
of $\sigma$-fields is naturally embedded into $ \La$,
\[
\La(\Om,x,P|_x) = \La_x = \{ y \in\La\dvtx y \le x \}
\subset\La.
\]
Dealing with a noise-type Boolean algebra $ B \subset\La$ over $
(\Om,\F,P) $, we introduce
\[
B_x = B \cap\La_x = \{ u \in B\dvtx u \le x \} \subset B\qquad
\mbox{for } x \in B
\]
and note that
\[
B_x \subset\La_x \mbox{ is a noise-type Boolean
algebra over } (\Om,x,P|_x);
\]
thus, notions introduced for $ B $ have their counterparts for $ B_x
$. We mark them by the \emph{left} index $ x $. Some of these
counterparts were used in previous (sub)sections. For $ x \in B $:
\begin{eqnarray*}
& _x H = H_x; \qquad\mbox{see Section~\ref{2c}},&
\\
& _x Q_u = Q_u^{(x)}\qquad \mbox{for
} u \in B_x; \qquad\mbox{see Notation \ref{2c2}},&
\\
&{_x} \A= \A^{(x)};\qquad \mbox{see \eqref{7b75}},&
\\
& {_x} S = S; \leftidx{_x} \Si= \Si_x;
\qquad\mbox{see \eqref{7b6}},&
\\
& {_x} \al\dvtx \A^{(x)} \to L_\infty(
\Si_x), \qquad {_x} \al(A) = \al(A\otimes I);\qquad \mbox{see
\eqref{7b9}},&
\\
& {_x} S_u = S_{u\vee x'} \qquad\mbox{for } u \in
B_x; \qquad\mbox{see \eqref{7b2}},&
\\
&{_x} H^{(1)} = H^{(1)} \cap H_x;&
\end{eqnarray*}
the last line follows easily from Lemma \ref{5b1}; the next to the
last line
holds, since $ \leftidx{_x} \al( \leftidx{_x} Q_u ) = \al( Q_u^{(x)}
\otimes I ) = \al( Q_u^{(x)} \otimes Q_{x'}^{(x')} ) = \al( Q_{u\vee
x'} ) $. The counterpart of $ H(E) = \al^{-1}(\One_E)H $ for $ E \in
\Si$ is $ \leftidx{_x} H(E) = \leftidx{_x} \al^{-1}(\One_E) H_x $ for
$ E \in\Si_x $.

%
\begin{lemma}\label{7c1}
For every $ x \in B $, treating $ H $ as $ H_x \otimes H_{x'} $ we
have $ H ( E \cap F ) = ( \leftidx{_x} H(E) ) \otimes(
\leftidx{_{x'}} H(F) ) $ for all $ E \in\Si_x $, $ F \in\Si_{x'} $.
\end{lemma}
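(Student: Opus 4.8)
The plan is to exploit that $\al$ is an algebra isomorphism and that, by definition, $H(G) = \al^{-1}(\One_G) H$ for every $G \in \Si$. So I would compute the operator $\al^{-1}(\One_{E \cap F})$ directly, show that it splits as a tensor product of an orthogonal projection on $H_x$ and one on $H_{x'}$, and then read off its range.

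First I would write $\One_{E \cap F} = \One_E \cdot \One_F$ and use multiplicativity of the isomorphism to get $\al^{-1}(\One_{E \cap F}) = \al^{-1}(\One_E)\, \al^{-1}(\One_F)$. Since $E \in \Si_x$ we have $\One_E \in L_\infty(\Si_x) = \al(\A^{(x)} \otimes I)$ (recall \eqref{7b6} and \eqref{7b9}), so $\al^{-1}(\One_E) = A \otimes I$ for the orthogonal projection $A = \leftidx{_x} \al^{-1}(\One_E) \in \A^{(x)}$; this is precisely the defining relation $\leftidx{_x} \al(A) = \al(A \otimes I)$. Symmetrically, $\al^{-1}(\One_F) = I \otimes C$ with $C = \leftidx{_{x'}} \al^{-1}(\One_F) \in \A^{(x')}$. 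Multiplying the two commuting projections gives $\al^{-1}(\One_{E \cap F}) = (A \otimes I)(I \otimes C) = A \otimes C$.

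Finally I would apply this projection to $H = H_x \otimes H_{x'}$. For orthogonal projections $A$ on $H_x$ and $C$ on $H_{x'}$, the range of $A \otimes C$ is the (closed) tensor product $(A H_x) \otimes (C H_{x'})$. Hence $H(E \cap F) = (A \otimes C)(H_x \otimes H_{x'}) = (A H_x) \otimes (C H_{x'})$, and since $A H_x = \leftidx{_x} H(E)$ and $C H_{x'} = \leftidx{_{x'}} H(F)$ by the definitions of the left-indexed spaces, this is exactly the asserted equality.

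This argument is essentially bookkeeping, so I do not anticipate a genuine obstacle; the one point deserving care is the identification of the factored projections with the left-indexed objects, i.e. checking that $A \otimes I = \al^{-1}(\One_E)$ forces $A = \leftidx{_x} \al^{-1}(\One_E)$ and hence $A H_x = \leftidx{_x} H(E)$, which is immediate from $\leftidx{_x} \al(A) = \al(A \otimes I)$. I should also record at the outset that $\al^{-1}(\One_E)$ is genuinely an orthogonal projection, because $\One_E$ is a self-adjoint idempotent and $\al$ preserves both properties; this is what licenses passing from the operator identity $\al^{-1}(\One_{E \cap F}) = A \otimes C$ to the stated decomposition of ranges.
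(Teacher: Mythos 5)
Your proposal is correct and follows essentially the same route as the paper's own proof: both factor $\al^{-1}(\One_E)$ and $\al^{-1}(\One_F)$ as $A\otimes I$ and $I\otimes C$ via \eqref{7b9}, multiply to get $A\otimes C$ for $\One_{E\cap F}$, and read off the range as $(AH_x)\otimes(CH_{x'})$. Your extra remarks on the projections being orthogonal and on the identification with the left-indexed spaces are correct bookkeeping that the paper leaves implicit.
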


\begin{pf}
We take $ A \in\A^{(x)} $, $ B \in\A^{(x')} $ such that $ \al( A
\otimes I ) = \One_E $, $ \al( I \otimes B ) = \One_F $, then $ \al(
A \otimes B ) = \One_E \One_F = \One_{E\cap F} $ and $ H ( E \cap F )
= ( A \otimes B ) ( H_x \otimes H_{x'} ) = ( A H_x ) \otimes( B
H_{x'} ) = ( \leftidx{_x} H(E) ) \otimes( \leftidx{_{x'}} H(F) ) $.
\end{pf}

\subsection{Classicality via spectrum}
\label{7d}

Let $ b \subset B $ be a finite Boolean subalgebra. For almost every $
s \in
S $ the set $ \{ x \in b\dvtx s \in S_x \} $ is a filter on $ b $ due to
\eqref{7b3}; like every filter on a finite Boolean algebra, it is
generated by some $ x_b(s) \in b $,
%
%
%
\begin{equation}
\label{7d1} \forall x \in b \qquad\bigl( s \in S_x \equiv x \ge
x_b(s) \bigr).
\end{equation}
Like every element of $ b $, $ x_b(s) $ is the union of some of the
atoms of $ b $ [recall \eqref{2e2}]; the number of these atoms will be
denoted by $ K_b(s) $,
\[
K_b(s) = \bigl| \bigl\{ a \in\Atoms(b)\dvtx a \le x_b(s) \bigr\}
\bigr|.
\]
For two finite Boolean subalgebras,
%
%
%
\begin{equation}
\label{7d15} \mbox{if } b_1 \subset b_2 \mbox{ then }
K_{b_1}(\cdot) \le K_{b_2}(\cdot) \mbox{ and }
x_{b_1}(s) \ge x_{b_2}(s).
\end{equation}
Each $ K_b $ is an equivalence class (rather than a function), and the
set of all $ b $ need not be countable. We take supremum in the
complete lattice of all equivalence classes of measurable functions $
S \to[0,+\infty] $ (recall Section~\ref{2f}):
%
%
%
\begin{equation}
\label{7d17} K = \sup_b K_b, \qquad K\dvtx S \to[0,+
\infty],
\end{equation}
where $ b $ runs over all finite Boolean subalgebras $ b \subset B $.

%
\begin{theorem}\label{7d2}
$ B $ is classical if and only if $ K(\cdot) < \infty$ almost everywhere.
\end{theorem}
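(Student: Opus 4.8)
The plan is to translate everything into the spectral space $(S,\Si,\M)$ of Sect.~\ref{7b} and to show that $K$ grades the Hilbert space $H$ exactly as the ``number of points'' grades a classical factorization, the first chaos being the one-point part and $\si(H^{(1)})$ the finite-point part. First I would prove two identifications. \emph{(A) The first chaos is the one-point part:} $H^{(1)}=H(\{K=1\})$, where $H(E)=\al^{-1}(\One_E)H$ as in \eqref{2d9}. For a single $x\in B$ the operator $Q_x+Q_{x'}=\al^{-1}(\One_{S_x}+\One_{S_{x'}})$ has values in $\{0,1,2\}$, and by \eqref{7b3} the value $1$ is taken exactly on $(S_x\cup S_{x'})\setminus S_0$; comparing with \eqref{7d1} for the two-atom algebra $b_x$ generated by $x$, this set is $\{K_{b_x}=1\}$. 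Hence $f=Q_xf+Q_{x'}f$ (Def.~\ref{1a2}, Lemma~\ref{5b1}) holds iff $f\in H(\{K_{b_x}=1\})$, and intersecting over all $x$ via \eqref{2f05} gives $H^{(1)}=H\!\left(\bigcap_x\{K_{b_x}=1\}\right)=H(\{K=1\})$, the last equality because $s\in S_0\Leftrightarrow K(s)=0$ while, for $s\notin S_0$, one has $K_{b_x}(s)=1$ for all $x$ iff no finite $b$ separates $s$ into two atoms iff $K(s)=1$. \emph{(B) $K$ is additive under splitting:} for $x\in B$, writing $H=H_x\otimes H_{x'}$ and $\Si=\Si_x\vee\Si_{x'}$, one has $K=\leftidx{_x}K+\leftidx{_{x'}}K$ with $\leftidx{_x}K$ being $\Si_x$-measurable and $\leftidx{_{x'}}K$ being $\Si_{x'}$-measurable (here $\leftidx{_x}K$ is the $K$-function of the sub-noise $B_x$ of Sect.~\ref{7c}). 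This follows from the pointwise identity $K_b=\leftidx{_x}K_{b\wedge x}+\leftidx{_{x'}}K_{b\wedge x'}$ for finite $b\ni x$ — obtained by splitting $x_b(s)=(x_b(s)\wedge x)\vee(x_b(s)\wedge x')$ and checking $x_b(s)\wedge x=\leftidx{_x}x_{b\wedge x}(s)$ from $\leftidx{_x}S_u=S_{u\vee x'}$ — upon taking the supremum over the cofinal family of $b$ containing $x$, the two summands depending on independent coordinates.

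With (A) and (B) in hand, the theorem reduces to the single equality
\[
H_{\si(H^{(1)})}=H(\{K<\infty\}),
\]
where $H_{\si(H^{(1)})}$ is the type-$L_2$ space generated by $H^{(1)}$ (Facts~\ref{2a1},~\ref{2a2},~\ref{2a10}); indeed $B$ is classical iff $\si(H^{(1)})=1_\La$ (Def.~\ref{1a3}) iff $H_{\si(H^{(1)})}=H$, while $H(\{K<\infty\})=H$ means exactly $\{K<\infty\}=S$, i.e.\ $K<\infty$ almost everywhere. The two inclusions yield the two implications of the theorem separately.

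For the inclusion $H_{\si(H^{(1)})}\subseteq H(\{K<\infty\})$ (giving ``classical $\Rightarrow K<\infty$'') I would show that a product of $k$ first-chaos elements lies in $H(\{K\le k\})$. Fix a finite $b$ with atoms $a_1,\dots,a_r$; since distinct atoms are independent (\eqref{4a8}, \ref{2c1}) we have $H=H_{a_1}\otimes\cdots\otimes H_{a_r}$, and each $f_i\in H^{(1)}=H(\{K=1\})$ is supported on $\{K_b=1\}$ (as $\{K=1\}\subseteq\{K_b=1\}$), so $f_i=\sum_l Q_{a_l}f_i$ with $Q_{a_l}f_i$ in the $l$-th tensor slot. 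Expanding $f_1\cdots f_k$ and grouping factors by slot shows each resulting monomial occupies at most $k$ slots, hence lies in $H(S_{a_{l_1}\vee\cdots\vee a_{l_k}})\subseteq H(\{K_b\le k\})$ by \eqref{7d1}. Intersecting over all $b$ gives $f_1\cdots f_k\in H(\{K\le k\})$, so the whole generated algebra sits inside $H(\{K<\infty\})$.

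The inclusion $H(\{K<\infty\})\subseteq H_{\si(H^{(1)})}$ (giving ``$K<\infty\Rightarrow$ classical'') is the main obstacle, and I would prove $H(\{K\le n\})\subseteq H_{\si(H^{(1)})}=:G$ by induction on $n$ (the cases $n\le1$ being (A)). Writing $\mathcal{H}_n:=H(\{K=n\})$, the splitting $H=H_x\otimes H_{x'}$ together with (B) decomposes $f\in\mathcal{H}_n$ as $f=\sum_{j+l=n}f_{j,l}$ with $f_{j,l}\in\leftidx{_x}H(\{\leftidx{_x}K=j\})\otimes\leftidx{_{x'}}H(\{\leftidx{_{x'}}K=l\})$ (Lemma~\ref{7c1}); for $1\le j\le n-1$ the induction hypothesis applied to $B_x$ and $B_{x'}$ places $f_{j,l}$ in $G$ (using $H^{(1)}(B_x),H^{(1)}(B_{x'})\subseteq H^{(1)}(B)$ and that $G$ is an algebra), while the two corner terms are exactly $Q_xf$ and $Q_{x'}f$. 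Thus $f-Q_xf-Q_{x'}f\in G$ for every $x$; iterating over a finite $b$ collapses this to $f-P(\{K_b=1\})f\in G$, where $P(\{K_b=1\})=\sum_l Q_{a_l}$. Finally, choosing $b_1\subseteq b_2\subseteq\cdots$ with $K_{b_j}\uparrow K$ and using $n\ge2$, the indicator $\One_{\{K_{b_j}=1\}}$ tends to $0$ on the support $\{K=n\}$ of $f$, so $P(\{K_{b_j}=1\})f\to0$ by \eqref{2d10} and $f=\lim_j\!\left(f-P(\{K_{b_j}=1\})f\right)\in G$. The delicate points I expect to fight with are the measurability and countable-cofinality of the uncountable suprema defining $K$ (handled through Sect.~\ref{2f} and the countable chain condition \eqref{4a03}), the boundedness/truncation needed to multiply $L_2$ functions in the induction step, and the bookkeeping ensuring that the ``corner'' terms are precisely $Q_xf$, $Q_{x'}f$, so that the limit along $b_j\uparrow$ annihilates them.
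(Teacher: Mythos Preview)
Your approach is essentially the paper's: items (A) and (B) are precisely Proposition~\ref{7d3} and Lemma~\ref{7d10}, and your inductive ``$\supseteq$'' argument is a valid variant of Proposition~\ref{7d4}. The paper phrases the induction via the set identity $\{K=k\}=\sup_{x\in B}\{\leftidx{_x}K=k-1,\ \leftidx{_{x'}}K=1\}$ (Lemma~\ref{7d11} and its generalization), which immediately exhibits $H^{(k)}$ as spanned by tensor products $(H_x\cap H^{(k-1)})\otimes(H_{x'}\cap H^{(1)})$ and avoids your corner-term bookkeeping and limit along $b_j\uparrow$; but your route also works, and your verification that the corner terms are exactly $Q_xf$, $Q_{x'}f$ is correct.

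The genuine gap is in your ``$\subseteq$'' direction. You propose to show $H_{\si(H^{(1)})}\subset H(\{K<\infty\})$ by proving that a product $f_1\cdots f_k$ of first-chaos elements lies in $H(\{K\le k\})$, then letting the algebra they generate fill $H_{\si(H^{(1)})}$. But polynomials in $H^{(1)}$ need not be dense in $H_{\si(H^{(1)})}$: already for a classical (Gaussian) noise one has $H^{(1)}\cap L_\infty=\{0\}$, and products of unbounded $L_2$ functions need not lie in $L_2$, so neither the individual monomials $\prod_i Q_{a_{l_i}}f_i$ nor their sum are under control. Replacing $f$ by bounded Borel functions $\phi(f)$ does not help directly, since $\phi(f)$ typically has nonzero components in $W_J$ for \emph{all} $J$, not just $|J|\le1$. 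This is the ``boundedness/truncation'' difficulty you anticipate, and it is not merely technical.

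It is worth noting that the paper's one-line deduction (``Thus, $\oplus_k H^{(k)}=H \Leftrightarrow \si(H^{(1)})=\si(H)$'') is equally elliptical on this point: Propositions~\ref{7d3} and~\ref{7d4} by themselves establish only $K<\infty\Rightarrow$ classical. The reverse implication is obtained in the paper not from \S\ref{7d} alone but from the global chain: classical $\Rightarrow$ condition~(c) of Theorem~\ref{1c2} (\S\ref{6a}--\ref{6b}) $\Rightarrow$ \eqref{7a5} holds for all choices $\Rightarrow$ $K<\infty$ a.e.\ (Proposition~\ref{7e1}). If you want a self-contained proof of Theorem~\ref{7d2}, you should either import that chain or find a different argument for $H_{\si(H^{(1)})}\subset H(\{K<\infty\})$ that does not rely on multiplying $L_2$ functions.
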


We split this theorem in two propositions as follows. Recall that
classicality is defined by Definition \ref{1a3} as the equality $
\sigma(H^{(1)}) =
1_\La$. Introducing
\[
E_k = \bigl\{ s \in S\dvtx K(s) = k \bigr\}\quad \mbox{and}\quad H^{(k)}
= H(E_k)\qquad \mbox{for } k=0,1,2,\ldots
\]
[recall \eqref{2d9}] we reformulate the condition $ K(\cdot) < \infty
$ as $ S = \biguplus_k E_k $ and further, by \eqref{2d10}, as $ H =
\bigoplus_k H^{(k)} $. For $ k=1 $ the new notation conforms to the old
one in the following sense.

%
\begin{proposition}\label{7d3}
$ H(E_1) $ is equal to the first chaos space $ H^{(1)} $ (defined by
Definition \ref{1a2}).
\end{proposition}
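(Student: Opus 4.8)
The plan is to transport both sides to the spectral space $(S,\Si,\M)$ of Sect.~\ref{7b} and reduce the statement to an equality of classes of measurable subsets of $S$. First I would rewrite the first chaos space. By Definition~\ref{1a2}, $H^{(1)}=\bigcap_{x\in B}\{f:f=Q_xf+Q_{x'}f\}$, and Remark~\ref{5c2} identifies each member of the intersection as $(H_x\ominus H_0)\oplus(H_{x'}\ominus H_0)$. Writing $H_x=H(S_x)$, $H_0=H(S_{0_\La})$, using $S_{0_\La}\subset S_x$, the orthogonality built into \eqref{2d9}, and $S_x\cap S_{x'}=S_{0_\La}$ (from \eqref{7b3}), this member equals $H(G_x)$ with
\[
G_x=(S_x\setminus S_{0_\La})\uplus(S_{x'}\setminus S_{0_\La})=(S_x\cup S_{x'})\setminus S_{0_\La}.
\]
Then \eqref{2f05} turns $H^{(1)}$ into $H(G)$ with $G=\inf_{x\in B}G_x$, so the whole proposition reduces to the set identity $G=E_1$ modulo null sets.

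Next I would read off $K$ on $S$. Since $x_b(s)=0_\La$ exactly when $s\in S_{0_\La}$, one has $\{K=0\}=S_{0_\La}=E_0$ and $\{K\ge1\}=S\setminus S_{0_\La}$. Let $K^{(x)},K^{(x')}\colon S\to[0,+\infty]$ be the analogues of $K$ formed inside $B_x$, $B_{x'}$, whose spectral spaces are $(S,\Si_x,\M)$, $(S,\Si_{x'},\M)$ in the conventions of Sect.~\ref{7c}; here $K^{(x)}$ is $\Si_x$-measurable, $K^{(x')}$ is $\Si_{x'}$-measurable, and the same computation applied to $B_x$, $B_{x'}$ gives $\{K^{(x)}=0\}=S_{x'}$, $\{K^{(x')}=0\}=S_x$. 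Thus $G_x=\{K\ge1\}\cap(\{K^{(x)}=0\}\cup\{K^{(x')}=0\})$. The structural input I need is the additivity
\[
K=K^{(x)}+K^{(x')}\qquad(x\in B).
\]
I would establish it at the level of a finite subalgebra $b\ni x$: every atom of $b$ lies below $x$ or below $x'$; combining Fact~\ref{2c3}, $Q_u=Q_{u\wedge x}^{(x)}\otimes Q_{u\wedge x'}^{(x')}$, with multiplicativity of $\al$ yields $S_u=S_{(u\wedge x)\vee x'}\cap S_{(u\wedge x')\vee x}$, whence the filter generator splits as $x_b(s)=p(s)\vee q(s)$ with $p(s)\le x$, $q(s)\le x'$ computed inside $B_x$, $B_{x'}$; counting atoms below each piece gives the finite identity $K_b=K^{(x)}_{b^{(x)}}+K^{(x')}_{b^{(x')}}$ (where $b^{(x)}$, $b^{(x')}$ collect the atoms of $b$ below $x$, $x'$), and passing to suprema over a countable cofinal family of $b$ (Sect.~\ref{2f}) gives the displayed additivity.

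The forward inclusion is then immediate: on $E_1=\{K=1\}$ additivity forces $K^{(x)}=0$ or $K^{(x')}=0$, so $E_1\subset G_x$ for every $x$ and hence $E_1\subset G$.

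The hard part is the reverse inclusion $G\subset E_1$, where the difficulty is genuinely measure-theoretic, since $G$ is an infimum of uncountably many $G_x$. The plan is to realize $\{K\ge2\}=\sup_b\{K_b\ge2\}$ by a countable family of subalgebras $b\in D_0$ (Sect.~\ref{2f}). For $b\in D_0$ and each atom $a\in\Atoms(b)$ I take $x=a\in B$ (which has a complement $a'$ in $B$): if $s\in\{K_b\ge2\}$ with $a\le x_b(s)$, then $a\le x_b(s)$ forces $s\notin S_{a'}$, so the finite identity gives $K^{(a)}(s)\ge1$ and $K^{(a')}(s)\ge K_b(s)-1\ge1$; by the formula for $G_a$ this means $s\notin G_a$. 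Since $K_b(s)\ge2$ puts at least one atom of $b$ below $x_b(s)$, I get $\{K_b\ge2\}\subset\bigcup_{a\in\Atoms(b)}(S\setminus G_a)$, a finite union. Collecting over $b\in D_0$ gives a countable set $D_1\subset B$ with $\{K\ge2\}\subset S\setminus\bigcap_{a\in D_1}G_a$, i.e. $\bigcap_{a\in D_1}G_a\subset\{K\le1\}$; intersecting with $\{K\ge1\}$, which contains every $G_a$, yields $\bigcap_{a\in D_1}G_a\subset E_1$. Finally $G=\inf_{x\in B}G_x\subset\bigcap_{a\in D_1}G_a\subset E_1$, so $G=E_1$ and therefore $H(E_1)=H(G)=H^{(1)}$. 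The two technical hearts are this simultaneous-splitting reduction to a single countable family and the spectral factorization $S_u=S_{(u\wedge x)\vee x'}\cap S_{(u\wedge x')\vee x}$ underlying the additivity of $K$.
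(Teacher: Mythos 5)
Your proof is correct, and its skeleton coincides with the paper's: you identify $\{f: f=Q_xf+Q_{x'}f\}$ with $H(G_x)$ where $G_x=(S_x\cup S_{x'})\setminus S_{0_\La}$ (this is the paper's Lemma \ref{7d5}, resting on Remark \ref{5c2} and $S_x\cap S_{x'}=S_{0_\La}$), reduce everything via \eqref{2f05} to the set identity $E_1=\inf_{x\in B}G_x$, and then prove that identity. Where you diverge is in the proof of the set identity. The paper gets it from Lemma \ref{7d6}, a purely combinatorial consequence of Fact \ref{2e3}: if $K_{b_1}(s)\le1$ and $K_{b_2}(s)\le1$ then $K_b(s)\le1$ for the generated subalgebra $b$, because $x_b(s)\le x_{b_1}(s)\wedge x_{b_2}(s)$ is at most an atom; since every finite $b$ is generated by the four-element algebras $b_x$, $x\in b$, this yields $\{K\le1\}=\inf_{x\in B}\{K_{b_x}\le1\}$ in one stroke (Lemma \ref{7d7}), with the converse inclusion trivial from $K\ge K_{b_x}$. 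You instead prove and use the additivity $\leftidx{_x}K+\leftidx{_{x'}}K=K$ --- which is exactly the paper's Lemma \ref{7d10}, stated and proved there only afterwards, for Proposition \ref{7d4} --- together with an explicit countable-family argument over atoms for the reverse inclusion; that argument is in substance the contrapositive of Lemma \ref{7d6} restricted to atoms, and your handling of the uncountability of $B$ via a countable cofinal family of subalgebras is the same device the paper hides in the phrase ``infimum of equivalence classes.'' Your route is heavier for this proposition (additivity is a sledgehammer for the inclusion $E_1\subset G_x$, which follows already from $K\ge K_{b_x}$ and $\{K_{b_x}=0\}=S_{0_\La}$), but none of the extra work is wasted, since Lemma \ref{7d10} is needed anyway for the rest of Theorem \ref{7d2}; the spectral factorization $S_u=S_{(u\wedge x)\vee x'}\cap S_{(u\wedge x')\vee x}$ you isolate is indeed the correct mechanism behind it.
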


%
\begin{proposition}\label{7d4}
$ \sigma(H^{(k)}) \subset\sigma(H^{(1)}) $ for all $ k=2,3,\ldots.$
\end{proposition}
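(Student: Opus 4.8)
The plan is to prove the stronger statement that every $ f \in H^{(k)} $ is \measurable{\si(H^{(1)})}; since $ H^{(k)} $ generates $ \si(H^{(k)}) $, this yields $ \si(H^{(k)}) \subset \si(H^{(1)}) $ at once. The whole argument rests on cutting $ 1_\La $ into a fine finite partition inside $ B $ and exploiting the tensor factorization of the spectral space across the pieces. First I would fix a finite Boolean subalgebra $ b \subset B $ with atoms $ a_1,\dots,a_n $. These atoms are mutually independent, so $ H = H_{a_1} \otimes \dots \otimes H_{a_n} $, the spectral \sif\ factorizes into \independent{\M} pieces $ \Si = \Si_{a_1} \vee \dots \vee \Si_{a_n} $, and iterating \ref{7c1} gives $ H(E_1\cap\dots\cap E_n) = \leftidx{_{a_1}} H(E_1) \otimes \dots \otimes \leftidx{_{a_n}} H(E_n) $ for $ E_j \in \Si_{a_j} $. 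The structural fact to establish is that the counting function is additive under this splitting, $ K = \leftidx{_{a_1}} K + \dots + \leftidx{_{a_n}} K $, each $ \leftidx{_{a_j}} K $ being \measurable{\Si_{a_j}}; this follows by checking, for $ b $ refining $ \{a_1,\dots,a_n\} $, that $ \leftidx{_{a_j}} x_{b_j}(s) = x_b(s) \wedge a_j $ (using $ \leftidx{_{a_j}} S_u = S_{u\vee a_j'} $ and $ \Cl(B) \subset \La_{a_j,a_j'} $), hence $ K_b = \sum_j \leftidx{_{a_j}} K_{b_j} $, and then letting the $ b_j $ refine independently so that the directed supremum passes through the finite sum. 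Granting additivity, $ E_k $ splits according to $ i_1+\dots+i_n=k $ and
\[
H^{(k)} = \bigoplus_{i_1+\dots+i_n=k} \leftidx{_{a_1}} H^{(i_1)} \otimes \dots \otimes \leftidx{_{a_n}} H^{(i_n)} \, .
\]

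Next I would note $ \leftidx{_{a_j}} H^{(0)} = H_0 $ (constants) and $ \leftidx{_{a_j}} H^{(1)} = H^{(1)} \cap H_{a_j} \subset H^{(1)} $ (recall Sect.~\ref{7c}). Hence every summand with all $ i_j \le 1 $ is spanned by pointwise products of at most $ k $ factors drawn from $ H^{(1)} $, so it consists of \measurable{\si(H^{(1)})} functions; collecting these ``resolved'' summands into $ G_b $ and the remaining ones (some $ i_j \ge 2 $) into $ R_b = H(C_b) $, where $ C_b = \{ s \in E_k : \leftidx{_{a_j}} K(s) \ge 2 \text{ for some } j \} $ is the collision set, I get an orthogonal decomposition $ H^{(k)} = G_b \oplus R_b $ with $ G_b \subset H_{\si(H^{(1)})} $. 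Finally I would run $ b $ through a countable directed cofinal family of finite subalgebras (available since $ B $ satisfies the countable chain condition \eqref{4a03}; see Sect.~\ref{2f}), so that $ K_b \uparrow K $ and, because refining a partition never merges points, $ C_{b'} \subset C_b $ for $ b' \supset b $. On $ \{K=k\} $, as soon as $ K_b(s)=k $ the $ k $ occupied atoms carry exactly one point each (here one uses $ K_b = |\{ j : \leftidx{_{a_j}} K \ge 1 \}| $, which comes from $ a_j \le x_b(s) \equiv s \notin S_{a_j'} \equiv \leftidx{_{a_j}} K(s) \ge 1 $), so $ s \notin C_b $; thus $ \bigcap_b C_b $ is $ \M $-null, $ Q_{R_b} \downarrow 0 $ on $ H^{(k)} $ by \eqref{2d10}, and $ Q_{G_b} f \to f $ for each $ f \in H^{(k)} $. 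Since $ G_b \subset H_{\si(H^{(1)})} $ and the latter is closed, $ f \in H_{\si(H^{(1)})} $, as required.

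The main obstacle is the additivity $ K = \sum_j \leftidx{_{a_j}} K $ together with the identification $ K_b = |\{ j : \leftidx{_{a_j}} K \ge 1 \}| $: both are finite combinatorial identities about atoms, but they must be matched correctly through $ S_{x\wedge y} = S_x \cap S_y $, $ \leftidx{_{a_j}} S_u = S_{u\vee a_j'} $, and the passage of a directed supremum through a finite sum. Once these identities are in place, the collision estimate $ \bigcap_b C_b = \emptyset $ (mod $ \M $) and the ensuing limiting argument are routine.
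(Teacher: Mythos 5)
Your proposal is correct, and it organizes the argument differently from the paper. The paper proceeds by induction on $k$: it proves the two-fold additivity $\leftidx{_{x\vee y}}K=\leftidx{_x}K+\leftidx{_y}K$ (Lemma \ref{7d10}), shows $\{s:K(s)=k\}=\sup_{x\in B}\{s:\leftidx{_x}K(s)=k-1,\ \leftidx{_{x'}}K(s)=1\}$ (Lemma \ref{7d11} and its generalization), and then deduces $\si(H^{(k)})\subset\si(H^{(k-1)}\cup H^{(1)})$ from Lemma \ref{7c1} and \eqref{7d12}. You instead split along all $n$ atoms of a finite subalgebra at once, identify the fully separated summands directly as closed spans of $k$-fold products of first-chaos vectors, and kill the collision set by a refinement limit. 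The ingredients are the same --- additivity of $K$ over independent pieces (your $n$-fold version is just the iterate of Lemma \ref{7d10}, and the paper itself uses $\sum_{a}\leftidx{_a}K=K$ in Sect.~\ref{7e}), Lemma \ref{7c1}, the restricted form \eqref{7d12} of Proposition \ref{7d3}, and an increasing sequence of finite subalgebras with $K_{b_n}\uparrow K$ --- but your route avoids the induction on $k$ and yields the sharper, more illuminating conclusion that $H^{(k)}$ is approximated by products of $k$ first-chaos elements (the Wiener-chaos picture), at the cost of the $n$-fold tensor bookkeeping and the collision estimate. The identities you flag as the main obstacle do check out: $a\le x_b(s)$ iff $s\notin S_{a'}$ iff $\leftidx{_a}K(s)\ge1$, which gives $K_b=|\{j:\leftidx{_{a_j}}K\ge1\}|$, and $C_{b'}\subset C_b$ for $b'\supset b$ follows from additivity. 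One small correction: the countable chain condition is not the relevant justification for your countable family of subalgebras (and a countable \emph{cofinal} family need not exist if $B$ is uncountable); what you actually need, and what Sect.~\ref{2f} supplies via the supremum of equivalence classes being attained on a countable subfamily, is a single increasing sequence $b_1\subset b_2\subset\dots$ with $K_{b_n}\uparrow K$ --- exactly what the paper constructs in Sect.~\ref{7e} --- and your monotonicity of the collision sets together with \eqref{2d10} then finishes the limiting step.
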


Thus, $ \bigoplus_k H^{(k)} = H \equiv\sigma(H^{(1)})
= \sigma(H) \equiv\sigma(H^{(1)}) = 1_\La$. We see that Theorem
\ref{7d2}
follows from Propositions \ref{7d3}, \ref{7d4}.

The proof of Proposition \ref{7d3} is given after three lemmas.

We introduce minimal nontrivial finite Boolean subalgebras $ b_x = \{
0, x, x', 1 \} $ for $ x \in B $.

%
\begin{lemma}\label{7d5}
For every $ x \in B $,
\[
\{ f \in H\dvtx f = Q_x f + Q_{x'} f \} = H \bigl( \bigl\{ s
\dvtx K_{b_x}(s) = 1 \bigr\} \bigr).
\]
\end{lemma}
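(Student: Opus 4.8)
The plan is to pass everything through the spectral picture of Sect.~\ref{7b} and to identify both sides of the claimed equality with the same subspace $(H_x \ominus H_0) \oplus (H_{x'} \ominus H_0)$ of $H$.

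First I would unwind the combinatorial content of $K_{b_x}$. The algebra $b_x = \{0_\La, x, x', 1_\La\}$ has exactly two atoms, $x$ and $x'$, so $K_{b_x}(s)$ counts how many of $x, x'$ lie below the generator $x_{b_x}(s)$ of the filter $\{ y \in b_x : s \in S_y \}$. By \eqref{7d1} this generator is pinned down by the two membership relations $s \in S_x$ and $s \in S_{x'}$; running through the four cases $x_{b_x}(s) \in \{0_\La, x, x', 1_\La\}$ shows that $K_{b_x}(s) = 1$ exactly when $s$ belongs to precisely one of $S_x$, $S_{x'}$. Hence $\{ s : K_{b_x}(s) = 1 \}$ is the symmetric difference, i.e.\ the disjoint union $(S_x \setminus S_{x'}) \uplus (S_{x'} \setminus S_x)$.

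Next I would compute $H$ of this set. Since $x \wedge x' = 0_\La$, relation \eqref{7b3} gives $S_x \cap S_{x'} = S_0$, and \eqref{7b4} identifies $H(S_x) = H_x$, $H(S_{x'}) = H_{x'}$, $H(S_0) = H_0$. Writing $S_x = S_0 \uplus (S_x \setminus S_{x'})$, which is legitimate because $S_0 = S_x \cap S_{x'} \subset S_x$, and applying the orthogonal-sum rule in \eqref{2d9} gives $H_x = H_0 \oplus H(S_x \setminus S_{x'})$, whence $H(S_x \setminus S_{x'}) = H_x \ominus H_0$, and symmetrically $H(S_{x'} \setminus S_x) = H_{x'} \ominus H_0$. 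One more application of \eqref{2d9} to the disjoint union yields
\[
H\( \{ s : K_{b_x}(s) = 1 \} \) = (H_x \ominus H_0) \oplus (H_{x'} \ominus H_0) \, .
\]
Finally I would invoke Remark \ref{5c2}, which already records that the left-hand side of the lemma, $\{ f \in H : f = Q_x f + Q_{x'} f \}$, equals exactly $(H_x \ominus H_0) \oplus (H_{x'} \ominus H_0)$; comparing the two displays finishes the proof.

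I do not expect a genuine obstacle here. The only point needing care is the bookkeeping of equivalence classes in the measure algebra: the decomposition $S_x = S_0 \uplus (S_x \setminus S_{x'})$ and the passage \eqref{7d1} from the filter to its generator $x_{b_x}(s)$ must be read as almost-everywhere statements, which is exactly the regime in which $\al$ and the additivity relations \eqref{2d9} operate. Everything else is a direct translation through the spectral isomorphism.
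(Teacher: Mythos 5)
Your proof is correct and follows essentially the same route as the paper: the paper also identifies $\{s: K_{b_x}(s)=1\}$ with $(S_x\setminus S_0)\uplus(S_{x'}\setminus S_0)$ (the same set as your symmetric difference, since $S_x\cap S_{x'}=S_0$), passes through \eqref{2d9} to get $(H_x\ominus H_0)\oplus(H_{x'}\ominus H_0)$, and concludes by Remark \ref{5c2}.
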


\begin{pf}
$ \{ s\dvtx K_{b_x}(s) = 1 \} = \{ s\dvtx K_{b_x}(s) \le1 \}
\setminus\{ s\dvtx K_{b_x}(s) = 0 \} = ( S_x \cup S_{x'} ) \setminus S_0
= ( S_x \setminus S_0 ) \uplus( S_{x'} \setminus S_0 ) $ (since $ S_x
\cap S_{x'} = S_0 $), thus $ H ( \{ s\dvtx K_{b_x}(s) = 1 \} ) = H (
S_x \setminus S_0 ) \oplus H ( S_{x'} \setminus S_0 ) = ( H_x \ominus
H_0 ) \oplus( H_{x'} \ominus H_0 ) $; use Remark \ref{5c2}.
\end{pf}

%
\begin{lemma}\label{7d6}
Assume that $ b_1, b_2 \subset B $ are finite Boolean subalgebras, and
$ b \subset B $ is the (finite by Fact \ref{2e3}) Boolean subalgebra
generated by $ b_1, b_2 $. Then
\[
\bigl\{ s\dvtx K_{b_1}(s) \le1 \bigr\} \cap\bigl\{ s\dvtx
K_{b_2}(s) \le1 \bigr\} \subset\bigl\{ s\dvtx K_b(s) \le1
\bigr\}.
\]
\end{lemma}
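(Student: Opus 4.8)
The plan is to reduce the statement about the functions $ K_{b_1}, K_{b_2}, K_b $ to a purely set-theoretic inclusion among the spectral sets $ S_a $, and then read off the conclusion from the atom description in Fact~\ref{2e3}. The crucial preliminary step is a clean formula for the sublevel set $ \{ s : K_b(s) \le 1 \} $ valid for every finite Boolean subalgebra $ b \subset B $, namely
\[
\{ s : K_b(s) \le 1 \} = \bigcup_{a\in\Atoms(b)} S_a \, .
\]
I would prove this directly from the defining property \eqref{7d1} of the generator $ x_b(s) $. The inequality $ K_b(s) \le 1 $ says $ x_b(s) $ is either $ 0_\La $ or a single atom of $ b $; if $ x_b(s) = a \in \Atoms(b) $ then $ a \ge x_b(s) $ gives $ s \in S_a $, while if $ x_b(s) = 0_\La $ then $ s \in S_0 $, and $ S_0 = S_{0_\La\wedge a} = S_0 \cap S_a \subset S_a $ by \eqref{7b3} for any atom $ a $. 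Conversely $ s \in S_a $ forces $ a \ge x_b(s) $, hence $ x_b(s) \in \{ 0_\La, a \} $ and $ K_b(s) \le 1 $. (As a check, for $ b = b_x $ this recovers $ \{ s : K_{b_x}(s) \le 1 \} = S_x \cup S_{x'} $, the identity underlying \ref{7d5}.)

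Granting this formula for $ b_1 $, $ b_2 $ and $ b $, I would compute
\[
\{ s : K_{b_1}(s) \le 1 \} \cap \{ s : K_{b_2}(s) \le 1 \} = \Big( \bigcup_{a_1\in\Atoms(b_1)} S_{a_1} \Big) \cap \Big( \bigcup_{a_2\in\Atoms(b_2)} S_{a_2} \Big) = \bigcup_{a_1,a_2} S_{a_1\wedge a_2} \, ,
\]
the last equality by \eqref{7b3}. Now Fact~\ref{2e3} tells us that for $ a_1 \in \Atoms(b_1) $ and $ a_2 \in \Atoms(b_2) $ the meet $ a_1 \wedge a_2 $ is either $ 0_B = 0_\La $ or an atom of $ b $. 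In the former case $ S_{a_1\wedge a_2} = S_0 \subset S_a $ for any $ a \in \Atoms(b) $; in the latter case $ S_{a_1\wedge a_2} $ is itself one of the sets $ S_a $, $ a \in \Atoms(b) $. Either way each term lies in $ \bigcup_{a\in\Atoms(b)} S_a = \{ s : K_b(s) \le 1 \} $, which is exactly the asserted inclusion.

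Because $ b_1, b_2 $ (and hence $ b $, by \ref{2e3}) are finite, only finitely many equivalence classes of measurable sets occur, so all the unions, intersections and the relation \eqref{7b3} are legitimate at the level of equivalence classes, and no subtlety about uncountable families intervenes. I do not expect a genuine obstacle here; the one point to handle with care is the derivation of the sublevel-set formula from \eqref{7d1}, in particular the absorption $ S_0 \subset S_a $ of the bottom set into every single-atom set. The substance of the lemma --- that intersecting an ``at most one atom of $ b_1 $'' condition with an ``at most one atom of $ b_2 $'' condition yields an ``at most one atom of $ b $'' condition --- is carried entirely by Fact~\ref{2e3}.
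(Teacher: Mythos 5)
Your proof is correct. It rests on the same combinatorial engine as the paper's --- Fact \ref{2e3}, identifying the atoms of $b$ with the nonzero meets $a_1\wedge a_2$ --- but packages the argument differently. The paper argues pointwise: for $s\notin S_0$ with $K_{b_1}(s)\le 1$ and $K_{b_2}(s)\le 1$, the generators $x_{b_1}(s)$, $x_{b_2}(s)$ are atoms, and by the monotonicity \eqref{7d15} one gets $x_b(s)\le x_{b_1}(s)\wedge x_{b_2}(s)\in\Atoms(b)$, whence $K_b(s)\le 1$; three lines, with no intermediate identity. You instead first establish the sublevel-set formula $\{s: K_b(s)\le 1\}=\bigcup_{a\in\Atoms(b)}S_a$ directly from \eqref{7d1}, and then push the intersection through using $S_{a_1}\cap S_{a_2}=S_{a_1\wedge a_2}$ from \eqref{7b3}. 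Your route has the small advantage of isolating a reusable identity (which, as you note, specializes for $b=b_x$ to the computation $S_x\cup S_{x'}$ inside the proof of Lemma \ref{7d5}) and of replacing the appeal to \eqref{7d15} by the purely lattice-theoretic \eqref{7b3}; the paper's version is shorter. Your attention to the absorption $S_0\subset S_a$ and to the finiteness of the family of equivalence classes involved is appropriate, and both points are handled correctly.
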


\begin{pf}
If $ K_{b_1}(s) \le1 $, $ K_{b_2}(s) \le1 $ and $ s \notin S_0 $,
then $ x_{b_1}(s) \in\Atoms(b_1) $, $ x_{b_2}(s) \in\Atoms(b_2) $,
thus $ x_b(s) \le x_{b_1}(s) \wedge x_{b_2}(s) \in\Atoms(b) $ by
Fact \ref{2e3}, therefore $ K_b(s) \le1 $.
\end{pf}

%
\begin{lemma}\label{7d7}
$ \{ s\dvtx K(s) \le1 \} = \inf_{x\in B} \{ s\dvtx K_{b_x}(s) \le1
\} $,
and $ \{ s\dvtx K(s) = 1 \} = \inf_{x\in B} \{ s\dvtx K_{b_x}(s) = 1
\} $
(the infimum of equivalence classes).\vadjust{\goodbreak}
\end{lemma}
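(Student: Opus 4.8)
The plan is to derive both equalities from the combinatorial estimate of Lemma~\ref{7d6} together with the fact (Sect.~\ref{2f}) that a supremum of equivalence classes over an arbitrary index set is already attained along a countable subfamily. I treat the first equality $ \{ s : K(s) \le 1 \} = \inf_{x\in B} \{ s : K_{b_x}(s) \le 1 \} $ first; the second will follow formally. The inclusion ``$\subset$'' is immediate: each $ b_x $ is one of the finite Boolean subalgebras entering the supremum \eqref{7d17}, so $ K_{b_x} \le K $ and hence $ \{ K \le 1 \} \subset \{ K_{b_x} \le 1 \} $ for every $ x \in B $; since $ \{K \le 1\} $ is contained in every member of the family, it is contained in their infimum.

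For the reverse inclusion I would first replace the (possibly uncountable) index set in $ K = \sup_b K_b $ by a countable one: by Sect.~\ref{2f} there exist finite Boolean subalgebras $ b_1, b_2, \dots \subset B $ with $ K = \sup_n K_{b_n} $, so that $ \{ K \le 1 \} = \bigcap_n \{ K_{b_n} \le 1 \} $, a countable intersection that may be handled pointwise. Now fix $ n $ and let $ a_1, \dots, a_m $ be the atoms of $ b_n $; these lie in $ B $ and generate $ b_n $, so $ b_n $ is the Boolean subalgebra generated by $ b_{a_1}, \dots, b_{a_m} $. Iterating Lemma~\ref{7d6} along the chain of subalgebras generated by $ b_{a_1}, \dots, b_{a_j} $ for $ j = 1, \dots, m $ yields $ \bigcap_{i=1}^m \{ K_{b_{a_i}} \le 1 \} \subset \{ K_{b_n} \le 1 \} $. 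Since $ \inf_{x\in B} \{ K_{b_x} \le 1 \} \subset \{ K_{b_{a_i}} \le 1 \} $ for each $ i $ (because each $ a_i \in B $), we get $ \inf_x \{ K_{b_x} \le 1 \} \subset \{ K_{b_n} \le 1 \} $ for every $ n $, and intersecting over $ n $ gives $ \inf_x \{ K_{b_x} \le 1 \} \subset \{ K \le 1 \} $, completing the first equality.

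For the second equality I would use that $ \{ K_b = 0 \} = S_0 $ for every finite $ b $: indeed $ K_b(s) = 0 $ means $ x_b(s) = 0_\La $, i.e.\ $ s $ lies in every $ S_y $, $ y \in b $, and $ \bigcap_{y\in b} S_y = S_0 $ by \eqref{7b3}. The same computation gives $ \{ K = 0 \} = S_0 $ and $ S_0 \subset \{ K_{b_x} \le 1 \} $. As $ K $ takes values in $ \{0,1,2,\dots\} \cup \{+\infty\} $, on $ \{ K \le 1 \} $ we have $ K \in \{0,1\} $, whence $ \{ K = 1 \} = \{ K \le 1 \} \setminus S_0 $, and likewise $ \{ K_{b_x} = 1 \} = \{ K_{b_x} \le 1 \} \setminus S_0 $. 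Removing the fixed set $ S_0 $ commutes with intersection, so
\[
\inf_x \{ K_{b_x} = 1 \} = \Big( \inf_x \{ K_{b_x} \le 1 \} \Big) \setminus S_0 = \{ K \le 1 \} \setminus S_0 = \{ K = 1 \} \, ,
\]
using the first equality in the middle step.

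I expect the only real obstacle to be the reverse inclusion in the first equality, where the uncountable index set of \eqref{7d17} must be reconciled with honest set-theoretic manipulation of equivalence classes; the reduction to a countable cofinal subfamily via Sect.~\ref{2f} is exactly the device that removes this difficulty, after which the iteration of Lemma~\ref{7d6} and the passage from $ \{ K \le 1 \} $ to $ \{ K = 1 \} $ are routine.
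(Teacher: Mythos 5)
Your proposal is correct and follows essentially the same route as the paper: the reverse inclusion via iterated application of Lemma~\ref{7d6} over generators of a finite subalgebra (the paper uses all $b_x$ for $x\in b$, you use the atoms, which is immaterial), and the passage from $\le 1$ to $=1$ via $\{s: K_b(s)=0\}=S_0$. Your explicit reduction of $K=\sup_b K_b$ to a countable cofinal subfamily via Sect.~\ref{2f} merely spells out a step the paper leaves implicit in ``the infimum over all $b$ gives\dots''.
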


\begin{pf}
Every finite Boolean subalgebra $ b $ is generated by the Boolean
subalgebras $ b_x $ for $ x \in b $; by Lemma \ref{7d6}, $ \{ s\dvtx
K_b(s) \le
1 \} \supset\bigcap_{x\in b} \{ s\dvtx K_{b_x}(s) \le1 \} $; the infimum
over all $ b $ gives $ \{ s\dvtx K(s) \le1 \} \supset\inf_{x\in B}
\{ s
\dvtx K_{b_x}(s) \le1 \} $. The converse inclusion being trivial, we get
the first equality. The second equality follows, since the set $ \{ s
\dvtx K_b(s) = 0 \} $ is equal to $ S_0 $, irrespective of $ b $.
\end{pf}

\begin{pf*}{Proof of Proposition \ref{7d3}}
It follows from the second equality of Lem\-ma~\ref{7d7}, using \eqref{2f05},
that $ H(E_1) = \bigcap_{x\in B} H ( \{ s\dvtx K_{b_x}(s) = 1 \} )
$. Using Lem\-ma~\ref{7d5} we get $ H(E_1) = \bigcap_{x\in B} \{ f \in
H\dvtx f =
Q_x f + Q_{x'} f \} = H^{(1)} $.
\end{pf*}

In order to prove Theorem \ref{7d2} it remains to prove Proposition
\ref{7d4}.

We have $ K $ introduced for $ B $ by \eqref{7d17}, but also for $ B_x
$ we have $ \leftidx{_x} K $, the counterpart of $ K $ in the sense of
Section~\ref{7c};
\[
\leftidx{_x} K = \sup_b \leftidx{_x}
K_b,\qquad \leftidx{_x} K\dvtx S \to[0,\infty] \qquad\mbox{for } x \in
B,
\]
where $ b $ runs over all finite Boolean subalgebras $ b \subset B_x
$; $ \leftidx{_x} K $ is an equivalence class of $\Si_x$-measurable
functions $ S \to[0,\infty] $.

%
\begin{lemma}\label{7d10}
$ \leftidx{_{x\vee y}} K = \leftidx{_x} K + \leftidx{_y} K $ for all
$ x,y \in B $ such that $ x \wedge y = 0_\La$.
\end{lemma}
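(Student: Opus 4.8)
The plan is to fix $ z = x \vee y $ and reduce the identity to a count of atoms in the single global spectral space $ (S,\Si,\M) $, exploiting the Boolean identity $ y \vee z' = x' $ that is forced by $ x \wedge y = 0_\La $. First I would record the structure of $ B_z $. Since $ B $ is distributive and commutative (recall \eqref{4a2}) and $ x \wedge y = 0_\La $, the elements $ x,y $ are independent by \ref{3c1}, $ y \le x' $, and $ z' = x' \wedge y' $, whence $ y \vee z' = ( y \vee x' ) \wedge ( y \vee y' ) = y \vee x' = x' $. Moreover each $ w \in B_z $ satisfies $ w = ( w \wedge x ) \vee ( w \wedge y ) $ by distributivity, so $ B_z = \{ u \vee v : u \in B_x,\, v \in B_y \} $, and $ w \mapsto w \wedge x $, $ w \mapsto w \wedge y $ are Boolean homomorphisms of $ B_z $ onto $ B_x $, $ B_y $.

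Next I would reformulate each $ K $ as an atom count. For a finite Boolean subalgebra $ b \subset B_z $ the restricted spectral sets $ \leftidx{_z} S_w = S_{w\vee z'} $ satisfy $ \leftidx{_z} S_{w_1\wedge w_2} = \leftidx{_z} S_{w_1} \cap \leftidx{_z} S_{w_2} $ (by \eqref{7b3} and distributivity of $ B $), so the filter $ \{ w \in b : s \in \leftidx{_z} S_w \} $ is principal, exactly as in \eqref{7d1}. For an atom $ a \in \Atoms(b) $ with complement $ \hat a $ in $ b $, the atom $ a $ lies below the generator of this filter if and only if $ s \notin \leftidx{_z} S_{\hat a} = S_{\hat a \vee z'} $; hence $ \leftidx{_z} K_b(s) = | \{ a \in \Atoms(b) : s \notin S_{\hat a \vee z'} \} | $ almost everywhere, and likewise $ \leftidx{_x} K_{b_x}(s) = | \{ a \in \Atoms(b_x) : s \notin S_{\hat a^{b_x} \vee x'} \} | $ with $ x' $ in place of $ z' $, and similarly for $ y $.

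The key computation is for $ b = b_x \vee b_y $, the subalgebra of $ B_z $ generated by finite $ b_x \subset B_x $ and $ b_y \subset B_y $. Its elements are $ \{ u \vee v : u \in b_x,\, v \in b_y \} $, and using \ref{2c7} one checks that its atoms are $ \Atoms(b_x) \sqcup \Atoms(b_y) $ (each $ a \in \Atoms(b_x) $ giving the atom $ a = a \vee 0_\La $, and symmetrically for $ b_y $), and that the complement in $ b $ of such an $ a $ is $ \hat a = \hat a^{b_x} \vee y $, where $ \hat a^{b_x} $ is the complement of $ a $ in $ b_x $ and $ y = 1_{b_y} $ is the top of $ b_y $. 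Therefore $ \hat a \vee z' = \hat a^{b_x} \vee y \vee z' = \hat a^{b_x} \vee x' $, so $ s \notin S_{\hat a \vee z'} $ if and only if $ s \notin \leftidx{_x} S_{\hat a^{b_x}} $; summing over the two blocks of atoms gives $ \leftidx{_z} K_{b_x\vee b_y} = \leftidx{_x} K_{b_x} + \leftidx{_y} K_{b_y} $.

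Finally I would pass to suprema. The subalgebras $ b_x \vee b_y $ are cofinal among finite subalgebras of $ B_z $: any $ b \subset B_z $ is contained in $ b_x \vee b_y $ for $ b_x,b_y $ its two projections, so $ \leftidx{_z} K_b \le \leftidx{_z} K_{b_x\vee b_y} $ by \eqref{7d15}. Since the families $ \{ \leftidx{_x} K_{b_x} \} $ and $ \{ \leftidx{_y} K_{b_y} \} $ are monotone and directed, and by Sect.~\ref{2f} each supremum is attained along an increasing sequence, the supremum of the sums equals the sum of the suprema; combining this with the cofinality inequality yields $ \leftidx{_{x\vee y}} K = \leftidx{_x} K + \leftidx{_y} K $. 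The hard part is the third step: pinning down the atoms of $ b_x \vee b_y $ and verifying $ \hat a \vee z' = \hat a^{b_x} \vee x' $. This rests on $ 1_{b_y} = y $ (so that the join absorbs all of $ y $ rather than a coarser piece of it) together with $ y \vee z' = x' $, and it is precisely here that $ x \wedge y = 0_\La $ — via $ y \le x' $ — and the distributivity of $ B $ enter.
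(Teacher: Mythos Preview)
Your proof is correct and follows essentially the same approach as the paper: restrict to the cofinal family of finite subalgebras of $B_{x\vee y}$ that split as $b_x \vee b_y$ with $b_x \subset B_x$, $b_y \subset B_y$, observe that $\Atoms(b_x \vee b_y) = \Atoms(b_x) \sqcup \Atoms(b_y)$, deduce $\leftidx{_z} K_{b_x\vee b_y} = \leftidx{_x} K_{b_x} + \leftidx{_y} K_{b_y}$, and take suprema. The only cosmetic difference is that you count atoms via the complement criterion $s \notin S_{\hat a \vee z'}$ and the identity $\hat a \vee z' = \hat a^{b_x} \vee x'$, whereas the paper records the equivalent direct identity $x_b(s) = x_{b_1}(s) \vee x_{b_2}(s)$ (which amounts to $\leftidx{_z} S_{u\vee v} = \leftidx{_x} S_u \cap \leftidx{_y} S_v$).
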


\begin{pf}
When calculating $ \leftidx{_{x\vee y}} K $ we may restrict
ourselves to finite subalgebras $ b \subset B_{x\vee y} $ that contain
$ x $ and $ y $; recall \eqref{7d15}. Each such $ b $ may be thought
of as a pair of $ b_1 \subset B_x $ and $ b_2 \subset B_y $. We have
$ \Atoms(b) = \Atoms(b_1) \uplus\Atoms(b_2) $, $ x_b(s) =
x_{b_1}(s) \vee x_{b_2}(s) $ (recall that $ \leftidx{_x} S_u =
S_{u\vee x'} $ for $ u \le x $), thus $ \leftidx{_{x\vee y}} K_b =
\leftidx{_x} K_{b_1} + \leftidx{_y} K_{b_2} $; take the supremum in $
b_1, b_2 $.
\end{pf}

%
\begin{lemma}\label{7d11}
$ \{ s \in S\dvtx K(s) = 2 \} = \sup_{x\in B} \{ s \in S\dvtx\leftidx{_x}
K(s) = \leftidx{_{x'}} K(s) = 1 \} $ (the supremum of equivalence
classes).
\end{lemma}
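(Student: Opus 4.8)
The plan is to prove the asserted equality of equivalence classes by two inclusions, the first of which is immediate from Lemma~\ref{7d10}.

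For "$\supseteq$" I would apply Lemma~\ref{7d10} with $y=x'$. Since $x\wedge x'=0_\La$ and $x\vee x'=1_\La$ (so that $B_{1_\La}=B$ and $\leftidx{_{1_\La}} K=K$), this gives $\leftidx{_x} K+\leftidx{_{x'}} K=K$ for every $x\in B$. Hence $\{\,s:\leftidx{_x} K(s)=\leftidx{_{x'}} K(s)=1\,\}\subset\{\,s:K(s)=2\,\}$ for each $x$, and the same containment holds for the supremum over $x$.

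For "$\subseteq$" I would first record, exactly as in the proof of Lemma~\ref{7d7} but read inside $B_x$, that $\{\,s:\leftidx{_x} K(s)=0\,\}=\leftidx{_x} S_0=S_{x'}$; thus $\leftidx{_x} K(s)\ge1$ is equivalent to $s\notin S_{x'}$, and $\leftidx{_{x'}} K(s)\ge1$ to $s\notin S_x$. Combined with $\leftidx{_x} K+\leftidx{_{x'}} K=K$, this shows that on $\{K=2\}$ the two restricted functions both equal $1$ precisely when $s\notin S_x$ and $s\notin S_{x'}$. So it suffices to cover almost all of $\{K=2\}$ by sets of this last form. Using Sect.~\ref{2f} I would pick a countable family of finite subalgebras realizing $K$ as a supremum and pass to the subalgebras $b_1\subset b_2\subset\dots$ generated by the first $n$ of them (finite by~\ref{2e3}, with $K_{b_n}$ nondecreasing by~\eqref{7d15}), so that $K_{b_n}\uparrow K$. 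For almost every $s$ with $K(s)=2$ some $n$ then satisfies $K_{b_n}(s)=2$, i.e.\ $x_{b_n}(s)=a_1\vee a_2$ for two distinct atoms $a_1,a_2\in\Atoms(b_n)$. I would take $x=a_1\in b_n\subset B$; since complements in the Boolean algebra $B$ are unique, $a_1'$ coincides with the complement of $a_1$ inside $b_n$, namely $a_1'=\bigvee_{a\in\Atoms(b_n),\,a\ne a_1}a\in b_n$. Now \eqref{7d1} applies to both $a_1,a_1'\in b_n$: from $a_1\vee a_2\not\le a_1$ (as $a_2\ne0_\La$ and $a_1\wedge a_2=0_\La$) I get $s\notin S_{a_1}$, and from $a_1\vee a_2\not\le a_1'$ (as $a_1\not\le a_1'$) I get $s\notin S_{a_1'}$. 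Hence $\leftidx{_{a_1}} K(s)=\leftidx{_{a_1'}} K(s)=1$, so $s$ lies in the set indexed by $x=a_1$. Letting $n$ and the pair $(a_1,a_2)$ vary then covers $\{K=2\}$ up to a null set.

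The one genuinely delicate point is this separation step: one must produce a single $x\in B$ putting one of the two atoms below $x$ and the other below $x'$, and the clean device is the remark $a_1'\in b_n$, which converts both non-membership claims $s\notin S_{a_1}$, $s\notin S_{a_1'}$ into direct instances of \eqref{7d1}. Everything else is bookkeeping: intersecting the sets $\{x_{b_n}(s)=a_1\vee a_2\}$ (which a priori only give $K(s)\ge2$) with $\{K=2\}$, and reducing the uncountable suprema, over $b$ and over $x$, to countable ones by Sect.~\ref{2f}.
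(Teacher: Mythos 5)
Your proposal is correct and follows essentially the same route as the paper: the ``$\supseteq$'' inclusion from Lemma~\ref{7d10} with $y=x'$, and the ``$\subseteq$'' inclusion by exhausting with a countable chain $b_1\subset b_2\subset\dots$ satisfying $K_{b_n}\uparrow K$, locating $n$ with $K_{b_n}(s)=2$, and separating the two atoms of $x_{b_n}(s)$ by an element of $b_n$. The only (harmless) difference is in the last step: the paper concludes $\leftidx{_x}K(s)=\leftidx{_{x'}}K(s)=1$ by the squeeze $1=\leftidx{_x}K_{b_n}(s)\le\leftidx{_x}K(s)=K(s)-\leftidx{_{x'}}K(s)\le 2-\leftidx{_{x'}}K_{b_n}(s)=1$, whereas you reach the same conclusion via the identification $\{\leftidx{_x}K=0\}=S_{x'}$ and \eqref{7d1}.
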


\begin{pf}
The ``$\supset$'' inclusion follows from Lemma \ref{7d10}; it is
sufficient to prove that $ \{ s \in S\dvtx K(s) = 2 \} \subset\bigcup
_{x\in
b_1\cup b_2\cup\cdots} \{ s \in S\dvtx\leftidx{_x} K(s) = \leftidx{_{x'}}
K(s) = 1 \} $ if $ b_1 \subset b_2 \subset\cdots$ satisfy $ K_{b_n}
\uparrow K $.

Given $ s $ such that $ K(s)=2 $, we take $ n $ such that $ K_{b_n}(s)
= 2 $, that is, $ x_{b_n}(s) $ contains exactly two atoms of $ b_n
$. We choose $ x \in b_n $ that contains exactly one of these two
atoms; then $ \leftidx{_x} K_{b_n}(s) = \leftidx{_{x'}} K_{b_n}(s) = 1
$, therefore $ \leftidx{_x} K(s) = \leftidx{_{x'}} K(s) = 1 $, since $
1 = \leftidx{_x} K_{b_n}(s) \le\leftidx{_x} K(s) = K(s) -
\leftidx{_{x'}} K(s) \le2 - \leftidx{_{x'}} K_{b_n}(s) = 1 $.
\end{pf}

We use the counterpart (in the sense of Section~\ref{7c}) of
Proposition \ref{7d3}: $ \leftidx{_x} H ( \leftidx{_x} E_1 ) =
\leftidx{_x} H^{(1)} $, that is, for every $ x \in B $,
%
%
%
\begin{equation}
\label{7d12} \leftidx{_x} H \bigl( \bigl\{ s \in S\dvtx\leftidx
{_x} K(s) = 1 \bigr\} \bigr) = H^{(1)} \cap
H_x.
\end{equation}

\begin{pf*}{Proof of Proposition \ref{7d4} for $ k=2 $}
It follows from Lemma \ref{7d11} and~\eqref{2f2} that $ H^{(2)} $ is
generated (as a closed linear subspace of $ H $) by the union, over
all $ x \in B $, of the subspaces $ H ( \{ s \in S\dvtx\leftidx{_x}
K(s) = \leftidx{_{x'}} K(s) = 1 \} ) $. In order to get $
\sigma(H^{(2)}) \subset\sigma(H^{(1)}) $ it is sufficient to prove that
%
%
%
\begin{equation}
\label{7d13} \sigma\bigl( H \bigl( \bigl\{ s \in S\dvtx\leftidx{_x}
K(s) = \leftidx{_{x'}} K(s) = 1 \bigr\} \bigr) \bigr) \subset
\sigma
\bigl(H^{(1)}\bigr)\qquad \mbox{for all } x \in B.
\end{equation}
By Lemma \ref{7c1} and \eqref{7d12}, $ H ( \{ s \in S\dvtx\leftidx
{_x} K(s)
= \leftidx{_{x'}} K(s) = 1 \} ) = \leftidx{_x} H ( \{ s \in S\dvtx\break
\leftidx{_x} K(s) = 1 \} ) \otimes\leftidx{_{x'}} H ( \{ s \in
S\dvtx\leftidx{_{x'}} K(s) = 1 \} ) = ( H_x \cap H^{(1)} ) \otimes( H_{x'}
\cap H^{(1)} ) $, which implies \eqref{7d13}.
\end{pf*}

The proof of Proposition \ref{7d4} for higher $ k $ is similar. Lemma
\ref{7d11} is generalized to
\[
\bigl\{ s \in S\dvtx K(s) = k \bigr\} = \sup_{x\in B} \bigl\{ s \in
S\dvtx\leftidx{_x} K(s) = k-1, \leftidx{_{x'}} K(s) = 1
\bigr\},
\]
and \eqref{7d13} to
\[
\sigma\bigl( H \bigl( \bigl\{ s \in S\dvtx\leftidx{_x} K(s) = k-1,
\leftidx{_{x'}} K(s) = 1 \bigr\} \bigr) \bigr) \subset\sigma
\bigl(H^{(k-1)} \cup H^{(1)}\bigr).
\]
Thus, $ \sigma(H^{(k)}) \subset\sigma(H^{(k-1)} \cup H^{(1)}) $. By
induction in $ k $, $ \sigma(H^{(k)}) \subset\sigma(H^{(1)}) $, which
completes the proof of Proposition \ref{7d4} and Theorem \ref{7d2}.

\subsection{Finishing the proof}
\label{7e}

%
\begin{proposition}\label{7e1}
If \eqref{7a5} holds for all $ b_1 \subset b_2 \subset\cdots$ and $
p_1,p_2,\ldots\in(0,1) $, then $ K(\cdot) < \infty$ almost
everywhere.
\end{proposition}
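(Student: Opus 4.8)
The plan is to push the condition \eqref{7a5} into the spectral space of Sect.~\ref{7b} and then integrate out the randomness by Tonelli. Fix a realization $(x_n)_n$ and write $y_m = x_1\vee\dots\vee x_m\in B$, $y=\sup_m y_m\in\Cl(B)$ and $z=\inf_m y_m'\in\Cl(B)$. As in the proof of Lemma~\ref{6b3} one has $\lim_m\lim_n(y_m\vee y_n')=y\vee z$: here $y_n'\downarrow z$, and since $y_m\in B\subset C$, Lemma~\ref{4b3} gives $y_m\vee y_n'\to y_m\vee z$ while Lemma~\ref{4b4} gives $y_m\vee z\in\Cl(B)$. As $y_m\vee z\uparrow y\vee z$, \eqref{2b4} together with \eqref{7b2} yields $Q_{y\vee z}=\sup_m Q_{y_m\vee z}=\One_{S^*}$, where $S^*:=\bigcup_m S_{y_m\vee z}\in\Si$. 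Thus \eqref{7a5} for this realization is exactly the assertion $S^*=S$ ($\M$-a.e.). Fixing $\mu\in\M$, the hypothesis then reads: for every chain $b_1\subset b_2\subset\dots$ and all $p_n$, for $\nu$-almost every $(x_n)_n$ one has $\mu(S\setminus S^*)=0$; by Tonelli applied to $\nu\times\mu$, for $\mu$-almost every $s$ the set $\{(x_n)_n:s\in S^*\}$ has full $\nu$-measure.

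Next I would describe $S^*$ combinatorially. Choose the chain so that $K_{b_n}\uparrow K$ (possible since the finite subalgebras are directed and a countable cofinal chain suffices, Sect.~\ref{2f}). For almost every $s$ the support atoms of $x_{b_k}(s)$, $k=1,2,\dots$, form a finitely branching tree, a level-$k$ node lying below a unique level-$(k-1)$ node by \eqref{7d15}, with $K_{b_k}(s)$ nodes at level $k$; the randomness declares each node \emph{open} (its atom included in the corresponding $x_i$) independently with probability $p_i$. Using \eqref{7d1}, \eqref{7b3} and $S_{y_m\vee z}=\bigcap_{k\ge m}S_{y_m\vee y_k'}$ (the decreasing limit granted by \eqref{7b35}), a direct check shows that $s\in S_{y_m\vee z}$ iff every level-$m$ node whose path to the root is entirely closed has an entirely closed subtree. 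Consequently $s\notin S^*$ iff for every $m$ there is a level-$m$ node with closed path to the root but with an open node somewhere below it.

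The crux is to choose $p_n$ so that $\nu(\{(x_n)_n:s\notin S^*\})>0$ on a set of $s$ of positive $\M$-measure, contradicting the Tonelli conclusion. When $K(s)=\infty$ we have $K_{b_k}(s)\to\infty$, which forces the tree to branch at unbounded depth. I would take $p_n\to0$ with $\sum_n p_n=\infty$, tuned---after restricting to a positive-measure set on which $K_{b_k}(\cdot)$ grows at a controlled rate---so that the closed cluster of the root reaches every level $m$ with $\nu$-probability bounded below in $m$, while $\sum_n p_n=\infty$ forces open nodes to appear below that cluster at unbounded depth. Then $\bigcap_m\{s\notin S_{y_m\vee z}\}$ has positive $\nu$-measure. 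This percolation-survival estimate on the tree is the main obstacle: the branching number of the tree can be as small as $1$ (e.g.\ polynomial growth of $K_{b_k}$), so a uniform retention probability never survives, and one is forced to use level-dependent $p_n$ together with a first/second-moment argument showing \emph{simultaneously} that the closed cluster survives and that open branch-offs persist below it.

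Finally, integrating this last inequality gives $\int_S \nu(\{(x_n)_n:s\notin S^*\})\,d\mu(s)>0$, contradicting the full-measure statement obtained by Tonelli. Hence $\M(\{s:K(s)=\infty\})=0$, that is $K<\infty$ almost everywhere, which is the assertion of the proposition.
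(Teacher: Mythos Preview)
Your translation of \eqref{7a5} into the spectral space and the Tonelli reduction are correct and match the paper's reformulation \eqref{7e15}; the target inequality $\nu(\{(x_n)_n:s\notin S^*\})>0$ whenever $K(s)=\infty$ is exactly what has to be shown. The combinatorial tree picture is also essentially right.

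The genuine gap is in the ``crux''. You choose $\sum_n p_n=\infty$ and then try to run a percolation survival argument on a tree whose branching number may equal $1$; you yourself flag this as ``the main obstacle'' and do not carry it out. In fact this route is both hard and unnecessary. The paper makes the opposite choice: it takes $\sum_n p_n<1$ together with integers $c_n$ satisfying $(1-p_n)^{c_n}\to0$ (e.g.\ $p_n=2^{-n-1}$, $c_n=n2^{n+1}$). With $\sum_n p_n<1$ the survival of a closed path is immediate by a union bound: at each level $n$ there is an atom $a\in\Atoms(b_n)$ with $\leftidx{_a}K(s)=\infty$, and the probability that its level-$k$ ancestor is open for some $k\le n$ is at most $\sum_{k\le n}p_k$; hence $\Pr{\leftidx{_{Y_m'}}K(s)=\infty\text{ for all }m}\ge1-\sum_n p_n>0$. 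No branching estimate is needed.

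The second ingredient you are missing is the diagonal speedup of the chain $b_n$. Rather than ``restricting to a positive-measure set on which $K_{b_k}$ grows at a controlled rate'', the paper passes to a subsequence of $(b_n)$ so that for every $x$ in the countable algebra $b=\bigcup_n b_n$ and every $s$, either $\leftidx{_x}K(s)<\infty$ or $\leftidx{_x}K_{b_n}(s)\ge c_n$ eventually (this is \eqref{7e5}, obtained from \ref{2f3} and a diagonal argument). Once this holds, for any $x\in b_m$ with $\leftidx{_x}K(s)=\infty$ the events $\{X_n\wedge x\wedge x_{b_n}(s)=0_\La\}$ have probabilities $(1-p_n)^{\leftidx{_x}K_{b_n}(s)}\le(1-p_n)^{c_n}\to0$, so by independence some $X_n$ with $n>m$ hits the subtree of $x$; this forces $s\notin S_{Y_m\vee Y_n'}$. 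Combining: on the event $\{\leftidx{_{Y_m'}}K(s)=\infty\text{ for all }m\}$ (which has positive probability), for every $m$ there is $n>m$ with $s\notin S_{Y_m\vee Y_n'}$, i.e.\ $s\notin S^*$. Thus the choice $\sum_n p_n<1$ handles survival trivially, and the condition $(1-p_n)^{c_n}\to0$ together with the freedom to refine $(b_n)$ handles the ``open node below'' part without any second-moment percolation estimate.
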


By Theorem \ref{7d2}, in order to prove Proposition \ref{7a6} it is
sufficient to prove Proposition \ref{7e1}.

The relation $ \lim_m \lim_n ( y_m \vee y'_n ) = 1_\La$ for $ y_1
\le
y_2 \le\cdots$ [appearing in \eqref{7a5} with $ y_n = x_1 \vee\cdots
\vee x_n $] may be reformulated in spectral terms using \eqref{7b35};
it turns into $ \bigcup_m \bigcap_n S_{y_m \vee y'_n} = S $, in other words,
almost every $ s \in S $ satisfies $ \exists m \forall n \> s \in
S_{y_m \vee y'_n} $. Accordingly, \eqref{7a5} may be rewritten as
follows:
%
%
%
\begin{eqnarray}
\label{7e15} &&\mbox{for $\nu$-almost all sequences } (x_n)_n
, \mbox{for almost all } s \in S,\exists m \forall n
\nonumber
\\[-8pt]
\\[-8pt]
&&\eqntext{s \in S_{ x_1 \vee\cdots\vee x_m \vee(
x_1 \vee\cdots\vee x_n )' }.}
\end{eqnarray}

We choose $ p_1,p_2,\ldots\in(0,1) $ and $ c_1,c_2,\ldots\in
\{1,2,3,\ldots\} $ such that
%
%
%
\begin{eqnarray}
&\displaystyle\sum_n p_n < 1,& \label{7e2}
\\
&(1-p_n)^{c_n} \to0\qquad \mbox{as } n \to\infty.& \label{7e3}
\end{eqnarray}
We also choose finite Boolean subalgebras $ b_1 \subset b_2 \subset
\cdots\subset B $ such that $ K_{b_n} \uparrow K $ and introduce $ b =
b_1 \cup b_2 \cup\cdots\subset B $ (a countable Boolean
subalgebra).

\begin{claim*}
%
%
\begin{equation}
\leftidx{_x} K_{b_n} \uparrow\leftidx{_x} K\qquad
\mbox{for every } x \in b.
\end{equation}
\end{claim*}
\begin{pf}$ \leftidx{_x} K \ge\lim_n \leftidx{_x} K_{b_n} =
\lim_n (
K_{b_n} - \leftidx{_{x'}} K_{b_n} ) \ge K - \leftidx{_{x'}} K =
\leftidx{_x} K $.
\end{pf}

\begin{remark*} For $ x \in b_n $, by $ \leftidx{_x} K_{b_n} $ we mean
$ \leftidx{_x} K_{b_n \cap B_x } $. Thus $ \leftidx{_x} K_{b_n} $
is well defined for all $ n $ large enough, provided that $ x \in b
$.
\end{remark*}

Using Fact \ref{2f3} we take $ n_1 < n_2 < \cdots$ such that for almost
every $ s \in S $
%
%
%
\begin{eqnarray}
\label{7e4} &\mbox{either } \leftidx{_x} K(s) < \infty,&
\nonumber
\\[-8pt]
\\[-8pt]
\nonumber
&\mbox{or } \leftidx{_x} K_{b_{n_k}}(s) \ge c_k
\mbox{ for all $ k $ large enough.}&
\end{eqnarray}
These $ n_k $ depend on $ x \in b $. However, countably many $ x $ can
be served by a single sequence $ (n_k)_k $ using the well-known
diagonal argument. This way we ensure~\eqref{7e4} with a single $
(n_k)_k $ for all $ x \in b $. Now we rename $ b_{n_k} $ into $ b_k $,
discard a null set of bad points $ s \in S $ and get
%
%
%
\begin{eqnarray}
\label{7e5} &\mbox{either } \leftidx{_x} K(s) < \infty,&
\nonumber
\\[-8pt]
\\[-8pt]
\nonumber
&\mbox{or } \leftidx{_x} K_{b_n}(s) \ge c_n
\mbox{ for all $ n $ large enough}&
\end{eqnarray}
for all $ x \in b $ and $ s \in S $; here ``$n$ large enough'' means $
n \ge n_0(x,s) $.

We recall the product measure $ \nu= \nu_1 \times\nu_2 \times\cdots
$ introduced in Section~\ref{7a} on the product set $ b_1 \times b_2
\times\cdots$; as before, $ \nu_n = \nu_{b_n,p_n} $.
For notational convenience we treat the coordinate maps $ X_n\dvtx( b_1
\times b_2 \times\cdots, \nu) \to b_n $, $ X_n(x_1,\break x_2,\ldots) = x_n
$, as independent $b_n$-valued random variables; $ X_n $ is
distributed $ \nu_n $, that is, $ \mathbb{P}(X_n = x ) = \nu_n(\{x\}
) $ for
$ x \in b_n $. We introduce $b_n$-valued random variables
\[
Y_n = X_1 \vee\cdots\vee X_n.
\]

%
\begin{lemma}\label{7e6}
$ \mathbb{P}(\leftidx{_{Y'_n}} K(s) < \infty) \le p_1 + \cdots+ p_n
$ for
all $ s \in S $ such that $ K(s) = \infty$ and all $ n $.
\end{lemma}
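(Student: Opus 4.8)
The plan is to fix $ s \in S $ with $ K(s) = \infty $ and control the decreasing sequence $ Y'_1 \ge Y'_2 \ge \dots $ by a first-entrance argument over the steps $ m = 1,\dots,n $. First I would record two consequences of Lemma \ref{7d10}. Writing $ z_2 = z_1 \vee ( z_2 \wedge z_1' ) $ as a disjoint join yields monotonicity, $ z_1 \le z_2 \Rightarrow \leftidx{_{z_1}} K \le \leftidx{_{z_2}} K $; and decomposing any $ z $ in a finite subalgebra $ b $ into its atoms (via \eqref{2e2}) gives the finite sum $ \leftidx{_z} K(s) = \sum_{a \in \Atoms(b),\, a \le z} \leftidx{_a} K(s) $, so $ \leftidx{_z} K(s) = \infty $ exactly when some atom $ a \le z $ has $ \leftidx{_a} K(s) = \infty $. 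Since $ Y'_n \le Y'_{n-1} $, the events $ \{ \leftidx{_{Y'_n}} K(s) < \infty \} $ increase with $ n $; setting $ Y'_0 = 1_\La $ (so $ \leftidx{_{Y'_0}} K(s) = K(s) = \infty $), each outcome with $ \leftidx{_{Y'_n}} K(s) < \infty $ has a first index $ m \le n $ at which the value drops, so it lies in $ E_m = \{ \leftidx{_{Y'_m}} K(s) < \infty, \ \leftidx{_{Y'_{m-1}}} K(s) = \infty \} $. A union bound then reduces the claim to $ \Pr{ E_m } \le p_m $ for each $ m $.

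Next I would identify $ E_m $ combinatorially inside $ b_m $, using $ Y'_{m-1} \in b_{m-1} \subset b_m $ and $ X_m \in b_m $. Decomposing $ Y'_{m-1} $ and $ Y'_m = Y'_{m-1} \wedge X'_m $ into atoms of $ b_m $ and applying the additivity above, $ \leftidx{_{Y'_{m-1}}} K(s) $ is the sum of $ \leftidx{_a} K(s) $ over atoms $ a \le Y'_{m-1} $, and similarly for $ Y'_m $ over atoms $ a \le Y'_m $. Put $ A_\infty = \{ a \in \Atoms(b_m) : a \le Y'_{m-1}, \ \leftidx{_a} K(s) = \infty \} $, which depends only on $ X_1,\dots,X_{m-1} $. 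Then $ \{ \leftidx{_{Y'_{m-1}}} K(s) = \infty \} = \{ A_\infty \ne \emptyset \} $. Since an atom $ a \le Y'_{m-1} $ satisfies $ a \le Y'_m $ iff $ a \le X'_m $, i.e.\ iff $ a $ is \emph{not} included in $ X_m $, the atoms below $ Y'_m $ carrying $ \leftidx{_a} K(s) = \infty $ are precisely the atoms of $ A_\infty $ not included in $ X_m $. Hence $ \leftidx{_{Y'_m}} K(s) < \infty $ iff every atom of $ A_\infty $ is included in $ X_m $, and therefore $ E_m = \{ A_\infty \ne \emptyset \} \cap \{ a \le X_m \text{ for all } a \in A_\infty \} $.

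Finally I would condition on $ X_1,\dots,X_{m-1} $, which fixes $ A_\infty $. If $ A_\infty = \emptyset $ the conditional probability of $ E_m $ is $ 0 $; if $ A_\infty \ne \emptyset $, then by independence of $ X_m $ from $ X_1,\dots,X_{m-1} $ and the definition of $ \nu_m = \nu_{b_m,p_m} $ (each atom included independently with probability $ p_m $), the conditional probability that all atoms of $ A_\infty $ are included is $ p_m^{|A_\infty|} \le p_m $. Taking expectations gives $ \Pr{ E_m } \le p_m $, and summing over $ m = 1,\dots,n $ via the union bound of the first paragraph yields $ \Pr{ \leftidx{_{Y'_n}} K(s) < \infty } \le p_1 + \dots + p_n $.

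The main obstacle is getting the second step exactly right. One must check that the finite-additivity and atom-decomposition of $ \leftidx{_\cdot} K(s) $ are legitimate (these rest on Lemma \ref{7d10} and on $ Y'_{m-1} $ and $ X_m $ living in the common finite algebra $ b_m $), and, crucially, that $ \{ \leftidx{_{Y'_m}} K(s) < \infty \} $ forces $ X_m $ to swallow \emph{every} infinite-$ K $ atom below $ Y'_{m-1} $, not merely one of them. It is this ``all atoms'' reformulation, together with the fact that $ A_\infty $ is determined by $ X_1,\dots,X_{m-1} $ alone, that makes the conditioning clean and produces the single factor $ p_m $ rather than something weaker.
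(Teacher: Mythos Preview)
Your argument is correct, but the paper's route is considerably shorter. Instead of a first-entrance decomposition tracking the full set $A_\infty$ of infinite-$K$ atoms, the paper fixes a \emph{single} atom $a\in\Atoms(b_n)$ with $\leftidx{_a}K(s)=\infty$ (which exists since $\sum_a \leftidx{_a}K(s)=K(s)=\infty$), observes that $\leftidx{_{Y'_n}}K(s)<\infty$ forces $a\le Y_n=X_1\vee\dots\vee X_n$, hence $a\le X_k$ for some $k\le n$, and applies the union bound $\sum_{k=1}^n\Pr{a\le X_k}=\sum_{k=1}^n p_k$. Your approach conditions on the past and exploits that \emph{all} atoms in $A_\infty$ must be swallowed by $X_m$, yielding the sharper conditional bound $p_m^{|A_\infty|}\le p_m$; this extra strength is not needed here, but the argument is sound and the machinery (first-entrance, measurability of $A_\infty$ with respect to $X_1,\dots,X_{m-1}$, atom decomposition via Lemma~\ref{7d10}) is set up correctly.
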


\begin{pf}
There exists $ a \in\Atoms(b_n) $ such that $ \leftidx{_a} K(s) =
\infty$ [since\break  $ \sum_a \leftidx{_a} K(s) = K(s) = \infty$]. We have
$ \leftidx{_{Y'_n}} K(s) < \infty\impl a \le Y_n \impl\exists
k\in\{1,\ldots,\break n\} \> a \le X_k $, therefore $ \mathbb{P}(\leftidx
{_{Y'_n}} K(s) < \infty) \le\sum_{k=1}^n \mathbb{P}(a \le X_k ) =
\sum_{k=1}^n p_k $.
\end{pf}

%
\begin{lemma}\label{7e7}
If $ x \in b_m $ and $ s \in S $ satisfy $ \leftidx{_x} K(s)=\infty$,
then
\[
\mathbb{P}\bigl(\forall n > m \ X_n \wedge x \wedge
x_{b_n}(s) = 0_\La\bigr) = 0.
\]
\end{lemma}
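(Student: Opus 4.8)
The plan is to bound, for each fixed $n>m$, the probability of the single event $A_n = \{\, X_n \wedge x \wedge x_{b_n}(s) = 0_\La \,\}$, and then to note that the target intersection $\bigcap_{n>m} A_n$ is contained in $A_N$ for every $N>m$. Consequently it suffices to prove $\Pr{A_N}\to 0$ as $N\to\infty$, since then $\Pr{\bigcap_{n>m} A_n}\le\inf_{N>m}\Pr{A_N}=0$; no Borel--Cantelli is needed, a single shrinking factor does the job.

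First I would compute $\Pr{A_n}$ by atom-counting in $b_n$. Both $x$ (as $x\in b_m\subset b_n$) and $x_{b_n}(s)$ lie in $b_n$, so $x\wedge x_{b_n}(s)$ is a union of atoms of $b_n$, while $X_n$ is the union of those atoms of $b_n$ that are ``included'', each independently with probability $p_n$ (recall \eqref{7a3}). As distinct atoms are disjoint, $X_n\wedge x\wedge x_{b_n}(s)=0_\La$ holds exactly when none of the atoms $a\in\Atoms(b_n)$ with $a\le x\wedge x_{b_n}(s)$ is included; hence $\Pr{A_n}=(1-p_n)^{\ell_n}$, where $\ell_n$ is the number of such atoms.

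Next I would identify $\ell_n$ with $\leftidx{_x} K_{b_n}(s)$. Unwinding the restriction notation of Sect.~\ref{7c}, the relation $\leftidx{_x} S_u = S_{u\vee x'}$ gives, for $u\le x$ in $b_n$, the equivalences $s\in\leftidx{_x} S_u \equiv x_{b_n}(s)\le u\vee x' \equiv x\wedge x_{b_n}(s)\le u$ (the $x'$-part of $x_{b_n}(s)$ being absorbed by $x'$); thus the filter defining $\leftidx{_x} x_{b_n}(s)$ is generated by $x\wedge x_{b_n}(s)$, so the atoms of $b_n\cap B_x$ lying below it are exactly the atoms of $b_n$ below $x\wedge x_{b_n}(s)$. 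Therefore $\ell_n=\leftidx{_x} K_{b_n}(s)$ and $\Pr{A_n}=(1-p_n)^{\leftidx{_x} K_{b_n}(s)}$.

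Finally, since $\leftidx{_x} K(s)=\infty$, the dichotomy \eqref{7e5} places $s$ in its second alternative, so $\leftidx{_x} K_{b_n}(s)\ge c_n$ for all $n$ large enough; with $1-p_n\in(0,1)$ this yields $\Pr{A_n}\le(1-p_n)^{c_n}$, which tends to $0$ by \eqref{7e3}. Hence $\inf_{N>m}\Pr{A_N}=0$ and the lemma follows. The only delicate point, and the step I would verify most carefully, is the bookkeeping identity $\leftidx{_x} x_{b_n}(s)=x\wedge x_{b_n}(s)$ together with the resulting match between $\leftidx{_x} K_{b_n}(s)$ and the exponent in the exclusion probability; once that is pinned down, the probabilistic estimate is entirely routine.
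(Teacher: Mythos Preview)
Your proof is correct and follows essentially the same route as the paper: compute $\Pr{A_n}=(1-p_n)^{\leftidx{_x}K_{b_n}(s)}$ via atom counting, invoke \eqref{7e5} to get the exponent $\ge c_n$ eventually, and use \eqref{7e3} to drive the bound to zero. You supply more detail than the paper does on the bookkeeping identity $\leftidx{_x}x_{b_n}(s)=x\wedge x_{b_n}(s)$ (which the paper asserts in one clause), but the argument is otherwise identical.
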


\begin{pf}
For $ n > m $,
\[
\mathbb{P}\bigl(X_n \wedge x \wedge x_{b_n}(s) =
0_\La\bigr) = (1-p_n)^{\leftidx{_x}
K_{b_n}(s)},\vadjust{\goodbreak}
\]
since $ x \wedge x_{b_n}(s) $ contains $ \leftidx{_x} K_{b_n}(s) $
atoms of $ b_n $. By \eqref{7e5}, $ \leftidx{_x} K_{b_n}(s) \ge c_n $
for all $ n $ large enough. Thus, $ \mathbb{P}(X_n \wedge x \wedge
x_{b_n}(s) = 0_\La) \le(1-p_n)^{c_n} \to0 $ as $ n \to\infty$ by~\eqref{7e3}.
\end{pf}

%
\begin{lemma}\label{7e8}
\[
\mathbb{P}\bigl(\leftidx{_{Y'_m}} K(s)=\infty\mbox{ \small and }
\forall n > m \> s \in S_{Y_m\vee Y'_n} \bigr) = 0
\]
for all $ s \in S $ and $ m $.
\end{lemma}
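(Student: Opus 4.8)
The plan is to condition on the first $ m $ coordinates and reduce everything to Lemma \ref{7e7}. Fix $ s $ and $ m $. The random elements $ Y_m, Y'_m $ depend only on $ X_1,\dots,X_m $, which take finitely many values, so I condition on $ X_1=x_1,\dots,X_m=x_m $; this fixes $ Y_m = x_1\vee\dots\vee x_m $ and its complement $ x := Y'_m \in b_m $ (note $ Y_m\in b_m $, hence $ Y'_m\in b_m $). On this piece the first event $ \{ \leftidx{_{Y'_m}} K(s)=\infty \} $ becomes the deterministic condition $ \leftidx{_x} K(s)=\infty $. If it fails, the whole event is empty there and contributes $ 0 $; so I only need those $ (x_1,\dots,x_m) $ with $ \leftidx{_x} K(s)=\infty $, which is exactly the hypothesis of Lemma \ref{7e7}.

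Next I would translate the spectral condition into a Boolean one. Since $ Y_m\vee Y'_n \in b_n $, formula \eqref{7d1} gives that $ s\in S_{Y_m\vee Y'_n} $ is equivalent to $ x_{b_n}(s) \le Y_m\vee Y'_n $, hence, by De Morgan in the finite Boolean algebra $ b_n $, to $ x_{b_n}(s)\wedge Y'_m\wedge Y_n = 0_\La $. As $ X_n \le Y_n $ and $ x=Y'_m $, this implies $ X_n\wedge x\wedge x_{b_n}(s)=0_\La $. Consequently the event $ \{ \forall n>m\;\> s\in S_{Y_m\vee Y'_n} \} $ is contained in $ \{ \forall n>m\;\> X_n\wedge x\wedge x_{b_n}(s)=0_\La \} $.

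This last event involves only $ X_{m+1},X_{m+2},\dots $, so it is independent of the conditioning variables $ X_1,\dots,X_m $; its probability therefore coincides with the unconditional one, which is $ 0 $ by Lemma \ref{7e7} (using $ \leftidx{_x} K(s)=\infty $). Thus every conditional contribution vanishes, and summing over the finitely many values of $ (x_1,\dots,x_m) $ yields $ \Pr{ \leftidx{_{Y'_m}} K(s)=\infty \text{ and } \forall n>m\;\> s\in S_{Y_m\vee Y'_n} } = 0 $.

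The Boolean bookkeeping is routine; the points to handle with care are the direction of the implication (which uses $ X_n\le Y_n $, together with $ Y'_m\wedge Y_m = 0_\La $) and the observation that the reduced event depends only on the ``future'' coordinates, so that conditioning on $ X_1,\dots,X_m $ does not alter its probability. I expect this independence-and-reduction step to be the only real subtlety.
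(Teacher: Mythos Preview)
Your argument is correct and follows essentially the same route as the paper: both condition on (equivalently, partition over) the first $m$ coordinates to fix $Y'_m=x$, translate $s\in S_{Y_m\vee Y'_n}$ via \eqref{7d1} into $x_{b_n}(s)\wedge x\wedge Y_n=0_\La$, weaken this via $X_n\le Y_n$ to an event depending only on the future coordinates, and then invoke Lemma~\ref{7e7}. The only cosmetic difference is that the paper partitions over the value of $Y'_m$ rather than the full tuple $(X_1,\dots,X_m)$.
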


\begin{pf}
By \eqref{7d1}, $ s \in S_{Y_m\vee Y'_n} \equiv Y_m\vee Y'_n \ge
x_{b_n}(s) $ for $ n>m $. We have to prove that
\[
\mathbb{P}\bigl(Y'_m=y \mbox{ \small and } \forall n >
m \> y' \vee Y'_n \ge x_{b_n}(s)
\bigr) = 0
\]
for every $ y \in b_m $ satisfying $ \leftidx{_y} K(s)=\infty$. By
Lemma \ref{7e7},
\[
\mathbb{P}\bigl(\forall n > m \> X_n \wedge y \wedge
x_{b_n}(s) = 0_\La\bigr) = 0.
\]
It remains to note that $ y' \vee Y'_n \ge x_{b_n}(s) \equiv( y
\wedge Y_n )' \ge x_{b_n}(s) \equiv( y \wedge Y_n ) \wedge x_{b_n}(s)
= 0_\La\imply y \wedge X_n \wedge x_{b_n}(s) = 0_\La$.
\end{pf}

Now we prove Proposition \ref{7e1}. We use \eqref{7e15} for $
b_1,b_2,\ldots$ and $ p_1,p_2,\ldots$ satisfying \eqref{7e2},
\eqref{7e3},
\[
\exists m\ \forall n\qquad s \in S_{Y_m\vee Y'_n}
\]
almost surely, for almost all $ s \in S $. In combination with
Lemma \ref{7e8} it gives
\[
\mathbb{P}\bigl(\exists m \> \leftidx{_{Y'_m}} K(s)<\infty\bigr) = 1
\]
for almost all $ s \in S $. On the other hand, by Lemma \ref{7e6} and
\eqref{7e2},
\[
\mathbb{P}\bigl(\exists m \> \leftidx{_{Y'_m}} K(s)<\infty\bigr) =
\lim
_m \mathbb{P}\bigl(\leftidx{_{Y'_m}} K(s)<\infty
\bigr) \le p_1 + p_2 + \cdots< 1
\]
for all $ s \in S $ such that $ K(s)=\infty$. Therefore $ K(s)<\infty
$ for almost all $ s $, which completes the proof of Propositions
\ref{7e1}, \ref{7a6} and finally, Theorem \ref{1c2}. Theorem~\ref{1c1}
follows immediately.

%



\printaddresses

\end{document}